
\documentclass{amsart}
\usepackage{amssymb,latexsym}
\usepackage{amscd,amsthm}
\usepackage{tikz-cd}

\usepackage{amsmath}
\usepackage[all]{xy}
\usepackage{tikz}

\newtheorem{theorem}{Theorem}[section]

\newtheorem{them}{Theorem}

\newtheorem{lemma}[theorem]{Lemma}
\newtheorem{proposition}[theorem]{Proposition}
\newtheorem{corollary}[theorem]{Corollary}

\theoremstyle{definition}
\newtheorem{definition}[theorem]{Definition}
\newtheorem{fact}[theorem]{Fact}
\newtheorem{remark}{Remark}
\newtheorem*{note}{Note}
\newtheorem*{notation}{Notation}
\newtheorem{example}[theorem]{Example}

\newtheorem{setup}[theorem]{Set-Up}

\DeclareMathOperator{\Ext}{Ext}
\DeclareMathOperator{\Hom}{Hom}
\DeclareMathOperator{\Tor}{Tor}

\DeclareMathOperator{\im}{Im}

\newcommand{\cat}[1]{\mathcal{#1}}           

\newcommand{\class}[1]{\mathcal{#1}}   

\newcommand{\Z}{\mathbb{Z}}
\newcommand{\Q}{\mathbb{Q/Z}}
\newcommand{\mathcolon}{\colon\,} 

\newcommand{\ch}{\textnormal{Ch}(R)}

\newcommand{\dwclass}[1]{dw\widetilde{\class{#1}}}
\newcommand{\exclass}[1]{ex\widetilde{\class{#1}}}

\newcommand{\tilclass}[1]{\widetilde{\class{#1}}}
\newcommand{\dgclass}[1]{dg\widetilde{\class{#1}}}

\newcommand{\rightperp}[1]{#1^{\perp}}
\newcommand{\leftperp}[1]{{}^\perp #1}

\newcommand{\homcomplex}{\mathit{Hom}}

\newcommand{\Ho}[1]{{\textnormal{Ho}(#1)}}

\newcommand{\arr}{\xrightarrow{}}

\begin{document}

\title{Gorenstein complexes and recollements from cotorsion pairs}

\author{James Gillespie}
\address{Ramapo College of New Jersey \\
         School of Theoretical and Applied Science \\
         505 Ramapo Valley Road \\
         Mahwah, NJ 07430}
\email[Jim Gillespie]{jgillesp@ramapo.edu}
\urladdr{http://pages.ramapo.edu/~jgillesp/}

\dedicatory{Dedicated with gratitude to Mark Hovey}

\date{\today}

\begin{abstract}
We describe a general correspondence between injective (resp. projective) recollements of triangulated categories and injective (resp. projective) cotorsion pairs. This provides a model category description of these recollement situations. Our applications focus on displaying several recollements
that glue together various full subcategories of $K(R)$, the homotopy category of chain complexes of modules over a general ring $R$.
When $R$ is (left) Noetherian ring, these recollements involve complexes built from the Gorenstein injective modules. When $R$ is a (left) coherent ring for which all flat modules have finite projective dimension we obtain the duals. These results extend to a general ring $R$ by replacing the Gorenstein modules with the Gorenstein AC-modules introduced in~\cite{bravo-gillespie-hovey}.  We also see that in any abelian category with enough injectives, the Gorenstein injective objects enjoy a maximality property in that they contain every other class making up the right half of an injective cotorsion pair.
\end{abstract}

\maketitle

\section{introduction}

It has become clear that adjoint functors between homotopy categories of chain complexes are strongly connected with complete cotorsion pairs. For example, suppose $(\class{F},\class{C})$ is a cotorsion pair in the category of chain complexes of modules over a ring $R$ and assume $\class{F}$ is closed under suspensions. It is shown in~\cite{enochs-iacob-jenda-bravo-rada-adjoints and Ch(R)} that the full subcategory $K(\class{F})$ of the usual homotopy category $K(R)$ of chain complexes is right admissible, meaning the inclusion has a right adjoint. On the other hand, $K(\class{C})$  is left admissible. Knowing that the abstract categorical language for homotopy theory is that of model categories, and knowing that in algebraic settings model structures translate to cotorsion pairs, this paper aims to show there is a much deeper connection. In particular, we show how a recollement situation, which is a sort of elaborate gluing of triangulated categories with several adjoint pairs, may be obtained model categorically, using just the cotorsion pairs.

The idea relies on Becker's work in~\cite{becker}, where he finds and describes a beautiful way to localize two ``injective'' abelian model structures to obtain a third abelian model structure. The homotopy categories of the three involved model structures are then linked by a colocalization sequence and the localized model structure is in fact the right Bousfield localization of the first, killing the fibrant objects of the second. Becker goes on to recover Krause's recollement $K_{ex}(Inj) \xrightarrow{} K(Inj) \xrightarrow{} \class{D}(R)$ from~\cite{krause-stable derived cat of a Noetherian scheme} using the theory of abelian model categories. Here, $K(Inj)$ is the homotopy category of all complexes of injective modules, $K_{ex}(Inj)$ is the full subcategory of exact complexes of injectives, and $\class{D}(R)$ is the derived category of $R$. After reading~\cite{becker} the author recalled seeing in his work instances of injective cotorsion pairs that were ``linked'' in the same way that allowed Becker to obtain Krause's recollement. One of those situations were the three cotorsion pairs used by Becker to reconstruct the Krause recollement. But another two were ``Gorenstein injective'' versions of this. This inspired the author to look for general conditions that allowed for three injective cotorsion pairs to give a recollement. This appears as Theorem~\ref{them-finding recollements}, but we describe it in more detail below. The theorem appears quite useful as it allowed the author to not only put together the two Gorenstein injective versions of the recollement alluded to above, but to find an unexpected third variation of the recollement and then without much effort to spot two other interesting recollements involving the categorical Gorenstein injective complexes. A converse to Theorem~\ref{them-finding recollements} appears as Corollary~\ref{cor-bijective correspondences between admissible subcats and injective cot pairs} and indicates that from the model category perspective, the conditions relating the three cotorsion pairs in Theorem~\ref{them-finding recollements} characterize (injective) recollement situations. So clearly many more recollement situations, perhaps even all of those that arise in practice, ought to be describable via cotorsion pairs. In fact, several more applications involving exact categories will appear in a sequel to the current paper. We now describe in more detail the main results in this paper.

\subsection{Injective model structures and recollements from cotorsion pairs}

The theory of abelian model categories began in~\cite{hovey} where it was shown that an abelian category with a compatible model structure is equivalent to two complete cotorsion pairs. The definition of cotorsion pair appears as Definition~\ref{def-cotorsion pair} and Hovey's correspondence appears in Proposition~\ref{prop-Hovey's theorem}. Our main goal in Sections~\ref{sec-injective and weak injective hovey triples} and~\ref{sec-localization sequences and recollements from cotorsion pairs} is to understand localization theory, in particular recollement situations, in terms of Hovey's correspondence.

So let $\class{A}$ be an abelian category with enough injectives. We will call a complete cotorsion pair $(\class{W},\class{F})$ an \emph{injective cotorsion pair} whenever $\class{W}$ is \emph{thick} (Definition~\ref{def-thick}) and $\class{W} \cap \class{F}$ is the class of injective objects in $\class{A}$. As we will make clear, such a cotorsion pair is equivalent to an injective model structure on $\cat{A}$, which as defined in~\cite{gillespie-exact model structures} is an abelian model structure on $\cat{A}$ in which all objects are cofibrant.  The canonical example of an injective cotorsion pair would have to be $(\class{E},\dgclass{I})$ in the category of chain complexes $\ch$, where here $\class{E}$ is the class of all exact complexes and $\dgclass{I}$ is the class of DG-injective complexes. This cotorsion pair completely determines the standard injective model structure on $\ch$ for the derived category $\class{D}(R)$. Focussing on the injective cotorsion pairs themselves is convenient because it puts all of the essential information for an injective model structure in one small package. In particular, $\class{F}$ is the class of fibrant objects, $\class{W}$ are the trivial objects, $\class{W} \cap \class{F}$ are the injectives (trivially fibrant objects), and the entire model structure is determined from this information. This decluttering allows one to then focus on the relationships between the classes appearing in different cotorsion pairs, which turns out to be useful for spotting recollement situations.

Becker too considers injective cotorsion pairs in~\cite{becker}, although he did not call them this. Becker shows that if $\class{M}_1 = (\class{W}_1,\class{F}_1)$ and $\class{M}_2 = (\class{W}_2,\class{F}_2)$ are injective cotorsion pairs (so model structures) with $\class{F}_2 \subseteq \class{F}_1$ then there is a colocalization sequence associated to the homotopy categories $\Ho{\class{M}_2} \xrightarrow{\textnormal{R}\,\textnormal{id}} \Ho{\class{M}_1} \xrightarrow{\textnormal{R}\,\textnormal{id}} \Ho{\class{M}}$ where $\class{M} = \class{M}_1/\class{M}_2$ is the right Bousfield localization of $\class{M}_1$ with respect to killing the class $\class{F}_2$ of fibrant objects in $\class{M}_2$. This is central to this paper and the exact statement reappears in Proposition~\ref{prop-Beckers theorem}. We note two additions in how we state Becker's result. First we define in Section~\ref{sec-injective and weak injective hovey triples} the notion of a \emph{weak injective} model structure. We point out that Becker's theorem has a converse in that every hereditary weak injective model structure is the right localization of two injective ones. We also add a uniqueness property that the class of trivial objects satisfy. These two additions are not hard but help give a bigger picture to the theory. More importantly, the uniqueness condition is quite useful for actually \emph{spotting} a localization, or even potential recollement situation.

In light of the above, the very rough idea then behind spotting a recollement is to find three injective cotorsion pairs $\class{M}_1 = (\class{W}_1,\class{F}_1)$, $\class{M}_2 = (\class{W}_2,\class{F}_2)$, and $\class{M}_3 = (\class{W}_3,\class{F}_3)$ for which $\class{M}_1/\class{M}_2$ is Quillen equivalent to $\class{M}_3$ and vice versa $\class{M}_1/\class{M}_3$ is Quillen equivalent to $\class{M}_2$. But to get a recollement one needs to ``glue'' together a localization sequence and a colocalization sequence. The exact condition allowing such a ``gluing'' is this: $\class{F}_3 \subseteq \class{F}_1$ and $\class{W}_3 \cap \class{F}_1 = \class{F}_2$. A precise statement of the main theorem follows.

\begin{them}\label{them-A}
Let $\cat{A}$ be an abelian category with enough injectives and suppose we have three injective cotorsion pairs $$\class{M}_1 = (\class{W}_1, \class{F}_1) , \ \ \ \class{M}_2 = (\class{W}_2, \class{F}_2) , \ \ \ \class{M}_3 = (\class{W}_3, \class{F}_3)$$ such that $\class{F}_3 \subseteq  \class{F}_1$ and $\class{W}_3 \cap \class{F}_1 = \class{F}_2$. Then
\begin{enumerate}
\item The localization $\class{M}_1/\class{M}_2$ is Quillen equivalent to $\class{M}_3$ while $\class{M}_1/\class{M}_3$ is Quillen equivalent to $\class{M}_2$.
\item There is a recollement
\[
\xy
(-30,0)*+{\class{F}_2/\sim };
(0,0)*+{\class{F}_1/\sim};
(25,0)*+{\class{F}_3/\sim};
{(-19,0) \ar (-10,0)};
{(-10,0) \ar@<0.5em> (-19,0)};
{(-10,0) \ar@<-0.5em> (-19,0)};
{(10,0) \ar (19,0)};
{(19,0) \ar@<0.5em> (10,0)};
{(19,0) \ar@<-0.5em> (10,0)};
\endxy
.\]  where here $f \sim g$ if and only if $g-f$ factors through an injective object.
\item There is a converse to \textnormal{(2)} above.
\end{enumerate}
\end{them}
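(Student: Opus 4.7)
The plan is to drive the proof from Becker's theorem (Proposition~\ref{prop-Beckers theorem}), applied twice: once to the pair $(\class{M}_1, \class{M}_2)$ and once to $(\class{M}_1, \class{M}_3)$. Both applications are admissible, since $\class{F}_3 \subseteq \class{F}_1$ is hypothesized and $\class{F}_2 \subseteq \class{F}_1$ follows from $\class{F}_2 = \class{W}_3 \cap \class{F}_1$. Each application produces a hereditary weak injective model structure together with a colocalization sequence of homotopy categories. To prove (1), I will identify these two localized structures with $\class{M}_3$ and $\class{M}_2$ respectively, leveraging the uniqueness property that a hereditary weak injective model structure is determined by its class of trivial objects.

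For the first identification $\class{M}_1/\class{M}_2 \simeq \class{M}_3$, Becker names the trivial class of $\class{M}_1/\class{M}_2$ as the smallest thick subcategory $\widetilde{\class{W}}$ of $\cat{A}$ containing $\class{W}_1 \cup \class{F}_2$, so it suffices to check $\widetilde{\class{W}} = \class{W}_3$. The containment $\widetilde{\class{W}} \subseteq \class{W}_3$ is clear from $\class{W}_1 \subseteq \class{W}_3$ (take left orthogonals of $\class{F}_3 \subseteq \class{F}_1$), from $\class{F}_2 = \class{W}_3 \cap \class{F}_1 \subseteq \class{W}_3$, and from thickness of $\class{W}_3$. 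For the reverse, given $W \in \class{W}_3$ I form a special $\class{F}_1$-preenvelope $0 \to W \to F \to W_1 \to 0$ with $F \in \class{F}_1$ and $W_1 \in \class{W}_1 \subseteq \widetilde{\class{W}}$; thickness of $\class{W}_3$ forces $F \in \class{W}_3 \cap \class{F}_1 = \class{F}_2 \subseteq \widetilde{\class{W}}$, and thickness of $\widetilde{\class{W}}$ then yields $W \in \widetilde{\class{W}}$. The analogous identification $\class{M}_1/\class{M}_3 \simeq \class{M}_2$ requires matching $\class{W}_2$ with the thick closure of $\class{W}_1 \cup \class{F}_3$; here $\class{W}_1 \subseteq \class{W}_2$ is straightforward, but $\class{F}_3 \subseteq \class{W}_2$ will be the main obstacle, since swapping orthogonality sides is not automatic from the asymmetric hypothesis. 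I expect to handle it by taking a special $\class{F}_2$-preenvelope $0 \to X \to F \to Y \to 0$ of $X \in \class{F}_3$, using heredity of $\class{M}_1$ to deduce $Y \in \class{F}_1$, and then combining with the approximation theory of $(\class{W}_3, \class{F}_3)$ and the equation $\class{W}_3 \cap \class{F}_1 = \class{F}_2$ to land $X$ inside $\class{W}_2$.

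For part (2), once the two Quillen equivalences are in place, the pair of Becker colocalization sequences can be read as $\Ho{\class{M}_2} \rightleftarrows \Ho{\class{M}_1} \rightleftarrows \Ho{\class{M}_3}$ and $\Ho{\class{M}_3} \rightleftarrows \Ho{\class{M}_1} \rightleftarrows \Ho{\class{M}_2}$, which together furnish all six adjoint functors of a recollement; the mutual orthogonality and fully faithfulness conditions follow from the explicit description of the (co)localized homotopy categories as right orthogonals of the killed classes. The presentation $\Ho{\class{M}_i} \simeq \class{F}_i/\sim$, standard for abelian model structures, then converts the recollement into the form in the statement. For part (3), the converse, I would appeal to Corollary~\ref{cor-bijective correspondences between admissible subcats and injective cot pairs}, which records a bijective correspondence between injective cotorsion pairs on $\cat{A}$ and the admissible triangulated subcategories of $\Ho{\class{M}_1}$ that can arise. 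Given a recollement of the shape in (2), each of the three constituent subcategories is $\Ho{\class{M}_i}$ for a unique injective cotorsion pair, and the containment and orthogonality data in the recollement translate back into the conditions $\class{F}_3 \subseteq \class{F}_1$ and $\class{W}_3 \cap \class{F}_1 = \class{F}_2$ by inspecting fibrant and trivial classes.
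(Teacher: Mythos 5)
Your treatment of the first localization is sound and is essentially the paper's own route: showing the trivial class of $\class{M}_1/\class{M}_2$ equals $\class{W}_3$ is exactly Proposition~\ref{prop-spotting localizations} applied with $\class{W}_3$ in place of $\class{W}$, and then $\class{M}_1/\class{M}_2=(\class{W}_2,\class{W}_3,\class{F}_1)$ has the same trivial objects, hence the same weak equivalences, as $\class{M}_3$, so the identity adjunction is a Quillen equivalence. But be careful with your phrase that a weak injective model structure ``is determined by its class of trivial objects'': the uniqueness in Proposition~\ref{prop-characterization of trivial objects} says the trivial class is determined by the cofibrant and fibrant classes, not conversely, and $\class{M}_1/\class{M}_2$ is only Quillen equivalent to $\class{M}_3$, not equal to it. That misreading is precisely what derails your second step.

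The genuine gap is the second identification. You propose to show that the trivial class of $\class{M}_1/\class{M}_3$ is $\class{W}_2$ (equivalently, that $\class{W}_2$ is the thick closure of $\class{W}_1\cup\class{F}_3$), and you flag $\class{F}_3\subseteq\class{W}_2$ as the step to be handled by approximations. This is not a technical hurdle: it is false in general, and so is the equality you are aiming for. Writing $\class{M}_1/\class{M}_3=(\class{W}_3,\class{V},\class{F}_1)$, one always has $\class{V}\cap\class{F}_1=\rightperp{\class{W}_3}=\class{F}_3$, so $\class{V}=\class{W}_2$ would force $\class{W}_2\cap\class{F}_1=\class{F}_3$, a condition symmetric to the hypothesis $\class{W}_3\cap\class{F}_1=\class{F}_2$ that does not follow from it (it would force the resulting torsion triple, hence the recollement, to split). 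Concretely, take the Krause--Becker instance over $R=\Z/4$: $\class{M}_1=(\class{W}_1,\dwclass{I})$, $\class{M}_2=(\class{W}_2,\exclass{I})$, $\class{M}_3=(\class{E},\dgclass{I})$, which satisfies the hypotheses of the theorem. The DG-injective complex $X$ given by the injective resolution of $\Z/2$, namely $\Z/4\xrightarrow{2}\Z/4\xrightarrow{2}\cdots$ concentrated in degrees $\leq 0$, maps to the exact complex of injectives $E=(\cdots\xrightarrow{2}\Z/4\xrightarrow{2}\Z/4\xrightarrow{2}\cdots)$ by the identity in each degree where $X$ is nonzero; a null homotopy in degree $0$ would express $\mathrm{id}_{\Z/4}$ as a sum of maps divisible by $2$, which is impossible, so by Lemma~\ref{lemma-homcomplex-basic-lemma} one gets $\Ext^1_{\ch}(X,\Sigma^{-1}E)\neq 0$ with $\Sigma^{-1}E\in\exclass{I}$. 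Hence $X\in\class{F}_3=\dgclass{I}$ but $X\notin\class{W}_2=\leftperp{\exclass{I}}$. The correct argument, as in the paper, never matches trivial classes here: one observes instead that the cofibrant-fibrant objects of $\class{M}_1/\class{M}_3$ are $\class{W}_3\cap\class{F}_1=\class{F}_2$, with homotopy given by factoring through injectives, so $\Ho{\class{M}_1/\class{M}_3}\cong\class{F}_2/\sim\ \cong\Ho{\class{M}_2}$ via the derived identity functors; this same identification is what glues the butterfly diagram (the inclusion $\class{F}_2/\sim\ \rightarrow\class{F}_1/\sim$ acquires a left adjoint, and one cites Krause's criterion to obtain the recollement), so your part (2) inherits the gap. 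Finally, for part (3) you only cite Corollary~\ref{cor-bijective correspondences between admissible subcats and injective cot pairs}; that is indeed where the paper proves the converse, but the actual content is the torsion-pair/cotorsion-pair correspondence of Proposition~\ref{prop-torsion pairs and cotorsion pairs}, none of which your proposal reproduces.
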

Parts (1) and (2) appear in Theorem~\ref{them-finding recollements} while the converse appears in Corollary~\ref{cor-bijective correspondences between admissible subcats and injective cot pairs}.

A few comments on Theorem~\ref{them-A} are in order. First, we point out that all the functors in the recollement are easily described. They are all just left or right derived identity functors between the three model structures, and in practice they correspond to taking precovers or preenvelopes using completeness of the cotorsion pairs.

Second, the author wishes to point out that the converse (3) and its proof were pointed out to him by the anonymous referee of an earlier version of this paper! In short, the converse says that starting with an ambient injective cotorsion pair $(\class{W},\class{F})$ for which $\class{F}/\sim$ sits in the center of a recollement, then there must exist two injective cotorsion pairs $(\class{W}',\class{F}')$, and $(\class{W}'',\class{F}'')$ with $\class{F}'' \subseteq \class{F}$ and $\class{W}'' \cap \class{F} = \class{F}'$, which recover the recollement. Though this converse is not used in the rest of the paper it clearly makes the theory more interesting as it indicates how (injective) recollements appear model categorically.

Third, while the applications of the above theorem in the current paper are to triangulated categories associated to $\ch$, the author wishes to stress that Theorem~\ref{them-finding recollements} holds in the full generality of abelian categories with enough injectives and should prove useful in more general settings.

Last, the dual notion of a \emph{projective cotorsion pair} is equally useful. There are versions of all our results for projective cotorsion pairs in abelian categories with enough projectives.

\subsection{Gorenstein complexes and Gorenstein derived categories}

The remainder of this paper, beginning in Section~\ref{sec-the Gorenstein injective cotorsion pair} is concerned with applications of Theorem~\ref{them-A} to Gorenstein homological algebra and its extension to arbitrary rings in~\cite{bravo-gillespie-hovey}. In Gorenstein homological algebra we essentially replace injective, projective, and flat modules with the Gorenstein projective, Gorenstein injective, and Gorenstein flat modules.
We call an object $M$ in an abelian category with enough injectives \emph{Gorenstein injective} if $M = Z_0J$ for some exact complex of injectives $J$ which remains exact after applying $\Hom(I,-)$ for any other injective object $I$. The Gorenstein projectives are the dual which we define in an abelian category with enough projectives.
Ideally we would like classical results concerning the projectives, injectives and flats to have analogs in Gorenstein homological algebra. In particular, given a ring $R$, the most fundamental question in Gorenstein homological algebra is whether or not Gorenstein injective preenvelopes (resp. Gorenstein projective precovers) exist in $R$-Mod. Said another way, we wish to know when the Gorenstein injectives makes up the right half of a complete cotorsion pair. This first of all allows for the construction of Gorenstein derived functors which is the starting point of the theory; see~\cite{enochs-jenda-book}. Second it leads to a generalized Tate cohomology theory which has been pursued in different ways by many authors. See for example~\cite{benson-infinite}, \cite{jorgensen}, \cite{iacob-generalized tate} and~\cite{martsinkovsky-avramov}. In light of this and Theorem~\ref{them-A}, it is natural to consider Gorenstein homological algebra in this paper because of the following result appearing as Theorem~\ref{them-Gorenstein projectives are at top of lattice}.

\begin{them}\label{them-B}
Let $\cat{A}$ be any abelian category with enough injectives. Let $\class{I}$ denote the class of all injective objects in $\class{A}$ and $\class{G}\class{I}$ denote the class of Gorenstein injectives.
\begin{enumerate}
\item For any injective cotorsion pair $(\class{W},\class{F})$, we have $\class{I} \subseteq \class{F} \subseteq \class{GI}$.
\item Whenever $(\leftperp{\class{G}\class{I}}, \class{G}\class{I})$ is a complete cotorsion pair it is automatically an injective cotorsion pair too.
\end{enumerate}
\end{them}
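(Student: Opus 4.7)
For part (1), the inclusion $\class{I} \subseteq \class{F}$ is immediate from the cotorsion pair definition, since any injective $I$ satisfies $\Ext^1(W, I) = 0$ for every $W \in \class{W}$, placing $I$ in $\rightperp{\class{W}} = \class{F}$. The substantive inclusion is $\class{F} \subseteq \class{G}\class{I}$. My plan is to build, for each $F \in \class{F}$, an exact complex $J$ of injectives with $F = Z_0 J$ and $\Hom(I, J)$ exact for every injective $I$. The right half of $J$ is a standard injective coresolution of $F$; its cokernels remain in $\class{F}$ because thickness of $\class{W}$ renders $(\class{W},\class{F})$ hereditary. For the left half I need, for every $F \in \class{F}$, a short exact sequence $0 \to F' \to I \to F \to 0$ with $I$ injective and $F' \in \class{F}$. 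This is the Frobenius property of $\class{F}$ (with the injectives as the projective-injective objects), a standard consequence of $\class{F}$ being the fibrant class in the injective model structure that will be cited from earlier in the paper. Concatenating the two halves produces $J$ with all cycles in $\class{F}$; since $\class{I} \subseteq \class{W}$ and $\class{F} = \rightperp{\class{W}}$, each cycle $K$ satisfies $\Ext^1(I, K) = 0$ for every injective $I$, so $\Hom(I, J)$ is exact and $F \in \class{G}\class{I}$.

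For part (2) we assume $(\leftperp{\class{G}\class{I}}, \class{G}\class{I})$ is a complete cotorsion pair and must check both $\leftperp{\class{G}\class{I}} \cap \class{G}\class{I} = \class{I}$ and thickness of $\leftperp{\class{G}\class{I}}$. For the class equality, any injective $I$ is Gorenstein injective via a trivially acyclic complex having $I$ as a cycle, and $\Ext^1(I, G) = 0$ for every $G \in \class{G}\class{I}$ because any witnessing totally acyclic complex is $\Hom(I,-)$-exact; hence $\class{I} \subseteq \leftperp{\class{G}\class{I}} \cap \class{G}\class{I}$. Conversely, if $G$ lies in both classes, the totally acyclic complex realizing $G$ produces $0 \to G'' \to I \to G \to 0$ with $I$ injective and $G'' \in \class{G}\class{I}$; since $G \in \leftperp{\class{G}\class{I}}$, $\Ext^1(G, G'') = 0$, the sequence splits, and $G$ is a summand of $I$, hence injective.

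For thickness, closure of $\leftperp{\class{G}\class{I}}$ under direct summands and extensions is automatic. The case $B, C \in \leftperp{\class{G}\class{I}} \Rightarrow A \in \leftperp{\class{G}\class{I}}$ uses that $\Omega^{-1}G$ is again Gorenstein injective (it is the next cycle of the totally acyclic complex), so dimension shifting gives $\Ext^n(X, G) = 0$ for every $n \geq 1$ whenever $X \in \leftperp{\class{G}\class{I}}$, and the conclusion follows from the long exact $\Ext$ sequence. The main obstacle is the remaining case $A, B \in \leftperp{\class{G}\class{I}} \Rightarrow C \in \leftperp{\class{G}\class{I}}$, which cannot be read off the bare $\Ext$ sequence. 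My plan is to use completeness: take a special precover $0 \to G \to L \to C \to 0$ with $G \in \class{G}\class{I}$ and $L \in \leftperp{\class{G}\class{I}}$, then pull back along $B \to C$ to form $P = B \times_C L$, which sits in both $0 \to A \to P \to L \to 0$ and $0 \to G \to P \to B \to 0$. The first sequence gives $P \in \leftperp{\class{G}\class{I}}$ by extension closure; the second splits because $\Ext^1(B, G) = 0$, so $G$ is a summand of $P$, which forces $G \in \leftperp{\class{G}\class{I}} \cap \class{G}\class{I} = \class{I}$ by the class equality just established. The resulting injectivity of $G$ splits $0 \to G \to L \to C \to 0$, displaying $C$ as a summand of $L \in \leftperp{\class{G}\class{I}}$, so $C \in \leftperp{\class{G}\class{I}}$ as required.
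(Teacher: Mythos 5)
Your proof is correct, and for part (1) it is essentially the paper's argument: the right half of the complete injective resolution is an ordinary injective coresolution (its $\Hom(I,-)$-exactness coming from hereditariness, equivalently from the cocycles lying in $\class{F}$), and the left half consists of short exact sequences $0\to F'\to I\to F\to 0$ with $I$ injective and $F'\in\class{F}$; the paper manufactures these directly from completeness of $(\class{W},\class{F})$ plus extension-closure of $\class{F}$ (the middle term lands in $\class{W}\cap\class{F}=\class{I}$), which is precisely the content of the Frobenius statement you cite, so the difference is only one of citation. In part (2) your skeleton is the same --- identify $\leftperp{\class{GI}}\cap\class{GI}$ with the injectives and prove $\leftperp{\class{GI}}$ thick --- but the mechanism for thickness differs. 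The paper shows $\class{GI}$ is cosyzygy closed, invokes the Hereditary Test (Lemma~\ref{lemma-hereditary test}) to get hereditariness, then applies Lemma~\ref{lem-Henriks lemma} (whose hypothesis is supplied by the left half of a complete injective resolution) and finishes via Proposition~\ref{prop-characterizing injective cotorsion pairs}. You handle the resolving direction by the same cosyzygy dimension shift, but settle the cokernel-of-mono direction by pulling back a special precover $0\to G\to L\to C\to 0$ along $B\to C$ and splitting $0\to G\to P\to B\to 0$ to force $G$ to be injective, after which $C$ is a retract of $L$. This pullback argument is a self-contained, elementary substitute for Henrik's Lemma in this specific situation (it trades the $\Ext^2$ dimension shift for a use of completeness plus retract-closure), whereas the paper's route has the advantage of isolating general lemmas it reuses elsewhere; both are valid.
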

In other words, the Gorenstein injectives are the largest possible class that can appear as the right half of an injective cotorsion pair. Therefore we obtain a recollement from Theorem~\ref{them-A} whenever $\class{W}_3 \cap \class{GI} = \class{F}_2$, and this is the key to obtaining up to 10 recollements involving complexes of Gorenstein modules as we describe more below in Theorems~\ref{them-C} and~\ref{them-D}. This also leads us to investigate in Sections~\ref{sec-lattice} and~\ref{sec-lifting from modules to chain complexes} a possible lattice structure on the class of all injective cotorsion pairs. For the Gorenstein injectives are the maximal element in the lattice while the usual injectives are the minimal element.

Categorically speaking, the abstract home for Tate cohomology is the \emph{stable module category} which was defined in~\cite{hovey} for Gorenstein rings by putting a model structure on the category of $R$-modules whose fibrant objects are the Gorenstein injectives. The existence of a nice stable module category boils down, again, to the existence of Gorenstein injective preenvelopes. In~\cite{bravo-gillespie-hovey} it is shown that the Gorenstein injective model structure on $R$-Mod from~\cite{hovey} extends to any (left) Noetherian ring $R$, and that the Gorenstein projective model structure extends to any (left) coherent ring $R$ for which all flat modules have finite projective dimension. But the most interesting aspect of the work in~\cite{bravo-gillespie-hovey} is that it provides a way to extend Gorenstein homological algebra to arbitrary rings. It introduces two new classes of modules we call \emph{absolutely clean} modules and \emph{level} modules and proves that a perfect duality exists between these classes with respect to taking character modules. When $R$ is Noetherian, the absolutely clean modules are nothing more than the usual injective modules while the level modules are exactly the flat modules. When $R$ is coherent, the absolutely clean modules are the absolutely pure (or FP-injective) modules while still the level modules are the flat modules. This duality between the absolutely clean and level modules is key to extending Gorenstein homological algebra. For a general ring $R$, we then call a module $M$ \emph{Gorenstein AC-injective} if $M = Z_0J$ for some exact complex of injectives $J$ for which $\Hom_R(I,J)$ is also exact for any absolutely clean module $I$. To emphasize, when $R$ is Noetherian, the Gorenstein AC-injectives are just the usual Gorenstein injectives. There is the dual notion of \emph{Gorenstein AC-projective} modules and these coincide with the usual Gorenstein projectives for sufficiently nice rings. It is shown in~\cite{bravo-gillespie-hovey} that for any ring $R$, $(\class{W},\class{GI})$ is an injective cotorsion pair, where $\class{GI}$ is the class of Gorenstein AC-injectives. This provides a generalization of the stable module category to any ring $R$. On the other hand, we show $(\class{GP},\class{V})$ is a projective cotorsion pair where $\class{GP}$ is the class of Gorenstein AC-projectives. This provides another generalization of the stable module category, but for many interesting rings they will coincide.

In the current paper we first of all lift these results to the level of chain complexes by showing that each chain complex $X$ of $R$-modules has a Gorenstein injective preenvelope and an exact Gorenstein injective preenvelope when $R$ is Noetherian. It follows that $K(GInj)$ and $K_{ex}(GInj)$ each appear as the homotopy category of an injective model structure on $\ch$. Here $K(GInj)$ denotes the homotopy category of all (categorical) Gorenstein injective chain complexes and $K_{ex}(GInj)$ denotes homotopy category of all exact Gorenstein injective complexes. But the more interesting thing is that we obtain several new recollements by using the correspondence between cotorsion pairs and recollements in Theorem~\ref{them-A}. For a general ring $R$, there are potentially 10 recollements we see (five injective ones and five projective ones) involving complexes of Gorenstein modules. We summarize the most interesting ones below in Theorems~\ref{them-C} and~\ref{them-D}.

\begin{them}\label{them-C}
Let $\class{D}(R)$ denote the derived category of $R$. Then the following hold.
\begin{enumerate}
\item If $R$ is (left) Noetherian ring, then there is a recollement
\[
\xy
(-30,0)*+{K_{ex}(GInj) };
(0,0)*+{K(GInj)};
(25,0)*+{\class{D}(R)};
{(-19,0) \ar (-10,0)};
{(-10,0) \ar@<0.5em> (-19,0)};
{(-10,0) \ar@<-0.5em> (-19,0)};
{(10,0) \ar (19,0)};
{(19,0) \ar@<0.5em> (10,0)};
{(19,0) \ar@<-0.5em> (10,0)};
\endxy
.\]
This says that the homotopy category of Gorenstein injective complexes is obtained by gluing the derived category together with the homotopy category of exact Gorenstein injective complexes.

\item If $R$ is (left) coherent ring and all flat modules have finite projective dimension, then the dual Gorenstein projective version of the above recollement in \textnormal{(1)} exists. More generally, for any ring $R$ in which all level modules have finite projective dimension we have the same result.

\item Both the injective recollement of \textnormal{(1)} and the projective recollement of \textnormal{(2)} extend to arbitrary rings by replacing the Gorenstein injective complexes (resp. Gorenstein projective complexes) with the complexes of Gorenstein AC-injectives (resp. Gorenstein AC-projectives).

\end{enumerate}
\end{them}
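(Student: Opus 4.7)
The plan is to invoke Theorem~\ref{them-A} three times, once per item. In each case I produce three injective (or projective) cotorsion pairs on $\ch$ whose fibrant (or cofibrant) classes are the DG-injective complexes, the (categorical) Gorenstein injective chain complexes, and the exact (categorical) Gorenstein injective chain complexes, respectively; I then verify the two compatibility conditions and identify the stable categories $\class{F}_i/\sim$ with the three triangulated categories in the statement.

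For part (1), with $R$ left Noetherian, let $\class{M}_3 = (\class{E}, \dgclass{I})$ be the classical injective cotorsion pair on $\ch$, so $\class{F}_3$ is the class of DG-injective complexes and $\class{W}_3$ is the class of exact complexes; its homotopy category is $\class{D}(R)$. Let $\class{M}_1$ and $\class{M}_2$ be the two injective cotorsion pairs on $\ch$ lifted from $R$-Mod in the preceding sections, with $\class{F}_1$ the class of categorical Gorenstein injective chain complexes and $\class{F}_2$ the class of exact ones. The condition $\class{W}_3 \cap \class{F}_1 = \class{F}_2$ then becomes the tautology (exact) $\cap$ (Gorenstein injective complex) $=$ (exact Gorenstein injective complex). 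The condition $\class{F}_3 \subseteq \class{F}_1$ is precisely part (1) of Theorem~\ref{them-B} applied to the abelian category $\ch$: since $(\class{E}, \dgclass{I})$ is already an injective cotorsion pair on $\ch$, its right half must sit inside the categorical Gorenstein injectives of $\ch$. Invoking Theorem~\ref{them-A} then produces the recollement, and the three stable categories are identified with $K_{ex}(GInj)$, $K(GInj)$ and $\class{D}(R)$ by observing that the relation ``factors through an injective object of $\ch$'' (i.e.\ through a contractible complex of injective modules) coincides with chain homotopy.

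Part (2) is obtained by running the dual argument with the three projective cotorsion pairs whose cofibrant classes are the DG-projectives, the Gorenstein projective chain complexes, and the exact Gorenstein projective chain complexes. The hypothesis that every flat (or, more generally, every level) module has finite projective dimension is exactly what is needed in~\cite{bravo-gillespie-hovey} for the Gorenstein projective cotorsion pair on $R$-Mod to exist; once this is in hand, the lifting to $\ch$ proceeds as in part (1), and the projective form of Theorem~\ref{them-A} applies. Part (3) replaces every instance of ``Gorenstein (projective) injective'' by ``Gorenstein AC-(projective) injective''; the existence of the required module-level cotorsion pairs $(\class{W}, \class{GI})$ and $(\class{GP}, \class{V})$ for an arbitrary ring $R$ is exactly what is established in~\cite{bravo-gillespie-hovey}, and the rest of the argument is identical.

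The main obstacle is the lifting step producing the Gorenstein injective cotorsion pairs on $\ch$ in both the ``all fibrant'' and ``exact fibrant'' flavours, together with the identification of their fibrant classes with (exact) categorical Gorenstein injective chain complexes; this is the content of the preceding sections of the paper. Once these cotorsion pairs are in place, verification of the two compatibility conditions is essentially formal, Theorem~\ref{them-A} does the rest, and the projective and AC-variants of parts (2) and (3) follow with only cosmetic changes from the corresponding input cotorsion pairs on $R$-Mod.
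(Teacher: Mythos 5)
Your proposal is correct and follows essentially the same route as the paper: it takes $\class{M}_3 = (\class{E},\dgclass{I})$ together with the two lifted injective cotorsion pairs $({}^\perp\dwclass{GI},\dwclass{GI})$ and $({}^\perp\exclass{GI},\exclass{GI})$, checks $\class{F}_3 \subseteq \class{F}_1$ via the maximality of the Gorenstein injectives (Theorem~\ref{them-B}) plus the identification $\dwclass{GI} =$ Gorenstein injective complexes, checks $\class{E} \cap \dwclass{GI} = \exclass{GI}$, and then applies Theorem~\ref{them-A}, with the projective and Gorenstein AC variants handled dually exactly as in Theorems~\ref{them-three Gorenstein projective versions of Krause recollement} and~\ref{them-Gorenstein AC-injective recollements}. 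The identification of the formal homotopy relation with chain homotopy, used to name the corners of the recollement, is also the paper's argument.
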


These results appear in Theorem~\ref{them-three Gorenstein version of Krause recollement}, Theorem~\ref{them-three Gorenstein projective versions of Krause recollement}, and Theorem~\ref{them-Gorenstein AC-injective recollements} where some other variations of these recollements also appear. As we explore in Sections~\ref{sec-lattice} and~\ref{sec-lifting from modules to chain complexes} these other variations may or may not be distinct from the above recollements, nor need they be nontrivial. The farther away from Gorenstein the ring is, the more recollements we see.

Finally, we now describe one of two beautiful recollements that appear near the end of the paper. For any ring $R$, let $(\class{W},\class{GI})$ denote the injective cotorsion pair where $\class{GI}$ are the Gorenstein AC-injectives and $\class{W}$ are the trivial objects in the corresponding stable module category. Now let $\exclass{W}$ denote the class of all exact complexes $W$ for which each $W_n \in \class{W}$. We will see that there is an injective model structure on $\ch$ with $\exclass{W}$ as its class of trivial objects. Let  $\class{D}(\exclass{W})$ denote the associated triangulated homotopy category.

\begin{them}\label{them-D}
In the above set-up we have the following.
\begin{enumerate}
\item If $R$ is Noetherian then there is a recollement
\[
\xy
(-30,0)*+{K_{ex}(Inj) };
(0,0)*+{K(GInj)};
(29,0)*+{\class{D}(\exclass{W})};
{(-19,0) \ar (-10,0)};
{(-10,0) \ar@<0.5em> (-19,0)};
{(-10,0) \ar@<-0.5em> (-19,0)};
{(10,0) \ar (19,0)};
{(19,0) \ar@<0.5em> (10,0)};
{(19,0) \ar@<-0.5em> (10,0)};
\endxy
.\]
This says that the homotopy category of Gorenstein injective complexes is obtained by gluing $\class{D}(\exclass{W})$ together with the stable derived category $S(R) = K_{ex}(Inj)$.

\item The injective recollement of \textnormal{(1)} still holds for an arbitrary ring $R$ by replacing the Gorenstein injective complexes by the complexes of Gorenstein AC-injectives.

\item The projective dual to both \textnormal{(1)} and \textnormal{(2)} above hold. In particular, the projective dual of \textnormal{(1)} holds if $R$ is coherent and all flat modules have finite projective dimension.
\end{enumerate}

\end{them}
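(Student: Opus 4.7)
The plan is to apply Theorem~A to three injective cotorsion pairs on $\ch$ whose fibrants $\class{F}_i$, modulo the injective equivalence relation $\sim$, realize the three triangulated categories in the statement. For part~(1), with $R$ Noetherian, I take $\class{M}_1$ to be the injective cotorsion pair whose fibrants $\class{F}_1 = \tilclass{GI}$ are the Gorenstein injective chain complexes, giving $\class{F}_1/\!\sim\, =\, K(GInj)$; $\class{M}_2$ the injective cotorsion pair whose fibrants $\class{F}_2 = \exclass{I}$ are the exact complexes of injectives (the same pair Becker used on the left side of Krause's recollement), giving $\class{F}_2/\!\sim\, =\, K_{ex}(Inj)$; and $\class{M}_3$ the injective cotorsion pair whose trivial class $\class{W}_3$ is $\exclass{W}$, giving $\class{F}_3/\!\sim\, =\, \class{D}(\exclass{W})$. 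Pair $\class{M}_1$ is obtained by lifting the module-level injective cotorsion pair $(\class{W},\class{GI})$ to $\ch$ via the paper's lifting procedure, $\class{M}_2$ is a standard cotorsion pair on $\ch$, and $\class{M}_3$ is the injective model structure asserted in the set-up preceding the theorem.

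With the three pairs in place, I verify the hypotheses of Theorem~A. The inclusion $\class{F}_3 \subseteq \class{F}_1$ is immediate from Theorem~B applied to $\ch$: the Gorenstein injective chain complexes form the largest possible right half of any injective cotorsion pair on $\ch$, so $\class{F}_3$ is automatically contained in $\tilclass{GI}$. For the equality $\class{W}_3 \cap \class{F}_1 = \class{F}_2$, any complex in $\exclass{W} \cap \tilclass{GI}$ is exact with entries in $\class{W}$, and since a Gorenstein injective chain complex has components in $\class{GI}$, these components lie in $\class{W} \cap \class{GI} = \class{I}$, giving an element of $\exclass{I}$. Conversely, any exact complex of injectives lies in $\exclass{W}$ (as $\class{I} \subseteq \class{W}$) and is itself a categorical Gorenstein injective chain complex. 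Theorem~A now delivers the recollement in part~(1).

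Parts~(2) and~(3) are obtained by essentially the same argument. For~(2), reinterpret $\class{GI}$ as the class of Gorenstein AC-injectives and use the pair $(\class{W},\class{GI})$ from~\cite{bravo-gillespie-hovey}; the identity $\class{W} \cap \class{GI} = \class{I}$ continues to hold by the defining condition of an injective cotorsion pair, so the verification is unchanged. Part~(3) is handled by dualizing: apply the projective analog of Theorem~A to the projective duals of the three cotorsion pairs, using the Gorenstein (AC-)projective cotorsion pair in place of the Gorenstein (AC-)injective one. The main obstacle is neither of these extensions nor the formal application of Theorem~A, but rather the verification that $\exclass{W} \cap \tilclass{GI} = \exclass{I}$, which rests on knowing that the components of a categorical Gorenstein injective chain complex are Gorenstein injective modules and that every exact complex of injectives is itself a categorical Gorenstein injective chain complex; both facts should have been established in the earlier sections treating the categorical Gorenstein injective complexes.
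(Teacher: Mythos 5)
Your proposal is correct and is essentially the paper's own argument: part (1) is proved there as the second recollement of Theorem~\ref{them-lovely recollements} by applying Theorem~\ref{them-finding recollements} to $\class{M}_1 = (\leftperp{\dwclass{GI}}, \dwclass{GI})$, $\class{M}_2 = (\leftperp{\exclass{I}}, \exclass{I})$, $\class{M}_3 = (\exclass{W}, \rightperp{(\exclass{W})})$, with $\class{F}_3 \subseteq \class{F}_1$ coming from the maximality of the Gorenstein injective complexes and $\exclass{W} \cap \dwclass{GI} = \exclass{I}$ from the degreewise identity $\class{W} \cap \class{GI} = \class{I}$, while parts (2) and (3) are handled exactly as you indicate via Theorem~\ref{them-Gorenstein AC-injective recollements} and Remark~\ref{remark-loevely recollements dual}. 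One notational correction: in the paper's conventions the class of Gorenstein injective complexes (whose homotopy category is $K(GInj)$) is $\dwclass{GI}$, established in Corollary~\ref{cor-Goren inj chain complexes coincide with the degreewise Goren inj complexes}, whereas $\tilclass{GI}$ denotes the exact complexes with Gorenstein injective cycles, a strictly different class, so your $\class{F}_1$ should be written $\dwclass{GI}$.
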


\vspace{.1in}

The outline of the paper is as follows. Preliminary notions and notation are laid out in Section~\ref{sec-preliminaries}. In Section~\ref{sec-injective and weak injective hovey triples} we look at the necessary notions of Hovey triples, injective cotorsion pairs, weak injective cotorsion pairs, and Becker's localization theorem. In Section~\ref{sec-localization sequences and recollements from cotorsion pairs} we discuss recollements and the main Theorem~\ref{them-A} relating them to injective cotorsion pairs. In Section~\ref{sec-the Gorenstein injective cotorsion pair} we prove the maximality property of the Gorenstein injective cotorsion pair in Theorem~\ref{them-B}. In Section~\ref{sec-lattice} we look at the semilattice of injective cotorsion pairs. Then in Section~\ref{sec-lifting from modules to chain complexes} we show how to lift an injective cotorsion pair from $R$-Mod to six in $\ch$. Finally in Section~\ref{sec-Gorenstein projective and injective models for the derived category} we look exclusively at the Gorenstein complexes and the Krause-like recollements of Theorem~\ref{them-C}, parts~(1) and~(2). In Section~\ref{sec-more recollements} we conclude Theorem~\ref{them-D}, parts~(1) and~(2), as well as an equally interesting variant of this recollement. In Section~\ref{sec-Gorenstein AC recollements} we explain a little more detail on the work of~\cite{bravo-gillespie-hovey} which results in the extensions of our recollements to arbitrary rings $R$. That is, part~(3) of Theorems~\ref{them-C} and~\ref{them-D}.

\subsection{Acknowledgements}
The author thanks the anonymous referee of an earlier version of this paper. In particular, the contributions in Subsection~\ref{subsec-converse of main theorem} are due to the referee. The author also wishes to thank Daniel Bravo for offering much help in typesetting this paper with \LaTeX, Mark Hovey for suggestions on how to make this paper more readable, and Henrik Holm for enjoyable discussions on Gorenstein modules. Finally, the author acknowledges the inspiration he received from reading the paper~\cite{becker}.

\section{preliminaries}\label{sec-preliminaries}

Here we either recall or define fundamental ideas and set some notation which we
use throughout the paper. By a ring $R$, we always mean a ring with 1. Central to this paper is Quillen's notion of a model category~\cite{quillen}. Our basic reference is~\cite{hovey-model-categories} and we are concerned with abelian model structures. The basic theory of abelian model structures is in~\cite{hovey}, but we will recall the correspondence with cotorsion pairs in this section below.

\subsection{Chain complexes}

We denote by $R$-Mod the category of (left) $R$-modules and by $\ch$ the category
of chain complexes of (left) $R$-modules. We write chain complexes as $\cdots
\xrightarrow{} X_{n+1} \xrightarrow{d_{n+1}} X_{n} \xrightarrow{d_n}
X_{n-1} \xrightarrow{} \cdots$ so that the differentials lower the degree.

We let $S^{n}(M)$ denote the chain
complex with all entries 0 except $M$ in degree $n$. We let $D^{n}(M)$
denote the chain complex $X$ with $X_{n} = X_{n-1} = M$ and all other
entries 0.  All maps are 0 except $d_{n} = 1_{M}$. Given $X$, the
\emph{suspension of $X$}, denoted $\Sigma X$, is the complex given by
$(\Sigma X)_{n} = X_{n-1}$ and $(d_{\Sigma X})_{n} = -d_{n}$.  The
complex $\Sigma (\Sigma X)$ is denoted $\Sigma^{2} X$ and inductively
we define $\Sigma^{n} X$ for all $n \in \Z$.

Given two chain complexes $X$ and $Y$ we define $\homcomplex(X,Y)$ to
be the complex of abelian groups $ \cdots \xrightarrow{} \prod_{k \in
\Z} \Hom(X_{k},Y_{k+n}) \xrightarrow{\delta_{n}} \prod_{k \in \Z}
\Hom(X_{k},Y_{k+n-1}) \xrightarrow{} \cdots$, where $(\delta_{n}f)_{k}
= d_{k+n}f_{k} - (-1)^n f_{k-1}d_{k}$.  This gives a functor
$\homcomplex(X,-) \mathcolon \ch \xrightarrow{} \textnormal{Ch}(\Z)$
which is left exact, and exact if $X_{n}$ is projective for all $n$.
Similarly the contravariant functor $\homcomplex(-,Y)$ sends right
exact sequences to left exact sequences and is exact if $Y_{n}$ is
injective for all $n$.

Recall that $\Ext^1_{\ch}(X,Y)$ is the group of (equivalence classes)
of short exact sequences $0 \xrightarrow{} Y \xrightarrow{} Z \xrightarrow{} X \xrightarrow{} 0$ under the Baer
sum. We let $\Ext^1_{dw}(X,Y)$ be the subgroup of $\Ext^1_{\ch}(X,Y)$
consisting of those short exact sequences which are split in each
degree. We often make use of the following standard fact.

\begin{lemma}\label{lemma-homcomplex-basic-lemma}
For chain complexes $X$ and $Y$, we have
$$\Ext^1_{dw}(X,\Sigma^{(-n-1)}Y) \cong H_n \homcomplex(X,Y) =
Ch(R)(X,\Sigma^{-n} Y)/\sim \ ,
$$ where $\sim$ is chain homotopy.
\end{lemma}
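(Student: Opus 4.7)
The plan is to verify both parts of the statement directly from the definitions, with the main work being careful sign bookkeeping.

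For the right-hand equality $H_n \homcomplex(X,Y) = \text{Ch}(R)(X,\Sigma^{-n}Y)/\sim$, I would unpack the definition of $\homcomplex$. A degree-$n$ chain is a family $(f_k)_k$ with $f_k \in \Hom_R(X_k, Y_{k+n})$, and since $(\Sigma^{-n} Y)_k = Y_{k+n}$ with differential $(-1)^n d_Y$, the cycle condition $(\delta_n f)_k = d_{k+n} f_k - (-1)^n f_{k-1} d_k = 0$ is exactly the statement that $(f_k)$ defines a chain map $X \to \Sigma^{-n} Y$. An identical unpacking, after the sign adjustment $s_k = (-1)^n g_k$, identifies boundaries $f = \delta_{n+1}(g)$ with chain null-homotopies of such a map, so the equality follows by passing to quotients.

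For the isomorphism $\Ext^1_{dw}(X, \Sigma^{-n-1} Y) \cong H_n \homcomplex(X,Y)$, I would construct a map $\Phi$ sending a cycle $f$ to the degree-wise split extension $E_f$ defined by $(E_f)_k = Y_{k+n+1} \oplus X_k$ with differential
$$d^{E_f}_k \;=\; \begin{pmatrix} (-1)^{n+1} d_Y & f_k \\ 0 & d_X \end{pmatrix} \colon Y_{k+n+1} \oplus X_k \longrightarrow Y_{k+n} \oplus X_{k-1}.$$
A direct matrix computation shows that $(d^{E_f})^2 = 0$ is equivalent to the cycle condition on $f$, and the canonical inclusion and projection then make $E_f$ into a degree-wise split short exact sequence $0 \to \Sigma^{-n-1}Y \to E_f \to X \to 0$. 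Conversely, any degree-wise split extension can be put into this normal form after choosing splittings in each degree, yielding a degree-$n$ chain $(f_k)$ that must be a cycle because $d^2=0$. Two different choices of splittings differ by conjugation by a unipotent automorphism determined by a family $h_k \colon X_k \to Y_{k+n+1}$, and a second matrix computation shows that the resulting cycles differ by $\delta_{n+1}(\tilde h)$ for $\tilde h_k = (-1)^n h_k$, i.e.\ by an $n$-boundary in $\homcomplex(X,Y)$. Hence $\Phi$ descends to a well-defined bijection on the quotient, with inverse supplied by the splitting construction.

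The only real obstacle is the consistent tracking of signs through these matrix computations; once that is done, compatibility of the bijection with the Baer sum and the abelian group structures on both sides is a routine verification that I would not belabor.
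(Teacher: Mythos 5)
Your argument is correct: the identification of degree-$n$ cycles (resp.\ boundaries) of $\homcomplex(X,Y)$ with chain maps $X \to \Sigma^{-n}Y$ (resp.\ null-homotopies, via $s_k=(-1)^n g_k$), and the classification of degreewise split extensions $0 \to \Sigma^{-n-1}Y \to E \to X \to 0$ by the upper-triangular differential $\begin{pmatrix} (-1)^{n+1}d_Y & f_k \\ 0 & d_X \end{pmatrix}$ with $d^2=0$ equivalent to the cycle condition and change of splitting altering $f$ by a boundary, all check out with your sign conventions. The paper gives no proof of this lemma, quoting it as a standard fact, and your direct verification is precisely the standard argument (as found, e.g., in Garc\'ia-Rozas), so there is nothing to compare beyond noting that it fills in the routine details the paper leaves to the references.
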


In particular, for chain complexes $X$ and $Y$, $\homcomplex(X,Y)$ is
exact iff for any $n \in \Z$, any $f \mathcolon \Sigma^nX \xrightarrow{} Y$ is
homotopic to 0 (or iff any $f \mathcolon X \xrightarrow{} \Sigma^nY$ is homotopic
to 0).

\subsection{Cotorsion pairs} The most important concept in this paper is that of a cotorsion pair in an abelian category $\cat{A}$. A standard reference is~\cite{enochs-jenda-book}.

\begin{definition}\label{def-cotorsion pair}
A pair of classes $(\class{F},\class{C})$ in an abelian category
$\cat{A}$ is a cotorsion pair if the following conditions hold:
\begin{enumerate}
    \item $\Ext^1_{\cat{A}}(F,C) = 0$ for all $F \in \class{F}$ and $C \in
    \class{C}$.

    \item If $\Ext^1_{\cat{A}}(F,X) = 0$ for all $F \in \class{F}$, then $X
    \in \class{C}$.

    \item If $\Ext^1_{\cat{A}}(X,C) = 0$ for all $C \in \class{C}$, then $X \in
    \class{F}$.
\end{enumerate}
\end{definition}

A cotorsion pair is said to have \emph{enough projectives} if for
any $X \in \cat{A}$ there is a short exact sequence $0 \xrightarrow{} C \xrightarrow{} F \xrightarrow{} X \xrightarrow{} 0$ where $C \in
\class{C}$ and $F \in \class{F}$. We say it has \emph{enough injectives} if it satisfies the dual statement. If both of these hold
we say the cotorsion pair is \emph{complete}. Whenever the category $\cat{A}$ has enough injectives and projectives then a cotorsion pair $(\class{F},\class{C})$ is complete iff $(\class{F},\class{C})$ has enough injectives iff $(\class{F},\class{C})$ has enough projectives.~\cite[Proposition~7.1.7]{enochs-jenda-book}.

In $R$-Mod, the class of projectives is the left half of an obvious complete cotorsion pair while the class of injectives is the right half of an obvious complete cotorsion pair. The most well-known nontrivial example of a complete cotorsion pair is
$(\class{F},\class{C})$ where $\class{F}$ is the class of flat
modules and $\class{C}$ are the cotorsion modules. A proof that this is a complete cotorsion theory can be found in~\cite{enochs-jenda-book}. We will be concerned with cotorsion pairs of chain complexes in this paper and will use results from~\cite{gillespie} and~\cite{gillespie-degreewise-model-strucs}. These results describe several cotorsion pairs in $\ch$ that can be associated to a single given cotorsion pair in $R$-Mod. This will be encountered in Section~\ref{sec-lifting from modules to chain complexes} where we will recall the basic definitions.

\subsection{Hereditary cotorsion pairs} Let $\cat{A}$ be an abelian category. We say a cotorsion pair $(\class{F},\class{C})$ in $\cat{A}$ is \emph{resolving} if $\class{F}$ is closed under taking kernels of epimorphisms between objects of $\class{F}$. That is, if for any short exact sequence $0 \xrightarrow{} X \xrightarrow{} Y \xrightarrow{} Z \xrightarrow{} 0$, we have $X$ in $\class{F}$ whenever $Y$ and $Z$ are in $\class{F}$. We say  $(\class{F},\class{C})$ is \emph{coresolving} if the right hand class $\class{C}$ satisfies the dual. Finally, we say that  $(\class{F},\class{C})$ is \emph{hereditary} if it is both resolving and coresolving. The following is a standard test for checking to see if a given cotorsion pair is hereditary.

\begin{lemma}[Hereditary Test]\label{lemma-hereditary test}
Assume $(\class{F},\class{C})$ is a cotorsion pair in an abelian category $\cat{A}$. Consider the statements below.
\begin{enumerate}
\item  $\class{F}$ is closed under taking kernels of epimorphisms between objects of $\class{F}$.
\item $\class{F}$ is syzygy closed, meaning $X \in \class{F}$ whenever we have an exact $0 \xrightarrow{} X \xrightarrow{} P \xrightarrow{} Z \xrightarrow{} 0$ with $P$ projective and $Z \in \class{F}$.
\item  $\class{C}$ is closed under taking cokernels of monomorphisms between objects of $\class{C}$.
\item $\class{C}$ is cosyzygy closed, meaning $Z \in \class{C}$ whenever we have an exact $0 \xrightarrow{} X \xrightarrow{} I \xrightarrow{} Z \xrightarrow{} 0$ with $I$ injective and $X \in \class{C}$.
\item $\Ext^2_{\cat{A}}(F,C) = 0$ whenever $F \in \class{F}$ and $C \in \class{C}$.
\end{enumerate}
Then we have the following implications depending on whether or not $\class{A}$ has enough injectives or projectives.
\begin{itemize}
\item If $\cat{A}$ has enough injectives then we have (3) implies (4) implies (5) implies (1) implies (2).
\item If $\cat{A}$ has enough projective then we have (1) implies (2) implies (5) implies (3) implies (4).
\item (5) implies (1) -- (4) without any assumption on enough injectives or projectives. (Just using the long exact sequence in $\Ext$.)
\end{itemize}
In particular, when $\cat{A}$ is a category with enough projectives and injectives then all of the conditions (1) -- (5) are equivalent.
\end{lemma}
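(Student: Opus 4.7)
The proof is driven by the long exact sequence in $\Ext$ attached to a short exact sequence in $\cat{A}$.

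First I would handle the unconditional step (5) $\Rightarrow$ (1)--(4), together with the tautological (1) $\Rightarrow$ (2) and (3) $\Rightarrow$ (4). Given an exact $0 \to X \to Y \to Z \to 0$ with $Y, Z \in \class{F}$ and an arbitrary $C \in \class{C}$, the fragment $\Ext^1_{\cat{A}}(Y,C) \to \Ext^1_{\cat{A}}(X,C) \to \Ext^2_{\cat{A}}(Z,C)$ has both outer terms zero, the first by the cotorsion pair and the second by (5), so $\Ext^1_{\cat{A}}(X,C) = 0$ and $X \in \class{F}$, giving (1); (3) is the dual argument applied to $\Ext^*_{\cat{A}}(F,-)$, and (2), (4) are the special cases in which the middle term is projective or injective. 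Moreover, (1) $\Rightarrow$ (2) and (3) $\Rightarrow$ (4) are themselves tautological once one observes that every projective object lies in $\class{F}$ (since $\Ext^1_{\cat{A}}(P,-)$ vanishes on all of $\cat{A}$, in particular on $\class{C}$), and dually every injective object lies in $\class{C}$.

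The real content is (2) $\Rightarrow$ (5) when $\cat{A}$ has enough projectives, together with its dual (4) $\Rightarrow$ (5) when $\cat{A}$ has enough injectives. For (2) $\Rightarrow$ (5), I would fix $F \in \class{F}$ and $C \in \class{C}$, choose a short exact sequence $0 \to K \to P \to F \to 0$ with $P$ projective, apply (2) to conclude $K \in \class{F}$, and then read off from $\Ext^1_{\cat{A}}(K,C) \to \Ext^2_{\cat{A}}(F,C) \to \Ext^2_{\cat{A}}(P,C)$ that $\Ext^2_{\cat{A}}(F,C) = 0$, since the outer terms vanish by the cotorsion pair and by the projectivity of $P$, respectively. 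The dual argument, coresolving $C$ on the right by an injective, gives (4) $\Rightarrow$ (5).

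Assembling these pieces yields both chains of implications in the statement and the unconditional equivalence under both hypotheses. The only subtlety, and it is a mild one, is the dimension shift in the converse: the point of needing enough projectives (resp.\ enough injectives) is precisely that one wants to resolve by a single step and thereby promote the vanishing of $\Ext^1$ on the syzygy $K$ into the vanishing of $\Ext^2$ on $F$ required by (5).
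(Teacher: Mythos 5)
Your proof is correct. The paper gives no argument of its own here (it simply cites Garc\'ia-Rozas), and your dimension-shifting argument via the long exact sequence in Yoneda $\Ext$ is exactly the standard one that reference supplies: the unconditional implications (5) $\Rightarrow$ (1)--(4) and the tautological (1) $\Rightarrow$ (2), (3) $\Rightarrow$ (4) (using that projectives always lie in $\class{F}$ and injectives in $\class{C}$), plus the one-step resolution to recover (5) from (2) or (4), which is precisely where enough projectives, respectively enough injectives, is needed.
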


\begin{proof}
See~\cite{garcia-rozas}.
\end{proof}

So suppose $\cat{A}$ has enough injectives and you have a cotorsion pair $(\class{F},\class{C})$. Then from Lemma~\ref{lemma-hereditary test} the cotorsion pair is hereditary whenever it is known to just be coresolving, or even if $\class{C}$ is just cosyzygy closed. However it can't be used to conclude hereditary in the case that you only know it is resolving. But for a complete cotorsion pair, the following is a wonderfully convenient result that appears as Corollary~1.1.13 of~\cite{becker}.

\begin{lemma}[Becker's Lemma]\label{lemma-Beckers lemma}
Let $\cat{A}$ be an abelian category and let $(\class{F},\class{C})$ be a complete cotorsion pair. Then the following are equivalent.
\begin{enumerate}
\item $(\class{F},\class{C})$ is hereditary.
\item $(\class{F},\class{C})$ is resolving.
\item $(\class{F},\class{C})$ is coresolving.
\end{enumerate}

\end{lemma}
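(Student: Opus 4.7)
The implications $(1) \Rightarrow (2)$ and $(1) \Rightarrow (3)$ are immediate from the definitions. The two converses are dual to each other: passing to $\cat{A}^{\mathrm{op}}$ interchanges $\class{F}$ with $\class{C}$, interchanges resolving with coresolving, and interchanges the two halves of completeness, while leaving ``hereditary'' invariant. I would therefore just prove $(2) \Rightarrow (1)$.

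So assume $(\class{F},\class{C})$ is complete and resolving. By the preceding Hereditary Test, the unconditional clause (5)~$\Rightarrow$~(1)--(4) reduces the task to showing $\Ext^2_{\cat{A}}(F,C) = 0$ for all $F \in \class{F}$ and $C \in \class{C}$.

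To see this vanishing, fix such $F,C$ and represent a class of $\Ext^2(F,C)$ by a Yoneda $2$-extension $\xi\colon 0 \to C \to E_1 \to E_0 \to F \to 0$. Set $K = \ker(E_0 \to F)$ and split $\xi$ at $K$ into the constituent short exact sequences $\xi_1\colon 0\to C\to E_1\to K\to 0$ and $\xi_2\colon 0\to K\to E_0\to F\to 0$, so that $[\xi]$ is their Yoneda product. The plan is to apply enough injectives of the cotorsion pair to $K$, obtaining $0 \to K \xrightarrow{f} Q \to F' \to 0$ with $Q \in \class{C}$ and $F' \in \class{F}$. Because $F \in \class{F}$ and $Q \in \class{C}$, the pushforward $f_{*}[\xi_2] \in \Ext^1(F,Q)$ vanishes, and the Yoneda functoriality identity $(f^{*}\eta)\cdot\xi_2 = \eta\cdot(f_{*}\xi_2)$ forces $[\xi]=0$ as soon as $[\xi_1]$ lies in the image of $f^{*}\colon\Ext^1(Q,C)\to\Ext^1(K,C)$.

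The main obstacle is verifying this last lifting. The $\Hom(-,C)$-long exact sequence for $0 \to K \to Q \to F' \to 0$ identifies the obstruction as a Yoneda product landing in $\Ext^2(F',C)$, so one application only trades one $\Ext^2$-vanishing for another and a naive induction is circular. The resolving hypothesis must be used here in tandem with enough projectives: a special precover $0 \to K'' \to G \to F \to 0$ yields $K'' \in \class{F}\cap\class{C}$ by resolving, and pulling $\xi$ back along $G \to F$ combined with iterated push--pull manipulations on the resulting $2$-extension should terminate the bootstrap. Working out the exact combinatorics of this simultaneous push--pull argument is the technical heart of the proof, and is the step where I expect the most difficulty; the full details appear in Becker~\cite{becker}.
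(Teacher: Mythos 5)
Your reductions are fine: duality does reduce the lemma to (2) $\Rightarrow$ (1), and by the unconditional implication (5) $\Rightarrow$ (1)--(4) of the Hereditary Test it suffices to prove $\Ext^2_{\cat{A}}(F,C)=0$ for $F \in \class{F}$, $C \in \class{C}$. But the argument stops exactly where the content lies, and the strategy you sketch for that step does not close. Pushing forward at $K$ along a special preenvelope $0 \to K \xrightarrow{f} Q \to F' \to 0$ leaves, as you say, an obstruction in $\Ext^2_{\cat{A}}(F',C)$; and the remedy you propose -- a special precover $0 \to K'' \to G \to F \to 0$ and pulling $\xi$ back along $G \to F$ -- is the same circle, since $K'' \in \class{F}$ makes $\Ext^2_{\cat{A}}(F,C) \to \Ext^2_{\cat{A}}(G,C)$ injective and you have merely replaced $F$ by another object of $\class{F}$ for which the same vanishing must be proved. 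Iterating these two moves never terminates, and ``the full details appear in Becker'' is a citation, not a proof; note also that the paper itself does not prove this lemma but quotes it as Corollary~1.1.13 of~\cite{becker}, so there is no in-paper argument your sketch could be matching.

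The missing idea is to apply completeness to the \emph{middle} term of $\xi_2$, not to its end. Write $[\xi]=[\xi_1]\cdot[\xi_2]$ with $\xi_1\colon 0 \to C \to E_1 \to K \to 0$ and $\xi_2\colon 0 \to K \to E_0 \to F \to 0$, and choose a special sequence $0 \to C_1 \to F_1 \xrightarrow{\alpha} E_0 \to 0$ with $F_1 \in \class{F}$, $C_1 \in \class{C}$ (enough projectives of the pair applied to $E_0$). Let $K_1 = \ker(F_1 \to E_0 \to F)$. Then $0 \to K_1 \to F_1 \to F \to 0$ is exact with $F_1, F \in \class{F}$, so the resolving hypothesis gives $K_1 \in \class{F}$; moreover $0 \to C_1 \to K_1 \xrightarrow{\beta} K \to 0$ is exact and $(\beta,\alpha,\mathrm{id}_F)$ exhibits $[\xi_2]=\beta_{*}[\omega]$ for $\omega\colon 0 \to K_1 \to F_1 \to F \to 0$. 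Hence, by the very push--pull identity you quoted, $[\xi]=[\xi_1]\cdot\beta_{*}[\omega]=(\beta^{*}[\xi_1])\cdot[\omega]=0$, because $\beta^{*}[\xi_1]\in\Ext^1_{\cat{A}}(K_1,C)=0$. This is a single step, with no bootstrap, and it is essentially Becker's argument; your draft identifies the right tools (resolving plus special precovers plus Yoneda functoriality) but resolves the wrong object and therefore leaves a genuine gap at the technical heart of the proof.
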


\subsection{Hovey's correspondence}

Hovey defines abelian model categories in~\cite{hovey}. He then characterizes them in terms of cotorsion pairs as we now describe. So in fact one could even take the cotorsion pairs given in the correspondence below as the definition of an abelian model category. First, we need the definition of a thick subcategory.

\begin{definition}\label{def-thick}
By a \emph{thick subcategory} of an abelian $\cat{A}$ we mean a class of objects $\class{W}$ which is closed under direct summands and such that if two out of three of the terms in a short exact sequence are in $\class{W}$, then so is the third.
\end{definition}

\begin{proposition}\label{prop-Hovey's theorem}
Let $\class{A}$ be an abelian category with an abelian model structure. Let $\class{Q}$ be the class of cofibrant objects, $\class{R}$ the class of fibrant objects and $\class{W}$ the class of trivial objects. Then $\class{W}$ is a thick subcategory of $\cat{A}$ and both $(\class{Q},\class{W} \cap \class{R})$ and $(\class{Q} \cap \class{W} , \class{R})$ are complete cotorsion pairs in $\cat{A}$. Conversely, given a thick subcategory $\class{W}$ and classes $\class{Q}$ and $\class{R}$ making $(\class{Q},\class{W} \cap \class{R})$ and $(\class{Q} \cap \class{W} , \class{R})$ each complete cotorsion pairs, then there is an abelian model structure on $\cat{A}$ where $\class{Q}$ are the cofibrant objects, $\class{R}$ are the fibrant objects and $\class{W}$ are the trivial objects.
\end{proposition}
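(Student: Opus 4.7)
The plan is to follow Hovey's original argument from~\cite{hovey}, reducing both directions to the interplay between the model category axioms and the Ext-vanishing plus approximation content of cotorsion pairs. For the forward direction, I would begin by unpacking the definition of an abelian model structure: the (trivial) cofibrations are precisely the monomorphisms whose cokernel is (trivially) cofibrant, and dually for (trivial) fibrations. In particular $X$ is cofibrant iff $X \in \class{Q}$, fibrant iff $X \in \class{R}$, and trivial iff $X \in \class{W}$. To show $(\class{Q}, \class{W} \cap \class{R})$ is a cotorsion pair, I would first verify $\Ext^1_{\cat{A}}(\class{Q}, \class{W} \cap \class{R}) = 0$: given a short exact sequence $0 \to A \to Z \to Q \to 0$ with $A \in \class{W} \cap \class{R}$ and $Q \in \class{Q}$, the map $A \to Z$ is a cofibration and $Z \to Q$ is a trivial fibration, so lifting $\mathrm{id}_Q$ through $Z \to Q$ against the cofibration $0 \to Q$ produces a splitting. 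Completeness is supplied by the factorization axiom: factoring $0 \to X$ as a cofibration followed by a trivial fibration gives a short exact sequence $0 \to A \to Q' \to X \to 0$ with $Q' \in \class{Q}$ and $A \in \class{W} \cap \class{R}$, while factoring $X \to 0$ dually yields enough injectives. The analogous argument using (trivial cofibration)$\circ$(fibration) factorizations gives the second cotorsion pair $(\class{Q} \cap \class{W}, \class{R})$. The remaining axioms (2) and (3) of Definition~\ref{def-cotorsion pair} then follow formally from the Ext-vanishing and completeness, as in~\cite[Proposition~7.1.7]{enochs-jenda-book}.

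For thickness of $\class{W}$, closure under direct summands follows from the closure of weak equivalences under retracts: $X$ a retract of $Y \in \class{W}$ presents $0 \to X$ as a retract of the weak equivalence $0 \to Y$. For the two-out-of-three property on a short exact sequence $0 \to A \to B \to C \to 0$, the key observation is that in any abelian model structure, the quotient $B \to C$ is a weak equivalence precisely when $A \in \class{W}$, and dually the inclusion $A \to B$ is a weak equivalence precisely when $C \in \class{W}$. These characterizations are obtained by factoring the relevant map and comparing with zero maps via the two-out-of-three property of weak equivalences, after which thickness of $\class{W}$ drops out immediately.

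For the converse, I would define cofibrations as monomorphisms with cokernel in $\class{Q}$, trivial cofibrations as monomorphisms with cokernel in $\class{Q} \cap \class{W}$, fibrations as epimorphisms with kernel in $\class{R}$, trivial fibrations as epimorphisms with kernel in $\class{W} \cap \class{R}$, and weak equivalences as the maps admitting a factorization as a trivial cofibration followed by a trivial fibration. The factorization axioms come from completeness of the two cotorsion pairs, applied via pushout/pullback constructions to factor an arbitrary morphism $f \colon X \to Y$; the lifting axioms translate a lifting problem for a (trivial) cofibration against a (trivial) fibration into the extension problem killed by the relevant Ext-vanishing; the retract axiom is routine because all the relevant classes are retract-closed. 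The two-out-of-three property of weak equivalences and the identification of $\class{Q}$, $\class{R}$, $\class{W}$ as the classes of cofibrant, fibrant, and trivial objects in the resulting model structure are precisely where thickness of $\class{W}$ gets used.

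The main obstacle, in both directions, is the bookkeeping around the class $\class{W}$ and the two-out-of-three property: one must pin down that weak equivalences (defined via trivial factorizations) are exactly the maps with the expected behavior under short exact sequences, and that this class is stable under the extensions implicit in Quillen's axioms. Both reductions rest on the same trick of decomposing a morphism through its cotorsion pair approximations and invoking thickness to move trivial summands through the resulting diagrams.
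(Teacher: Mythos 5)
The paper offers no proof of this proposition at all: it is stated as Hovey's correspondence and simply cited from \cite{hovey}, which is exactly the argument you propose to follow, and your outline correctly reproduces its main steps (splitting extensions by a lifting argument to get Ext-vanishing, completeness from the factorization axiom, thickness via the characterization of weak equivalences, and, for the converse, pullback/pushout factorizations from completeness, lifting via Ext-vanishing, and the thickness-driven two-out-of-three check). The one imprecision worth flagging is that the identification of trivial cofibrations (resp.\ trivial fibrations) as the monomorphisms with cokernel in $\class{Q} \cap \class{W}$ (resp.\ epimorphisms with kernel in $\class{W} \cap \class{R}$) is not part of the definition of an abelian model structure but is itself a lemma established in \cite{hovey} (compare the remarks and the citation of~\cite[Lemma~5.8]{hovey} immediately following the proposition in this paper), and your Ext-vanishing and completeness arguments in the forward direction silently rely on it.
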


We point out that the abelian model structure on $\cat{A}$ is then completely determined by the classes $\class{Q}$, $\class{W}$ and $\class{R}$. Indeed the cofibrations (resp. trivial cofibrations) are the monomorphisms with cokernel in $\class{Q}$ (resp. $\class{Q} \cap \class{W}$) and the fibrations are the epimorphisms with kernel in $\class{R}$ (resp. $\class{W} \cap \class{R}$). The weak equivalences are the maps which factor as a trivial cofibration followed by a trivial fibration. However, the description of the weak equivalences given by the Lemma below is sometimes more convenient. It follows from~\cite[Lemma~5.8]{hovey} and an application of the two out of three axiom.

\begin{lemma}\label{lemma-characterization of weak equivalences in exact categories}
Say $\cat{A}$ is an abelian category with an abelian model structure. Let $\class{W}$ denote the class of trivial objects. Then a map $f$ is a weak equivalence if and only if it factors as a monomorphism with cokernel in $\class{W}$ followed by an epimorphism with kernel in $\class{W}$.
\end{lemma}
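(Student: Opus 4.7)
The plan is to verify both directions, with the forward direction being essentially a repackaging of the definition of weak equivalence combined with Hovey's description of trivial cofibrations and trivial fibrations from Proposition~\ref{prop-Hovey's theorem}. Any weak equivalence factors by axiom as a trivial cofibration followed by a trivial fibration; trivial cofibrations are monomorphisms with cokernel in $\class{Q} \cap \class{W} \subseteq \class{W}$, and trivial fibrations are epimorphisms with kernel in $\class{R} \cap \class{W} \subseteq \class{W}$, so the desired factorization exists immediately.

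For the converse, suppose $f = p \circ i$ with $i \colon X \to Z$ a monomorphism having $\cok(i) \in \class{W}$ and $p \colon Z \to Y$ an epimorphism having $\ker(p) \in \class{W}$. By two-out-of-three for weak equivalences it suffices to show that each of $i$ and $p$ is a weak equivalence; I would treat $i$ since the argument for $p$ is dual. Applying the factorization axiom, write $i = Ri \circ Li$ where $Li \colon X \to E$ is a trivial cofibration and $Ri \colon E \to Z$ is a fibration. Then $Li$ is mono with $\cok(Li) \in \class{Q} \cap \class{W}$ and $Ri$ is epi with $\ker(Ri) \in \class{R}$. The whole game is to show $\ker(Ri) \in \class{W}$, which will promote $Ri$ to a trivial fibration and hence exhibit $i$ as a weak equivalence.

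The key diagram arises from composing $Ri$ with the cokernel projection $\pi \colon Z \to \cok(i)$. One checks that $\pi \circ Ri \colon E \to \cok(i)$ is surjective with kernel $Li(X) + \ker(Ri)$, and that the sum is direct: an element of $Li(X) \cap \ker(Ri)$ has the form $Li(x)$ with $i(x) = Ri(Li(x)) = 0$, forcing $x = 0$ by injectivity of $i$. Passing to the quotient $E/Li(X) = \cok(Li)$ then yields a short exact sequence
$$0 \to \ker(Ri) \to \cok(Li) \to \cok(i) \to 0.$$
Since both outer terms lie in $\class{W}$, thickness of $\class{W}$ forces $\ker(Ri) \in \class{W}$; combined with $\ker(Ri) \in \class{R}$ this makes $Ri$ a trivial fibration, so $i$ is a weak equivalence. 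The dual factorization argument (factor $p$ as a cofibration followed by a trivial fibration, then use that $\cok(p) = 0$ and a dual subobject computation) handles $p$, and the composition axiom for weak equivalences closes the argument. The only moderately delicate point is the construction and verification of that short exact sequence; everything else is axiomatic bookkeeping and the thickness of $\class{W}$.
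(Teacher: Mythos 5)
Your argument is correct, and it is more self-contained than what the paper does: the paper disposes of this lemma in one line, quoting Hovey's Lemma~5.8 (a monomorphism with trivial cokernel and an epimorphism with trivial kernel are weak equivalences) together with the two-out-of-three axiom, whereas you re-derive that input directly from the factorization axiom and the thickness of $\class{W}$. Your key short exact sequence $0 \to \ker(Ri) \to \cok(Li) \to \cok(i) \to 0$ is correctly constructed (the kernel of $\pi \circ Ri$ is indeed $Li(X) + \ker(Ri)$ with $Li(X) \cap \ker(Ri) = 0$, so the quotient computation goes through), and the dual sequence $0 \to \ker(p) \to \ker(Rp) \to \cok(Lp) \to 0$ handles the epimorphism in the same way; so what you have written is essentially a proof of Hovey's lemma embedded in the proof of this one. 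The only blemish is the sentence ``Since both outer terms lie in $\class{W}$'': the outer terms of your sequence are $\ker(Ri)$ and $\cok(i)$, and $\ker(Ri)$ is precisely what you are trying to show lies in $\class{W}$, so as stated the justification is circular. What you mean, and what the sequence actually gives, is that the middle and right terms $\cok(Li) \in \class{Q} \cap \class{W}$ and $\cok(i) \in \class{W}$ are trivial, whence thickness (two-out-of-three on short exact sequences) puts $\ker(Ri)$ in $\class{W}$. With that wording repaired the proof is complete; the elementwise manipulations are justified in an arbitrary abelian category by the usual device of members or a full embedding.
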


\begin{remark}\label{remark-bicompleteness}
We note that one normally assumes the category $\cat{A}$ is bicomplete for it to qualify as a model category. However, as explained in~\cite{gillespie-exact model structures} an abelian category with an abelian model structure already has enough colimits and limits to get the basic first results of homotopy theory. So we can always discuss model \emph{structures} on abelian categories $\cat{A}$, but we will deliberately reserve the term model \emph{category} when $\cat{A}$ is known to be bicomplete.

\end{remark}

\section{Injective and weak injective Hovey triples}\label{sec-injective and weak injective hovey triples}

It was reading~\cite{becker} that led the author to denote abelian model structures as triples and this eventually led to the definition of a weak injective model structure as below. Throughout this entire section assume $\cat{A}$ is an abelian category.

\subsection{Hovey triples} We start with a convenient definition.

\begin{definition}\label{def-hovey triple}
Let $\class{Q}$, $\class{W}$, and $\class{R}$ be three classes in $\cat{A}$ as in Hovey's correspondence. Then we call $(\class{Q},\class{W},\class{R})$ a \emph{Hovey triple}. By a \emph{hereditary} Hovey triple we mean that the two corresponding cotorsion pairs $(\class{Q} , \class{W} \cap \class{R})$ and $(\class{Q} \cap \class{W} , \class{R})$ are each hereditary.
\end{definition}

\begin{notation}
Due to Hovey's one-to-one correspondence between Hovey triples in $\cat{A}$ and abelian model structures on $\cat{A}$ we often will not distinguish between the Hovey triple and the actual model structure. For example, we may say $\class{M} = (\class{Q},\class{W},\class{R})$ is an abelian model structure and understand this to mean the model structure associated to the Hovey triple $(\class{Q},\class{W},\class{R})$. On the other hand, we may say that an abelian model structure is \emph{hereditary} and by this we mean that the Hovey triple is hereditary.
\end{notation}

The following is an easy but useful fact about Hovey triples.

\begin{proposition}[Characterization of trivial objects]\label{prop-characterization of trivial objects}
Suppose $(\class{Q}, \class{W}, \class{R})$ is a Hovey triple in $\class{A}$.
Then the thick class $\class{W}$ is characterized two ways as follows:
  \begin{align*}
   \class{W}  &= \{\, A \in \class{A} \, | \, \exists \, \text{s.e.s.} \, 0 \arr A \arr W_2 \arr W_1 \arr 0 \, \text{ with} \, W_2 \in \class{W} \cap \class{R} \, , W_1 \in \class{Q} \cap \class{W} \,\} \\
           &= \{\, A \in \class{A} \, | \, \exists \, \text{s.e.s.} \, 0 \arr W_2 \arr W_1 \arr A \arr 0 \, \text{ with} \,  W_2 \in \class{W} \cap \class{R} \, , W_1 \in \class{Q} \cap \class{W} \,\}.
          \end{align*}  Consequently, whenever $(\class{Q}, \class{V}, \class{R})$ is a Hovey triple, then necessarily $\class{V} = \class{W}$.

\end{proposition}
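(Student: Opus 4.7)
The plan is to prove the two set equalities by establishing each containment separately, and then deduce the uniqueness clause as an easy corollary. The reverse containments in both characterizations are immediate from thickness of $\class{W}$: if the outer terms $W_2$ and $W_1$ of a short exact sequence both lie in $\class{W}$, then by the two-out-of-three property so does the middle term $A$. So the real content is the forward containments.

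For the forward containment in the first characterization, given $A \in \class{W}$ I would apply that the cotorsion pair $(\class{Q} \cap \class{W}, \class{R})$ is complete, hence has enough injectives. This yields a short exact sequence $0 \to A \to R \to W_1 \to 0$ with $R \in \class{R}$ and $W_1 \in \class{Q} \cap \class{W}$. Since $\class{W}$ is thick and already contains $A$ and $W_1$, the two-out-of-three property forces $R \in \class{W}$, so $R \in \class{W} \cap \class{R}$ and this is the sequence we want. The second characterization is proved by the dual argument using that the cotorsion pair $(\class{Q}, \class{W} \cap \class{R})$ has enough projectives: produce $0 \to W_2 \to Q \to A \to 0$ with $W_2 \in \class{W} \cap \class{R}$ and $Q \in \class{Q}$, and then use thickness to conclude $Q \in \class{Q} \cap \class{W}$.

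For the consequence, suppose $(\class{Q}, \class{V}, \class{R})$ is another Hovey triple with the same cofibrant and fibrant classes. Then $(\class{Q}, \class{V} \cap \class{R})$ and $(\class{Q}, \class{W} \cap \class{R})$ are cotorsion pairs with the same left class $\class{Q}$, so their right classes (determined by $\class{Q}^{\perp}$) agree: $\class{V} \cap \class{R} = \class{W} \cap \class{R}$. Symmetrically, $\class{Q} \cap \class{V} = \class{Q} \cap \class{W}$ from the other cotorsion pair. Applying the first characterization to both triples then describes $\class{V}$ and $\class{W}$ by the same set-theoretic condition, giving $\class{V} = \class{W}$. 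There is no real obstacle here; the main thing to keep straight is which of the two cotorsion pairs supplies which forward resolution, and the uniqueness clause is automatic once one observes that the characterizations depend on $\class{W}$ only through the two intersections $\class{W} \cap \class{R}$ and $\class{Q} \cap \class{W}$.
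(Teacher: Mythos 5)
Your proof is correct and follows essentially the same route as the paper: reverse containments from thickness, forward containments from completeness of the two associated cotorsion pairs plus the two-out-of-three property, and uniqueness because the characterizations only involve $\class{W} \cap \class{R} = \rightperp{\class{Q}}$ and $\class{Q} \cap \class{W} = \leftperp{\class{R}}$. The only (harmless) difference is that you draw each short exact sequence from the opposite cotorsion pair than the paper does, placing the middle term rather than the end term in $\class{W}$ via thickness.
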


\begin{proof}
($\supseteq$) It is clear that each class described is contained in $\class{W}$ since $\class{W}$ is handed to us thick.
($\subseteq$) Let $W \in \class{W}$. Then apply enough injectives of $(\class{Q}, \class{W} \cap \class{R})$ to get a short exact sequence $0 \xrightarrow{} W \xrightarrow{} W_2 \xrightarrow{} W_1 \xrightarrow{} 0$ where $W_2 \in \class{W} \cap \class{R}$ and $W_1 \in \class{Q}$. But indeed $W_1 \in \class{Q} \cap \class{W}$ since $\class{W}$ is thick. So $W$ is in the top class described. On the other hand, $W$ is also in the bottom class described by a similar argument using enough projectives of the cotorsion pair $(\class{Q} \cap \class{W}, \class{R})$.

Now if $(\class{Q}, \class{V}, \class{R})$ is any other Hovey triple we must have $\class{V} \cap \class{R} = \class{W} \cap \class{R}$ (since each equal $\rightperp{\class{Q}}$) and $\class{Q} \cap \class{V} = \class{Q} \cap \class{W}$ (since each equal $\leftperp{\class{R}}$). So by what we just proved in the last paragraph it immediately follows that $\class{V} = \class{W}$.

\end{proof}

\subsection{Injective and weak injective Hovey triples}

\begin{definition}\label{def-weak injective Hovey triples}
We call a Hovey triple  $(\class{Q}, \class{W}, \class{R})$ \emph{injective} if $\class{Q} = \class{A}$. That is, if every object of $\class{A}$ is cofibrant.
By a \emph{weak injective} Hovey triple we mean a Hovey triple $(\class{Q},\class{W},\class{R})$ for which $\class{Q} \cap \class{W} \cap \class{R}$ is exactly the class of injective objects in $\class{A}$. We have the obvious dual notions of \emph{projective} Hovey triples and \emph{weak projective} Hovey triples.

We apply these same phrases to the actual model structures induced as well. So we speak of \emph{injective}, \emph{weak injective}, \emph{projective}, and \emph{weak projective} model structures on $\cat{A}$.
 \end{definition}

\begin{remark}
It is immediate that an injective Hovey triple is a weak injective Hovey triple. Also an injective Hovey triple is automatically hereditary by Becker's Lemma~\ref{lemma-Beckers lemma} because $\class{W}$ is thick.

\end{remark}

Note that whenever $\class{M} = (\class{Q},\class{W},\class{R})$ is an injective Hovey triple then the class $\class{Q}$ is redundant and need not be mentioned. Indeed as long as $\class{A}$ has enough injectives, then an injective model structure on $\cat{A}$ is equivalent to a single complete cotorsion pair $\class{M} = (\class{W},\class{R})$ where $\class{W}$ is thick and $\class{W} \cap \class{R}$ is the class of injective objects. We make this precise in the next definition.

\begin{definition}\label{def-injective cotorsion pairs}
Let $\cat{A}$ be an abelian model category with enough injectives. We call a complete cotorsion pair $(\class{W},\class{F})$ an \emph{injective cotorsion pair} if $\class{W}$ is thick and $\class{W} \cap \class{F}$ coincides with the class of injective objects. In this case $\class{M} = (\class{W},\class{F})$ determines an injective model structure on $\cat{A}$ with $\class{F}$ the class of fibrant objects and $\class{W}$ the trivial objects. On the other hand, if $\cat{A}$ has enough projectives we define the dual notion of a \emph{projective cotorsion pair} $(\class{C},\class{W})$.

\end{definition}

\begin{remark}
We emphasize that we don't use the term \emph{injective cotorsion pair} unless $\cat{A}$ has enough injectives and we don't use the phrase \emph{projective cotorsion pair} unless $\cat{A}$ has enough projectives. The point is that a projective cotorsion pair IS a model structure on $\cat{A}$ but you don't have a model structure if $\cat{A}$ lacks projectives. The definitions are useful though because now all of the essential information needed for such a model structure is packed into one simple idea - a single cotorsion pair.

\end{remark}

The definition of an injective cotorsion pair is stronger that what we need. This isn't just interesting but it is relevant for stating a converse to Becker's localization theorem~(Proposition~\ref{prop-Beckers theorem}). The author learned the Lemma below from Henrik Holm.

\begin{lemma}\label{lem-Henriks lemma}
Assume $(\class{W},\class{F})$ is an hereditary cotorsion pair in $\class{A}$. Suppose that for any $F \in \class{F}$ we can find a short exact sequence $0 \xrightarrow{} F' \xrightarrow{} I \xrightarrow{} F \xrightarrow{} 0$ where $I$ is injective and $F' \in \class{F}$. Then $\class{W}$ is thick.
\end{lemma}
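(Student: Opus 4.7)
My plan is to verify thickness of $\class{W}$ in two pieces: closure under direct summands, which is immediate from $\class{W} = \leftperp{\class{F}}$ and additivity of $\Ext^1$; and the two-out-of-three condition for a short exact sequence $0 \arr A \arr B \arr C \arr 0$. Two of the three two-out-of-three cases will be essentially free. The case where $A, C \in \class{W}$ and we want $B \in \class{W}$ is standard for any class of the form $\leftperp{\class{F}}$, via the long exact sequence in $\Ext$. The case where $B, C \in \class{W}$ and we want $A \in \class{W}$ is exactly the statement that $\class{W}$ is resolving, which is already part of the hereditary hypothesis on the cotorsion pair.

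The main case, and the only place I expect to use the special resolution hypothesis, is: $A, B \in \class{W}$ implies $C \in \class{W}$. Fix $F \in \class{F}$; the goal is to show $\Ext^1(C, F) = 0$. My plan is to apply the hypothesis to $F$ to obtain a short exact sequence $0 \arr F' \arr I \arr F \arr 0$ with $I$ injective and $F' \in \class{F}$. Since $I$ is injective, $\Ext^i(X, I) = 0$ for all $i \geq 1$ and any object $X$, and so the long exact sequence in $\Ext(X, -)$ collapses to a dimension-shifting isomorphism $\Ext^1(X, F) \cong \Ext^2(X, F')$ valid for every $X$. Now applying $\Ext(-, F')$ to $0 \arr A \arr B \arr C \arr 0$ gives the fragment $\Ext^1(A, F') \arr \Ext^2(C, F') \arr \Ext^2(B, F')$. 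The left term vanishes because $A \in \class{W}$ and $F' \in \class{F}$; under the dimension shift the right term is $\Ext^1(B, F)$, which vanishes because $B \in \class{W}$. Hence $\Ext^2(C, F') = 0$, and one more use of the dimension shift yields $\Ext^1(C, F) = 0$, so $C \in \class{W}$.

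The only real obstacle I foresee is recognizing the role of the special resolution. It should be read as a relative ``enough injectives with kernels in $\class{F}$'' condition, and its sole function in the argument is to manufacture the single dimension-shifting isomorphism $\Ext^1(-, F) \cong \Ext^2(-, F')$. With that isomorphism in hand the rest is mechanical bookkeeping with long exact sequences, combined with the resolving half of hereditary for the kernel case.
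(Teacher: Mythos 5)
Your proof is correct and takes essentially the same route as the paper: reduce to the cokernel case $0 \xrightarrow{} A \xrightarrow{} B \xrightarrow{} C \xrightarrow{} 0$ with $A,B \in \class{W}$, use the hypothesized sequence $0 \xrightarrow{} F' \xrightarrow{} I \xrightarrow{} F \xrightarrow{} 0$ to dimension-shift $\Ext^1(-,F) \cong \Ext^2(-,F')$, and kill $\Ext^2(C,F')$ with the long exact sequence. The only difference is bookkeeping: the paper first establishes $\Ext^{\geq 2}(X,F)=0$ for all $F \in \class{F}$ (leaning on the hereditary hypothesis), whereas you shift twice so that this step needs only the $\Ext^1$-orthogonality of the pair, reserving hereditarity for the resolving (kernel) case.
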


\begin{proof}
The assumptions imply that $\class{W}$ is already closed under retracts and extensions and is resolving. So it remains to show that if
  \begin{displaymath}
    \tag{\text{$*$}}
0 \xrightarrow{}   V  \xrightarrow{} W \xrightarrow{}  X \xrightarrow{} 0
  \end{displaymath}
  is an exact sequence with $V, W \in \mathcal{W}$
  then $X \in \mathcal{W}$ too. Applying $\Hom_{\cat{A}}(-,F)$ to
  ($*$), for any $F \in \mathcal{F}$, it follows that $\Ext^{\geqslant
    2}_{\cat{A}}(X,F)=0$. To see that $\Ext^1_{\cat{A}}(X,F)=0$ for every $F \in
  \mathcal{F}$, pick a short exact sequence $0 \to F' \to I \to F \to
  0$, where $I$ is injective and $F' \in \mathcal{F}$. Applying
  $\Hom_{\cat{A}}(X,-)$ to this sequence gives $\Ext^1_{\cat{A}}(X,F) \cong
  \Ext^2_{\cat{A}}(F',X)$, which is zero by what we just proved.
\end{proof}

\begin{proposition}[Characterizations of injective cotorsion pairs]\label{prop-characterizing injective cotorsion pairs}
Suppose $(\mathcal{W}, \class{F})$ is a complete cotorsion pair in an abelian category $\mathcal{A}$ with enough injectives. Then each of the following statements are equivalent:
\begin{enumerate}
\item $(\mathcal{W}, \class{F})$ is an injective cotorsion pair.
\item $(\mathcal{W}, \class{F})$ is hereditary and $\class{W} \cap \class{F}$ equals the class of injective objects.
\item $\mathcal{W}$ is thick and contains the injective objects.
\end{enumerate}
\end{proposition}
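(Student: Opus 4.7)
The plan is to prove the cycle of implications $(1) \Rightarrow (2) \Rightarrow (3) \Rightarrow (1)$. The implication $(1) \Rightarrow (2)$ is immediate: thickness of $\class{W}$ implies in particular that $\class{W}$ is closed under kernels of epimorphisms between its own objects, so the cotorsion pair $(\class{W}, \class{F})$ is resolving. Becker's Lemma (Lemma~\ref{lemma-Beckers lemma}) then upgrades resolving to hereditary, and the condition on $\class{W} \cap \class{F}$ passes through unchanged.

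The implication $(2) \Rightarrow (3)$ is where the work lies. Since $\class{W} \cap \class{F}$ equals the class of injectives, certainly $\class{W}$ contains the injectives. The substance is showing $\class{W}$ is thick. Closure under retracts is automatic for the left class of any cotorsion pair; closure under extensions is likewise automatic; and closure under kernels of epimorphisms between $\class{W}$-objects is the resolving property, which follows from the hereditary hypothesis. The main obstacle is the remaining direction of the two-out-of-three property: given a short exact sequence $0 \to V \to W \to X \to 0$ with $V, W \in \class{W}$, I must show $X \in \class{W}$. My plan is to apply the ``enough injectives'' half of completeness of the cotorsion pair to $V$, producing a short exact sequence $0 \to V \to F_V \to W_V \to 0$ with $F_V \in \class{F}$ and $W_V \in \class{W}$. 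Since $\class{W}$ is extension-closed and both $V, W_V \in \class{W}$, we deduce $F_V \in \class{W} \cap \class{F}$, so $F_V$ is injective. Now form the pushout of $V \hookrightarrow W$ along $V \hookrightarrow F_V$ to obtain an object $Z$ sitting in two short exact sequences:
\[
0 \to F_V \to Z \to X \to 0 \qquad \text{and} \qquad 0 \to W \to Z \to W_V \to 0.
\]
The first sequence splits since $F_V$ is injective, so $X$ is a direct summand of $Z$. The second sequence exhibits $Z$ as an extension of two objects of $\class{W}$, so $Z \in \class{W}$. By closure under retracts, $X \in \class{W}$.

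For $(3) \Rightarrow (1)$, only the equality $\class{W} \cap \class{F} = \{\text{injectives}\}$ remains. The inclusion $\supseteq$ combines the standing fact that every injective lies in the right class of any cotorsion pair with the assumption that $\class{W}$ contains the injectives. For the reverse inclusion, take $X \in \class{W} \cap \class{F}$ and embed it in an injective: $0 \to X \to I \to Y \to 0$. Both $X$ and $I$ lie in $\class{W}$ (the latter by assumption), so thickness of $\class{W}$ forces $Y \in \class{W}$. Since $X \in \class{F} = \class{W}^\perp$, we have $\Ext^1(Y, X) = 0$, so the sequence splits and $X$ is a direct summand of the injective $I$, hence injective itself. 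Thus the only nontrivial step in the whole equivalence is the pushout construction in $(2) \Rightarrow (3)$, which neatly sidesteps any need to appeal to Henrik's Lemma~\ref{lem-Henriks lemma}.
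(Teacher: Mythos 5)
Your proof is correct, and for two of the three implications it matches the paper: $(1)\Rightarrow(2)$ is Becker's Lemma exactly as in the paper, and your $(3)\Rightarrow(1)$ (embed $X\in\class{W}\cap\class{F}$ into an injective, use thickness to put the quotient in $\class{W}$, and split) is the paper's argument verbatim. Where you genuinely diverge is $(2)\Rightarrow(3)$. The paper verifies the hypothesis of Lemma~\ref{lem-Henriks lemma} --- for $F\in\class{F}$ use enough projectives of the pair to get $0\to F'\to W\to F\to 0$ and observe $W\in\class{W}\cap\class{F}$ is injective --- and then lets that lemma close $\class{W}$ under cokernels via an $\Ext$ long-exact-sequence and dimension-shifting computation. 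You instead handle the cokernel case directly: take a special $\class{F}$-preenvelope $0\to V\to F_V\to W_V\to 0$ of the subobject, note $F_V\in\class{W}\cap\class{F}$ is injective by hypothesis (2), and push out $V\to W$ along $V\to F_V$ to get $Z$ with $0\to F_V\to Z\to X\to 0$ (split, so $X$ is a retract of $Z$) and $0\to W\to Z\to W_V\to 0$ (so $Z\in\class{W}$ by extension closure); both exact sequences are the standard facts about pushouts of monomorphisms in an abelian category, so the step is sound. Your route is more elementary --- no higher $\Ext$ groups at all --- and it isolates the hypotheses cleanly: hereditariness is used only for the kernel (resolving) direction, while the cokernel direction needs only completeness, closure under extensions and retracts, and $\class{W}\cap\class{F}$ being the injectives. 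What the paper's route buys is the reusable Lemma~\ref{lem-Henriks lemma}, stated without completeness of the pair, which is invoked again later (for instance in the proof of Theorem~\ref{them-Gorenstein injectives are at top of lattice}), whereas your pushout argument is self-contained but local to this proposition.
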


\begin{proof}
First, (1) implies (2) by definition and Becker's Lemma~\ref{lemma-Beckers lemma}. For (2) implies (3) note that for any $F \in \class{F}$ using that the cotorsion pair $(\mathcal{W}, \class{F})$ has enough projectives we can write $0 \xrightarrow{} F' \xrightarrow{} W \xrightarrow{} F \xrightarrow{} 0$ where $W \in \class{W}$ and $F' \in \class{F}$. Then since $\class{F}$ is closed under extensions we have $W \in \class{F}$. So by hypothesis we have $W$ is injective. Therefore (2) implies (3) follows from Lemma~\ref{lem-Henriks lemma}.

For (3) implies (1) first note that the hypothesis makes clear that the injective objects are in $\mathcal{W} \cap \class{F}$. So we just wish to show that everything in $\class{W} \cap \class{F}$ is injective. So suppose $X \in \class{W} \cap
\class{F}$. Then using that $\cat{A}$ has enough injectives find a short exact sequence  $0 \xrightarrow{} X \xrightarrow{} I \xrightarrow{} I/X \xrightarrow{} 0$ where $I$ is injective. By hypothesis $I \in \class{W}$ and also
$\class{W}$ is assumed to be thick, which means $I/X \in
\class{W}$. But now since $(\class{W},\class{F})$ is a cotorsion pair
the exact sequence splits. Therefore $X$ is a direct summand of
$I$, proving $X \in \class{I}$.
\end{proof}

By duality we have the following as well.

\begin{proposition}[Characterizations of projective cotorsion pairs]\label{prop-characterizing projective cotorsion pairs}
Suppose $(\mathcal{C}, \class{W})$ is a complete cotorsion pair in an abelian category $\mathcal{A}$ with enough projectives. Then each of the following statements are equivalent:
\begin{enumerate}
\item $(\mathcal{C}, \class{W})$ is a projective cotorsion pair.
\item $(\mathcal{C}, \class{W})$ is hereditary and $\class{C} \cap \class{W}$ equals the class of projective objects.
\item $\mathcal{W}$ is thick and contains the projective objects.
\end{enumerate}
\end{proposition}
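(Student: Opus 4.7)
The plan is to deduce this result by formal duality from Proposition~\ref{prop-characterizing injective cotorsion pairs}. The key observation is that a projective cotorsion pair $(\class{C},\class{W})$ in $\cat{A}$ corresponds to an injective cotorsion pair $(\class{W}^{op},\class{C}^{op})$ in the opposite category $\cat{A}^{op}$: an abelian category has enough injectives iff its opposite has enough projectives; Ext groups satisfy $\Ext^1_{\cat{A}}(C,W)\cong\Ext^1_{\cat{A}^{op}}(W,C)$, so the cotorsion-pair axioms are preserved under the swap; thickness of $\class{W}$ is self-dual; and the projectives of $\cat{A}$ are precisely the injectives of $\cat{A}^{op}$. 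Each of (1), (2), (3) in the projective statement thus translates verbatim into the corresponding clause of the injective statement applied to $\cat{A}^{op}$, and their equivalence is immediate from the previous proposition.

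If one prefers a self-contained argument inside $\cat{A}$, the three implications can be mimicked step by step. The implication (1) $\Rightarrow$ (2) is immediate from the definition together with Becker's Lemma~\ref{lemma-Beckers lemma}. For (2) $\Rightarrow$ (3), given $C \in \class{C}$, one uses that the cotorsion pair has enough injectives to produce an exact sequence $0 \to C \to W \to C' \to 0$ with $W \in \class{W}$ and $C' \in \class{C}$; since $\class{C}$ is closed under extensions, $W \in \class{C}\cap\class{W}$ is projective by hypothesis. This is exactly the input required to invoke the (dual) of Lemma~\ref{lem-Henriks lemma} to conclude that $\class{W}$ is thick. For (3) $\Rightarrow$ (1), one takes $X \in \class{C}\cap\class{W}$, produces a short exact sequence $0 \to X' \to P \to X \to 0$ with $P$ projective, notes that $P \in \class{W}$ by hypothesis and hence $X' \in \class{W}$ by thickness, and observes that the sequence must split since $X \in \class{C}$ and $X' \in \class{W}$. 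Thus $X$ is a summand of the projective $P$, hence projective.

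The only potential obstacle is the dual of Lemma~\ref{lem-Henriks lemma}, which asserts that for a hereditary cotorsion pair $(\class{C},\class{W})$, if every $C \in \class{C}$ admits a short exact sequence $0 \to C \to P \to C' \to 0$ with $P$ projective and $C' \in \class{C}$, then $\class{W}$ is thick. This dualises transparently: closure of $\class{W}$ under retracts and extensions, and coresolving-ness, are already available, and the Ext-shift argument $\Ext^1_{\cat{A}}(V,W) \cong \Ext^2_{\cat{A}}(V,C) = 0$ (using the given sequence and the vanishing of $\Ext^{\geq 2}$ obtained from applying $\Hom_{\cat{A}}(C,-)$ to the two-out-of-three sequence) works symmetrically. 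With that lemma in hand, the projective version is proved by a straightforward parallel of the injective case.
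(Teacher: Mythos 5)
Your proposal is correct and follows the paper's own route: the paper proves the injective version in detail and then obtains this proposition simply ``by duality,'' which is exactly your primary argument (and your spelled-out version inside $\cat{A}$, including the dual of Lemma~\ref{lem-Henriks lemma} with the shift $\Ext^1_{\cat{A}}(C,X) \cong \Ext^2_{\cat{A}}(C',X)$ coming from $0 \to C \to P \to C' \to 0$ with $P$ projective, is the faithful dualization the paper implicitly relies on). The only cosmetic point is that the displayed Ext-shift in your last paragraph has its objects mislabelled, but the intended argument is clear and correct.
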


For a general Grothendieck category $\cat{A}$ it becomes important to know whether or not the left side of a cotorsion pair contains a set of generators for $\cat{A}$. We point out the following property of injective cotorsion pairs which could be of use in this setting. It won't be used in this paper however.

\begin{proposition}\label{prop-finite injective and projective dimension objects are trivial}
Let $\cat{A}$ be an abelian category with enough injectives. If $(\class{W}, \class{F})$ is any injective cotorsion pair, then $\class{W}$ contains all objects of finite injective dimension. If $\class{A}$ also has enough projectives then $\class{W}$ also contains all objects of finite projective dimension.

We have the dual for projective cotorsion pairs $(\class{C},\class{W})$.

\end{proposition}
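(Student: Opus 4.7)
My plan is to prove both statements by induction on the relevant dimension, using the thickness of $\class{W}$ together with the fact (from Proposition~\ref{prop-characterizing injective cotorsion pairs}(3)) that $\class{W}$ contains the injective objects. The finite injective dimension case is essentially immediate; the finite projective dimension case requires one extra observation to handle the base case, namely that projective objects lie in $\class{W}$.

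For the first statement, suppose $M$ has injective dimension $n$. I proceed by induction on $n$. If $n=0$ then $M$ is injective and hence in $\class{W}$ by Proposition~\ref{prop-characterizing injective cotorsion pairs}. If $n\geq 1$, embed $M$ in an injective $I$ to get a short exact sequence
\[
0 \to M \to I \to M' \to 0,
\]
where $M'$ has injective dimension $n-1$. By induction $M' \in \class{W}$, and $I \in \class{W}$ since injectives lie in $\class{W}$. Thickness of $\class{W}$ then forces $M \in \class{W}$.

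For the second statement, first I would observe that every projective object $P$ lies in $\class{W}$. Using that the cotorsion pair $(\class{W},\class{F})$ has enough projectives, pick a short exact sequence $0 \to F' \to W \to P \to 0$ with $W \in \class{W}$ and $F' \in \class{F}$. Since $P$ is projective, the sequence splits, so $P$ is a direct summand of $W$; thickness (which includes closure under direct summands) yields $P \in \class{W}$. Now induct on projective dimension exactly as before: if $M$ has projective dimension $n \geq 1$, take $0 \to M' \to P \to M \to 0$ with $P$ projective and $M'$ of projective dimension $n-1$, and conclude $M \in \class{W}$ by thickness.

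The dual statement for projective cotorsion pairs is proved in exactly the same way, interchanging the roles of projectives and injectives and using Proposition~\ref{prop-characterizing projective cotorsion pairs} in place of Proposition~\ref{prop-characterizing injective cotorsion pairs}. There is no real obstacle here; the only subtlety worth highlighting is the base case of the projective dimension induction, where one needs completeness of the cotorsion pair (rather than just thickness and the presence of injectives) to pull projectives into $\class{W}$.
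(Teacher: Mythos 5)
Your proof is correct and follows essentially the same route as the paper: thickness of $\class{W}$ plus the fact that $\class{W}$ contains the injectives (resp.\ projectives), applied along a finite (co)resolution; your induction on dimension is just a step-by-step version of the paper's argument with the whole resolution. The one place you diverge is the base case for projective dimension: you pull projectives into $\class{W}$ via completeness (a special sequence $0 \to F' \to W \to P \to 0$ that splits), and you close by asserting that completeness is genuinely needed there. It is not: since $\Ext^1_{\cat{A}}(P,F)=0$ automatically for $P$ projective, projectives lie in the left half $\leftperp{\class{F}} = \class{W}$ of \emph{any} cotorsion pair, which is exactly the one-line justification the paper uses. So your argument is valid (injective cotorsion pairs are complete by definition), but the subtlety you flag is not actually there.
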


\begin{proof}
The point is that $\class{W}$ is \emph{thick} and contains all injective and projective objects. So let $M$ be of finite injective dimension. Then there exists an exact sequence $$0 \xrightarrow{} M \xrightarrow{} I^0 \xrightarrow{} I^1 \xrightarrow{} \cdots \xrightarrow{} I^n \xrightarrow{} 0$$ with each $I^n$ injective. Since each $I^i \in \class{W}$ and $\class{W}$ thick we conclude $M \in \class{I}$. If $\cat{A}$ has enough projectives, then the same argument works for projective dimension since projectives are always in the left side of a cotorsion pair.

\end{proof}

Note that Proposition~\ref{prop-finite injective and projective dimension objects are trivial} is saying that any object of finite injective dimension is trivial in the model structure associated to any injective cotorsion pair.

\subsection{The homotopy category of a weak injective model structure}
 It is convenient for us to explicitly define weak injective model structures for two reasons. First, the hereditary ones are precisely the right Bousfield localizations of two injective ones as we point out in Proposition~\ref{prop-Beckers theorem}. Second the homotopy relation on the subcategory of cofibrant-fibrant objects is characterized in a very nice way which we explain below. This will prove important for this paper. For example, it is automatic that for any weak injective (or weak projective) model structure on $\ch$, two maps from a cofibrant complex to a fibrant complex are formally homotopic if and only if they are chain homotopic in the usual sense.

First we point out that if $(\class{Q}, \class{W}, \class{R})$ is any Hovey triple in $\class{A}$ then the fully exact subcategory $\class{Q} \cap \class{R}$ is a Frobenius category with $\class{Q} \cap \class{W} \cap \class{R}$ being precisely the class of projective-injective objects. Therefore the stable category $\class{Q} \cap \class{R}/\sim$ is naturally a triangulated category. Since $\class{A}$ is abelian, it is a general fact that $\Ho{\class{A}}$ is also a triangulated category and is triangle equivalent to $\class{Q} \cap \class{R}/\sim$. See~\cite[Section~5]{gillespie-exact model structures} and~\cite[Chapters~6 and~7]{hovey-model-categories} for more on all this. We now go on to make this a little more precise while focusing only on injective and weak injective model structures.

Recall that a \emph{thick} subcategory of a triangulated category is a triangulated subcategory that is closed under retracts.

\begin{proposition}\label{prop-classification of homotopic maps in injective models}
Suppose $\class{M} = (\class{W},\class{F})$ is an injective cotorsion pair in $\class{A}$.
\begin{enumerate}
\item If $Y$ is fibrant then two maps $f,g \mathcolon X \xrightarrow{} Y$ in $\class{A}$ are homotopic, written $f \sim g$, if and only if $g-f$ factors through an injective object.
\item The fully exact subcategory $\class{F}$ of fibrant objects is a Frobenius category with its projective-injective objects being precisely the injective objects of $\class{A}$.
\item The inclusion functor $i : \class{F} \xrightarrow{} \cat{A}$ induces an inclusion of triangulated categories $\textnormal{Ho}\,i : \class{F}/\sim \ \xrightarrow{} \Ho{\cat{A}}$. It displays $\class{F}/\sim$ as an equivalent thick subcategory of $\Ho{\cat{A}}$.
\item The inverse of $\textnormal{Ho}\,i$ is the functor $\textnormal{Ho}\,R : \Ho{\cat{A}} \xrightarrow{} \class{F}/\sim$ and this is the fibrant replacement functor.
\end{enumerate}
\end{proposition}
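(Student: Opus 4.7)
The plan is to handle each part in sequence, with the main work concentrated in part~(2). Throughout, I will exploit the injective model structure's defining features: every object is cofibrant, $\class{W} \cap \class{F}$ equals the class of injectives, $\class{W}$ is thick, and by Proposition~\ref{prop-characterizing injective cotorsion pairs} the cotorsion pair is automatically hereditary.

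For part~(1), I would invoke the standard characterization of the homotopy relation in an abelian model structure: for $X$ cofibrant and $Y$ fibrant, $f \sim g$ if and only if $g-f$ factors through a trivially fibrant object, i.e.\ an object of $\class{W} \cap \class{F}$. This result is due to Hovey \cite{hovey} and is a general fact about abelian model structures. In our setting, $X$ is automatically cofibrant (every object is), and $\class{W} \cap \class{F}$ is precisely the class of injective objects in $\cat{A}$ by definition of injective cotorsion pair. Hence (1) is immediate.

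For part~(2), I need to verify that $\class{F}$ forms a Frobenius category with the claimed projective-injectives. First, $\class{F}$ is fully exact in $\cat{A}$ since it is extension-closed (right half of a cotorsion pair). By Becker's Lemma~\ref{lemma-Beckers lemma} together with Proposition~\ref{prop-characterizing injective cotorsion pairs}, the cotorsion pair is coresolving, so $\class{F}$ is closed under cokernels of admissible monos. To produce enough injectives in $\class{F}$: for any $F \in \class{F}$, embed $F$ into an injective $I$ of $\cat{A}$ (using that $\cat{A}$ has enough injectives); then $I \in \class{W}\cap\class{F}$, and coresolving gives $I/F \in \class{F}$, yielding an admissible SES in $\class{F}$. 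For enough projectives in $\class{F}$: given $F \in \class{F}$, apply enough-projectives of the cotorsion pair to write $0 \to F' \to W \to F \to 0$ with $F' \in \class{F}$ and $W \in \class{W}$; since $F, F' \in \class{F}$ and $\class{F}$ is extension-closed, $W \in \class{F}$, so $W \in \class{W}\cap\class{F}$ is injective in $\cat{A}$. This is the crucial observation and the main obstacle I anticipate: the enough-projectives completion of the cotorsion pair automatically produces an injective middle term precisely because $\class{F}$ is extension-closed. Finally, to identify the projective-injectives of $\class{F}$: injectives of $\cat{A}$ are injective in $\class{F}$ (restriction of exactness of $\Hom(-,I)$) and projective in $\class{F}$ (since $\Ext^1_{\cat{A}}(I,F)=0$ for $I$ injective and $F\in\class{F}$); conversely, any injective of $\class{F}$ splits its admissible embedding into an injective of $\cat{A}$ and hence is a summand of one, and similarly for projectives.

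For parts~(3) and~(4), I would invoke the general theory of exact/abelian model categories from \cite{gillespie-exact model structures} and \cite{hovey-model-categories}: the stable category of cofibrant-fibrant objects modulo the homotopy relation is triangle-equivalent to $\Ho{\cat{A}}$. In the injective setting, ``cofibrant-fibrant'' is simply ``fibrant,'' so $\Ho\,i : \class{F}/\sim\ \to \Ho{\cat{A}}$ is an equivalence of triangulated categories; its image is of course a thick subcategory of $\Ho{\cat{A}}$ (trivially, as the image is all of $\Ho{\cat{A}}$). To identify the inverse as $\Ho\,R$, recall that for any $X\in\cat{A}$ the enough-injectives side of $(\class{W},\class{F})$ yields $0 \to X \to RX \to W \to 0$ with $RX \in \class{F}$ and $W \in \class{W}$; this is a trivial cofibration by Lemma~\ref{lemma-characterization of weak equivalences in exact categories}, so $X \to RX$ becomes an isomorphism in $\Ho{\cat{A}}$. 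Thus $\Ho\,R$ is precisely the fibrant replacement, inverse to $\Ho\,i$.
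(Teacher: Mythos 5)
Your proposal is correct and takes essentially the same route as the paper, whose entire proof is a citation to the general results in \cite[Proposition~4.4 and Section~5]{gillespie-exact model structures} and Chapters~6 and~7 of \cite{hovey-model-categories} (plus a one-line remark on retract-closure); you invoke the same sources for (1), (3) and (4). The only difference is that you verify (2) explicitly rather than citing it, and your key step there --- that a special $\class{W}$-precover $0 \to F' \to W \to F \to 0$ of a fibrant object has injective middle term because $\class{F}$ is extension-closed --- is precisely the argument the paper itself uses in Proposition~\ref{prop-characterizing injective cotorsion pairs} and Theorem~\ref{them-Gorenstein injectives are at top of lattice}, so nothing is lost.
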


\begin{proposition}\label{prop-classification of homotopic maps in weak injective models}
Suppose $\class{M} = (\class{Q},\class{W},\class{R})$ is an hereditary weak injective model structure in $\cat{A}$.
\begin{enumerate}
\item If $X$ is cofibrant and $Y$ is fibrant then two maps $f,g \mathcolon X \xrightarrow{} Y$ in $\class{A}$ are homotopic, written $f \sim g$, if and only if $g-f$ factors through an injective object.
\item The fully exact subcategory $\class{Q} \cap \class{R}$ of cofibrant-fibrant objects is a Frobenius category with its projective-injective objects being precisely the injective objects of $\class{A}$.
\item The inclusion functor $i : \class{Q} \cap \class{R} \xrightarrow{} \cat{A}$ induces an inclusion of triangulated categories $\textnormal{Ho}\,i : \class{Q} \cap \class{R} /\sim \ \xrightarrow{} \Ho{\cat{A}}$. It displays $\class{Q} \cap \class{R} /\sim$ as an equivalent thick subcategory of $\Ho{\cat{A}}$.

\item The inverse of $\textnormal{Ho}\,i$ is the functor $\textnormal{Ho}\,Q \circ \textnormal{Ho}\,R : \Ho{\cat{A}} \xrightarrow{} \class{Q} \cap \class{R} /\sim$ and this is fibrant replacement followed by cofibrant replacement.
\end{enumerate}
\end{proposition}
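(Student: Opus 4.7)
The plan is to proceed in close parallel with the proof of Proposition~\ref{prop-classification of homotopic maps in injective models}, adapted to the Frobenius category of cofibrant-fibrant objects $\class{Q} \cap \class{R}$ rather than just the fibrant class. The crucial observation driving all four statements is that the weak injective hypothesis forces $\class{Q} \cap \class{W} \cap \class{R}$ to equal the class of injective objects of $\cat{A}$, so that the projective-injective objects of the Frobenius structure on $\class{Q}\cap\class{R}$ are precisely the categorical injectives.

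I would begin with part~(2). Hereditariness gives that $\class{Q}$ is resolving, $\class{R}$ is coresolving, and each is closed under extensions as a half of a cotorsion pair; a short diagram chase then shows $\class{Q} \cap \class{R}$ is a fully exact subcategory of $\cat{A}$ closed under extensions, kernels of admissible epimorphisms, and cokernels of admissible monomorphisms. To produce enough Frobenius-projectives, given $X \in \class{Q}\cap\class{R}$, apply enough projectives of the cotorsion pair $(\class{Q} \cap \class{W}, \class{R})$ to obtain $0 \to R \to T \to X \to 0$ with $T \in \class{Q} \cap \class{W}$ and $R \in \class{R}$. Then $R \in \class{Q}$ by the resolving property, and $T \in \class{R}$ since $\class{R}$ is closed under extensions; so $T \in \class{Q} \cap \class{W} \cap \class{R}$, an injective of $\cat{A}$. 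Dually, enough injectives of $(\class{Q}, \class{W} \cap \class{R})$ embeds $X$ into an injective of $\cat{A}$ with cokernel in $\class{Q}\cap\class{R}$, giving enough Frobenius-injectives. This is the standard exact-model-category argument found in \cite[Section~5]{gillespie-exact model structures}.

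For part~(1), I would appeal to the general characterization of the formal homotopy relation in an abelian model structure restricted to cofibrant-fibrant objects: by Hovey's theory (cf.\ \cite{hovey} and \cite{gillespie-exact model structures}) two maps $f,g \colon X \to Y$ between cofibrant-fibrant objects are homotopic if and only if $g-f$ factors through some object of $\class{Q} \cap \class{W} \cap \class{R}$. Invoking the weak injective hypothesis translates this immediately into the stated condition of factoring through an injective object of $\cat{A}$.

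For parts~(3) and~(4), the equivalence $\Ho{\cat{A}} \simeq (\class{Q} \cap \class{R})/\sim$ is the specialization of the standard fact (see \cite[Chapters~6 and~7]{hovey-model-categories}) that $\Ho{\cat{A}}$ is equivalent to the stable category of its Frobenius category of bifibrant objects. Full faithfulness of $\textnormal{Ho}\,i$ follows directly from part~(1), while essential surjectivity is achieved by the explicit cofibrant-fibrant replacement: first apply fibrant replacement $R$ using $(\class{Q}, \class{W} \cap \class{R})$, then cofibrant replacement $Q$ using $(\class{Q} \cap \class{W}, \class{R})$; since a (trivial) fibration into a fibrant object has fibrant source, the output $Q(R(X))$ lies in $\class{Q}\cap\class{R}$. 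The image of $\textnormal{Ho}\,i$ is thus an equivalent full triangulated subcategory, and in particular closed under retracts, which is the thickness claim. The only real obstacle I foresee is bookkeeping: one must verify that these abstract model-categorical functors coincide, up to natural isomorphism, with the concrete replacements produced by the two cotorsion pairs, but this is routine once the Frobenius structure from part~(2) is in hand.
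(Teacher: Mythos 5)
Your proposal is correct and follows essentially the same route as the paper: the paper's proof simply cites the general results in \cite[Proposition~4.4 and Section~5]{gillespie-exact model structures} and Chapters~6 and~7 of \cite{hovey-model-categories}, which is exactly the machinery you invoke, and your cotorsion-pair arguments for the Frobenius structure and the cofibrant-fibrant replacement are the standard ones underlying those citations. The only point the paper singles out explicitly is closure under retracts, handled by noting that $\class{Q} \cap \class{R}$ contains the injectives and is closed under direct sums and retracts, which is the same fact your part~(2) argument already supplies.
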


\begin{proof}
Both follow from general results that can be found in~\cite[Proposition~4.4 and Section~5]{gillespie-exact model structures} and Chapters~6 an~7 of~\cite{hovey-model-categories}. To show the triangulated subcategory $\class{Q} \cap \class{R}/\sim$ is closed under retracts use the fact that $\class{Q} \cap \class{R}$ contains the injective objects and is itself is closed under direct sums and retracts.

\end{proof}

\subsection{Becker's Theorem and a converse}\label{subsec-Beckers theorem and a converse}

We now look at a beautiful result from~\cite[Proposition~1.4.2]{becker} giving a simple description, in terms of Hovey triples, of the right Bousfield localization of an injective model structure with respect to another. In our statement here we add a uniqueness property and a converse. These last two things are not hard at all but they give a complete picture and are in fact useful for spotting localizations. In fact the author had already witnessed on more than one occasion model structures on $\ch$ arise in the following way. First, let $(\class{W}_1 , \class{F}_1)$ and $\class{M}_2 = (\class{W}_2 , \class{F}_2)$ be two injective model structures on $\cat{A}$. Suppose you also have a thick subcategory $\class{W}$ for which $\class{M} = (\class{W}_2 , \class{W} , \class{F}_1)$ is a Hovey triple. Then these three model structures are obviously linked in some way. It was not until the result of Becker that the author learned the formal connection. It turns out that $\class{M} = (\class{W}_2 , \class{W} , \class{F}_1)$ is the right Bousfield localization of $\class{M}_1$ with respect to the fibrant objects in $\class{M}_2$.

\begin{proposition}[Characterization of Becker Localizations]\label{prop-Beckers theorem}
Let $\cat{A}$ be an abelian category with enough injectives. Let $\class{M}_1 = (\class{W}_1 , \class{F}_1)$ and $\class{M}_2 = (\class{W}_2 , \class{F}_2)$ be two injective cotorsion pairs on $\cat{A}$ with $\class{F}_2 \subseteq \class{F}_1$. Then there exists a weak injective hereditary Hovey triple $\class{M}_1/\class{M}_2 = (\class{W}_2, \class{W}, \class{F}_1)$ on $\cat{A}$ where the thick class $\class{W}$ is
  \begin{align*}
   \class{W}  &= \{\, X \in \class{A} \, | \, \exists \, \text{s.e.s.} \, 0 \arr X \arr F_2 \arr W_1 \arr 0 \, \text{ with} \, F_2 \in \class{F}_2 \, , W_1 \in \class{W}_1 \,\} \\
           &= \{\, X \in \class{A} \, | \, \exists \, \text{s.e.s.} \, 0 \arr F_2 \arr W_1 \arr X \arr 0 \, \text{ with} \,  F_2 \in \class{F}_2 \, , W_1 \in \class{W}_1 \,\} \\
          \end{align*} We call $\class{M}_1/\class{M}_2$ the \emph{right localization of $\class{M}_1$ with respect to $\class{M}_2$} and we note the following uniqueness and converse:
\begin{enumerate}
\item~\textnormal{(Uniqueness of Trivial Objects)}  Suppose $\class{V}$ is a thick subcategory for which $\class{M} = (\class{W}_2 , \class{V} , \class{F}_1)$ is a Hovey triple. Then $\class{M} = \class{M}_1/\class{M}_2$.

\item~\textnormal{(Converse)}  Let $\class{M} = (\class{Q},\class{W},\class{R})$ be a weak injective hereditary Hovey triple in $\cat{A}$. Then setting $\class{M}_1 = (\class{Q} \cap \class{W},\class{R})$ and $\class{M}_2 = (\class{Q},\class{W} \cap \class{R})$, these are each injective cotorsion pairs and $\class{M} = \class{M}_1/\class{M}_2$.

\end{enumerate}
\end{proposition}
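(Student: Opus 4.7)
The plan is to address the three claims in turn. As an immediate observation, the hypothesis $\class{F}_2 \subseteq \class{F}_1$ forces $\class{W}_1 \subseteq \class{W}_2$ by orthogonality: any $W \in \class{W}_1$ satisfies $\Ext^1(W, F) = 0$ for every $F \in \class{F}_1 \supseteq \class{F}_2$, so $W \in \class{W}_2$. The existence of the Hovey triple $(\class{W}_2, \class{W}, \class{F}_1)$ is essentially Becker's Proposition~1.4.2 from~\cite{becker}, which I would invoke as a black box. The two additions of the present proposition are the explicit descriptions of $\class{W}$ and the uniqueness and converse observations, and these are where the fresh work lies.

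To pin down the structure of the constructed Hovey triple, I would verify the two intersection identities. For $\class{W} \cap \class{F}_1 = \class{F}_2$: given $X \in \class{W} \cap \class{F}_1$ with a witness $0 \to X \to F_2 \to W_1 \to 0$, hereditary-ness of $\class{M}_1$ (so $\class{F}_1$ is coresolving) gives $W_1 \in \class{F}_1$; combined with $W_1 \in \class{W}_1$, this places $W_1 \in \class{W}_1 \cap \class{F}_1 = \class{I}$, so the sequence splits and $X$ becomes a summand of $F_2 \in \class{F}_2$. For $\class{W}_2 \cap \class{W} = \class{W}_1$: given $X$ in the left side with such a witness, thickness of $\class{W}_2$ together with $\class{W}_1 \subseteq \class{W}_2$ places $F_2 \in \class{W}_2 \cap \class{F}_2 = \class{I}$, hence injective; applying $\Hom(-, F)$ for $F \in \class{F}_1$, the vanishing of $\Ext^{\geq 1}(W_1, F)$ (hereditary of $\class{M}_1$) and of $\Ext^{\geq 1}(F_2, F)$ (injectivity of $F_2$) forces $\Ext^1(X, F) = 0$, i.e., $X \in \class{W}_1$. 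These identities show that the two cotorsion pairs of the triple $(\class{W}_2, \class{W}, \class{F}_1)$ are exactly $\class{M}_2 = (\class{W}_2, \class{F}_2)$ and $\class{M}_1 = (\class{W}_1, \class{F}_1)$, both complete by hypothesis; weak injectivity is then $\class{W}_2 \cap \class{W} \cap \class{F}_1 = \class{W}_1 \cap \class{F}_1 = \class{I}$, and hereditary-ness descends from the inputs. The two displayed descriptions of $\class{W}$ are then precisely the two characterizations of trivial objects from Proposition~\ref{prop-characterization of trivial objects}.

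Uniqueness (1) is now a direct invocation of Proposition~\ref{prop-characterization of trivial objects}: any thick $\class{V}$ with $(\class{W}_2, \class{V}, \class{F}_1)$ a Hovey triple must equal the class characterized by the outer cotorsion pairs, hence $\class{V} = \class{W}$. For the converse (2), given a weak injective hereditary Hovey triple $\class{M} = (\class{Q}, \class{W}, \class{R})$, set $\class{M}_1 := (\class{Q} \cap \class{W}, \class{R})$ and $\class{M}_2 := (\class{Q}, \class{W} \cap \class{R})$. Both are complete cotorsion pairs by Hovey's correspondence, hereditary by assumption, and satisfy $(\class{Q} \cap \class{W}) \cap \class{R} = \class{Q} \cap (\class{W} \cap \class{R}) = \class{I}$ by weak injectivity, so Proposition~\ref{prop-characterizing injective cotorsion pairs} identifies them as injective cotorsion pairs. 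Since $\class{F}_2 = \class{W} \cap \class{R} \subseteq \class{R} = \class{F}_1$, the existence part applies to produce $\class{M}_1/\class{M}_2 = (\class{Q}, \class{W}', \class{R})$ with $\class{W}'$ thick, and the uniqueness assertion then forces $\class{W}' = \class{W}$, giving $\class{M} = \class{M}_1/\class{M}_2$. The main conceptual obstacle is recognizing how cleanly (1) and (2) follow once Propositions~\ref{prop-characterization of trivial objects} and~\ref{prop-characterizing injective cotorsion pairs} are in hand.
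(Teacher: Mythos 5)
Your proposal is correct and follows essentially the same route as the paper: the existence and description of the triple $(\class{W}_2,\class{W},\class{F}_1)$ are quoted from Becker's Proposition~1.4.2, uniqueness is read off from Proposition~\ref{prop-characterization of trivial objects}, and the converse is handled by observing that $(\class{Q}\cap\class{W},\class{R})$ and $(\class{Q},\class{W}\cap\class{R})$ are injective cotorsion pairs via condition~(2) of Proposition~\ref{prop-characterizing injective cotorsion pairs} and then invoking uniqueness. Your explicit verification of $\class{W}\cap\class{F}_1=\class{F}_2$ and $\class{W}_2\cap\class{W}=\class{W}_1$ is correct but not needed beyond what the paper does, since once $(\class{W}_2,\class{W},\class{F}_1)$ is a Hovey triple these intersections are forced to equal $\rightperp{\class{W}_2}$ and $\leftperp{\class{F}_1}$.
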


\begin{remark}
Becker shows that his right localization $\class{M}_1/\class{M}_2$ is in fact the right Bousfield localization with respect to the maps $0 \to F$ where $F \in \class{F}_2$.
\end{remark}

\begin{proof}
This is Becker's result. See~\cite[Proposition~1.4.2]{becker}. We are simply noting the uniqueness and converse statements.

The uniqueness of the class of trivial objects is a corollary to Proposition~\ref{prop-characterization of trivial objects}. To prove the converse statement, let $\class{M} = (\class{Q},\class{W},\class{R})$ be any weak injective and hereditary Hovey triple in $\cat{A}$. By definition of $\class{M} = (\class{Q},\class{W},\class{R})$ being hereditary we have that each of the cotorsion pairs $(\class{Q} \cap \class{W},\class{R})$ and $(\class{Q},\class{W} \cap \class{R})$ are also hereditary. Setting $\class{M}_1 = (\class{W}_1 , \class{F}_1) = (\class{Q} \cap \class{W},\class{R})$
and $\class{M}_2 = (\class{W}_2 , \class{F}_2) =  (\class{Q},\class{W} \cap \class{R})$ we get that $\class{M}_1$ and $\class{M}_2$ are each injective cotorsion pairs by condition~(2) of Proposition~\ref{prop-characterizing injective cotorsion pairs}. It now follows that $\class{M} = \class{M}_1/\class{M}_2$ by the uniqueness property just mentioned above.

\end{proof}

\begin{example}
It is easy to see that $\class{M} = (\class{W},\class{F})$ is an injective model structure if and only if it is the right localization of itself by the trivial model structure induced by the categorical injective cotorsion pair $(\class{A},\class{I})$.

\end{example}

\begin{example}\label{example-degreewise model structures}
Let $\cat{A} = \ch$. Let $\class{M}_1 = (\class{W}_1, \class{F}_1)$ be the Inj model structure from~\cite{bravo-gillespie-hovey} which has as the fibrant complexes $\class{F}_1 = \dwclass{I}$, the class of all complexes of injective $R$-modules.  Also let $\class{M}_2 = (\class{W}_2, \class{F}_2)$ be the exact injective model structure from~\cite{bravo-gillespie-hovey} which has as the fibrant complexes $\class{F}_2 = \exclass{I}$, the class of all exact complexes of injective $R$-modules. Denote the class of exact complexes by $\class{E}$. Then it follows from Theorem~4.7 of~\cite{gillespie-degreewise-model-strucs} that there is a hereditary weak injective Hovey triple $\class{M} = (\class{W}_2, \class{E}, \class{F}_1)$. It follows that $\class{M} = \class{M}_1/\class{M}_2$. Its homotopy category is $\class{D}(R)$ because the trivial complexes are precisely the exact complexes (and so it follows from Lemma~\ref{lemma-characterization of weak equivalences in exact categories} that the homology isomorphisms are the weak equivalences). This example is relevant to Becker's approach to recovering Krause's recollement. We see Gorenstein injective analogs in Section~\ref{sec-Gorenstein projective and injective models for the derived category}.

\end{example}

\begin{remark}
We have worked with injective cotorsion pairs and their right localizations in this section. We note that the dual statements concerning left localizations of projective cotorsion pairs also hold. We have omitted the projective statements in this section.

\end{remark}

\section{localization sequences and recollements from cotorsion pairs}\label{sec-localization sequences and recollements from cotorsion pairs}

The goal here is to describe how recollements are related to injective cotorsion pairs. This is Theorem~\ref{them-finding recollements} and its converse in Corollary~\ref{cor-bijective correspondences between admissible subcats and injective cot pairs}. Since injective cotorsion pairs are particular types of model structures the functors involved in the recollements are actually derived functors between simple Quillen adjunctions as we will describe. The method is a generalization of Becker's approach from~\cite{becker} where he obtained Krause's recollement from~\cite{krause-stable derived cat of a Noetherian scheme}, for a general ring $R$, using the theory of model categories.

Recall that the homotopy category of an abelian model category is always a triangulated category and has a set of weak generators whenever the model structure is cofibrantly generated. See Hovey~\cite[Section~7]{hovey} for more details. We start with the definition of a recollement. Loosely, a recollement is an ``attachment'' of two triangulated categories. The standard reference is~\cite{BBD-perverse sheaves}. We give the definition that appeared in~\cite{krause-stable derived cat of a Noetherian scheme} based on localization and colocalization sequences.

\begin{definition}\label{def-localization sequence}
Let $\class{T}' \xrightarrow{F} \class{T} \xrightarrow{G} \class{T}''$ be a sequence of exact functors between triangulated categories. We say it is a \emph{localization sequence} when there exists right adjoints $F_{\rho}$ and $G_{\rho}$ giving a diagram of functors as below with the listed properties.
$$\begin{tikzcd}
\class{T}'
\rar[to-,
to path={
([yshift=0.5ex]\tikztotarget.west) --
([yshift=0.5ex]\tikztostart.east) \tikztonodes}][swap]{F}
\rar[to-,
to path={
([yshift=-0.5ex]\tikztostart.east) --
([yshift=-0.5ex]\tikztotarget.west) \tikztonodes}][swap]{F_{\rho}}
& \class{T}
\rar[to-,
to path={
([yshift=0.5ex]\tikztotarget.west) --
([yshift=0.5ex]\tikztostart.east) \tikztonodes}][swap]{G}
\rar[to-,
to path={
([yshift=-0.5ex]\tikztostart.east) --
([yshift=-0.5ex]\tikztotarget.west) \tikztonodes}][swap]{G_{\rho}}
& \class{T}'' \\
\end{tikzcd}$$
\begin{enumerate}
\item The right adjoint $F_{\rho}$ of $F$ satisfies $F_{\rho} \circ F \cong \text{id}_{\class{T}'}$.
\item The right adjoint $G_{\rho}$ of $G$ satisfies $G \circ G_{\rho} \cong \text{id}_{\class{T}''}$.
\item For any object $X \in \class{T}$, we have $GX = 0$ iff $X \cong FX'$ for some $X' \in \class{T}'$.
\end{enumerate}
A \emph{colocalization sequence} is the dual. That is, there must exist left adjoints $F_{\lambda}$ and $G_{\lambda}$ with the analogous properties.
\end{definition}

It is fair to say that a localization sequence is a sequence of left adjoints which in some sense ``splits'' at the level of triangulated categories. See~\cite[Section~3]{krause-localization theory for triangulated categories} for the first properties of localization sequences which reflect this statement. Similarly, a colocalization sequence is a sequence of right adjoints with this property. It is true that if  $\class{T}' \xrightarrow{F} \class{T} \xrightarrow{G} \class{T}''$ is a localization sequence then  $\class{T}'' \xrightarrow{G_{\rho}} \class{T} \xrightarrow{F_{\rho}} \class{T}'$ is a colocalization sequence and if  $\class{T}' \xrightarrow{F} \class{T} \xrightarrow{G} \class{T}''$ is a colocalization sequence then  $\class{T}'' \xrightarrow{G_{\lambda}} \class{T} \xrightarrow{F_{\lambda}} \class{T}'$ is a localization sequence. This brings us to the definition of a recollement where the sequence of functors  $\class{T}' \xrightarrow{F} \class{T} \xrightarrow{G} \class{T}''$ is both a localization sequence and a colocalization sequence.

\begin{definition}\label{def-recollement}
Let $\class{T}' \xrightarrow{F} \class{T} \xrightarrow{G} \class{T}''$ be a sequence of exact functors between triangulated categories. We say $\class{T}' \xrightarrow{F} \class{T} \xrightarrow{G} \class{T}''$ induces a \emph{recollement} if it is both a localization sequence and a colocalization sequence as shown in the picture.
\[
\xy
(-20,0)*+{\class{T}'};
(0,0)*+{\class{T}};
(20,0)*+{\class{T}''};
{(-18,0) \ar^{F} (-2,0)};
{(-2,0) \ar@/^1pc/@<0.5em>^{F_{\rho}} (-18,0)};
{(-2,0) \ar@/_1pc/@<-0.5em>_{F_{\lambda}} (-18,0)};
{(2,0) \ar^{G} (18,0)};
{(18,0) \ar@/^1pc/@<0.5em>^{G_{\rho}} (2,0)};
{(18,0) \ar@/_1pc/@<-0.5em>_{G_{\lambda}} (2,0)};
\endxy
\]
\end{definition}

So the idea will be to ``glue'' a colocalization sequence to a localization sequence to get the diagram of functors in the recollement diagram.

\subsection{Colocalization sequences from weak injective model structures}\label{subsec-colocalization sequences} Lets look again at the setup to Proposition~\ref{prop-Beckers theorem}. We are working in an abelian category $\cat{A}$ with enough injectives. We have two injective cotorsion pairs $\class{M}_1 = (\class{W}_1,\class{F}_1)$ and $\class{M}_2 = (\class{W}_2, \class{F}_2)$ with $\class{F}_2 \subseteq \class{F}_1$. Since the identity functor is exact it follows immediately that $\class{M}_2 \xrightarrow{\text{id}} \class{M}_1 \xrightarrow{\text{id}} \class{M}_1/\class{M}_2$ are right Quillen functors.  That is, we have Quillen adjunctions  $\class{M}_1/\class{M}_2 \rightleftarrows \class{M}_1 \rightleftarrows \class{M}_2$ consisting entirely of identity functors. The following comes from~\cite[Corollary~1.4.5]{becker}.

\begin{proposition}\label{prop-colocalizations sequences from injective cot pairs}
Let $\cat{A}$ be an abelian category with enough injectives and let $\class{M}_1 = (\class{W}_1 , \class{F}_1)$ and $\class{M}_2 = (\class{W}_2 , \class{F}_2)$ be two injective cotorsion pairs on $\cat{A}$ with $\class{F}_2 \subseteq \class{F}_1$. Then the identity Quillen adjunctions  $\class{M}_1/\class{M}_2 \rightleftarrows \class{M}_1 \rightleftarrows \class{M}_2$ descend to a colocalization sequence $\Ho{\class{M}_2} \xrightarrow{\textnormal{R}\,\textnormal{id}} \Ho{\class{M}_1} \xrightarrow{\textnormal{R}\,\textnormal{id}} \Ho{\class{M}_1/\class{M}_2}$ with left adjoints $\textnormal{L}\,\textnormal{id}$. In particular, on the level of the full subcategory of cofibrant-fibrant subobjects we have the colocalization sequence:
\[
\begin{tikzpicture}[node distance=2.75 cm, auto]
\node (A)  {$\class{F}_2/\sim$};
\node (B) [right of=A] {$\class{F}_1/\sim$};
\node (C) [right of=B] {$(\class{F}_1 \cap \class{W}_2) /\sim$};

%
%
\draw[<-] (A.10) to node {E$(\class{M}_2)$} (B.170);
\draw[->] (A.350) to node [swap] {$I$} (B.190);
\draw[<-] (B.10) to node {$I$} (C.175);
\draw[->] (B.350) to node [swap] {C$(\class{M}_2)$} (C.185);
\end{tikzpicture}
\]
Here the functors $I$ are each inclusion, the functor E$(\class{M}_2)$ represents using enough injectives of the cotorsion pair $\class{M}_2 = (\class{W}_2 , \class{F}_2)$ (to get what would often be called a \emph{special $\class{F}_2$-preenvelope}), and the functor C$(\class{M}_2)$  represents using enough projectives of the cotorsion pair $\class{M}_2 = (\class{W}_2 , \class{F}_2)$ (to get what would often be called a \emph{special-$\class{W}_2$ precover}).

\end{proposition}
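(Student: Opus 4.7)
The plan is to reduce to Becker's Corollary~1.4.5 after checking that the stated identity functors are genuine Quillen adjunctions, and then to unpack the resulting derived identity functors on bifibrant subcategories.

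First I would verify the two Quillen adjunctions. Recall that in an injective model structure $\class{M}_i = (\class{W}_i,\class{F}_i)$, a fibration is an epimorphism with kernel in $\class{F}_i$ and a trivial fibration is an epimorphism with injective kernel. Since $\class{F}_2 \subseteq \class{F}_1$ by hypothesis and the injectives of $\cat{A}$ coincide with $\class{W}_i \cap \class{F}_i$ in both, the identity $\class{M}_2 \to \class{M}_1$ preserves fibrations and trivial fibrations, so it is right Quillen. For the identity $\class{M}_1 \to \class{M}_1/\class{M}_2$, note that both structures have $\class{F}_1$ as the class of fibrant objects, so the fibrations coincide. For trivial fibrations, Proposition~\ref{prop-Beckers theorem} gives $\class{W} \cap \class{F}_1 = \class{F}_2$, which contains all the injectives of $\cat{A}$; hence any epimorphism with injective kernel (trivial fibration in $\class{M}_1$) is automatically a trivial fibration in $\class{M}_1/\class{M}_2$.

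Next, I would invoke Becker's Corollary~1.4.5 to conclude that these identity Quillen adjunctions descend to a colocalization sequence $\Ho{\class{M}_2} \xrightarrow{R\textnormal{id}} \Ho{\class{M}_1} \xrightarrow{R\textnormal{id}} \Ho{\class{M}_1/\class{M}_2}$ with left adjoints $L\textnormal{id}$. One should observe that the three conditions dual to Definition~\ref{def-localization sequence} hold essentially by inspection: the composite $L\textnormal{id}\circ R\textnormal{id}$ on $\Ho{\class{M}_2}$ requires no genuine fibrant replacement since every bifibrant $X \in \class{F}_2$ is already bifibrant in $\class{M}_2$; dually $R\textnormal{id}\circ L\textnormal{id}$ on $\Ho{\class{M}_1/\class{M}_2}$ is the identity because a bifibrant $X \in \class{W}_2 \cap \class{F}_1$ is already cofibrant in $\class{M}_1/\class{M}_2$; and an object $X \in \class{F}_1$ is annihilated by the second $R\textnormal{id}$ precisely when $X \in \class{W} \cap \class{F}_1 = \class{F}_2$, i.e.\ precisely when it is in the essential image of the first $R\textnormal{id}$.

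Finally, I would identify the derived identity functors on bifibrant subcategories, invoking Propositions~\ref{prop-classification of homotopic maps in injective models} and~\ref{prop-classification of homotopic maps in weak injective models} to see that the relevant bifibrant subcategories are $\class{F}_2/\sim$, $\class{F}_1/\sim$, and $(\class{W}_2 \cap \class{F}_1)/\sim$. The right derived inclusion $\class{F}_2 \hookrightarrow \class{F}_1$ is plainly $I$. The left derived identity $\class{F}_1 \to \class{F}_2$ is computed by fibrant replacement in $\class{M}_2$: given $X \in \class{F}_1$, enough injectives of $(\class{W}_2,\class{F}_2)$ produces a short exact sequence $0 \to X \to F_2 \to W_2 \to 0$, and this special $\class{F}_2$-preenvelope is $E(\class{M}_2)$. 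Symmetrically, $L\textnormal{id}\colon (\class{W}_2 \cap \class{F}_1)/\sim \ \to \class{F}_1/\sim$ is the inclusion, while $R\textnormal{id}\colon \class{F}_1/\sim \ \to (\class{W}_2 \cap \class{F}_1)/\sim$ is computed via cofibrant replacement in $\class{M}_1/\class{M}_2$: enough projectives of the cotorsion pair $(\class{W}_2,\class{F}_2)$ yields $0 \to F_2 \to W_2 \to X \to 0$, realizing the special $\class{W}_2$-precover $C(\class{M}_2)$. The main obstacle is purely bookkeeping — keeping track of which direction of the identity is left versus right Quillen, and matching the explicit enough-injectives/projectives constructions to the correct derived functor; the substantive content is carried entirely by Becker's theorem.
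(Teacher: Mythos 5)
Your proposal is correct and follows essentially the same route as the paper: both rest on Becker's Corollary~1.4.5 for the colocalization sequence of derived identity functors, and then identify those derived functors on the bifibrant subcategories via Propositions~\ref{prop-classification of homotopic maps in injective models} and~\ref{prop-classification of homotopic maps in weak injective models}, with fibrant replacement in $\class{M}_2$ giving E$(\class{M}_2)$ and cofibrant replacement in $\class{M}_1/\class{M}_2$ (enough projectives of $(\class{W}_2,\class{F}_2)$) giving C$(\class{M}_2)$. Your explicit check that the identity functors are right Quillen, and the sanity check of the (co)localization axioms, are fine additions but carry the same content the paper disposes of by exactness of the identity and the citation to Becker.
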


\begin{proof}
See~\cite[Corollary~1.4.5]{becker} to see that we have the colocalization sequence $\Ho{\class{M}_2} \xrightarrow{\textnormal{R}\,\textnormal{id}} \Ho{\class{M}_1} \xrightarrow{\textnormal{R}\,\textnormal{id}} \Ho{\class{M}_1/\class{M}_2}$. In general, the right derived functor is defined on objects by first taking a fibrant replacement and than applying the functor, here the identity. Similarly, the left derived functor is defined by first taking a cofibrant replacement and then applying the functor. In any abelian model category $\class{M} = (\class{Q}, \class{W}, \class{R})$, taking cofibrant replacements corresponds to using enough projectives of the cotorsion pair $(\class{Q},\class{W} \cap \class{R})$ and taking fibrant replacements corresponds to using enough injectives of the cotorsion pair $(\class{Q} \cap \class{W}, \class{R})$. Also as in Propositions~\ref{prop-classification of homotopic maps in injective models} and~\ref{prop-classification of homotopic maps in weak injective models} one uses cofibrant-fibrant replacement and inclusion when translating between the homotopy categories of $\class{M}$ and the full subcategory of cofibrant-fibrant objects. We conclude that the functors work as stated.
\end{proof}

\subsection{Finding a recollement from three cotorsion pairs}\label{subsec-finding a recollement from three cot pairs}

With the same setup as in Subsection~\ref{subsec-colocalization sequences}, now suppose we have three injective model structures $\class{M}_1 = (\class{W}_1, \class{F}_1)$ , $\class{M}_2 = (\class{W}_2, \class{F}_2)$ , and $\class{M}_3 = (\class{W}_3, \class{F}_3)$ having $\class{F}_2 , \class{F}_3 \subseteq  \class{F}_1$. Then for $i = 1,2$ we have from Proposition~\ref{prop-colocalizations sequences from injective cot pairs} that the identity functors give Quillen adjunctions  $\class{M}_1/\class{M}_i \rightleftarrows \class{M}_1 \rightleftarrows \class{M}_i$ descending to give two colocalization sequences
\[
\tag{\text{$*$}}
\Ho{\class{M}_2} \xrightarrow{\textnormal{R}\,\textnormal{id}} \Ho{\class{M}_1} \xrightarrow{\textnormal{R}\,\textnormal{id}} \Ho{\class{M}_1/\class{M}_2}\]
 and
\[
\tag{\text{$**$}}
\Ho{\class{M}_3} \xrightarrow{\textnormal{R}\,\textnormal{id}} \Ho{\class{M}_1} \xrightarrow{\textnormal{R}\,\textnormal{id}} \Ho{\class{M}_1/\class{M}_3}.\]
However, we also have the following lemma.

\begin{lemma}
The identity adjunctions shown below are in fact Quillen adjunction:
$$\class{M}_3 \rightleftarrows \class{M}_3 \ , \ \ \class{M}_1/\class{M}_2 \rightleftarrows \class{M}_3 \ , \ \ \class{M}_1/\class{M}_3 \rightleftarrows \class{M}_2$$
\end{lemma}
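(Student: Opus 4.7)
The plan is to verify each of the three listed adjunctions independently by inspecting what the identity functor, viewed as the putative right adjoint in each case, does to fibrations and trivial fibrations. The first adjunction $\class{M}_3 \rightleftarrows \class{M}_3$ is simply the identity Quillen adjunction on the model structure $\class{M}_3$, so no work is needed.

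For $\class{M}_1/\class{M}_2 \rightleftarrows \class{M}_3$, I would check that the identity regarded as a functor $\class{M}_3 \to \class{M}_1/\class{M}_2$ is right Quillen. Because $\class{M}_3$ is an injective model structure, its fibrations are the epimorphisms with kernel in $\class{F}_3$, and its trivial fibrations are the epimorphisms with kernel in $\class{W}_3 \cap \class{F}_3 = \class{I}$, the class of injective objects of $\cat{A}$. For the weak injective Hovey triple $\class{M}_1/\class{M}_2 = (\class{W}_2, \class{W}, \class{F}_1)$ supplied by Proposition~\ref{prop-Beckers theorem}, the fibrations are the epimorphisms with kernel in $\class{F}_1$, while the trivial fibrations are the epimorphisms with kernel in $\class{W} \cap \class{F}_1$. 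Using that $(\class{W}_2, \class{W} \cap \class{F}_1)$ and $(\class{W}_2, \class{F}_2)$ are both cotorsion pairs, one obtains $\class{W} \cap \class{F}_1 = \rightperp{\class{W}_2} = \class{F}_2$. So the two class inclusions I need are $\class{F}_3 \subseteq \class{F}_1$, which is part of the standing setup of Subsection~\ref{subsec-finding a recollement from three cot pairs}, and $\class{I} \subseteq \class{F}_2$, which is automatic since $\class{F}_2$ is the right half of a cotorsion pair in a category with enough injectives.

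The third adjunction $\class{M}_1/\class{M}_3 \rightleftarrows \class{M}_2$ is handled by a symmetric argument: that the identity $\class{M}_2 \to \class{M}_1/\class{M}_3$ is right Quillen reduces, via the same inspection, to $\class{F}_2 \subseteq \class{F}_1$ (standing hypothesis) together with $\class{I} \subseteq \class{F}_3$ (automatic).

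No real obstacle presents itself; the only subtlety worth flagging is correctly reading off the fibrations and trivial fibrations in the weak injective Hovey triples $\class{M}_1/\class{M}_2$ and $\class{M}_1/\class{M}_3$, for which one uses the Hovey correspondence together with the identifications $\class{F}_i = \rightperp{\class{W}_i}$ to replace $\class{W} \cap \class{F}_1$ with $\class{F}_2$ or $\class{F}_3$ respectively. Notice in particular that the full hypotheses of Theorem~\ref{them-A} (namely $\class{W}_3 \cap \class{F}_1 = \class{F}_2$) are not yet needed at this stage; only $\class{F}_2, \class{F}_3 \subseteq \class{F}_1$ and the presence of enough injectives in $\cat{A}$ are invoked.
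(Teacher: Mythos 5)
Your proposal is correct and follows essentially the same route as the paper: the paper also reduces the claim to checking that the identity into $\class{M}_1/\class{M}_2 = (\class{W}_2,\class{W},\class{F}_1)$ is right Quillen, which comes down to precisely the two inclusions you identify, $\class{F}_3 \subseteq \class{F}_1$ and $\class{I} \subseteq \class{W} \cap \class{F}_1 = \class{F}_2$, with the third adjunction handled by symmetry. Your extra remark that only $\class{F}_2, \class{F}_3 \subseteq \class{F}_1$ (and not $\class{W}_3 \cap \class{F}_1 = \class{F}_2$) is needed here matches the paper's observation as well.
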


\begin{proof}
The first is obvious and the second two are symmetric, so we show that $\class{M}_1/\class{M}_2 \rightleftarrows \class{M}_3$ is a Quillen adjunction. That is, the identity from $\class{M}_3$ to $\class{M}_1/\class{M}_2$ is right Quillen. For this just recall $\class{M}_1/\class{M}_2 = (\class{W}_2,\class{W},\class{F}_1)$. Since the identity functor is exact it just boils down to noting that we are given $\class{F}_3 \subseteq \class{F}_1$ and that the injective objects are contained in $\class{W} \cap \class{F}_1 = \class{F}_2$.

\end{proof}

\

So reversing the direction of $(**)$ above, and using the above lemma we get the following diagram of functors on the level of homotopy categories.

\[
\tag{\text{$***$}}
\begin{tikzpicture}[node distance=3.5 cm, auto]
\node (A)  {$\textnormal{Ho}(\class{M}_2)$};
\node (D) [below of=A] {$\textnormal{Ho}(\class{M}_1/\class{M}_3)$};
\node (B) [right of=A] {$\textnormal{Ho}(\class{M}_1)$};
\node (C) [right of=B] {$\textnormal{Ho}(\class{M}_1/\class{M}_2)$};
\node (E) [right of=D] {$\textnormal{Ho}(\class{M}_1)$};
\node (F) [right of=E] {$\textnormal{Ho}(\class{M}_3)$};

%
%
\draw[<-] (A.10) to node {\textnormal{L}\,\textnormal{id}} (B.170);
\draw[->] (A.350) to node [swap] {\textnormal{R}\,\textnormal{id}} (B.190);
\draw[->] (D.6) to node {\textnormal{L}\,\textnormal{id}} (E.170);
\draw[<-] (D.353) to node [swap] {\textnormal{R}\,\textnormal{id}} (E.190);
\draw[->] (E.10) to node {\textnormal{L}\,\textnormal{id}} (F.170);
\draw[<-] (E.350) to node [swap] {\textnormal{R}\,\textnormal{id}} (F.190);
\draw[<-] (B.10) to node {\textnormal{L}\,\textnormal{id}} (C.173);
\draw[->] (B.350) to node [swap] {\textnormal{R}\,\textnormal{id}} (C.187);
%
%
\draw[->] (B.290) to node {\textnormal{R}\,\textnormal{id}} (E.70);
\draw[<-] (B.250) to node [swap] {\textnormal{L}\,\textnormal{id}} (E.110);
\draw[->] (A.290) to node {\textnormal{R}\,\textnormal{id}} (D.70);
\draw[<-] (A.250) to node [swap] {\textnormal{L}\,\textnormal{id}} (D.110);
\draw[<-] (C.290) to node {\textnormal{R}\,\textnormal{id}} (F.70);
\draw[->] (C.250) to node [swap] {\textnormal{L}\,\textnormal{id}} (F.110);
\end{tikzpicture}
\]

This is a general form of Becker's butterfly diagram~\cite[page 28]{becker}. We now make a few notes about the diagram before proceeding.
\begin{itemize}
\item We write all left adjoints on the top or on the left and we write all right adjoints on the bottom or right.
\item The top row is a colocalization sequence.
\item The bottom row is a localization sequence.
\item The butterfly is pictured by identifying the two occurrences of $\Ho{\class{M}_1}$ along the the middle vertical arrows. The middle vertical arrows are not literally the identity but these functors are canonical equivalences of $\Ho{\class{M}_1}$. It is the identity however if you restrict to the full subcategory of cofibrant-fibrant objects.
\end{itemize}

Note that the condition $\class{F}_2, \class{F}_3 \subseteq \class{F}_1$ is all that is required for the butterfly diagram setup to exist. We now give our main construction theorem, Theorem~\ref{them-finding recollements}, giving simple criteria for an induced recollement situation. The proof will make use of the following proposition which relies on the uniqueness condition of Proposition~\ref{prop-Beckers theorem}. It can be useful on its own for spotting localizations of two injective model structures.

\begin{proposition}\label{prop-spotting localizations}
Let $\class{W}$ be a thick class in $\cat{A}$. Suppose $\class{M}_1 = (\class{W}_1 , \class{F}_1)$ and $\class{M}_2 = (\class{W}_2 , \class{F}_2)$ are injective model structures.
\begin{enumerate}
\item If $\class{W} \cap \class{F}_1 = \class{F}_2$ and $\class{W}_1 \subseteq \class{W}$. Then $\class{M}_1/\class{M}_2 = (\class{W}_2 , \class{W} , \class{F}_1)$.
\item If $\class{W}_2 \cap \class{W} = \class{W}_1$ and $\class{F}_2 \subseteq \class{W}$. Then $\class{M}_1/\class{M}_2 = (\class{W}_2 , \class{W} , \class{F}_1)$.

\end{enumerate}

\end{proposition}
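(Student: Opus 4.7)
The plan is to reduce both parts to verifying that $(\class{W}_2, \class{W}, \class{F}_1)$ is a Hovey triple, after which the uniqueness clause of Proposition~\ref{prop-Beckers theorem} (combined with the fact that $\class{F}_2 \subseteq \class{F}_1$ holds in both cases) identifies it with $\class{M}_1/\class{M}_2$. Since $\class{W}$ is already assumed to be thick, by Hovey's correspondence it suffices to verify that the two associated cotorsion pairs $(\class{W}_2, \class{W} \cap \class{F}_1)$ and $(\class{W}_2 \cap \class{W}, \class{F}_1)$ are complete. The strategy is to show under each hypothesis that these two cotorsion pairs coincide with $\class{M}_2$ and $\class{M}_1$, respectively, i.e.\ that $\class{W} \cap \class{F}_1 = \class{F}_2$ and $\class{W}_2 \cap \class{W} = \class{W}_1$; then completeness is free.

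For part~(1), I first observe that $\class{F}_2 = \class{W} \cap \class{F}_1 \subseteq \class{F}_1$, and applying $\leftperp{(-)}$ gives $\class{W}_1 \subseteq \class{W}_2$. Together with the hypothesis $\class{W}_1 \subseteq \class{W}$, this yields the easy containment $\class{W}_1 \subseteq \class{W}_2 \cap \class{W}$. The reverse inclusion is the content of the argument: given $X \in \class{W}_2 \cap \class{W}$, use enough injectives of $\class{M}_1$ to obtain a short exact sequence $0 \to X \to F \to W \to 0$ with $F \in \class{F}_1$, $W \in \class{W}_1$. Thickness of $\class{W}$ (using $X, W \in \class{W}$) puts $F$ in $\class{W} \cap \class{F}_1 = \class{F}_2$, and thickness of $\class{W}_2$ (using $X, W \in \class{W}_2$) puts $F$ in $\class{W}_2$, so $F \in \class{W}_2 \cap \class{F}_2 = \class{I}$. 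Hence $F$ is injective, in particular $F \in \class{W}_1$, and thickness of $\class{W}_1$ applied to the same sequence forces $X \in \class{W}_1$.

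Part~(2) is essentially dual. From $\class{W}_1 = \class{W}_2 \cap \class{W} \subseteq \class{W}_2$ I obtain $\class{F}_2 \subseteq \class{F}_1$ by orthogonality, which together with the assumption $\class{F}_2 \subseteq \class{W}$ yields $\class{F}_2 \subseteq \class{W} \cap \class{F}_1$. For the reverse inclusion, given $X \in \class{W} \cap \class{F}_1$, I apply enough injectives of $\class{M}_2$ to get $0 \to X \to F \to W \to 0$ with $F \in \class{F}_2$, $W \in \class{W}_2$. Since $\class{M}_1$ is hereditary (an injective cotorsion pair is automatically hereditary by Becker's Lemma) and $X, F \in \class{F}_1$, the class $\class{F}_1$ is coresolving and therefore $W \in \class{F}_1$. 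Thickness of $\class{W}$ applied to $X, F \in \class{W}$ puts $W \in \class{W}$, hence $W \in \class{W}_2 \cap \class{W} = \class{W}_1$, and so $W \in \class{W}_1 \cap \class{F}_1 = \class{I}$. The sequence then splits, exhibiting $X$ as a direct summand of $F \in \class{F}_2$, and closure of $\class{F}_2$ under summands finishes the argument.

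There is no single hard step; the main care needed is bookkeeping of the two thickness properties (of $\class{W}$ and of $\class{W}_i$) and noting that the key point in each direction is that the ``middle'' term of the short exact sequence produced by completeness of the other cotorsion pair ends up forced to lie in $\class{I}$, which then collapses the sequence via a further thickness argument (in~(1)) or by splitting (in~(2)).
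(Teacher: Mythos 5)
Your proposal is correct and follows the paper's overall strategy: in each case you establish the complementary identity ($\class{W}_2 \cap \class{W} = \class{W}_1$ in (1), $\class{W} \cap \class{F}_1 = \class{F}_2$ in (2)), so that the two cotorsion pairs associated to the triple $(\class{W}_2, \class{W}, \class{F}_1)$ are exactly $\class{M}_2$ and $\class{M}_1$ and hence complete, and then you invoke the uniqueness clause of Proposition~\ref{prop-Beckers theorem}. The only real divergence is inside part (1): the paper uses enough projectives of $(\class{W}_1,\class{F}_1)$ to get $0 \to B \to A \to X \to 0$ with $B \in \class{F}_1$, $A \in \class{W}_1$, places $B$ in $\class{W} \cap \class{F}_1 = \class{F}_2$ by thickness of $\class{W}$, and splits the sequence via $\Ext^1_{\cat{A}}(X,B)=0$ (as $X \in \class{W}_2$, $B \in \class{F}_2$), so that $X$ is a retract of $A \in \class{W}_1$; you instead use enough injectives, force the middle term into $\class{W}_2 \cap \class{F}_2 = \class{I}$ by two thickness arguments, and finish with thickness of $\class{W}_1$. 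Both work; the paper trades one thickness step for a splitting. In part (2) your argument is the paper's, except the detour through hereditariness to get $W \in \class{F}_1$ and conclude that $W$ is injective is superfluous, and one should be careful about the phrasing: the splitting of $0 \to X \to F \to W \to 0$ is not a consequence of injectivity of the quotient as such (an epimorphism onto an injective object need not split), but of the orthogonality $\Ext^1_{\cat{A}}(W,X)=0$ coming from $W \in \class{W}_2 \cap \class{W} = \class{W}_1$ and $X \in \class{F}_1$ — facts you have already established, so the conclusion stands.
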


\begin{proof}
First, note that $\class{F}_2 \subseteq \class{F}_1$ by the given. So taking left perps we get $\class{W}_1 \subseteq \class{W}_2$. Also we are given that $\class{W}_1 \subseteq \class{W}$, and so $\class{W}_1 \subseteq \class{W}_2 \cap \class{W}$ is automatic.

On the other hand, say $X \in \class{W}_2
\cap \class{W}$. Since $(\class{W}_1,\class{F}_1)$ is complete we
can find a short exact sequence $0 \xrightarrow{} B \xrightarrow{} A \xrightarrow{} X \xrightarrow{} 0$ where
$B \in \class{F}_1$ and $A \in \class{W}_1$. Since $A$ and $X$ are
in the thick class $\class{W}$ we get that $B$ is too. So $B \in \class{F}_1 \cap \class{W} =
\class{F}_2$. Since $(\class{W}_2,\class{F}_2)$ is a cotorsion
pair the sequence $0 \xrightarrow{} B \xrightarrow{} A \xrightarrow{} X \xrightarrow{} 0$ must split, making
$X$ a direct summand of $A$. But then $X$ must belong to
$\class{W}_1$ since the left side of a cotorsion pair is always
closed under retracts. This shows $\class{W}_1
= \class{W}_2 \cap \class{W}$ and it follows that $(\class{W}_2 , \class{W} , \class{F}_1)$ is an injective Hovey triple and proves (1) because of the uniqueness part of Proposition~\ref{prop-Beckers theorem}. The proof of~(2) is similar. The assumptions imply $\class{F}_2 \subseteq \class{W} \cap \class{F}_1$. On the other hand, for $X \in \class{F}_1 \cap \class{W}$ use completeness of $(\class{W}_2 , \class{F}_2)$ to find a s.e.s. $0 \xrightarrow{} X \xrightarrow{} F_2 \xrightarrow{} W_2 \xrightarrow{} 0$. But given that $F_2 \in \class{W}$, thickness of $\class{W}$ implies $W_2 \in \class{W} \cap \class{W}_2 = \class{W}_1$. So the sequence splits.

\end{proof}

\begin{theorem}\label{them-finding recollements}
Let $\cat{A}$ be an abelian category with enough injectives and suppose we have three injective cotorsion pairs $$\class{M}_1 = (\class{W}_1, \class{F}_1) , \ \ \ \class{M}_2 = (\class{W}_2, \class{F}_2) , \ \ \ \class{M}_3 = (\class{W}_3, \class{F}_3)$$ such that $\class{F}_2 , \class{F}_3 \subseteq  \class{F}_1$. Then if $\class{W}_3 \cap \class{F}_1 = \class{F}_2$ (or equivalently, $\class{W}_2 \cap \class{W}_3 = \class{W}_1$ and $\class{F}_2 \subseteq \class{W}_3$), then $\class{M}_1/\class{M}_2$ is Quillen equivalent to $\class{M}_3$ and $\class{M}_1/\class{M}_3$ is Quillen equivalent to $\class{M}_2$. In fact, there is a recollement
\[
\begin{tikzpicture}[node distance=3.5cm]
\node (A) {$\mathcal{F}_2/\sim$};
\node (B) [right of=A] {$\mathcal{F}_1/\sim$};
\node (C) [right of=B] {$\mathcal{F}_3/\sim$};
\draw[<-,bend left=40] (A.20) to node[above]{\small E$(\class{W}_2, \class{F}_2)$} (B.160);
\draw[->] (A) to node[above]{\small $I$} (B);
\draw[<-,bend right=40] (A.340) to node [below]{\small C$(\class{W}_3, \class{F}_3)$} (B.200);
\draw[<-,bend left] (B.20) to node[above]{\small $\lambda$=C$(\class{W}_2, \class{F}_2)$} (C.160);
\draw[->] (B) to node[above]{\tiny E$(\class{W}_3, \class{F}_3)$} (C);
\draw[<-,bend right] (B.340) to node [below]{\small $I$} (C.200);
\end{tikzpicture}
\]
Here, the notation such as E$(\class{W}_3, \class{F}_3)$ means to take a special $\class{F}_3$-preenvelope by using enough injectives of the cotorsion pair $(\class{W}_3, \class{F}_3)$. Similarly the notation C$(\class{W}_3,\class{F}_3)$ means to take a special $\class{W}_3$-precover. Moreover the left adjoint $\lambda$ has essential image $(\class{W}_2 \cap \class{F}_1)/\sim$ yielding an equivalence $(\class{W}_2 \cap \class{F}_1)/\sim \ \cong \ \class{F}_3/\sim$.
\end{theorem}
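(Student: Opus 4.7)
The plan is to first establish, via Proposition~\ref{prop-spotting localizations}, the Hovey triple identity $\class{M}_1/\class{M}_2 = (\class{W}_2, \class{W}_3, \class{F}_1)$, and then to identify $\Ho{\class{M}_1/\class{M}_2}$ with $\Ho{\class{M}_3}$ via the identity Quillen adjunction. Before doing this, I would verify the equivalence of the two forms of the hypothesis: starting from $\class{W}_3 \cap \class{F}_1 = \class{F}_2$, the inclusion $\class{F}_2 \subseteq \class{W}_3$ is immediate, while $\class{W}_2 \cap \class{W}_3 \subseteq \class{W}_1$ is obtained by taking a special $\class{W}_1$-precover of an $X \in \class{W}_2 \cap \class{W}_3$, using thickness of $\class{W}_3$ to force the kernel into $\class{W}_3 \cap \class{F}_1 = \class{F}_2$, and then splitting the sequence via $\Ext$-vanishing from $(\class{W}_2, \class{F}_2)$. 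The reverse implication uses the dual argument via a special $\class{F}_2$-preenveloping. With this equivalence in hand, Proposition~\ref{prop-spotting localizations}(1) applied with $\class{W} = \class{W}_3$ (noting $\class{W}_1 \subseteq \class{W}_3$ by taking left perps of $\class{F}_3 \subseteq \class{F}_1$) yields the weak injective hereditary Hovey triple $\class{M}_1/\class{M}_2 = (\class{W}_2, \class{W}_3, \class{F}_1)$.

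Next, I would observe that both $\class{M}_1/\class{M}_2$ and $\class{M}_3$ have $\class{W}_3$ as their class of trivial objects; by Lemma~\ref{lemma-characterization of weak equivalences in exact categories} they share the same class of weak equivalences. The identity functors already form a Quillen adjunction (as noted in the lemma preceding the theorem), and agreement of weak equivalences promotes this to a Quillen equivalence. Passing to the subcategories of cofibrant-fibrant objects via Propositions~\ref{prop-classification of homotopic maps in injective models} and~\ref{prop-classification of homotopic maps in weak injective models} already yields the stable equivalence $(\class{W}_2 \cap \class{F}_1)/\sim \ \cong \ \class{F}_3/\sim$ claimed at the end of the theorem, with $\lambda = C(\class{W}_2, \class{F}_2)$ realizing one direction as cofibrant replacement in $\class{M}_1/\class{M}_2$.

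The main obstacle is the symmetric Quillen equivalence $\class{M}_1/\class{M}_3 \simeq \class{M}_2$, since the hypothesis is not a priori symmetric in the indices $2$ and $3$. My strategy is to prove $\class{W}_2 \cap \class{F}_1 = \class{F}_3$, so that $(\class{W}_3, \class{W}_2, \class{F}_1)$ becomes a Hovey triple by Hovey's correspondence, and by the uniqueness part of Proposition~\ref{prop-Beckers theorem} must then coincide with $\class{M}_1/\class{M}_3$. For the inclusion $\class{W}_2 \cap \class{F}_1 \subseteq \class{F}_3$, given $X \in \class{W}_2 \cap \class{F}_1$ and $W \in \class{W}_3$, Proposition~\ref{prop-characterization of trivial objects} applied to the Hovey triple just produced yields a short exact sequence $0 \to W \to F_2 \to W_1 \to 0$ with $F_2 \in \class{F}_2$ and $W_1 \in \class{W}_1$; the long exact $\Ext$ sequence then sandwiches $\Ext^1(W, X)$ between $\Ext^1(F_2, X) = 0$ and $\Ext^2(W_1, X) = 0$, where the latter uses hereditariness of $(\class{W}_1, \class{F}_1)$ via Lemma~\ref{lemma-hereditary test}. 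For the reverse inclusion $\class{F}_3 \subseteq \class{W}_2$, I would exploit the stable equivalence already established: given $F_3 \in \class{F}_3$, it corresponds under the equivalence to some $W \in \class{W}_2 \cap \class{F}_1$ that is isomorphic to it in the Frobenius stable category $\class{F}_3/\sim$. The standard fact that stable isomorphism in a Frobenius category implies the existence of injective summands $I, J$ with $F_3 \oplus I \cong W \oplus J$, combined with thickness of $\class{W}_2$ (closed under finite direct sums and direct summands, and containing all injectives), then forces $F_3 \in \class{W}_2$.

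Once $\class{W}_2 \cap \class{F}_1 = \class{F}_3$, the Hovey triple $(\class{W}_3, \class{W}_2, \class{F}_1)$ exists (its two underlying cotorsion pairs being $(\class{W}_1, \class{F}_1)$ and $(\class{W}_3, \class{F}_3)$) and equals $\class{M}_1/\class{M}_3$ by the same uniqueness statement. The same-weak-equivalences argument then produces the Quillen equivalence $\class{M}_1/\class{M}_3 \simeq \class{M}_2$. To assemble the recollement, Proposition~\ref{prop-colocalizations sequences from injective cot pairs} supplies two colocalization sequences $\Ho{\class{M}_2} \to \Ho{\class{M}_1} \to \Ho{\class{M}_1/\class{M}_2}$ and $\Ho{\class{M}_3} \to \Ho{\class{M}_1} \to \Ho{\class{M}_1/\class{M}_3}$, whose right-hand terms become $\Ho{\class{M}_3}$ and $\Ho{\class{M}_2}$ under the Quillen equivalences; reversing the second into a localization sequence and gluing with the first, along the lines of Becker's butterfly, produces the recollement. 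Its explicit functors are the inclusions together with the cofibrant replacements $C(\class{W}_2, \class{F}_2)$ and fibrant replacements $E(\class{W}_3, \class{F}_3)$ coming from enough projectives and enough injectives of the relevant cotorsion pairs.
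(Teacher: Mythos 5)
Your first half is fine and is essentially the paper's argument: the equivalence of the two hypotheses via special (pre)covers/(pre)envelopes and splitting, then Proposition~\ref{prop-spotting localizations} to get $\class{M}_1/\class{M}_2 = (\class{W}_2, \class{W}_3, \class{F}_1)$, then ``same trivial objects $\Rightarrow$ same weak equivalences'' to get the Quillen equivalence with $\class{M}_3$. The gap is the pivot of your second half: the equality of classes $\class{W}_2 \cap \class{F}_1 = \class{F}_3$ is false in general, and both halves of your argument for it break. For $\class{W}_2 \cap \class{F}_1 \subseteq \class{F}_3$ you use $\Ext^1(F_2, X) = 0$ with $F_2 \in \class{F}_2$ in the \emph{first} variable and $X \in \class{W}_2$ in the second; the cotorsion pair $(\class{W}_2,\class{F}_2)$ gives orthogonality only in the other order, so this term need not vanish. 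For $\class{F}_3 \subseteq \class{W}_2 \cap \class{F}_1$ you treat an object and its image under the equivalence $(\class{W}_2\cap\class{F}_1)/\sim\ \cong\ \class{F}_3/\sim$ as isomorphic in the ambient stable category $\class{F}_1/\sim$; but that equivalence is a composite of replacement functors, not the identity embedding, so no such stable isomorphism (and hence no $F_3\oplus I \cong W\oplus J$) is available. Concretely, in Krause's triple of Example~\ref{example-becker gets Krause} over $R=\Z/4$, the DG-injective complex $I$ given by the injective coresolution $\Z/4 \xrightarrow{2} \Z/4 \xrightarrow{2}\cdots$ of $\Z/2$ maps by the degreewise identity to the exact complex of injectives $E = \cdots \Z/4 \xrightarrow{2} \Z/4 \xrightarrow{2} \Z/4 \cdots$, and this map is not null homotopic (a homotopy would express $\mathrm{id}_{\Z/4}$ as a morphism divisible by $2$); since extensions by complexes of injectives are degreewise split, Lemma~\ref{lemma-homcomplex-basic-lemma} gives $\Ext^1(I,\Sigma^{-1}E)\neq 0$, so $\dgclass{I}=\class{F}_3 \not\subseteq \class{W}_2$. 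This is not an accident: $(\class{W}_2\cap\class{F}_1)/\sim$ and $\class{F}_3/\sim$ are the two distinct wings of the torsion triple underlying the recollement, and the theorem asserts only an equivalence between them, never an equality of classes.

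Since your Quillen equivalence $\class{M}_1/\class{M}_3 \simeq \class{M}_2$ and hence your assembly of the recollement rest on that equality, this part of the proposal collapses as written. The statement itself is still true, but it must be reached the way the paper does, using the hypothesis a second time on the other side rather than any identification of $\class{W}_2\cap\class{F}_1$ with $\class{F}_3$: the cofibrant-fibrant objects of $\class{M}_1/\class{M}_3 = (\class{W}_3, \class{V}, \class{F}_1)$ are $\class{W}_3 \cap \class{F}_1 = \class{F}_2$, which are exactly the cofibrant-fibrant objects of $\class{M}_2$, so $\Ho{\class{M}_1/\class{M}_3} \cong \class{F}_2/\sim\ \cong \Ho{\class{M}_2}$ and the derived identity adjunction restricts to the identity there. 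With that in hand, the left square of the butterfly glues along the inclusion $\class{F}_2/\sim\ \to \class{F}_1/\sim$, exhibiting both adjoints of this inclusion, and the recollement follows formally (the paper cites Krause's localization theory); finally $\lambda = \mathrm{C}(\class{W}_2,\class{F}_2)$ is identified via a natural isomorphism of composites, which is what yields the equivalence $(\class{W}_2\cap\class{F}_1)/\sim\ \cong\ \class{F}_3/\sim$ in the last sentence of the theorem. If you repair your second half along these lines, the rest of your outline goes through.
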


\begin{proof}
First we show that the two conditions are equivalent. That is, $\class{W}_3 \cap \class{F}_1 = \class{F}_2$ if and only if $\class{W}_2 \cap \class{W}_3 = \class{W}_1$ and $\class{F}_2 \subseteq \class{W}_3$. For the ``only if'' part, the only part that is not clear is $\class{W}_2 \cap \class{W}_3 \subseteq \class{W}_1$. So assume $W \in \class{W}_2 \cap \class{W}_3$. Use enough projectives of $(\class{W}_1, \class{F}_1)$ to find a short exact sequence $0 \xrightarrow{} F_1 \xrightarrow{} W_1 \xrightarrow{} W \xrightarrow{} 0$ with $F_1 \in \class{F}_1$, and $W_1 \in \class{W}_1 \subseteq \class{W}_3$. Since $\class{W}_3$ is thick we see that $F_1 \in \class{W}_3 \cap \class{F}_1 = \class{F}_2$. So with $F_1 \in \class{F}_2$ and $W \in \class{W}_2$, the short exact sequence must split, making $W$ a retract of $W_1$. Hence $W$ must be in $\class{W}_1$. For the ``if'' part, the analogous part to show is $\class{W}_3 \cap \class{F}_1 \subseteq \class{F}_2$. This follows by a similar argument which starts by finding a short exact sequence $0 \xrightarrow{} W \xrightarrow{} F_2 \xrightarrow{} W_2 \xrightarrow{} 0$ with $F_2 \in \class{F}_2$, and $W_2 \in \class{W}_2$.

Having shown the two conditions are equivalent we point out that if either holds, Proposition~\ref{prop-spotting localizations} applies (with $\class{W}_3$ in place of $\class{W}$) to conclude $\class{M}_1/\class{M}_2 = (\class{W}_2, \class{W}_3, \class{F}_1)$. We now consider again the butterfly diagram $(***)$ above consisting of all adjoint pairs.

First, note that $\class{M}_1/\class{M}_2$ and $\class{M}_3$ have the same trivial objects. It follows then from Lemma~\ref{lemma-characterization of weak equivalences in exact categories} that the weak equivalences in $\class{M}_1/\class{M}_2$ coincide with those in $\class{M}_3$. So by definition, the identity adjunction $\class{M}_1/\class{M}_2 \rightleftarrows \class{M}_3$ is a Quillen equivalence. This means the vertical maps on the far right of $(***)$ are equivalences. As we noted above the middle vertical maps in $(***)$ are already canonical equivalences.

Second, note that the full subcategory of $\class{M}_1/\class{M}_3 = (\class{W}_3, \class{V}, \class{F}_1)$ consisting of the cofibrant-fibrant subobjects is $\class{W}_3 \cap \class{F}_1 = \class{F}_2$. So we have $\Ho{\class{M}_1/\class{M}_3} \cong (\class{W}_3 \cap \class{F}_1)/\sim \ = \class{F}_2/\sim$ through the canonical equivalence. Thus on the level of the the homotopy category associated to the full subcategory of cofibrant-fibrant subobjects, the diagram $(***)$ becomes the following:

\[
\begin{tikzpicture}[node distance=3.5 cm, auto]
\node (A)  {$\class{F}_2/\sim$};
\node (D) [below of=A] {$\class{F}_2/\sim$};
\node (B) [right of=A] {$\class{F}_1/\sim$};
\node (C) [right of=B] {$(\class{W}_2 \cap \class{F}_1)/\sim$};
\node (E) [right of=D] {$\class{F}_1/\sim$};
\node (F) [right of=E] {$\class{F}_3/\sim$};

%
%
\draw[<-] (A.17) to node {E$(\class{W}_2,\class{F}_2)$} (B.163);
\draw[->] (A.343) to node [swap] {Inclusion} (B.197);
\draw[->] (D.17) to node {Inclusion} (E.163);
\draw[<-] (D.343) to node [swap] {C$(\class{W}_3,\class{F}_3)$} (E.197);
\draw[->] (E.17) to node {E$(\class{W}_3,\class{F}_3)$} (F.163);
\draw[<-] (E.343) to node [swap] {Inclusion} (F.197);
\draw[<-] (B.17) to node {Inclusion} (C.171);
\draw[->] (B.343) to node [swap] {C$(\class{W}_2,\class{F}_2)$} (C.189);
%
%
\draw[->] (B.290) to node {\textnormal{id}} (E.70);
\draw[<-] (B.250) to node [swap] {\textnormal{id}} (E.110);
\draw[->] (A.290) to node {\textnormal{id}} (D.70);
\draw[<-] (A.250) to node [swap] {\textnormal{id}} (D.110);
\draw[<-] (C.290) to node {C$(\class{W}_2,\class{F}_2)$} (F.70);
\draw[->] (C.250) to node [swap] {E$(\class{W}_3,\class{F}_3)$} (F.110);
\end{tikzpicture}
\]
In particular, the left hand square perfectly ``glues'' together along the inclusion $I : \class{F}_2/\sim \ \xrightarrow{} \class{F}_1/\sim$ showing $I$ also has a left adjoint. It follows formally from ~\cite[Proposition~4.13.1]{krause-localization theory for triangulated categories} that the localization sequence which makes up the bottom row induces the recollement. However, we claim $\lambda = \text{C}(\class{W}_2,\class{F}_2)$. To see this, we point out that while the right square doesn't commute on the nose, it is actually ``glued'' together by an isomorphism of functors. Indeed one can argue that there is a natural isomorphism from the composite functor $\class{F}_1/\sim \ \xrightarrow{\text{E}(\class{W}_3,\class{F}_3)} \class{F}_3/\sim \ \xrightarrow{\text{C}(\class{W}_2,\class{F}_2)} (\class{W}_2 \cap \class{F}_1)/\sim$ to the functor $\class{F}_1/\sim \ \xrightarrow{\text{C}(\class{W}_2,\class{F}_2)} (\class{W}_2 \cap \class{F}_1)/\sim$. From this it follows rather easily that the composite $ \class{F}_3/\sim \ \xrightarrow{\text{C}(\class{W}_2,\class{F}_2)} (\class{W}_2 \cap \class{F}_1)/\sim \ \hookrightarrow \class{F}_1/\sim$ is indeed the left adjoint to $\class{F}_1/\sim \ \xrightarrow{\text{E}(\class{W}_3,\class{F}_3)} \class{F}_3/\sim$. That is, $\lambda = \text{C}(\class{W}_2,\class{F}_2)$.
\end{proof}

\begin{remark}
In the proof of Theorem~\ref{them-finding recollements} above, we described how to ``glue'' together the right square to get the recollement. But there is an alternate yet equally valid way to glue together the right square. Indeed there is also a natural isomorphism from the composite functor $\class{F}_1/\sim \ \xrightarrow{\text{C}(\class{W}_2,\class{F}_2)} (\class{W}_2 \cap \class{F}_1)/\sim \ \xrightarrow{\text{E}(\class{W}_3,\class{F}_3)} \class{F}_3/\sim$ to the functor $\class{F}_1/\sim \ \xrightarrow{\text{E}(\class{W}_3,\class{F}_3)} \class{F}_3/\sim$. From this we deduce that the composite $(\class{W}_2 \cap \class{F}_1)/\sim \ \xrightarrow{\text{E}(\class{W}_3,\class{F}_3)} \class{F}_3/\sim \ \hookrightarrow \class{F}_1/\sim$ is right adjoint to $\class{F}_1/\sim \ \xrightarrow{\text{C}(\class{W}_2,\class{F}_2)} (\class{W}_2 \cap \class{F}_1)/\sim$. That is, we can restate Theorem~\ref{them-finding recollements} with a new recollement diagram emphasizing the quotient functor $\class{F}_1/\sim \ \xrightarrow{\text{C}(\class{W}_2,\class{F}_2)} (\class{W}_2 \cap \class{F}_1)/\sim$ with the inclusion as its left adjoint and with $\rho = \text{E}(\class{W}_3,\class{F}_3)$ as its right adjoint having essential image $\class{F}_3/\sim$. We will display all recollement diagrams in this paper however in the style appearing in the statement of Theorem~\ref{them-finding recollements}.
\end{remark}

We now record the projective version of the theorem.

\begin{theorem}\label{them-finding projective recollements}
Let $\cat{A}$ be an abelian category with enough projectives and suppose we have three projective cotorsion pairs $$\class{M}_1 = (\class{C}_1, \class{W}_1) , \ \ \ \class{M}_2 = (\class{C}_2, \class{W}_2) , \ \ \ \class{M}_3 = (\class{C}_3, \class{W}_3)$$ such that $\class{C}_2 , \class{C}_3 \subseteq  \class{C}_1$. Then if $\class{W}_3 \cap \class{C}_1 = \class{C}_2$ (or equivalently, $\class{W}_2 \cap \class{W}_3 = \class{W}_1$ and $\class{C}_2 \subseteq \class{W}_3$), then the \emph{left} localization $\class{M}_2\backslash\class{M}_1$ is Quillen equivalent to $\class{M}_3$ and $\class{M}_3\backslash\class{M}_1$ is Quillen equivalent to $\class{M}_2$. In fact, there is a recollement
\[
\begin{tikzpicture}[node distance=3.5cm]
\node (A) {$\mathcal{C}_2/\sim$};
\node (B) [right of=A] {$\mathcal{C}_1/\sim$};
\node (C) [right of=B] {$\mathcal{C}_3/\sim$};
\draw[<-,bend left=40] (A.20) to node[above]{\small E$(\class{C}_3, \class{W}_3)$} (B.160);
\draw[->] (A) to node[above]{\small $I$} (B);
\draw[<-,bend right=40] (A.340) to node [below]{\small C$(\class{C}_2, \class{W}_2)$} (B.200);
\draw[<-,bend left] (B.20) to node[above]{\small $I$} (C.160);
\draw[->] (B) to node[above]{\tiny C$(\class{C}_3, \class{W}_3)$} (C);
\draw[<-,bend right] (B.340) to node [below]{\small $\rho$=E$(\class{C}_2, \class{W}_2)$} (C.200);
\end{tikzpicture}
\]
Here, the notation such as E$(\class{C}_3, \class{W}_3)$ means to take a special $\class{W}_3$-preenvelope by using enough injectives of the cotorsion pair $(\class{C}_3, \class{W}_3)$. Similarly the notation C$(\class{C}_3,\class{W}_3)$ means to take a special $\class{C}_3$-precover. Moreover the right adjoint $\rho$ has essential image $(\class{C}_1 \cap \class{W}_2)/\sim$ yielding an equivalence $(\class{C}_1 \cap \class{W}_2)/\sim \ \cong \ \class{C}_3/\sim$.
\end{theorem}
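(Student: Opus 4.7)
The plan is to dualize the proof of Theorem~\ref{them-finding recollements} step by step, with the understanding that the projective dual of every statement about injective cotorsion pairs in Section~\ref{sec-injective and weak injective hovey triples} and Subsection~\ref{subsec-finding a recollement from three cot pairs} holds. In particular, I would first establish the projective dual of Proposition~\ref{prop-Beckers theorem}, giving the left localization $\class{M}_2\backslash\class{M}_1$ as the Hovey triple $(\class{C}_1, \class{W}_3, \class{W}_2)$ whenever $\class{C}_2 \subseteq \class{C}_1$, and the dual of Proposition~\ref{prop-spotting localizations} which identifies the trivial class once we know a candidate thick class sits correctly between the two projective cotorsion pairs. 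Both statements dualize verbatim because their proofs rely only on completeness, thickness, and splitting of cotorsion sequences, all of which are self-dual properties.

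Next I would verify the equivalence of the two conditions $\class{W}_3 \cap \class{C}_1 = \class{C}_2$ and ($\class{W}_2 \cap \class{W}_3 = \class{W}_1$ together with $\class{C}_2 \subseteq \class{W}_3$) by the same argument as in the proof of Theorem~\ref{them-finding recollements}: use enough injectives of $(\class{C}_1, \class{W}_1)$ to produce a short exact sequence through which retracts can be extracted, applying thickness of $\class{W}_3$. Once either condition is available, the projective dual of Proposition~\ref{prop-spotting localizations} produces the Hovey triple $\class{M}_2\backslash\class{M}_1 = (\class{C}_1, \class{W}_3, \class{W}_2)$. Since $\class{M}_2\backslash\class{M}_1$ and $\class{M}_3$ share the same class $\class{W}_3$ of trivial objects, Lemma~\ref{lemma-characterization of weak equivalences in exact categories} yields equality of weak equivalences, and so the identity adjunction $\class{M}_2\backslash\class{M}_1 \rightleftarrows \class{M}_3$ is a Quillen equivalence. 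A symmetric argument handles $\class{M}_3\backslash\class{M}_1 \rightleftarrows \class{M}_2$.

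Finally I would construct the butterfly diagram dual to $(***)$, with a localization sequence on the top (the dual of the colocalization sequence in Proposition~\ref{prop-colocalizations sequences from injective cot pairs}) and a colocalization sequence on the bottom. Gluing them along the shared equivalence of $\Ho{\class{M}_1}$ on the full subcategory of cofibrant-fibrant objects $\class{C}_1/\sim$ yields the desired recollement. The derived identity functors unpack into the labels in the displayed diagram because, in a projective model structure, cofibrant replacement is a special precover (using enough projectives of the corresponding cotorsion pair) and fibrant replacement is a special preenvelope (using enough injectives). The essential image of $\rho = \textup{E}(\class{C}_2,\class{W}_2)$ as $(\class{C}_1 \cap \class{W}_2)/\sim$ then follows from the dual of the natural-isomorphism-of-functors argument closing the proof of Theorem~\ref{them-finding recollements}. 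The main obstacle is purely bookkeeping: one must carefully track which side of each adjunction flips under dualization so that the diagram in the statement, with $\rho$ as a \emph{right} adjoint and the inclusion as a \emph{left} adjoint, matches the output of the dualized butterfly correctly.
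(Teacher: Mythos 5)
Your proposal takes exactly the paper's route: the paper records Theorem~\ref{them-finding projective recollements} with no separate argument, as the word-for-word projective dual of Theorem~\ref{them-finding recollements}, which is precisely the step-by-step dualization you describe (dual Becker localization, dual of Proposition~\ref{prop-spotting localizations} identifying the trivial class as $\class{W}_3$, equality of weak equivalences via Lemma~\ref{lemma-characterization of weak equivalences in exact categories}, then the dual butterfly gluing and the dual natural-isomorphism argument for $\rho$). One small caution: the second Quillen equivalence $\class{M}_3\backslash\class{M}_1 \rightleftarrows \class{M}_2$ is not handled by a literally ``symmetric'' run of the same-trivial-objects argument (the hypothesis is not symmetric in $\class{M}_2$ and $\class{M}_3$); as in the injective proof, it comes from identifying the cofibrant-fibrant objects of $\class{M}_3\backslash\class{M}_1$ as $\class{C}_1 \cap \class{W}_3 = \class{C}_2$, so that its homotopy category is canonically $\class{C}_2/\sim \ \cong \Ho{\class{M}_2}$.
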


\begin{example}\label{example-becker gets Krause}
In terms of Theorem~\ref{them-finding recollements}, Becker's method of recovering Krause's recollement boils down to the following observations. As in Example~\ref{example-degreewise model structures}, let $\dwclass{I}$ be the class of all complexes of injective $R$-modules, $\exclass{I}$ be the class of all exact complexes of injective $R$-modules and $\dgclass{I}$ be the class of all DG-injective complexes. Then we have the following injective cotorsion pairs: $$\class{M}_1 = (\class{W}_1, \dwclass{I}) , \ \ \ \class{M}_2 = (\class{W}_2, \exclass{I}) , \ \ \ \class{M}_3 = (\class{E}, \dgclass{I}).$$ It is easy to see that $\class{F}_2 , \class{F}_3 \subseteq  \class{F}_1$ and $\class{E} \cap \dwclass{I} = \exclass{I}$. So Theorem~\ref{them-finding recollements} gives a recollement which is Krause's recollement from~\cite{krause-stable derived cat of a Noetherian scheme}. See also the introduction to Section~\ref{sec-Gorenstein projective and injective models for the derived category}.

In an earlier version of this paper the author claimed without proof that this example extended to recover Krause's recollement for quasi-coherent sheaves over any scheme $X$. This is false and a counterexample has been constructed and communicated to the author by Amnon Neeman.

\end{example}

\subsection{A converse: Lifting a recollement to the level of model categories}\label{subsec-converse of main theorem}

Let $\class{M} = (\class{W}, \class{F})$ be a fixed injective cotorsion pair in an abelian category $\class{A}$ with enough injectives.
We have just shown above that if $(\class{W}', \class{F}')$, and $(\class{W}'', \class{F}'')$ are also injective cotorsion pairs and if $\class{F}'' \subseteq \class{F}$ and $\class{W}'' \cap \class{F} = \class{F}'$, then there is a recollement $\class{F}'/\sim \ \xrightarrow{} \class{F}/\sim \ \xrightarrow{} \class{F}''/\sim$. The author was very pleased to learn from an anonymous referee that there is a converse of sorts. That is, all recollements with $\class{F}/\sim$ in the middle come from injective cotorsion pairs in the way described in Theorem~\ref{them-finding recollements}! In particular, this shows that all such recollements arise from Quillen functors between model structures on $\class{A}$. The same is true for just localization or colocalization sequences. Since we are now going in a new direction - building model structures on $\class{A}$ starting with thick subcategories of $\Ho{\class{A}} = \class{F}/\sim$ we necessarily need different tools and theory in this section. In particular, we will apply the theory of torsion pairs and torsion triples in triangulated categories and we will relate these notions to cotorsion pairs (model structures). We refer the reader to~\cite{beligiannis-reiten} as a reference for torsion pairs and torsion triples in triangulated categories. We will only formulate the injective versions of our results here but of course there are projective versions as well.

For the remainder of this section, we let $\class{M} = (\class{W}, \class{F})$ be a fixed injective cotorsion pair in an abelian category $\class{A}$ with enough injectives.
We point out again (see Proposition~\ref{prop-classification of homotopic maps in injective models}) that the full subcategory $\class{F}$ of fibrant objects naturally inherits the structure of a Frobenius category with its projective-injective objects being precisely the injective objects of $\class{A}$. The classical stable category $\class{F}/\sim$ is a triangulated category and the inclusion $\class{F}/\sim \ \hookrightarrow  \Ho{\class{M}}$ is an equivalence of triangulated categories. The fibrant replacement functor $R : \class{A} \xrightarrow{} \class{F}/\sim \ \hookrightarrow \Ho{\class{M}}$ coincides with using enough injectives in the cotorsion pair $\class{M} = (\class{W}, \class{F})$. It takes short exact sequences in $\class{A}$ to exact triangles in $\class{F}/\sim$ by~\cite[Lemmas~1.4.3 and~1.4.4]{becker}. Note that for $F \in \class{F}/\sim$ its suspension $\Sigma F \in \class{F}/\sim$ is computed, up to a unique isomorphism in $\class{F}/\sim$, by finding a short exact sequence $0 \xrightarrow{} F \xrightarrow{} I \xrightarrow{} \Sigma F \xrightarrow{} 0$ in $\class{F}$ with $I$ injective.  On the other hand, the loop $\Omega F \in \class{F}/\sim$ is computed, up to a unique isomorphism in $\class{F}/\sim$, by using enough projectives of the cotorsion pair $(\class{W}, \class{F})$ to get a short exact sequence $0 \xrightarrow{} \Omega F \xrightarrow{} I \xrightarrow{} F \xrightarrow{} 0$ with $\Omega F \in \class{F}$ and $I \in \class{W} \cap \class{F}$ necessarily being injective.

Recall once again that a \emph{thick} subcategory of $\cat{A}$ is a class of objects that is closed under retracts and satisfies the two out of three property on short exact sequences in $\class{A}$. We have the following similar definitions which will be applied to the Frobenius category $\class{F}$ and the triangulated category $\class{F}/\sim$.

\begin{definition}
We call an additive subcategory $\class{T}$ of $\class{F}$ an \emph{$\class{F}$-thick subcategory} if it is closed under retracts and satisfies the two out of three property on short exact sequences in $\class{F}$. That is, on short exact sequences having all three terms in $\class{F}$. By a \emph{thick subcategory} of the triangulated category $\class{F}/\sim$ we mean a  triangulated subcategory (so closed under suspensions and satisfying the two out of three property on exact triangles) which is closed under retracts.
\end{definition}

$\class{F}$-thick subcategories of $\class{F}$ which contain the injectives correspond in a very straightforward way to thick subcategories of $\class{F}/\sim$. That is, if $\class{T}$ is $\class{F}$-thick and contains the injectives then its image under the projection $\class{F} \xrightarrow{} \class{F}/\sim$, which we denote by $\class{T}/\sim$, is thick. This uses in particular the fact that triangles in $\class{F}/\sim$ are obtainable from short exact sequences in $\class{F}$ whose terms are isomorphic in $\class{F}/\sim$ to those in the original triangle. Such an argument appears in our proof of Proposition~\ref{prop-torsion pairs and cotorsion pairs} ahead. Conversely, a thick subcategory of $\class{F}/\sim$ consists of a collection of objects $\class{T} \subseteq \class{F}$. It necessarily contains the injectives since all injectives are isomorphic to zero in $\class{F}/\sim$, and indeed $\class{T}$ is $\class{F}$-thick. This uses in particular the easier fact that short exact sequences in $\class{F}$ give rise to exact triangles in $\class{F}/\sim$. All told we get the following well-known Lemma which we won't prove in more detail.

\begin{lemma}\label{lemma-bijection between F-thick and F mod twiddle-thick}
There is a bijection  between the following two classes:

$$\{\, \class{F}\text{-thick subcategories of } \class{F} \text{ that contain the injectives} \,\} \longleftrightarrow$$
$$\{\, \text{thick subcategories of } \class{F}/\sim  \,\}             $$
\end{lemma}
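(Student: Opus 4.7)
The plan is to exhibit explicit inverse bijections. Define $\Phi$ sending an $\class{F}$-thick subcategory $\class{T} \subseteq \class{F}$ (containing the injectives) to the full subcategory $\Phi(\class{T}) \subseteq \class{F}/\sim$ whose objects are precisely those of $\class{T}$, and define $\Psi$ sending a thick subcategory $\class{S} \subseteq \class{F}/\sim$ to its preimage $\Psi(\class{S}) = \{F \in \class{F} : F \in \class{S}\}$ under the canonical projection $\class{F} \to \class{F}/\sim$. Since every injective object becomes isomorphic to $0$ in $\class{F}/\sim$, and every thick subcategory of a triangulated category contains $0$, the class $\Psi(\class{S})$ automatically contains all injectives.

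For well-definedness of $\Psi$, I would verify that $\Psi(\class{S})$ is $\class{F}$-thick. Closure under retracts is immediate because a retract in $\class{F}$ remains a retract in $\class{F}/\sim$. For the two-out-of-three property on a short exact sequence $0 \to A \to B \to C \to 0$ in $\class{F}$, note by~\cite[Lemmas~1.4.3 and 1.4.4]{becker} (cited earlier) that this produces an exact triangle $A \to B \to C \to \Sigma A$ in $\class{F}/\sim$, so thickness of $\class{S}$ gives the conclusion. For well-definedness of $\Phi$, I would show $\Phi(\class{T})$ is triangulated and closed under retracts in $\class{F}/\sim$. Closure under $\Sigma$ uses that $\Sigma F$ fits in a short exact sequence $0 \to F \to I \to \Sigma F \to 0$ with $I$ injective and hence $I \in \class{T}$, so $\Sigma F \in \class{T}$ by $\class{F}$-thickness; closure under $\Omega$ is dual. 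For exact triangles, every triangle in $\class{F}/\sim$ is isomorphic to one arising from a short exact sequence in $\class{F}$, so two-out-of-three on triangles reduces to the $\class{F}$-thick property after absorbing the isomorphisms.

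The main obstacle is closure of $\Phi(\class{T})$ under retracts in $\class{F}/\sim$. Here I would use the classical characterization for Frobenius categories: two objects $X, Y \in \class{F}$ are isomorphic in $\class{F}/\sim$ if and only if there exist injective objects $I, I'$ with $X \oplus I \cong Y \oplus I'$ in $\class{F}$. So if $X \in \class{F}$ is a retract (in $\class{F}/\sim$) of some $T \in \class{T}$, one can first show $X$ is a retract in $\class{F}/\sim$ of $T$ via maps whose composite lifts, modulo injectives, to the identity of $X$; combining with the characterization above lets me arrange an isomorphism $X \oplus I \cong T' \oplus I'$ in $\class{F}$ for some $T' \in \class{T}$ (using that $\class{F}$-thickness together with injectives implies closure under finite direct sums as extensions). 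Then $X$ is a retract in $\class{F}$ of an object in $\class{T}$, hence belongs to $\class{T}$ by the retract-closure of $\class{F}$-thickness.

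Finally I would verify $\Phi$ and $\Psi$ are mutually inverse. The composite $\Phi \circ \Psi$ is the identity by definition. For $\Psi \circ \Phi(\class{T}) = \class{T}$, the inclusion $\class{T} \subseteq \Psi(\Phi(\class{T}))$ is trivial, and the reverse inclusion uses the same stable-equivalence argument from the previous paragraph: if $X \cong T$ in $\class{F}/\sim$ for some $T \in \class{T}$, then $X \oplus I \cong T \oplus I'$ in $\class{F}$ with $I, I'$ injective, hence $X \oplus I \in \class{T}$, hence $X \in \class{T}$ by retract closure. This completes the bijection.
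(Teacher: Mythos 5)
Your proposal is correct and follows essentially the same route as the paper, which states this lemma as well-known and only sketches it in the surrounding discussion (images of $\class{F}$-thick classes are thick via triangles arising from short exact sequences, preimages contain the injectives since injectives are zero in $\class{F}/\sim$); you simply fill in the details the paper omits, in particular the closure under retracts/stable isomorphisms by absorbing injective summands. One small imprecision: from a retraction in $\class{F}/\sim$ you should correct the homotopy $1_X - pi$ (which factors through an injective $I$) to exhibit $X$ as an honest retract in $\class{F}$ of $T \oplus I$, rather than claiming an isomorphism $X \oplus I \cong T' \oplus I'$; your final conclusion ``$X$ is a retract in $\class{F}$ of an object of $\class{T}$'' is exactly what this gives and is all that is needed.
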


\begin{remark}
Let $(\class{W}', \class{F}')$ be another injective cotorsion pair with $\class{F}' \subseteq \class{F}$. Then $\class{F}'$ is $\class{F}$-thick and contains the injectives. The key point to check is that for a given short exact sequence  $0 \xrightarrow{} F \xrightarrow{} F' \xrightarrow{} F'' \xrightarrow{} 0$ with $F \in \class{F}$ and $F', F'' \in \class{F}'$ we indeed have $F \in \class{F}'$ too. To see this, use that $(\class{W}', \class{F}')$ is an injective cotorsion pair to find another short exact sequence $0 \xrightarrow{} F''' \xrightarrow{} I \xrightarrow{} F'' \xrightarrow{} 0$ with $F''' \in \class{F}'$ and $I \in \class{W}' \cap \class{F}'$ necessarily injective. Taking the pullback $P$ of $F' \xrightarrow{} F'' \xleftarrow{} I$ yields another short exact sequence $0 \xrightarrow{} F \xrightarrow{} P \xrightarrow{} I \xrightarrow{} 0$ with $P \in  \class{F}'$ since it is an extension of $F'$ by $F'''$. But this last s.e.s must split since $(\class{W}, \class{F})$ is an injective cotorsion pair. Since $\class{F}'$ is closed under retracts we get $F \in \class{F}'$.
\end{remark}

\begin{setup}\label{setup}
We now fix a standard set-up along with notation that will be used throughout this section to prove Proposition~\ref{prop-torsion pairs and cotorsion pairs}. First, we have a fixed injective cotorsion pair $(\class{W}, \class{F})$ for which $\class{F}/\sim$ serves as our ambient triangulated category. By Lemma~\ref{lemma-bijection between F-thick and F mod twiddle-thick} we speak interchangeably of $\class{F}$-thick subcategories $\class{T} \subseteq \class{F}$ which contain the injectives and thick subcategories $\class{T}/\sim$ of $\class{F}/\sim$. We assume we are given such a class $\class{T}$ and we define the following class in $\class{A}$:
$$\class{W}_{\class{T}} = \{\, A \in \class{A} \ | \ \text{there exists a s.e.s. } 0 \xrightarrow{} A \xrightarrow{} T \xrightarrow{} W \xrightarrow{} 0  \text{ with } T \in \class{T} , W \in \class{W} \,\}$$

\end{setup}

\begin{lemma}\label{lemma-properties of the W class}
The following hold for any $\class{W}_{\class{T}}$ class.
\begin{enumerate}
\item $\class{W}_{\class{T}} = R^{-1}(\class{T}/\sim)$. That is, $\class{W}_{\class{T}}$ consists precisely of the objects of $\class{A}$ whose fibrant replacement in $\class{M} = (\class{W}, \class{F})$ lies in $\class{T}$.
\item $\class{W}_{\class{T}}$ is a thick subcategory of $\class{A}$ and contains both $\class{T}$ and $\class{W}$.
\item $\class{W}_{\class{T}} \cap \class{F} = \class{T}$
\end{enumerate}
\end{lemma}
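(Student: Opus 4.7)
The plan is to prove (1) first and then read off (2) and (3) from the characterization $\class{W}_{\class{T}} = R^{-1}(\class{T}/\sim)$, since thickness and the intersection with $\class{F}$ will then transfer from analogous properties of $\class{T}/\sim$ inside $\class{F}/\sim$. The one piece of technical glue that I will need and should establish separately is that $\class{T}$, regarded as a subcategory of $\class{F}$, is closed under isomorphism in the stable category $\class{F}/\sim$. This follows from a standard Frobenius argument: if $F \in \class{F}$ and $F \cong T$ in $\class{F}/\sim$ for some $T \in \class{T}$, then by Heller's lemma there exist injectives $I, J$ with $F \oplus I \cong T \oplus J$ in $\class{F}$; since $\class{T}$ contains the injectives and is $\class{F}$-thick, $T \oplus J \in \class{T}$, hence $F \oplus I \in \class{T}$, and closure under direct summands gives $F \in \class{T}$.

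For (1), the forward inclusion is immediate: given a defining short exact sequence $0 \to A \to T \to W \to 0$ with $T \in \class{T} \subseteq \class{F}$ and $W \in \class{W}$, the middle term $T$ already serves as a fibrant replacement of $A$ since it sits in such a sequence with cokernel in $\class{W}$. Because any two fibrant replacements in the model structure $\class{M} = (\class{W}, \class{F})$ are canonically isomorphic in $\class{F}/\sim$, we get $R(A) \cong T$ in $\class{F}/\sim$, so $R(A) \in \class{T}/\sim$. Conversely, if $R(A) \in \class{T}/\sim$, choose any fibrant replacement via enough injectives of $(\class{W}, \class{F})$: $0 \to A \to F \to W \to 0$ with $F \in \class{F}$ and $W \in \class{W}$. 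Then $F \cong R(A)$ in $\class{F}/\sim$, so by the closure property just noted, $F \in \class{T}$, witnessing $A \in \class{W}_{\class{T}}$.

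For (2), the containment $\class{T} \subseteq \class{W}_{\class{T}}$ is trivial via $0 \to T \to T \to 0 \to 0$. For $\class{W} \subseteq \class{W}_{\class{T}}$, given $W \in \class{W}$ apply enough injectives of $(\class{W}, \class{F})$ to obtain $0 \to W \to F \to W' \to 0$ with $F \in \class{F}$ and $W' \in \class{W}$; thickness of $\class{W}$ forces $F \in \class{W} \cap \class{F}$, which is the class of injectives, and this is contained in $\class{T}$ by hypothesis. Thickness of $\class{W}_{\class{T}}$ follows by pulling back thickness of $\class{T}/\sim$ along $R$: by \cite[Lemmas~1.4.3 and~1.4.4]{becker}, $R$ carries short exact sequences in $\class{A}$ to exact triangles in $\class{F}/\sim$ and commutes with finite direct sums, so $R^{-1}(\class{T}/\sim)$ inherits the two-out-of-three property on short exact sequences and closure under retracts from the corresponding properties of $\class{T}/\sim$.

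Part (3) then drops out immediately from (1): if $F \in \class{W}_{\class{T}} \cap \class{F}$, then $F$ itself is a fibrant replacement and lies in $\class{T}/\sim$, so by the closure property from the first paragraph, $F \in \class{T}$; the reverse containment is already covered by (2) together with $\class{T} \subseteq \class{F}$. The only genuine obstacle in all of this is the Frobenius closure step, but once that is in hand the rest is essentially bookkeeping through the fibrant replacement functor.
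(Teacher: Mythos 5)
Your argument is correct, and on part (1) it is essentially the paper's own proof: both hinge on the fact that the fibrant replacement $F$ of an object $A\in\class{W}_{\class{T}}$ is isomorphic in $\class{F}/\sim$ to the middle term $T$ of a defining sequence, whence $F\oplus I\cong T\oplus J$ for injectives $I,J$ and $F\in\class{T}$ because $\class{T}$ is $\class{F}$-thick, contains the injectives, and is closed under summands. Where you diverge is in parts (2) and (3). For thickness of $\class{W}_{\class{T}}$ the paper simply defers to Becker's direct abelian-category argument (the proof of thickness of $\class{W}$ in his Proposition~1.4.2), whereas you deduce it from the identification $\class{W}_{\class{T}}=R^{-1}(\class{T}/\sim)$, using that $R$ sends short exact sequences to exact triangles (the paper's Set-Up, citing Becker's Lemmas~1.4.3 and~1.4.4) and that $\class{T}/\sim$ is a thick subcategory of $\class{F}/\sim$ (the preceding bijection lemma); this is more conceptual, gives closure under retracts for free, and makes (1) do all the work. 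For (3) the paper gives a short self-contained splitting argument: from $0\arr A\arr T\arr W\arr 0$ with $A\in\class{F}$ the sequence splits since $\Ext^1_{\cat{A}}(W,A)=0$, so $A$ is a retract of $T$; you instead read (3) off of (1) via the observation that a fibrant object is its own fibrant replacement, which again routes through the stable-isomorphism closure of $\class{T}$. Both routes are sound; the paper's is more elementary and avoids invoking uniqueness of fibrant replacements and the triangulated formalism beyond part (1), while yours unifies the three statements around the single characterization $\class{W}_{\class{T}}=R^{-1}(\class{T}/\sim)$, at the cost of leaning on the functoriality and exactness properties of $R$ that the paper only states with citation.
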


\begin{proof}
For (1), recall that a fibrant replacement of $A$ is obtained by using enough injectives in the cotorsion pair $(\class{W},\class{F})$ to get a s.e.s. $0 \xrightarrow{} A \xrightarrow{} F \xrightarrow{} W \xrightarrow{} 0$ with $F \in \class{F}$ and $W \in \class{W}$. If $A \in \class{W}_{\class{T}}$, then there is another s.e.s. $0 \xrightarrow{} A \xrightarrow{} T \xrightarrow{} W' \xrightarrow{} 0$ with $T \in \class{T}$ and $W' \in \class{W}$. One can argue that $F$ is isomorphic to $T$ in the stable category $\class{F}/\sim$, and from this it follows that there are injectives $I$ and $J$ such that $F \oplus I \cong T \oplus J$ in $\class{F}$. Since $\class{T}$ is thick and contains the injectives we get that $F \in \class{T}$ too.

(2) The proof of the thickness of $\class{W}_{\class{T}}$ follows just like the proof of the thickness of the class $\class{W}$ given in~\cite[Proposition~1.4.2]{becker}.
Clearly $\class{T} \subseteq \class{W}_{\class{T}}$ because for any given $T \in \class{T}$, there is the s.e.s. $0 \xrightarrow{} T \xrightarrow{1} T \xrightarrow{} 0 \xrightarrow{} 0$. Also, for any $W \in \class{W}$, taking a fibrant replacement $0 \xrightarrow{} W \xrightarrow{} F \xrightarrow{} W' \xrightarrow{} 0$ will have $F \in \class{F} \cap \class{W}$. So $F$ is  injective and hence in $\class{T}$ since we are assuming $\class{T}$ contains the injectives. This shows $\class{W} \subseteq \class{W}_{\class{T}}$.

(3) We already know $\class{T}$ is contained in both $\class{F}$ and $\class{W}_{\class{T}}$. On the other hand, if $A \in \class{W}_{\class{T}} \cap \class{F}$ then there is a s.e.s. $0 \xrightarrow{} A \xrightarrow{} T \xrightarrow{} W \xrightarrow{} 0$ with $T \in \class{T}$ and $W \in \class{W}$. But with $A$ bing in $\class{F}$ we know that this sequence must split, making $A$ a retract of $T$. Hence $A \in \class{T}$ since $\class{T}$ is thick.

\end{proof}

\begin{lemma}\label{lemma-bijection between F-thick and thick W's}
Continuing with the notation in Set-Up~\ref{setup}, there is a bijection between the following two classes:

$$\{\, \class{F}\text{-thick subcategories of } \class{F} \text{ that contain the injectives} \,\} \longleftrightarrow$$
$$\{\, \text{thick subcategories of } \class{A} \text{ that contain } \class{W}  \,\}             $$ which acts by $\class{T} \hookrightarrow \class{W}_{\class{T}}$ and has inverse $\class{W}' \cap \class{F} \hookleftarrow \class{W}'$.
\end{lemma}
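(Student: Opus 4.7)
The plan is to verify the two maps $\class{T} \mapsto \class{W}_{\class{T}}$ and $\class{W}' \mapsto \class{W}' \cap \class{F}$ are well-defined between the two collections, and then show they are mutual inverses. The well-definedness of the first map is exactly the content of Lemma~\ref{lemma-properties of the W class}, parts (1) and (2), so nothing new is needed there. For the second map, I would first observe that $\class{W}' \cap \class{F}$ contains the injectives because $\class{W} \cap \class{F}$ is precisely the class of injectives (by definition of an injective cotorsion pair) and $\class{W} \subseteq \class{W}'$. Then $\class{F}$-thickness of $\class{W}' \cap \class{F}$ follows immediately: closure under retracts is inherited from both $\class{W}'$ and $\class{F}$, and the two-out-of-three property on a short exact sequence in $\class{F}$ reduces to the same property for $\class{W}'$ in $\class{A}$.

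Next I would check that the composition one way is the identity. The identity $\class{W}_{\class{T}} \cap \class{F} = \class{T}$ is precisely Lemma~\ref{lemma-properties of the W class}(3), so this direction is already in hand. The remaining direction, namely $\class{W}_{\class{W}' \cap \class{F}} = \class{W}'$, is where the real content lies. The inclusion ($\subseteq$) is easy: if $A$ sits in a short exact sequence $0 \to A \to T \to W \to 0$ with $T \in \class{W}' \cap \class{F}$ and $W \in \class{W} \subseteq \class{W}'$, then thickness of $\class{W}'$ forces $A \in \class{W}'$. For the reverse inclusion ($\supseteq$), take any $A \in \class{W}'$ and apply enough injectives in the ambient cotorsion pair $(\class{W},\class{F})$ to produce a fibrant replacement sequence $0 \to A \to F \to W \to 0$ with $F \in \class{F}$ and $W \in \class{W} \subseteq \class{W}'$. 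Thickness of $\class{W}'$ now places $F$ in $\class{W}' \cap \class{F}$, and the very same sequence witnesses that $A \in \class{W}_{\class{W}' \cap \class{F}}$.

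The only step that requires even a little care is the reverse inclusion above, and even there the argument is just a single application of the fibrant replacement functor combined with thickness of $\class{W}'$. So I expect no serious obstacle; the proof is essentially a bookkeeping exercise once the machinery of Setup~\ref{setup} and Lemma~\ref{lemma-properties of the W class} is in place.
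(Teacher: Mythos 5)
Your proposal is correct and follows essentially the same route as the paper's proof: well-definedness via Lemma~\ref{lemma-properties of the W class} and the observation that $\class{W}'\cap\class{F}$ is $\class{F}$-thick and contains the injectives, the identity $\class{W}_{\class{T}}\cap\class{F}=\class{T}$ quoted from that lemma, and the equality $\class{W}_{\class{W}'\cap\class{F}}=\class{W}'$ proved by exactly the same two arguments (thickness of $\class{W}'$ for $\subseteq$, and a fibrant replacement sequence from enough injectives of $(\class{W},\class{F})$ for $\supseteq$). No gaps.
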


\begin{proof}
The mapping $\class{T} \hookrightarrow \class{W}_{\class{T}}$ is well-defined by Lemma~\ref{lemma-properties of the W class}. The alleged inverse mapping $\class{W}' \cap \class{F} \hookleftarrow \class{W}'$ is also well-defined, for if $\class{W}'$ is a thick subcategory of $\class{A}$ containing $\class{W}$, then $\class{W}' \cap \class{F}$ is clearly an $\class{F}$-thick subcategory of $\class{F}$, and it contains the injectives since both $\class{F}$ and $\class{W}$ do. To prove our claim that these are inverse mappings we must show $\class{W}_{\class{T}} \cap \class{F} = \class{T}$ and $\class{W}_{\class{W}' \cap \class{F}} = \class{W}'$. We already have $\class{W}_{\class{T}} \cap \class{F} = \class{T}$ from Lemma~\ref{lemma-properties of the W class} so we focus on $\class{W}_{\class{W}' \cap \class{F}} = \class{W}'$.

$\class{W}_{\class{W}' \cap \class{F}} \supseteq \class{W}'$. Suppose $W' \in \class{W}'$. We need to find a s.e.s $0 \xrightarrow{} W' \xrightarrow{} F \xrightarrow{} W \xrightarrow{} 0$ with $F \in \class{W}' \cap \class{F}$ and $W \in \class{W}$. We simply apply enough injectives of the cotorsion pair $(\class{W}, \class{F})$ to get such a s.e.s. $0 \xrightarrow{} W' \xrightarrow{} F \xrightarrow{} W \xrightarrow{} 0$. Since $\class{W}'$ is thick and contains $\class{W}$ we conclude that $F \in \class{W}'$. So $F \in \class{W}' \cap \class{F}$ and we have $W' \in \class{W}_{\class{W}' \cap \class{F}}$.

$\class{W}_{\class{W}' \cap \class{F}} \subseteq \class{W}'$. Let $A \in \class{W}_{\class{W}' \cap \class{F}}$. Then there exists a s.e.s. $0 \xrightarrow{} A \xrightarrow{} W' \xrightarrow{} W \xrightarrow{} 0$ with $W' \in \class{W}' \cap \class{F}$ and $W \in \class{W}$. Since $\class{W}'$ is thick and contains $\class{W}$ we conclude that $A \in \class{W}'$.
\end{proof}

\begin{notation}
Given a thick subcategory $\class{T}/\sim$ of $\class{F}/\sim$, then $\rightperp{(\class{T}/\sim)}$ will denote the Hom-orthogonal in the triangulated category $\class{F}/\sim$. That is, $\rightperp{(\class{T}/\sim)} = \{\, F \in \class{F}/\sim \,|\, \Hom_{\class{F}/\sim}(T,F) = 0 \text{ for all } T \in \class{T}/\sim  \,\}$. On the other hand, for a class $\class{W}'$ of objects in $\class{A}$, recall $\rightperp{\class{W}'}$ denotes the Ext-orthogonal in $\class{A}$.
\end{notation}

\begin{lemma}\label{lemma-right perps are equal}
Continuing with the notation in Set-Up~\ref{setup} we have $\rightperp{\class{W}_{\class{T}}} \subseteq \class{F}$ and $\rightperp{(\class{T}/\sim)} = (\rightperp{\class{W}_{\class{T}}})/\sim$.

\end{lemma}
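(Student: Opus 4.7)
The first claim is immediate: since $\class{W} \subseteq \class{W}_{\class{T}}$ (by Lemma~\ref{lemma-properties of the W class}), we have $\rightperp{\class{W}_{\class{T}}} \subseteq \rightperp{\class{W}} = \class{F}$.

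For the equality, my plan is to exploit the fact that the thick subcategory $\class{T}/\sim$ of the triangulated category $\class{F}/\sim$ is closed under both the suspension $\Sigma$ and the loop $\Omega$, which are computed via short exact sequences $0 \to T \to I \to \Sigma T \to 0$ and $0 \to \Omega T \to I' \to T \to 0$ with $I, I'$ injective objects of $\class{A}$. Hereditariness of $(\class{W},\class{F})$ places both $\Sigma T$ and $\Omega T$ in $\class{F}$, and because $\class{T}$ is $\class{F}$-thick containing the injectives and closed under retracts, the standard fact that $X \oplus I_1 \cong Y \oplus I_2$ with $Y \in \class{T}$ forces $X \in \class{T}$ promotes the stable containments $[\Sigma T],[\Omega T] \in \class{T}/\sim$ to genuine membership $\Sigma T, \Omega T \in \class{T}$.

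For the inclusion $\supseteq$: Given $F \in \rightperp{\class{W}_{\class{T}}}$ and $T \in \class{T}$, apply $\Hom_{\class{A}}(-, F)$ to $0 \to T \to I \to \Sigma T \to 0$. Since $\Sigma T \in \class{T} \subseteq \class{W}_{\class{T}}$ gives $\Ext^1(\Sigma T, F) = 0$, the map $\Hom(I,F) \to \Hom(T,F)$ is surjective, so every morphism $T \to F$ factors through the injective $I$. This yields $\Hom_{\class{F}/\sim}(T, F) = 0$.

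For the inclusion $\subseteq$: Given $F \in \rightperp{(\class{T}/\sim)}$ (hence $F \in \class{F}$) and $A \in \class{W}_{\class{T}}$ with a defining sequence $0 \to A \to T \to W \to 0$, hereditariness gives $\Ext^{\geq 1}(W, F) = 0$, so $\Ext^1(A, F) \cong \Ext^1(T, F)$. From $0 \to \Omega T \to I' \to T \to 0$ with $\Ext^1(I', F) = 0$, the long exact $\Ext$-sequence identifies $\Ext^1(T, F)$ with $\Hom(\Omega T, F)/\mathrm{im}\,\Hom(I', F)$; an injectivity rerouting argument (any factorization $\Omega T \to J \to F$ through an injective $J$ extends along $\Omega T \hookrightarrow I'$ to a map $I' \to J$, hence factors through $I'$) shows this quotient equals $\Hom_{\class{F}/\sim}(\Omega T, F)$, which vanishes since $\Omega T \in \class{T}$. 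Therefore $\Ext^1(A,F) = 0$. The main subtlety is precisely this identification $\Ext^1(T,F) \cong \Hom_{\class{F}/\sim}(\Omega T, F)$; I would derive it explicitly from the long exact $\Ext$-sequence rather than invoke the general Frobenius stable-category formalism.
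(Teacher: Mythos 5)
Your proposal is correct and is essentially the paper's argument: the first inclusion from $\class{W}\subseteq\class{W}_{\class{T}}$, the containment $(\rightperp{\class{W}_{\class{T}}})/\sim\ \subseteq \rightperp{(\class{T}/\sim)}$ via $\Sigma T\in\class{T}\subseteq\class{W}_{\class{T}}$, and the reverse containment via the defining sequence $0\to A\to T\to W\to 0$, hereditariness killing $\Ext^2(W,F)$, and the vanishing of $\Ext^1(T,F)\cong\Hom_{\class{F}/\sim}(\Omega T,F)$. The only difference is that you derive the Frobenius stable-category identifications $\Hom_{\class{F}/\sim}(T,F)\cong\Ext^1(\Sigma T,F)$ and $\Hom_{\class{F}/\sim}(\Omega T,F)\cong\Ext^1(T,F)$ by hand from the long exact sequences, where the paper simply invokes them.
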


\begin{proof}
Since $\class{W} \subseteq \class{W}_{\class{T}}$ (Lemma~\ref{lemma-properties of the W class}) we get $\class{F} = \rightperp{\class{W}} \supseteq \rightperp{\class{W}_{\class{T}}}$.

We show $(\rightperp{\class{W}_{\class{T}}})/\sim \ \subseteq \rightperp{(\class{T}/\sim)}$. Let $T \in \class{T}$ and $F \in \rightperp{\class{W}_{\class{T}}}$. We wish to show $\Hom_{\class{F}/\sim}(T,F) = 0$. Since $T,F \in \class{F}$ and $\class{F}$ is Frobenius we have $\Hom_{\class{F}/\sim}(T,F) = \Ext^1_{\class{F}}(\Sigma T,F)$. But $\Sigma T \in \class{T} \subseteq \class{W}_{\class{T}}$, so $\Ext^1_{\class{F}}(\Sigma T,F) = \Ext^1_{\class{A}}(\Sigma T,F) = 0$, proving $\Hom_{\class{F}/\sim}(T,F) = 0$.

Next we show $\rightperp{(\class{T}/\sim)} \subseteq (\rightperp{\class{W}_{\class{T}}})/\sim$. Let $F \in \rightperp{(\class{T}/\sim)}$,  so that $\Hom_{\class{F}/\sim}(T,F) = 0$ for all $T \in \class{T}$. Let $A \in \class{W}_{\class{T}}$ be given, so that it comes with a short exact sequence $0 \xrightarrow{} A \xrightarrow{} T \xrightarrow{} W  \xrightarrow{} 0$ with $T \in \class{T}$ and $W \in \class{W}$. We must show $\Ext^1_{\class{A}}(A,F) = 0$. We apply $\Hom_{\class{A}}(-,F)$ to the s.e.s and get exactness of $\Ext^1_{\class{A}}(T,F) \xrightarrow{} \Ext^1_{\class{A}}(A,F) \xrightarrow{} \Ext^2_{\class{A}}(W,F)$. But we have $0 =  \Hom_{\class{F}/\sim}(\Omega T,F) = \Ext^1_{\class{A}}(T,F)$, and also $\Ext^2_{\class{A}}(W,F) = 0$ since $(\class{W}, \class{F})$ is an hereditary cotorsion pair. It follows that $\Ext^1_{\class{A}}(A,F) = 0$.

\end{proof}

We refer the reader to~\cite{beligiannis-reiten} for the definition of a \emph{torsion pair} in a triangulated category.

\begin{proposition}\label{prop-torsion pairs and cotorsion pairs}
Continuing with the notation in Set-Up~\ref{setup}, the bijection $\class{T}/\sim \ \hookrightarrow \class{W}_{\class{T}}$ of Lemmas~\ref{lemma-bijection between F-thick and F mod twiddle-thick} and~\ref{lemma-bijection between F-thick and thick W's} restricts to a bijection between the following classes:
$$\{\, \class{T} \ |\ (\class{T}/\sim, \rightperp{(\class{T}/\sim)}) \text{is a torsion pair in } \class{F}/\sim \,\} \longleftrightarrow$$
$$\{\, \class{W}' \ |\ (\class{W}', \rightperp{\class{W}'}) \text{is an injective cotorsion pair in } \class{A} \text{ with } \class{W}' \supseteq \class{W} \,\} $$
In particular, given $\class{T}$ with $(\class{T}/\sim, \rightperp{(\class{T}/\sim)})$ a torsion pair in $\class{F}/\sim$ we have that $(\class{W}_{\class{T}}, \rightperp{\class{W}_{\class{T}}})$ is an injective cotorsion pair in $\class{A}$ with $\rightperp{\class{W}_{\class{T}}} \subseteq \class{F}$ and $\rightperp{(\class{T}/\sim)} = (\rightperp{\class{W}_{\class{T}}})/\sim$. Conversely, given an injective cotorsion pair $(\class{W}',\class{F}')$ in $\class{A}$ with $\class{F}' \subseteq \class{F}$ we have that $((\class{W}' \cap \class{F})/\sim, \class{F}'/\sim)$ is a torsion pair in $\class{F}/\sim$.

\end{proposition}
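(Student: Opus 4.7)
The plan is to leverage the bijection $\class{T} \mapsto \class{W}_{\class{T}}$ of Lemmas~\ref{lemma-bijection between F-thick and F mod twiddle-thick} and~\ref{lemma-bijection between F-thick and thick W's} and show it restricts as claimed. Lemma~\ref{lemma-right perps are equal} already gives $\rightperp{\class{W}_{\class{T}}} \subseteq \class{F}$ and the identification $\rightperp{(\class{T}/\sim)} = (\rightperp{\class{W}_{\class{T}}})/\sim$, while the Hom-orthogonality and closure under shifts for $(\class{T}/\sim, \rightperp{(\class{T}/\sim)})$ are automatic from the definitions together with the fact that $\class{T}$ is $\class{F}$-thick and contains the injectives. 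So the real content will be to match ``existence of the torsion approximation triangles in $\class{F}/\sim$'' with ``completeness of the cotorsion pair $(\class{W}_{\class{T}}, \rightperp{\class{W}_{\class{T}}})$ in $\cat{A}$''.

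For the forward direction I would assume the torsion approximation triangles exist in $\class{F}/\sim$ and aim to produce enough projectives of $(\class{W}_{\class{T}}, \rightperp{\class{W}_{\class{T}}})$; since $\class{W}_{\class{T}}$ is thick and contains $\class{W}$ (hence the injectives) by Lemma~\ref{lemma-properties of the W class}, Proposition~\ref{prop-characterizing injective cotorsion pairs}(3) will then upgrade this to an injective cotorsion pair. Given $A \in \cat{A}$, I would first apply enough projectives of $(\class{W}, \class{F})$ to obtain $0 \to F_0 \to W_{00} \to A \to 0$, then lift the torsion decomposition of $F_0 \in \class{F}/\sim$ to a short exact sequence $0 \to T_0 \to F_0 \to F_0' \to 0$ in $\class{F}$ (enlarging $F_0$ by an injective summand as needed), with $T_0 \in \class{T}$ and $F_0' \in \rightperp{(\class{T}/\sim)}$. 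Next I would form the pushout $P$ of $F_0 \hookrightarrow W_{00}$ along $F_0 \twoheadrightarrow F_0'$; this produces simultaneous short exact sequences $0 \to F_0' \to P \to A \to 0$ and $0 \to T_0 \to W_{00} \to P \to 0$. Lemma~\ref{lemma-right perps are equal} gives $F_0' \in \rightperp{\class{W}_{\class{T}}}$, and thickness of $\class{W}_{\class{T}}$ applied to the second sequence yields $P \in \class{W}_{\class{T}}$. Enough injectives then follows from a Salce-style pullback using an injective envelope of $A$ together with the enough-projectives just constructed.

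For the reverse direction, suppose $(\class{W}', \class{F}')$ is an injective cotorsion pair with $\class{F}' \subseteq \class{F}$ and put $\class{T} = \class{W}' \cap \class{F}$. Given $F \in \class{F}$, I would apply enough projectives of $(\class{W}', \class{F}')$ to obtain $0 \to F' \to W' \to F \to 0$ with $W' \in \class{W}'$ and $F' \in \class{F}'$; closure of $\class{F}$ under extensions forces $W' \in \class{F}$, so $W' \in \class{T}$, and rotating the induced triangle $F' \to W' \to F \to \Sigma F'$ in $\class{F}/\sim$ gives the approximation triangle $W' \to F \to \Sigma F' \to \Sigma W'$, with $\Sigma F' \in \class{F}'$ by cosyzygy closure of $\class{F}'$. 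That $\class{F}'/\sim$ coincides with $\rightperp{(\class{T}/\sim)}$ in $\class{F}/\sim$ will then follow from Lemma~\ref{lemma-right perps are equal} together with the bijection identity $\class{W}_{\class{T}} = \class{W}'$ from Lemma~\ref{lemma-bijection between F-thick and thick W's}.

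The principal technical obstacle I anticipate is the pushout step in the forward direction: the torsion triangle in the stable category $\class{F}/\sim$ lifts to a short exact sequence in $\class{F}$ only up to injective summands, so one must enlarge $F_0$ (and hence $W_{00}$) before forming the pushout, and one must check carefully that the enlargement does not disturb the subsequent identification of $F_0'$ as a member of $\rightperp{\class{W}_{\class{T}}}$ at the object level of $\cat{A}$. Beyond this bookkeeping the pushout and Salce arguments are routine, and the reverse direction reduces to a single application of enough projectives followed by a rotation of a triangle.
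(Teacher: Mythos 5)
Your proposal follows essentially the same route as the paper's proof: both rest on Lemma~\ref{lemma-right perps are equal}, realize the torsion approximation triangles as short exact sequences in $\class{F}$ after correcting by injective summands, and then splice such a sequence with a $(\class{W},\class{F})$-approximation of an arbitrary object $A$ to produce the special approximation sequences for $(\class{W}_{\class{T}}, \rightperp{\class{W}_{\class{T}}})$; your reverse direction (enough projectives of $(\class{W}',\class{F}')$ plus a rotation, with the orthogonality supplied by Lemma~\ref{lemma-right perps are equal} and Lemma~\ref{lemma-bijection between F-thick and thick W's}) is word-for-word the paper's argument. The only real differences are mirror-image bookkeeping: you start from enough projectives of $(\class{W},\class{F})$, lift the triangle to a monomorphism $T_0 \rightarrowtail I\oplus F_0$ and push out, then get enough injectives by Salce's trick; the paper starts from enough injectives of $(\class{W},\class{F})$, lifts the triangle both to a monomorphism and (dually) to an epimorphism $T_2 \twoheadrightarrow F$, and uses a pullback for enough projectives and a pushout for enough injectives. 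Your anticipated obstacle is handled exactly as you suggest: stably isomorphic objects of $\class{F}$ differ by injective direct summands, and $\rightperp{\class{W}_{\class{T}}}$ contains the injectives and is closed under finite direct sums and retracts, so the cokernel of the corrected monomorphism really lies in $\rightperp{\class{W}_{\class{T}}}$ at the level of $\cat{A}$.

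One step is missing as written. Proposition~\ref{prop-characterizing injective cotorsion pairs}(3) applies to a pair already known to be a \emph{complete cotorsion pair}, so before invoking it you must verify that $(\class{W}_{\class{T}}, \rightperp{\class{W}_{\class{T}}})$ is a cotorsion pair at all, i.e.\ that $\leftperp{(\rightperp{\class{W}_{\class{T}}})} \subseteq \class{W}_{\class{T}}$ (the other inclusion is trivial). This follows in one line from what you have already built: if $A \in \leftperp{(\rightperp{\class{W}_{\class{T}}})}$, then your sequence $0 \arr F_0' \arr P \arr A \arr 0$ with $F_0' \in \rightperp{\class{W}_{\class{T}}}$ and $P \in \class{W}_{\class{T}}$ splits, so $A$ is a retract of $P$ and hence lies in $\class{W}_{\class{T}}$ because $\class{W}_{\class{T}}$ is thick. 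This retract argument is exactly how the paper closes the loop; just make it explicit before appealing to the characterization of injective cotorsion pairs.
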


\begin{proof}
First suppose $(\class{T}/\sim, \rightperp{(\class{T}/\sim)})$ is a torsion pair in $\class{F}/\sim$. We wish to show $(\class{W}_{\class{T}}, \rightperp{\class{W}_{\class{T}}})$ is a complete cotorsion pair in $\class{A}$. Lets set $\class{F}' = \rightperp{\class{W}_{\class{T}}}$. By Lemma~\ref{lemma-right perps are equal} we have $\class{F}' \subseteq \class{F}$ and $(\class{T}/\sim, \class{F}'/\sim)$ is a torsion pair. We first show that this implies the following: For any $F \in \class{F}$, there is a short exact sequence $0 \xrightarrow{} F \xrightarrow{} F'_1 \xrightarrow{} T_1 \xrightarrow{} 0$ with $F'_1 \in \class{F}'$ and $T_1 \in \class{T}$ and another short exact sequence $0 \xrightarrow{} F'_2 \xrightarrow{} T_2 \xrightarrow{} F \xrightarrow{} 0$ with $F'_2 \in \class{F}'$ and $T_2 \in \class{T}$. To find the first short exact sequence for a given $F$, we use the definition of torsion pair to get an exact triangle $T \xrightarrow{} F \xrightarrow{} F' \xrightarrow{} \Sigma T$ in $\class{F}/\sim$ with $T \in \class{T}/\sim$ and $F' \in \class{F}'/ \sim$. Rotate the triangle to get an exact triangle $F \xrightarrow{f} F' \xrightarrow{} \Sigma T \xrightarrow{} \Sigma F$, and note that $\Sigma T \in \class{T}/\sim$ since the class is thick.
Let $0 \xrightarrow{} F \xrightarrow{i} I(F) \xrightarrow{} \Sigma F \xrightarrow{} 0$ be a short exact sequence with $I(F)$ injective and form the short exact sequence
$0 \xrightarrow{} F \xrightarrow{\alpha \,=\, (i, -f)} I(F) \oplus F' \xrightarrow{\beta \,=\, f' + i'} F'' \xrightarrow{} 0$.
Here, $f'$ and $i'$ are the maps in the pushout square below and $F'' \in \class{F}$ because $\class{F}$ is closed under extensions. (See Proposition~2.12 of~\cite{buhler-exact categories} for more details on this construction. In the current case the cokernel $F''$ is automatically in $\class{F}$ because the class is coresolving. But for constructing the dual short exact sequence $0 \xrightarrow{} F'_2 \xrightarrow{} T_2 \xrightarrow{} F \xrightarrow{} 0$ we need to use the dual of this construction.)
$$\begin{CD}
0 @>>> F @>i>> I(F) @>>> \Sigma F @>>> 0 \\
@. @VfVV @VVf'V @| @.\\
0 @>>> F' @>i' >> F'' @>>> \Sigma F @>>> 0 \\
\end{CD}$$
Now having in hand the short exact sequence
$0 \xrightarrow{} F \xrightarrow{\alpha \,=\, (i, -f)} I(F) \oplus F' \xrightarrow{\beta \,=\, f' + i'} F'' \xrightarrow{} 0$ in $\class{F}$ , recall that by~\cite[Lemma~2.7]{happel-triangulated} there must be a map $\gamma : F'' \xrightarrow{} \Sigma F$ in $\class{F}$ giving an exact triangle $F \xrightarrow{\alpha} I(F) \oplus F' \xrightarrow{\beta} F'' \xrightarrow{\gamma} \Sigma F$ in $\class{F}/\sim$. It is easy to check that the left square below commutes in $\class{F}/\sim$ and is an isomorphism (from $f$ to $\alpha$) in $\class{F}/\sim$, and so extends to an isomorphism of triangles as shown. $$\begin{CD}
F @> f >> F' @>>> \Sigma T @>>> \Sigma F \\
@| @V i_{F'} VV @VV \exists \, \cong V @| \\
F @> \alpha >> I(F) \oplus F' @> \beta >> F'' @> \gamma >> \Sigma F \\
\end{CD}$$ Since $\Sigma T \in \class{T}/\sim$ it follows that $F'' \in \class{T}$. Therefore we have shown that the short exact sequence $0 \xrightarrow{} F \xrightarrow{\alpha} I \oplus F' \xrightarrow{\beta} F'' \xrightarrow{} 0$ is the desired $0 \xrightarrow{} F \xrightarrow{} F'_1 \xrightarrow{} T_1 \xrightarrow{} 0$ with $F'_1 \in \class{F}'$ and $T_1 \in \class{T}$. A similar argument works to find the other short exact sequence $0 \xrightarrow{} F'_2 \xrightarrow{} T_2 \xrightarrow{} F \xrightarrow{} 0$ with $F'_2 \in \class{F}'$ and $T_2 \in \class{T}$. For this, first use the given torsion pair to get an exact triangle $T \xrightarrow{} \Omega F \xrightarrow{} F' \xrightarrow{} \Sigma T$ in $\class{F}/\sim$ with $T \in \class{T}/\sim$ and $F' \in \class{F}'/ \sim$. Then rotate it twice to get $F' \xrightarrow{} \Sigma T \xrightarrow{} \Sigma \Omega F \xrightarrow{} \Sigma F'$ and use the $(\class{F}/\sim)$-isomorphism $\Sigma \Omega F \cong F$ to replace this with an exact triangle $F' \xrightarrow{} \Sigma T \xrightarrow{g} F \xrightarrow{} \Sigma F'$. Now follow an argument dual to the above, replacing $g$ with an epimorphism, to find the s.e.s $0 \xrightarrow{} F'_2 \xrightarrow{} T_2 \xrightarrow{} F \xrightarrow{} 0$.

Now we can see why $(\class{W}_{\class{T}}, \class{F}')$ is a cotorsion pair. We have $\class{F}' = \rightperp{\class{W}_{\class{T}}}$ by definition, so $\class{W}_{\class{T}} \subseteq \leftperp{\class{F}'}$ is automatic, which means it is left to show $\class{W}_{\class{T}} \supseteq \leftperp{\class{F}'}$. So let $A \in \class{A}$ be in $\leftperp{\class{F}'}$. Using that $(\class{W}, \class{F})$ has enough injectives find a short exact sequence $0 \xrightarrow{} A \xrightarrow{} F \xrightarrow{} W \xrightarrow{} 0$ with $F \in \class{F}$ and $W \in \class{W}$. From the above paragraph we can also find a short exact sequence $0 \xrightarrow{} F' \xrightarrow{} T \xrightarrow{} F \xrightarrow{} 0$ with $F' \in \class{F}'$ and $T \in \class{T}$. Taking the pullback of $A \xrightarrow{} F \xleftarrow{} T$ we get a commutative diagram with a bicartesian square (i.e. a pullback and pushout square) as shown.
$$\begin{CD}
@. 0 @. 0 \\
@. @VVV @VVV @. @.\\
@.  F' @= F'  \\
@. @VVV @VVV @. @.\\
0 @>>> P @>>> T @>>> W @>>> 0 \\
@. @VVV @VVV @| @.\\
0 @>>> A @>>> F @>>> W @>>> 0 \\
@. @VVV @VVV @. @.\\
@. 0 @. 0 \\
\end{CD}$$ Note that by definition we have $P \in \class{W}_{\class{T}}$. Since $A \in \leftperp{\class{F}'}$, the left vertical column splits. We conclude $A \in \class{W}_{\class{T}}$ since $\class{W}_{\class{T}}$ is closed under retracts. This finishes the proof that  $(\class{W}_{\class{T}}, \class{F}')$ is a cotorsion pair. If one follows the exact argument we just gave for an \emph{arbitrary} $A \in \class{A}$ (rather than assuming $A \in \leftperp{\class{F}'}$), we see immediately that this cotorsion pair has enough projectives. To see that it has enough injectives we do a dual ``pushout'' argument. In detail, for an arbitrary $A \in \class{A}$ again use that $(\class{W}, \class{F})$ has enough injectives to find a short exact sequence $0 \xrightarrow{} A \xrightarrow{} F \xrightarrow{} W \xrightarrow{} 0$ with $F \in \class{F}$ and $W \in \class{W}$. From the last paragraph we also have a short exact sequence $0 \xrightarrow{} F \xrightarrow{} F' \xrightarrow{} T \xrightarrow{} 0$ with $F' \in \class{F}'$ and $T \in \class{T}$. Taking the pushout of $F' \xleftarrow{} F \rightarrow{} W$ leads to a commutative diagram with bicartesian square as shown.
$$\begin{CD}
@. @. 0 @. 0 \\
@. @. @VVV @VVV @. @.\\
0 @>>> A @>>> F @>>> W @>>> 0 \\
@. @| @VVV @VVV @.\\
0 @>>> A @>>> F' @>>> P @>>> 0 \\
@. @. @VVV @VVV @.\\
@. @. T @= T  \\
@. @. @VVV @VVV @. @.\\
@. @. 0 @. 0 \\
\end{CD}$$ Since $W, T \in \class{W}_{\class{T}}$ we get the extension $P \in \class{W}_{\class{T}}$. So the second row in the diagram shows that the cotorsion pair $(\class{W}_{\class{T}}, \class{F}')$ has enough injectives, and so it is complete.

On the other hand, say $(\class{W}_{\class{T}}, \class{F}')$ is an injective cotorsion pair. To see that $(\class{T}/\sim, \class{F}'/\sim)$ is a torsion pair in $\class{F}/\sim$ the only non-automatic thing required is to construct, for a given $F \in \class{F}/\sim$, an exact triangle $T \xrightarrow{} F \xrightarrow{} F' \xrightarrow{} \Sigma T$ with $T \in \class{T}/\sim$ and $F' \in \class{F}'/\sim$. But for a given $F \in \class{F}$ we use enough projectives of $(\class{W}_{\class{T}}, \class{F}')$ to get a short exact sequence $0 \xrightarrow{} F' \xrightarrow{} T \xrightarrow{} F \xrightarrow{} 0$ with $F' \in \class{F}'$ and $T \in \class{W}_\class{T} \cap \class{F} = \class{T}$. This gives rise to an exact triangle $F' \xrightarrow{} T \xrightarrow{} F \xrightarrow{} \Sigma F'$ in $\class{F}/\sim$ which rotates to give us the desired exact triangle $T \xrightarrow{} F \xrightarrow{} \Sigma F' \xrightarrow{} \Sigma T$. It follows that $(\class{T}/\sim, \class{F}'/\sim)$ is a torsion pair in $\class{F}/\sim$.

The remaining statements in the Proposition follow from the previous Lemmas.

\end{proof}

We now are ready to present the converse to both Becker's result in Proposition~\ref{prop-Beckers theorem} and to the main Theorem~\ref{them-finding recollements}. First, say $\class{T}' \xrightarrow{F} \class{T} \xrightarrow{G} \class{T}''$ is a localization sequence with right adjoints $F_{\rho}$ and $G_{\rho}$ as in Definition~\ref{def-localization sequence}. Then $F$ and $G_{\rho}$ are each fully faithful and we will identify $\class{T}'$ and $\class{T}''$ with the thick subcategories $\class{T}' = \im{F}$ and $\class{T}'' = \im{G_{\rho}}$. In this way $\class{T}'$ is identified as what we call a \emph{right admissible} subcategory of $\class{T}$ (meaning the inclusion has a right adjoint) while $\class{T}''$ is identified as a \emph{left admissible} subcategory of $\class{T}$ (its inclusion has a left adjoint). With these identifications we have $(\class{T}', \class{T}'')$ is a torsion pair in $\class{T}$. See~\cite[discussion before Lemma~3.2]{krause-stable derived cat of a Noetherian scheme}. Of course, if  $\class{T}' \xrightarrow{F} \class{T} \xrightarrow{G} \class{T}''$ were a colocalization sequence from the start, we get in a similar way a torsion pair $(\class{T}'', \class{T}')$. Indeed the left adjoints $F_{\lambda}$ and $G_{\lambda}$ form a localization sequence $\class{T}'' \xrightarrow{G_{\lambda}} \class{T} \xrightarrow{F_{\lambda}} \class{T}'$. So identifying  $\class{T}'' = \im{G_{\lambda}}$ and $\class{T}' = \im{F}$ we get the torsion pair $(\class{T}'', \class{T}')$. Conversely any torsion pair $(\class{X},\class{Y})$ in a triangulated category $\class{T}$ yields a (co)localization sequence. $\class{X}$ is right admissible while $Y$ is left admissible. See~\cite[Proposition~2.6]{beligiannis-reiten} and also~\cite[Lemma~3.2]{krause-stable derived cat of a Noetherian scheme}. Finally, consider the recollement diagram of Definition~\ref{def-recollement}. Then $F$, $G_{\rho}$, and $G_{\lambda}$ are all fully faithful. Identifying $\class{T}' = \im{F}$ and setting $\class{T}_{\lambda} = \im{G_{\lambda}}$ and $\class{T}_{\rho} = \im{G_{\rho}}$\,, we see that the colocalization sequence gives a torsion pair $(\class{T}_{\lambda}, \class{T}')$ while the localization sequence gives a torsion pair $(\class{T}', \class{T}_{\rho})$. We say $\class{T}' = \im{F}$ is an \emph{admissible subcategory} of $\class{T}$ (for it has both a left and right adjoint) and also call $(\class{T}_{\lambda}, \class{T}', \class{T}_{\rho})$ a \emph{torsion triple}. Conversely, an admissible subcategory gives rise to a torsion triple and a recollement. Finally we are able to state the promised converses as a corollary to Proposition~\ref{prop-torsion pairs and cotorsion pairs}.

\begin{corollary}\label{cor-bijective correspondences between admissible subcats and injective cot pairs}
Proposition~\ref{prop-torsion pairs and cotorsion pairs} gives the following.
\begin{enumerate}
\item Left admissible subcategories $\class{F}'/\sim$ of $\class{F}/\sim$ are in bijective correspondence with injective cotorsion pairs $(\class{W}', \class{F}')$ in $\class{A}$ with $\class{F}' \subseteq \class{F}$.

\item Admissible subcategories $\class{F}'/\sim$ of $\class{F}/\sim$ are in bijective correspondence with two injective cotorsion pairs $(\class{W}', \class{F}')$ and $(\class{W}'', \class{F}'')$ in $\class{A}$ satisfying $\class{F}'' \subseteq \class{F}$ and $\class{W}'' \cap \class{F} = \class{F}'$.
\end{enumerate}
\end{corollary}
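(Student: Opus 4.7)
The plan is to reduce both parts of the corollary to Proposition~\ref{prop-torsion pairs and cotorsion pairs}, which already provides the bridge between torsion pairs in $\class{F}/\sim$ and injective cotorsion pairs $(\class{W}',\class{F}')$ in $\cat{A}$ with $\class{F}' \subseteq \class{F}$. The key observation, recorded in the discussion immediately preceding the corollary, is that admissibility in triangulated categories is precisely encoded by torsion pairs: a thick subcategory $\class{Y} \subseteq \class{F}/\sim$ is left admissible iff it occurs as the right half of a torsion pair $(\class{X},\class{Y})$, right admissible iff it occurs as the left half, and admissible iff it sits in the middle of a torsion triple $(\class{T}_\lambda, \class{Y}, \class{T}_\rho)$.

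For part (1), I would argue as follows. Suppose $\class{F}'/\sim$ is left admissible. Then there is a torsion pair $(\class{T}/\sim, \class{F}'/\sim)$ in $\class{F}/\sim$, and in particular $\class{F}'/\sim = \rightperp{(\class{T}/\sim)}$. Applying Proposition~\ref{prop-torsion pairs and cotorsion pairs} to $\class{T}$ produces an injective cotorsion pair $(\class{W}_{\class{T}}, \rightperp{\class{W}_{\class{T}}})$ in $\cat{A}$, where by Lemma~\ref{lemma-right perps are equal} the right half sits inside $\class{F}$ and recovers $\class{F}'$ via $\class{F}'/\sim = (\rightperp{\class{W}_{\class{T}}})/\sim$. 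Conversely, any injective cotorsion pair $(\class{W}',\class{F}')$ with $\class{F}'\subseteq\class{F}$ satisfies $\class{W}'\supseteq\class{W}$, so by Proposition~\ref{prop-torsion pairs and cotorsion pairs} we get a torsion pair $((\class{W}'\cap\class{F})/\sim,\class{F}'/\sim)$ in $\class{F}/\sim$, exhibiting $\class{F}'/\sim$ as left admissible. That these two constructions are mutually inverse follows directly from the inverse pair $\class{T} \leftrightarrow \class{W}_\class{T}$ of Lemma~\ref{lemma-bijection between F-thick and thick W's}.

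For part (2), the extra datum needed on the triangulated side is a second torsion pair $(\class{F}'/\sim, \class{T}_\rho)$, now with $\class{F}'$ itself playing the role of $\class{T}$ in Proposition~\ref{prop-torsion pairs and cotorsion pairs}. This is legitimate because $\class{F}'$ is an $\class{F}$-thick subcategory containing the injectives (as it is the right half of an injective cotorsion pair $(\class{W}',\class{F}')$ with $\class{F}' \subseteq \class{F}$). Applying the proposition with $\class{T} = \class{F}'$ yields a second injective cotorsion pair $(\class{W}'',\class{F}'') := (\class{W}_{\class{F}'},\rightperp{\class{W}_{\class{F}'}})$ with $\class{F}''\subseteq\class{F}$, and the crucial compatibility $\class{W}''\cap\class{F}=\class{F}'$ is exactly Lemma~\ref{lemma-properties of the W class}(3) applied to $\class{T} = \class{F}'$. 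Conversely, given two injective cotorsion pairs $(\class{W}',\class{F}')$ and $(\class{W}'',\class{F}'')$ with $\class{F}''\subseteq\class{F}$ and $\class{W}''\cap\class{F}=\class{F}'$, Proposition~\ref{prop-torsion pairs and cotorsion pairs} applied to $\class{W}'$ gives the torsion pair $((\class{W}'\cap\class{F})/\sim,\class{F}'/\sim)$, while applied to the thick class $\class{W}''\cap\class{F}=\class{F}'$ it gives the torsion pair $(\class{F}'/\sim,\class{F}''/\sim)$. Together these form a torsion triple with $\class{F}'/\sim$ in the middle, so $\class{F}'/\sim$ is admissible.

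The genuinely substantive work has already been done in Proposition~\ref{prop-torsion pairs and cotorsion pairs}, so the only real obstacle is bookkeeping: one must verify that the two constructions in each part are honestly inverse, which amounts to noting that the bijections of Lemmas~\ref{lemma-bijection between F-thick and F mod twiddle-thick} and~\ref{lemma-bijection between F-thick and thick W's} restrict appropriately, together with the identity $\class{W}_{\class{F}'}\cap\class{F}=\class{F}'$ to handle the ``$\class{W}''\cap\class{F}=\class{F}'$'' condition in part (2). No new homological input is required beyond what the preceding lemmas provide.
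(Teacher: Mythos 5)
Your proposal is correct and follows essentially the same route as the paper: both identify left admissible (resp. admissible) subcategories with torsion pairs (resp. torsion triples) in $\class{F}/\sim$ and then invoke Proposition~\ref{prop-torsion pairs and cotorsion pairs}, using $\class{W}_{\class{T}} \cap \class{F} = \class{T}$ and the bijections of Lemmas~\ref{lemma-bijection between F-thick and F mod twiddle-thick} and~\ref{lemma-bijection between F-thick and thick W's} to see the correspondences are inverse. Your extra bookkeeping (e.g.\ checking $\class{F}'$ is $\class{F}$-thick containing the injectives before applying the proposition with $\class{T} = \class{F}'$) matches remarks already present in the paper, so no new content is needed.
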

We describe how the bijections work in the proof below.

\begin{proof}
As described above, a left admissible subcategory $\class{F}'/\sim$ of $\class{F}/\sim$ is equivalent to a colocalization sequence $\class{F}'/\sim \ \hookrightarrow \class{F}/\sim \ \xrightarrow{} \class{T}/\sim$ where $\class{T}/\sim$ is another thick subcategory of $\class{F}/\sim$ and this in turn is equivalent to a torsion pair $(\class{T}/\sim, \class{F}'/\sim)$. In particular, this is equivalent to the injective cotorsion pair $(\class{W}_{\class{T}}, \class{F}')$ characterized by $\class{W}_\class{T} \cap \class{F} = \class{T}$.

In a similar way, an admissible subcategory $\class{F}'/\sim$ of $\class{F}/\sim$ is equivalent to a colocalization sequence $\class{F}'/\sim \ \hookrightarrow \class{F}/\sim \ \xrightarrow{} \class{T}_{\lambda}/\sim$ along with a localization sequence $\class{F}'/\sim \ \hookrightarrow \class{F}/\sim \ \xrightarrow{} \class{T}_{\rho}/\sim$ where $\class{T}_{\lambda}/\sim$ and $\class{T}_{\rho}/\sim$ are each thick subcategories of $\class{F}/\sim$. That is, it is equivalent to a torsion triple $(\class{T}_{\lambda}/\sim, \class{F}'/\sim, \class{T}_{\rho}/\sim)$. Here, the torsion pair $(\class{T}_{\lambda}/\sim, \class{F}'/\sim)$ gives the injective cotorsion pair $(\class{W}', \class{F}') = (\class{W}_{\class{T}_{\lambda}}, \class{F}')$ with $\class{W}' \cap \class{F} = \class{T}_\lambda$ while the torsion pair $(\class{F}'/\sim, \class{T}_{\rho}/\sim)$ gives another injective cotorsion pair $(\class{W}'', \class{F}'') = (\class{W}_{\class{F}'}, \class{T}_{\rho})$ with $\class{W}'' \cap \class{F} = \class{F}'$.

\end{proof}

\begin{note}
The recollement diagram of Theorem~\ref{them-finding recollements} gives rise to the torsion triple $((\class{W}_2 \cap \class{F}_1)/\sim \,, \class{F}_2/\sim \,, \class{F}_3/\sim)$ in the triangulated category $\class{F}_1/\sim$. As a concrete example, using the notation of Example~\ref{example-becker gets Krause}, Krause's recollement corresponds to the torsion triple $((\class{W}_2 \cap \dwclass{I})/\sim,  \exclass{I}/\sim, \dgclass{I}/\sim)$ in $\dwclass{I}/\sim$.

\end{note}


\section{The Gorenstein injective cotorsion pair}\label{sec-the Gorenstein injective cotorsion pair}

Let $\cat{A}$ be any abelian category with enough injectives. It is clear that the canonical injective cotorsion pair $(\class{A},\class{I})$ satisfies $\class{I} \subseteq \class{F}$ whenever $(\class{W},\class{F})$ is another injective cotorsion pair. We now show two things. First, for any injective cotorsion pair $(\class{W},\class{F})$ we have $\class{F} \subseteq \class{GI}$, where $\class{GI}$ is the class of Gorenstein injective objects. Second, whenever the class $\class{GI}$ forms the right half of a complete cotorsion pair $(\leftperp{\class{GI}}, \class{GI})$, then this is automatically an injective cotorsion pair too. These facts have an immediate application (Corollary~\ref{cor-Goren inj chain complexes coincide with the degreewise Goren inj complexes}) to a simple characterization of the Gorenstein injective chain complexes in Ch($\cat{A}$) and leads us to consider a possible lattice structure on $\cat{A}$ in Section~\ref{sec-lattice}.

We start with the definition of a Gorenstein injective object in $\cat{A}$.

\begin{definition}\label{def-Gorenstein injective objects}
Let $\cat{A}$ be an abelian category with enough injectives and let $M \in \class{A}$. We call $M$ \emph{Gorenstein injective} if $M = Z_0J$ for some exact complex $J$ of injectives which remains exact after applying $\Hom_{\cat{A}}(I,-)$ for any injective object $I$. We will also call such a complex $J$ a \emph{complete injective resolution of $M$}.

\end{definition}

See~\cite{enochs-jenda-book} for a basic reference on Gorenstein injective $R$-modules. In~\cite{bravo-gillespie-hovey} it is shown that whenever $R$ is a (left) Noetherian ring then $(\leftperp{\class{G}\class{I}}, \class{G}\class{I})$ is a complete cotorsion pair, cogenerated by a set, and that it is in fact an injective cotorsion pair.

\begin{theorem}\label{them-Gorenstein injectives are at top of lattice}
Let $\cat{A}$ be an abelian category with enough injectives and let $\class{G}\class{I}$ denote the class of Gorenstein injectives in $\cat{A}$. Then we have $\class{F} \subseteq \class{GI}$ whenever $(\class{W},\class{F})$ is an injective cotorsion pair. Moreover, whenever $(\leftperp{\class{G}\class{I}}, \class{G}\class{I})$ is a complete cotorsion pair it is automatically an injective cotorsion pair too.

\end{theorem}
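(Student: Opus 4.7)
For part (1), the plan is to build a complete injective resolution of an arbitrary $F \in \class{F}$ by splicing two half-resolutions. Since $(\class{W}, \class{F})$ is hereditary by Becker's Lemma~\ref{lemma-Beckers lemma}, the class $\class{F}$ is coresolving, so any injective coresolution $0 \to F \to I^0 \to I^1 \to \cdots$ of $F$ in $\cat{A}$ has every cosyzygy in $\class{F}$. For the left half I would iterate enough projectives of the cotorsion pair $(\class{W}, \class{F})$: given $F_n \in \class{F}$, pick a short exact sequence $0 \to F_{n+1} \to W_n \to F_n \to 0$ with $W_n \in \class{W}$ and $F_{n+1} \in \class{F}$. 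Closure of $\class{F}$ under extensions forces $W_n \in \class{W} \cap \class{F} = \class{I}$, so $W_n$ is injective and the induction continues. Splicing yields an exact complex $J$ of injectives with $Z_0 J = F$ and every cycle in $\class{F}$. To check $\Hom(I, J)$ is exact for any injective $I$, apply $\Hom(I, -)$ to each short exact sequence extracted from $J$: the relevant $\Ext^1(I, Z)$ vanish because $I \in \class{I} \subseteq \class{W}$ and each cycle $Z$ lies in $\class{F}$. Piecing these together yields exactness of $\Hom(I, J)$, hence $F \in \class{GI}$.

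For part (2), the plan is to verify characterization (3) of Proposition~\ref{prop-characterizing injective cotorsion pairs}: that $\leftperp{\class{GI}}$ is thick and contains every injective. To see that every injective $I$ belongs to $\leftperp{\class{GI}}$, take $G \in \class{GI}$ with complete injective resolution $J$ so $G = Z^0 J$, and dimension-shift along $0 \to G \to J^0 \to Z^1 J \to 0$. This expresses $\Ext^1(I, G)$ as the cokernel of $\Hom(I, J^0) \to \Hom(I, Z^1 J)$; but exactness of $\Hom(I, J)$ at $J^1$ identifies $\Hom(I, Z^1 J)$ with the image of $\Hom(I, J^0) \to \Hom(I, J^1)$, so the map is surjective and $\Ext^1(I, G) = 0$.

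For thickness I would invoke Lemma~\ref{lem-Henriks lemma}, whose two hypotheses can be verified as follows. First, the cotorsion pair is hereditary by the Hereditary Test~\ref{lemma-hereditary test}(5): dimension-shifting the same sequence yields $\Ext^2(X, G) \cong \Ext^1(X, Z^1 J) = 0$ for any $X \in \leftperp{\class{GI}}$, since $Z^1 J$ is itself Gorenstein injective (a shift of a complete injective resolution remains one). Second, the hypothesis that every $G \in \class{GI}$ sits in a short exact sequence $0 \to G' \to I \to G \to 0$ with $I$ injective and $G' \in \class{GI}$ is supplied by the other half of $J$: take $0 \to Z^{-1} J \to J^{-1} \to G \to 0$, whose middle term is injective and whose kernel is Gorenstein injective by the same shift argument. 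The main obstacle to anticipate is simply tracking which cycles of $J$ remain in $\class{F}$ or $\class{GI}$ at each step, but the stability of total acyclicity under shifts together with the hereditary structure makes this routine.
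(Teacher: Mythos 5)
Your proposal is correct and follows essentially the same route as the paper: for the first claim you splice an ordinary injective coresolution (whose cosyzygies stay in $\class{F}$ by heredity) with a left half built by iterating enough projectives of $(\class{W},\class{F})$, where extension-closure of $\class{F}$ forces the middle terms to be injective, and check $\Hom(I,-)$-exactness via $\Ext^1(I,-)$ vanishing on $\class{F}$; for the second you show $\leftperp{\class{GI}}$ contains the injectives, deduce heredity by dimension shifting along the complete injective resolution, apply Lemma~\ref{lem-Henriks lemma} to get thickness, and conclude with Proposition~\ref{prop-characterizing injective cotorsion pairs}. The only cosmetic difference is that the paper verifies cosyzygy-closedness of $\class{GI}$ (condition (4) of the Hereditary Test) while you verify condition (5) directly; the dimension-shift computation is the same.
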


\begin{proof}
First, suppose $(\class{W},\class{F})$ is an injective cotorsion pair and let $F \in \mathcal{F}$. We must show that $F$ is Gorenstein injective. We will construct a complete
injective resolution of $F$. First, since $\cat{A}$ has enough injectives we may take a usual injective coresolution of $F$. But this coresolution will actually remain exact after applying $\Hom_{\cat{A}}(I,-)$ for any injective $I$ since  $(\class{W},\class{F})$ is hereditary and since $\class{W}$ contains any injective $I$. So this gives us the right half $F \hookrightarrow J^*$ of a complete injective resolution of $F$.

To build the left half of a complete injective resolution use that $(\class{W},\class{F})$ has enough injectives to get a short exact sequence

  \begin{displaymath}
    \tag{\text{$*$}}
    0 \longrightarrow F' \longrightarrow J \longrightarrow F
    \longrightarrow 0,
  \end{displaymath}
  where $J \in \class{W}$ and $F' \in \mathcal{F}$. As $\mathcal{F}$
  is closed under extensions, and since $F, F' \in \mathcal{F}$, it
  follows that $J \in \class{W} \cap \mathcal{F}$. That is, $J$ must be injective. Since $F' \in \class{F}$ we again have $\Ext^1_{\cat{A}}(I,F')=0$ for all injectives $I$. Hence the exact sequence ($*$) stays exact under
  application of the functor $\Hom_{\cat{A}}(I,-)$ whenever $I$ is injective. Iterating this process allows us to construct the left half of a complete injective
  resolution $J_* \twoheadrightarrow F$. We compose to set $J = J_* \twoheadrightarrow F \hookrightarrow J^*$ with $M = Z_0J$ making $J$ the desired complete injective resolution of $M$.

Next, set $\class{V} = \leftperp{\class{GI}}$ and suppose that $(\class{V}, \class{G}\class{I})$ is a complete cotorsion pair. If $M$ is Gorenstein injective then it follows from the definition that $\Ext^n_{\cat{A}}(I,M) = 0$ for any injective object $I$ and $n \geq 1$. So $\class{V}$ contains the injective objects. Next, we claim that the class $\class{GI}$ is cosyzygy closed. Indeed it is clear that if $M$ is Gorenstein injective with complete injective resolution $J$, then $Z_nJ$ are also all Gorenstein injective by definition. In particular we have the short exact sequence $0 \xrightarrow{} M \xrightarrow{} J_0 \xrightarrow{} Z_{-1}J \xrightarrow{} 0$ with $Z_{-1}J$ also Gorenstein injective. So if $0 \xrightarrow{} M \xrightarrow{} I \xrightarrow{} Z \xrightarrow{} 0$ is any other short exact sequence with $I$ injective we get $\Ext^1_{\cat{A}}(V,Z) \cong \Ext^2_{\cat{A}}(V,M) \cong \Ext^1_{\cat{A}}(V,Z_{-1}J) = 0$. So the class $\class{G}\class{I}$ is cosyzygy closed. Since $\cat{A}$ has enough injectives we conclude $(\class{V}, \class{G}\class{I})$ is hereditary from Lemma~\ref{lemma-hereditary test}. Now $(\class{V}, \class{G}\class{I})$ satisfies the hypotheses of Lemma~\ref{lem-Henriks lemma} and so $\class{V}$ is thick. Finally, since we have shown that $\class{V}$ is thick and contains the injectives we get from part~(2) of Proposition~\ref{prop-characterizing injective cotorsion pairs} that $(\class{V}, \class{G}\class{I})$ is an injective cotorsion pair.

\end{proof}

\subsection{The Gorenstein projective cotorsion pair}\label{subsec-Gorenstein projective cotorsion pair}

We now state the dual result concerning the Gorenstein projectives.

\begin{definition}\label{def-Gorenstein projective objects}
Let $\cat{A}$ be an abelian category with enough projectives and let $M \in \class{A}$. We call $M$ \emph{Gorenstein projective} if $M = Z_0Q$ for some exact complex $Q$ of projectives which remains exact after applying $\Hom_{\cat{A}}(-,P)$ for any projective object $P$. We will also call such a complex $Q$ a \emph{complete projective resolution of $M$}.

\end{definition}

Again, see~\cite{enochs-jenda-book} for a basic reference on Gorenstein projective $R$-modules. It is shown in~\cite{bravo-gillespie-hovey} that $(\class{GP}, \rightperp{\class{G}\class{P}})$ is in fact cogenerated by a set, so complete, whenever $R$ is a ring for which all \emph{level} modules have finite projective dimension. We define level modules in Section~\ref{sec-Gorenstein AC recollements}. But this includes all (left) coherent rings in which all flat modules have finite projective dimension. We pause now to comment on the extraordinarily large class of rings satisfying the condition that all flat modules have finite projective dimension.

In~\cite{simson}, Simson gives a short proof of the following: If a ring $R$ has
cardinality at most $\aleph_{n}$ then the maximal projective dimension of a
flat module is at most $n+1$. This amazing result doesn't depend on whether the ring is commutative or Noetherian or anything and so there is an abundance of rings with the property that all flat modules have finite projective dimension. However, there is the continuum hypothesis. Putting cardinality aside, Enochs, Jenda and L\'opez-Ramos have considered rings in~\cite{enochs-jenda-lopezramos-n-perfect} they call $n$-perfect. These are rings in which all flat modules have projective dimension at most $n$. In that paper and in other papers coauthored by Enochs they give numerous examples of $n$-perfect rings. In particular, a perfect ring is 0-perfect and any $n$-Gorenstein ring is $n$-perfect. In this language the above result of Simson says that any ring $R$ with cardinality at most $\aleph_{n}$ is $(n+1)$-perfect.
There is more. A main example of an $n$-perfect ring is a commutative  Noetherian ring of finite Krull dimension $n$. This follows from~\cite{jensen} and~\cite{Gruson-Raynaud}. Finally, we point out a generalization of this due to Peter J\o rgensen. In the article~\cite{jorgensen-finite flat dimension}, he shows that every flat module has finite
projective dimension whenever $R$ is right-Noetherian and has a dualizing
complex. Note that our condition of saying $R$ has finite projective dimension for each flat module is more general than saying $R$ is $n$-perfect because we are not assuming an upper bound on the projective dimensions.

\begin{theorem}\label{them-Gorenstein projectives are at top of lattice}
Let $\cat{A}$ be an abelian category with enough projectives and let $\class{G}\class{P}$ denote the class of Gorenstein projectives in $\cat{A}$. Then we have $\class{C} \subseteq \class{GP}$ whenever $(\class{C},\class{W})$ is a projective cotorsion pair. Moreover, whenever $(\class{GP}, \rightperp{\class{G}\class{P}})$ is a complete cotorsion pair it is automatically a projective cotorsion pair too.

\end{theorem}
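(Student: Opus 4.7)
My plan is to simply dualize the proof of Theorem~\ref{them-Gorenstein injectives are at top of lattice} verbatim, since the statement is exactly the projective mirror of it, $\cat{A}$ has enough projectives, and all the ingredients used (Lemma~\ref{lemma-hereditary test}, Lemma~\ref{lem-Henriks lemma}, Proposition~\ref{prop-characterizing projective cotorsion pairs}) come with projective analogs.

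For the first claim, let $(\class{C},\class{W})$ be a projective cotorsion pair and let $C\in\class{C}$. I would build a complete projective resolution $Q$ of $C$ in two halves. The \emph{right half} $\cdots\arr Q_{1}\arr Q_{0}\twoheadrightarrow C$ is just an ordinary projective resolution (which exists since $\cat{A}$ has enough projectives); it stays exact after applying $\Hom_{\cat{A}}(-,P)$ for any projective $P$ because $\class{W}$ contains all projectives and $(\class{C},\class{W})$ is hereditary. For the \emph{left half}, I use enough injectives of the cotorsion pair $(\class{C},\class{W})$ to produce a short exact sequence
\[
0\arr C\arr W\arr C'\arr 0
\]
with $W\in\class{W}$ and $C'\in\class{C}$. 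Since $\class{C}$ is closed under extensions and both $C,C'\in\class{C}$, we get $W\in\class{C}\cap\class{W}$, which by Proposition~\ref{prop-characterizing projective cotorsion pairs} is precisely the class of projective objects. Moreover $\Hom_{\cat{A}}(-,P)$ applied to this sequence stays exact for each projective $P$, using that $\Ext^{1}_{\cat{A}}(C',P)=0$. Iterating on $C'$ produces $C\hookrightarrow Q_{-1}\arr Q_{-2}\arr\cdots$ with all $Q_{-i}$ projective, and splicing with the right half gives an exact complex $Q$ of projectives with $C=Z_{0}Q$ and $\Hom_{\cat{A}}(Q,P)$ exact for every projective $P$. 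Hence $C\in\class{GP}$.

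For the second claim, set $\class{V}=\rightperp{\class{GP}}$ and assume $(\class{GP},\class{V})$ is a complete cotorsion pair. First, $\class{V}$ contains every projective object $P$, because a complete projective resolution $Q$ of any $M\in\class{GP}$ yields $\Ext^{n}_{\cat{A}}(M,P)=0$ for all $n\geq 1$ by definition. Next I claim $\class{GP}$ is syzygy closed: given $M\in\class{GP}$ with complete projective resolution $Q$, every $Z_{n}Q$ is Gorenstein projective, and from the short exact sequence $0\arr Z_{1}Q\arr Q_{1}\arr M\arr 0$ together with any other short exact sequence $0\arr K\arr P\arr M\arr 0$ with $P$ projective, the long exact $\Ext$ sequences give, for any $V\in\class{V}$,
\[
\Ext^{1}_{\cat{A}}(K,V)\cong\Ext^{2}_{\cat{A}}(M,V)\cong\Ext^{1}_{\cat{A}}(Z_{1}Q,V)=0,
\]
so $K\in\leftperp{\class{V}}=\class{GP}$ by completeness. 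By the projective direction of Lemma~\ref{lemma-hereditary test} (namely (2) $\Rightarrow$ (5)), the cotorsion pair is hereditary.

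Finally, for any $M\in\class{GP}$ with complete projective resolution $Q$, the sequence $0\arr M\arr Q_{-1}\arr Z_{-1}Q\arr 0$ exhibits $M$ as a subobject of a projective with cokernel again in $\class{GP}$; this is exactly the hypothesis of the projective dual of Lemma~\ref{lem-Henriks lemma} (which is routine to formulate and prove in the same way), giving that $\class{V}$ is thick. Since $\class{V}$ is thick and contains all projective objects, Proposition~\ref{prop-characterizing projective cotorsion pairs} (in the form (3)$\Rightarrow$(1)) yields that $(\class{GP},\class{V})$ is a projective cotorsion pair. The main point requiring care is the computation $\Ext^{2}_{\cat{A}}(M,V)\cong\Ext^{1}_{\cat{A}}(Z_{1}Q,V)$ above, since we do not yet have heredity when we run it; but it uses only that $Q_{1}$ is projective, which is enough to kill the relevant $\Ext^{\geq 1}(Q_{1},V)$ terms directly.
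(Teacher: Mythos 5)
Your proposal is correct and is essentially the paper's own argument: the paper proves the injective case (Theorem~\ref{them-Gorenstein injectives are at top of lattice}) in full and states Theorem~\ref{them-Gorenstein projectives are at top of lattice} as its formal dual, and your dualization reproduces that proof step for step, including the key dimension-shift $\Ext^2_{\cat{A}}(M,V)\cong\Ext^1_{\cat{A}}(Z_1Q,V)$ using only projectivity of $Q_1$, the hereditary test, and the projective form of Lemma~\ref{lem-Henriks lemma}. The only cosmetic slip is an index: with the paper's convention $Z_0Q=\ker(Q_0\arr Q_{-1})$, your final short exact sequence should read $0\arr M\arr Q_0\arr Z_{-1}Q\arr 0$, and the identification $\leftperp{\class{V}}=\class{GP}$ needs only the cotorsion pair property, not completeness; neither affects the argument.
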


\section{The semilattice of injective cotorsion pairs in $R$-Mod and $\textnormal{Ch}(R)$}\label{sec-lattice}

We now let $R$ be a ring and we assume for the remainder of the paper that $\cat{A}$ is either the category of $R$-modules or the category of chain complexes of $R$-modules. That is, now $\cat{A}$ denotes either $R\textnormal{-Mod}$ or $\textnormal{Ch}(R)$. In light of Theorem~\ref{them-Gorenstein injectives are at top of lattice}, Theorem~\ref{them-finding recollements} and Corollary~\ref{cor-bijective correspondences between admissible subcats and injective cot pairs}, it makes sense to consider whether or not the injective cotorsion pairs form a lattice. We now make a brief investigation, again for $\cat{A}$ being $R$-Mod or $\ch$. Ultimately we are only able to show that arbitrary suprema of injective cotorsion pairs exist and so we call the ordering a \emph{semilattice}. The question remains open as to whether or not infima of injective cotorsion pairs are again injective cotorsion pairs.

\begin{lemma}\label{lemma-sup and inf}
Suppose $\{\,(\class{C}_i,\class{D}_i)\,\}_{i \in I}$ is a collection of cotorsion pairs in $\cat{A}$, each cogenerated by some class $\class{S}_i$ and generated by some class $\class{T}_i$.
\begin{enumerate}
\item $({}^\perp(\cap_{i \in I} \class{D}_i) ,  \cap_{i \in I} \class{D}_i)$ is a cotorsion pair cogenerated by the class $\cup_{i \in I} \, \class{S}_i$. In particular, if each $\class{S}_i$ is a \emph{set}, then the cotorsion pair is also cogenerated by a set.
\item $(\cap_{i \in I} \class{C}_i ,  (\cap_{i \in I} \class{C}_i)^\perp)$ is a cotorsion pair generated by the class $\cup_{i \in I} \, \class{T}_i$. In particular, if each $\class{T}_i$ is a \emph{set}, then the cotorsion pair is also generated by a set.
\end{enumerate}
\end{lemma}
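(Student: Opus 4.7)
The plan is to reduce both parts to the standard observation that, for any class $\class{S}$ in $\cat{A}$, the pair $({}^\perp(\class{S}^\perp), \class{S}^\perp)$ is a cotorsion pair, called by convention the cotorsion pair cogenerated by $\class{S}$; dually, $({}^\perp \class{T}, ({}^\perp \class{T})^\perp)$ is the cotorsion pair generated by $\class{T}$. Once this is in hand, the content of the lemma is simply to identify an intersection of right Ext-orthogonals with the right Ext-orthogonal of a union, and dually on the left.

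For part (1), the hypothesis that each $(\class{C}_i, \class{D}_i)$ is cogenerated by $\class{S}_i$ means $\class{D}_i = \class{S}_i^\perp$. The key step is the identity
$$\bigcap_{i \in I} \class{S}_i^\perp \;=\; \left(\bigcup_{i \in I} \class{S}_i\right)^\perp,$$
which follows immediately by unraveling the definition of the Ext-orthogonal: an object $X$ satisfies $\Ext^1_{\cat{A}}(S,X) = 0$ for every $S$ belonging to some $\class{S}_i$ if and only if $\Ext^1_{\cat{A}}(S,X) = 0$ for every $S \in \bigcup_{i \in I} \class{S}_i$. Consequently $\bigcap_{i \in I} \class{D}_i = (\bigcup_{i \in I} \class{S}_i)^\perp$, and so $({}^\perp(\bigcap_{i \in I} \class{D}_i), \bigcap_{i \in I} \class{D}_i)$ is, by construction, the cotorsion pair cogenerated by $\bigcup_{i \in I} \class{S}_i$. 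The set refinement is then automatic: granted $I$ is itself a set, a set-indexed union of sets is a set.

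For part (2) I would argue dually. From $\class{C}_i = {}^\perp \class{T}_i$ the same unraveling yields
$$\bigcap_{i \in I} {}^\perp \class{T}_i \;=\; {}^\perp \left(\bigcup_{i \in I} \class{T}_i\right),$$
so $(\bigcap_{i \in I} \class{C}_i, (\bigcap_{i \in I} \class{C}_i)^\perp)$ is precisely the cotorsion pair generated by $\bigcup_{i \in I} \class{T}_i$, and the set refinement follows exactly as before.

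I do not anticipate any real obstacle: the entire argument is a formal manipulation of Ext-orthogonals, and the lemma's substance is simply the good behavior of the perp operation with respect to unions and intersections.
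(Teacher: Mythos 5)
Your proof is correct and takes essentially the same route as the paper: the paper's entire proof is the observation that $(\cup_{i \in I} \class{S}_i)^\perp = \cap_{i \in I} \class{D}_i$ and ${}^\perp(\cup_{i \in I} \class{T}_i) = \cap_{i \in I} \class{C}_i$, which is exactly the identity you verify. The only difference is that you make explicit the standard fact that any class (co)generates a cotorsion pair via the double-perp construction, which the paper leaves implicit.
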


\begin{proof}
It is straightforward to check that $(\cup_{i \in I} \, \class{S}_i)^\perp = \cap_{i \in I} \class{D}_i$ which proves (1) and ${}^\perp(\cup_{i \in I} \, \class{T}_i) = \cap_{i \in I} \class{C}_i$ which proves (2).
\end{proof}

We can put a partial order on the class of all cotorsion pairs using containment of either the classes on the left side or the classes on the right side. For the theory of model categories it is best to change the ordering depending on whether we are focusing on injective or projective cotorsion pairs. But following either ordering, Lemma~\ref{lemma-sup and inf} guarantees suprema and infima so that a partial ordering gives a complete lattice on the class of all cotorsion pairs. It is nontrivial that restricting the partial ordering to just those cotorsion pairs which are cogenerated by a set forms a complete sublattice. But this follows from the fact that $\cap_{i \in I} \class{C}_i$ is \emph{deconstructible} (as defined in~\cite{Stovicek-Hill}) whenever each $(\class{C}_i,\class{D}_i)$ is cogenerated by a set~\cite[Proposition~2.9]{Stovicek-Hill}.

\subsection{The semilattice of injective cotorsion pairs}

Suppose we have two injective cotorsion pairs $\class{M}_1 = (\class{W}_1, \class{F}_1)$ and  $\class{M}_2 = (\class{W}_2, \class{F}_2)$ in $\cat{A}$. Then we define $\class{M}_2  \preceq_r \class{M}_1 $ if and only if $\class{F}_2 \subseteq \class{F}_1$. Note that this happens if and only if the inclusion functor $\class{M}_2 \to \class{M}_1$ is a right Quillen functor. With respect to $\preceq_r$ we have that the canonical injective cotorsion pair is the least element with respect to this ordering and from Theorem~\ref{them-Gorenstein injectives are at top of lattice} the Gorenstein injective cotorsion pair, whenever it exists, is the maximum element. We know it exists whenever $R$ is Noetherian by~\cite{bravo-gillespie-hovey}.

\begin{proposition}\label{prop-sublattice of injective cot pairs}
Let $\{\,(\class{W}_i, \class{F}_i)\,\}_{i \in I}$ be a collection of injective cotorsion pairs each cogenerated by a set $\class{S}_i$. Then its supremum $\bigvee_{i \in I} (\class{W}_i , \class{F}_i) = (\cap_{i \in I} \class{W}_i ,  (\cap_{i \in I} \class{W}_i)^\perp)$ is also an injective cotorsion pair cogenerated by a set.
\end{proposition}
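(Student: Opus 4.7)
The plan is to verify three things about the class $\mathcal{W} := \bigcap_{i \in I} \class{W}_i$: (i) $(\mathcal{W}, \rightperp{\mathcal{W}})$ is a complete cotorsion pair cogenerated by a set, (ii) it is actually an injective cotorsion pair, and (iii) it realizes the supremum of the family in the partial order $\preceq_r$. Statement (iii) is a formal consequence of the Galois correspondence between the two sides of a cotorsion pair: any upper bound $(\class{V}, \class{G})$ must satisfy $\class{F}_i \subseteq \class{G}$ for all $i$, so taking left perps yields $\class{V} = \leftperp{\class{G}} \subseteq \leftperp{\class{F}_i} = \class{W}_i$ for every $i$, hence $\class{V} \subseteq \mathcal{W}$, and thus $(\mathcal{W}, \rightperp{\mathcal{W}}) \preceq_r (\class{V}, \class{G})$.

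For (i), I would appeal directly to the Stovicek--Hill result cited in the paragraph preceding the statement. Since each $\class{W}_i$ is the left class of a cotorsion pair cogenerated by a set, it is deconstructible, and by Proposition~2.9 of Stovicek--Hill the intersection $\mathcal{W} = \bigcap \class{W}_i$ is again deconstructible. So there exists a set $\class{T}$ with $\mathcal{W} = \text{Filt}(\class{T})$. By Eklof's lemma, $\mathcal{W} \subseteq \leftperp{(\rightperp{\class{T}})}$; conversely $\class{T} \subseteq \mathcal{W}$ yields $\rightperp{\mathcal{W}} \subseteq \rightperp{\class{T}}$, and the two combine to give $\rightperp{\mathcal{W}} = \rightperp{\class{T}}$. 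Hence $(\mathcal{W}, \rightperp{\mathcal{W}})$ is a cotorsion pair cogenerated by the set $\class{T}$, and so is complete by the Eklof--Trlifaj theorem.

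For (ii), I would invoke Proposition~\ref{prop-characterizing injective cotorsion pairs}(3), which reduces the problem to showing that $\mathcal{W}$ is thick and contains every injective object of $\cat{A}$. Thickness is preserved under arbitrary intersections, as both closure under retracts and the two-out-of-three property on short exact sequences pass through any intersection of classes; hence $\mathcal{W}$ is thick because each $\class{W}_i$ is. Containment of injectives is immediate, since by definition of an injective cotorsion pair the injectives lie in $\class{W}_i \cap \class{F}_i$, hence in each $\class{W}_i$, so they lie in $\mathcal{W}$. The main obstacle is really just the legitimate invocation of the Stovicek--Hill deconstructibility theorem; once that is in hand, the remaining steps are formal consequences of the characterization of injective cotorsion pairs and standard cotorsion-theoretic manipulations.
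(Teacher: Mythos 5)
Your proposal follows essentially the same route as the paper's proof: deconstructibility of $\bigcap_{i\in I}\class{W}_i$ via \v{S}t'ov\'{\i}\v{c}ek--Hill, cogeneration by a set and hence completeness, and then Proposition~\ref{prop-characterizing injective cotorsion pairs}(3) applied to a class that is thick and contains the injectives; your extra verification that the pair really is the supremum for $\preceq_r$ is correct and harmless (the paper treats this as part of the surrounding lattice discussion rather than the proof proper).

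One step as written is not justified. Setting $\class{W}=\bigcap_{i\in I}\class{W}_i$, from $\class{T}\subseteq\class{W}\subseteq\leftperp{(\rightperp{\class{T}})}$ you correctly deduce $\rightperp{\class{W}}=\rightperp{\class{T}}$, but this alone does not show that $(\class{W},\rightperp{\class{W}})$ is a cotorsion pair: you still need $\leftperp{(\rightperp{\class{W}})}\subseteq\class{W}$, i.e.\ that $\class{W}$ is closed under the double perp, and Eklof's lemma only supplies the inclusion you stated. The cheapest fix is exactly the paper's Lemma~\ref{lemma-sup and inf}(2): since $\class{W}_i=\leftperp{\class{F}_i}$, one has $\bigcap_{i\in I}\class{W}_i=\leftperp{\bigl(\bigcup_{i\in I}\class{F}_i\bigr)}$, and any class of the form $\leftperp{\class{S}}$ automatically forms a cotorsion pair with its right Ext-orthogonal. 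Alternatively, you can close your own argument by recalling that the left class of the cotorsion pair cogenerated by the set $\class{T}$ consists of direct summands of objects filtered by $\class{T}$ together with projectives; all such objects lie in $\class{W}$ because $\class{W}$ contains the projectives, is closed under transfinite extensions (each $\class{W}_i$ is, by Eklof's lemma), and is closed under direct summands. With either patch your argument is complete and coincides with the paper's.
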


\begin{proof}
Since each $(\class{W}_i, \class{F}_i)$ is cogenerated by a set, each class $\class{W}_i$ is deconstructible by~\cite{Stovicek-Hill}. Then by~\cite[Proposition~2.9~(2)]{Stovicek-Hill} it follows that $\cap_{i \in I} \class{W}_i$ is also deconstructible. This implies the cotorsion pair $(\cap_{i \in I} \class{W}_i ,  (\cap_{i \in I} \class{W}_i)^\perp)$ is cogenerated by a set, and so is complete. Since each $\class{W}_i$ is thick and contains the injective objects we get that $\cap_{i \in I} \class{W}_i$ is also thick and contains the injectives. So $(\cap_{i \in I} \class{W}_i ,  (\cap_{i \in I} \class{W}_i)^\perp)$ is an injective cotorsion pair by Proposition~\ref{prop-characterizing injective cotorsion pairs}.

\end{proof}

\begin{remark}
Proposition~\ref{prop-sublattice of injective cot pairs} says that the ordering on the class of all injective cotorsion pairs that are cogenerated by a set is a ``complete join-semilattice'' sitting inside the complete lattice of all cotorsion pairs that are cogenerated by a set. By a complete join-semilattice we mean that the supremum of any given set exists. The author doesn't know whether or not infima exist. That is, let $\{\,(\class{W}_i, \class{F}_i)\,\}_{i \in I}$ be a collection of injective cotorsion pairs each cogenerated by a set $\class{S}_i$. Then we know from Lemma~\ref{lemma-sup and inf} that $({}^\perp(\cap_{i \in I} \class{F}_i) ,  \cap_{i \in I} \class{F}_i)$ is the infimum in the lattice of all cotorsion pairs that are cogenerated by a set. We would like to know if this cotorsion pair is injective. (The only thing not clear is whether or not the left class is closed under taking cokernels of monomorphisms between its objects. Equivalently, does $[{}^\perp(\cap_{i \in I} \class{F}_i)] \cap [\cap_{i \in I} \class{F}_i]$ consist only of injective modules?)

Finally, suppose $R$ is a ring with the Gorenstein injective cotorsion pair $(\leftperp{\class{G}\class{I}}, \class{G}\class{I})$ being complete. Then $\class{G}\class{I}$ becomes a Frobenius category and we point out that the semilattice of injective cotorsion pairs in $R$-Mod embeds inside the lattice of thick subcategories of the stable category $\class{G}\class{I}/\sim$ by Corollary~\ref{cor-bijective correspondences between admissible subcats and injective cot pairs}.

\end{remark}

\subsection{The semilattice of projective cotorsion pairs}

 On the other hand we have the semilattice of projective cotorsion pairs in $\class{A}$. We give the simple proof of the dual result below. It is necessarily different due to the fact that we must always \emph{cogenerate} a cotorsion pair by a set to get completeness.

Suppose that $\class{M}_1 = (\class{C}_1, \class{W}_1)$ and  $\class{M}_2 = (\class{C}_2, \class{W}_2)$ are projective cotorsion pairs in $\cat{A}$. Then we define $\class{M}_2  \preceq_l \class{M}_1 $ if and only if $\class{C}_2 \subseteq \class{C}_1$. Note that this happens if and only if the inclusion functor $\class{M}_2 \to \class{M}_1$ is a left Quillen functor. With respect to $\preceq_l$ we have that the canonical projective cotorsion pair is the least element with respect to the ordering and from Theorem~\ref{them-Gorenstein projectives are at top of lattice} the Gorenstein projective cotorsion pair, whenever it exists, is the maximum element. We know it exists whenever $R$ is a coherent ring in which all flat modules have finite projective dimension by~\cite{bravo-gillespie-hovey}.

\begin{proposition}\label{prop-sublattice of projective cot pairs}
Let $\{\,(\class{C}_i, \class{W}_i)\,\}_{i \in I}$ be a collection of projective cotorsion pairs each cogenerated by a set $\class{S}_i$. Then its supremum, which is the cotorsion pair
$\bigvee_{i \in I} (\class{C}_i , \class{W}_i) =  ({}^\perp(\cap_{i \in I} \class{W}_i) ,  \cap_{i \in I} \class{W}_i)$, is also a projective cotorsion pair cogenerated by a set.
\end{proposition}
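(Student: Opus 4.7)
The plan is to directly invoke Lemma~\ref{lemma-sup and inf}(1) with $\class{D}_i = \class{W}_i$. This immediately gives that $({}^\perp(\cap_{i\in I}\class{W}_i),\cap_{i\in I}\class{W}_i)$ is a cotorsion pair cogenerated by the class $\cup_{i\in I}\class{S}_i$, which is a set since each $\class{S}_i$ is and $I$ is tacitly a set. Completeness is then automatic from the standard fact (used throughout the paper) that a cotorsion pair cogenerated by a set is complete. So at this point I already have a complete cotorsion pair cogenerated by a set, and I also know it is an upper bound for the collection under $\preceq_l$ since $\cap_{i\in I}\class{W}_i \subseteq \class{W}_j$ for each $j$, equivalently $\class{C}_j \subseteq {}^\perp(\cap_{i\in I}\class{W}_i)$.

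It only remains to verify that this complete cotorsion pair is \emph{projective}. For this I would use condition~(3) of Proposition~\ref{prop-characterizing projective cotorsion pairs}, which reduces the task to checking that $\cap_{i\in I}\class{W}_i$ is thick and contains every projective object of $\cat{A}$. Both properties pass trivially from each individual $\class{W}_i$ to the intersection: closure under direct summands and the two-out-of-three property on short exact sequences are preserved under arbitrary intersections of subclasses, and if every $\class{W}_i$ contains the projectives then so does their intersection.

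There is no real obstacle in this argument. In fact, it is strictly simpler than its dual Proposition~\ref{prop-sublattice of injective cot pairs}, where one had to invoke the deconstructibility results of \cite{Stovicek-Hill} to intersect the \emph{left} halves of cotorsion pairs. Here the intersection sits on the right, so Lemma~\ref{lemma-sup and inf}(1) produces a cogenerating set essentially for free as the union $\cup_{i\in I}\class{S}_i$, and the rest of the verification is just bookkeeping about thickness and projectives.
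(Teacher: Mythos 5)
Your argument is correct and is essentially the paper's own proof: both apply Lemma~\ref{lemma-sup and inf}(1) to get that $\bigcup_{i\in I}\class{S}_i$ cogenerates the pair (hence completeness), and both then check that $\bigcap_{i\in I}\class{W}_i$ is thick and contains the projectives so that Proposition~\ref{prop-characterizing projective cotorsion pairs} applies. Your closing comparison with the injective case, where deconstructibility was needed because the intersection occurs on the left, matches the paper's remark preceding the dual Proposition~\ref{prop-sublattice of injective cot pairs}.
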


\begin{proof}
Here it is clear that $\cap_{i \in I} \class{W}_i$ is thick and contains the projectives since each $\class{W}_i$ does. Also, if $\{\,\class{S}_i\,\}_{i \in I}$ represents the cogenerating sets, then $\cup_{i \in I} \class{S}_i$ is a cogenerating set by Lemma~\ref{lemma-sup and inf}. So $\bigvee_{i \in I} (\class{C}_i , \class{W}_i)$ is complete and so a projective cotorsion pair by Proposition~\ref{prop-characterizing projective cotorsion pairs}.
\end{proof}

\subsection{Examples}\label{subsec-example of injective cot pairs in R-Mod}
In general, the semilattice of injective (or projective) cotorsion pairs on $\ch$ is always more interesting than the one on $R$-Mod and depends much on the global (Gorenstein) dimension of $R$. We will look at examples concerning $\ch$ at the end of Section~\ref{sec-lifting from modules to chain complexes}.

For a generic ring $R$, there seems to be just two injective cotorsion pairs on $R$-Mod that are typically of interest. The first is the canonical injective cotorsion pair $(\cat{A},\class{I})$ and the second the Gorenstein AC-injective cotorsion pair $(\class{W},\class{GI})$ discussed more in Section~\ref{sec-Gorenstein AC recollements}. When $R$ is Noetherian this is exactly the Gorenstein injective cotorsion pair.  But a ring can certainly have more than these two injective cotorsion pairs. For example, if $R$ is Noetherian then as we see in the next section there are several injective cotorsion pairs on $\ch$. But $\ch$ is really just a graded version of the category of $R[x]/(x^2)$-modules, and $R[x]/(x^2)$-Mod will have these analogous injective cotorsion pairs as well.

We note the following simplification for a ring of finite global dimension.

\begin{example}\label{example-finite global dim for modules}
Suppose $R$ is a ring with $\text{gl.dim}(R) < \infty$. Then the Gorenstein injective modules coincide with the injective modules. So in this case there is only one injective cotorsion pair in $R$-Mod, the categorical one. Similarly there is only one projective cotorsion pair in $R$-Mod, as the canonical projective cotorsion pair coincides with the Gorenstein projective cotorsion pair.

\end{example}

\section{Lifting from modules to chain complexes}\label{sec-lifting from modules to chain complexes}

The author has considered before the problem of lifting any given cotorsion pair $(\class{F},\class{C})$ in an abelian category $\class{A}$ to various cotorsion pairs in $\ch$. In particular, see~\cite{gillespie} and~\cite{gillespie-degreewise-model-strucs}.
We now show that when the ground pair $(\class{F},\class{C})$ is an injective (resp. projective) cotorsion pair, then these lifted pairs are also injective (resp. projective). We start with the following notations which were introduced in~\cite{gillespie} and~\cite{gillespie-degreewise-model-strucs}.

\begin{definition}\label{def-classes of complexes}
Given a class of $R$-modules $\class{C}$, we define the following classes of chain complexes in $\ch$.
\begin{enumerate}
\item $\dwclass{C}$ is the class of all chain complexes with $C_n \in \mathcal{C}$.
\item $\exclass{C}$ is the class of all exact chain complexes with $C_n \in \mathcal{C}$.
\item $\tilclass{C}$ is the class of all exact chain complexes with cycles $Z_nC \in \mathcal{C}$.
\end{enumerate}
\end{definition}
The ``dw'' is meant to stand for ``degreewise'' while the ``ex'' is meant to stand for ``exact''. When $\mathcal{C}$ is the class of projective (resp. injective, resp. flat) modules, then $\tilclass{C}$ are the categorical projective (resp. injective, resp. flat) chain complexes.

Moreover, if we are given any cotorsion pair $(\class{F}, \class{C})$ in $R$-Mod, then following~\cite{gillespie} we will denote $\rightperp{\tilclass{F}}$ by $\dgclass{C}$ and $\leftperp{\tilclass{C}}$ by $\dgclass{F}$.

\subsection{Injective cotorsion pairs in $\ch$ from one in $R$-Mod}\label{subsec-Injjective cotorsion pairs of complexes coming from one in R-Mod} Let $R$ be any ring and $(\class{W}, \class{F})$ be an injective cotorsion pair in $R$-Mod which is cogenerated by a set. We see below that this information allows for the construction of six injective cotorsion pairs, also each cogenerated by a set, in $\ch$. However, depending on the pair $(\class{W}, \class{F})$ that we start with and the particular ring $R$, some of these six will coincide. See the examples ahead in Section~\ref{subsec-examples of lattices in ch(R)}.

\begin{proposition}\label{prop-induced injective cot pairs on complexes}
Let $(\mathcal{W},\mathcal{F})$ be an injective cotorsion pair of $R$-modules cogenerated by some set. Then the following are also each injective cotorsion pairs in $\ch$, and cogenerated by sets.
\begin{enumerate}
\item $({}^\perp\dwclass{F}, \dwclass{F})$
\item $({}^\perp\exclass{F}, \exclass{F})$
\item $(\dgclass{W}, \tilclass{F})$

\

\item $(\dwclass{W}, (\dwclass{W})^\perp)$
\item $(\exclass{W}, (\exclass{W})^\perp)$
\item $(\tilclass{W}, \dgclass{F})$

\

\end{enumerate}
\end{proposition}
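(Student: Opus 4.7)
The plan is to apply the two characterizations of injective cotorsion pairs in Proposition~\ref{prop-characterizing injective cotorsion pairs}. Completeness of each of the six cotorsion pairs, together with the fact that each is cogenerated by a set, is already established in~\cite{gillespie} and~\cite{gillespie-degreewise-model-strucs}, so my job reduces to verifying the injective structure in each case.

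For pairs (4), (5), and (6), whose left classes are the transparently described $\dwclass{W}$, $\exclass{W}$, and $\tilclass{W}$, I would apply part~(3) of Proposition~\ref{prop-characterizing injective cotorsion pairs}: each left class needs to be thick in $\ch$ and contain the injective complexes. Thickness of $\dwclass{W}$ is immediate degreewise from the thickness of $\class{W}$ in $R$-Mod. For $\exclass{W}$, one adds the standard fact that the two-out-of-three property on exactness follows from the long exact sequence in homology. For $\tilclass{W}$, an application of the snake lemma produces a short exact sequence $0 \to Z_nX \to Z_nY \to Z_nZ \to 0$ from any short exact sequence of exact complexes, so thickness on the level of cycles is inherited from the thickness of $\class{W}$. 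Each of these three classes contains every injective complex, since an injective object of $\ch$ is a contractible complex of injective modules and the injectives lie in $\class{W}$ (because $\class{W} \cap \class{F}$ equals the injectives).

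For pairs (1), (2), and (3), whose right classes $\dwclass{F}$, $\exclass{F}$, $\tilclass{F}$ are easier to describe than their left classes, I would instead apply part~(2) of Proposition~\ref{prop-characterizing injective cotorsion pairs}: verify hereditariness and identify the intersection of the two classes with the class of injective complexes. Hereditariness is extracted from Lemma~\ref{lemma-hereditary test}: each right class is coresolving, which reduces degreewise to the coresolving property of $\class{F}$ in $R$-Mod, with the long exact sequence in homology and the snake lemma propagating exactness and the cycles condition as needed.

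The main obstacle will be pinning down the intersection in these latter three pairs. Consider pair~(1): given $X \in \dwclass{F} \cap \leftperp{\dwclass{F}}$, observe that $D^{n+1}(F)$ lies in $\dwclass{F}$ for every $F \in \class{F}$, and the adjunction isomorphism $\Ext^1_{\ch}(X, D^{n+1}(F)) \cong \Ext^1_R(X_n, F)$ then forces $X_n \in \leftperp{\class{F}} = \class{W}$. Combined with $X_n \in \class{F}$, we conclude $X_n$ is injective. To promote this to contractibility of $X$, I would exploit that $\Sigma^{-1}X$ also lies in $\dwclass{F}$, so $\Ext^1_{\ch}(X, \Sigma^{-1}X) = 0$ forces $\Ext^1_{dw}(X, \Sigma^{-1}X) = 0$; by Lemma~\ref{lemma-homcomplex-basic-lemma} this group is isomorphic to $H_0\homcomplex(X,X) \cong \Ch(R)(X,X)/\!\sim$, so $1_X$ must be null-homotopic, making $X$ contractible and hence an injective complex. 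The identical recipe handles pairs~(2) and~(3), since in each case both $D^{n+1}(F)$ and $\Sigma^{-1}X$ sit inside the relevant right class (for $\tilclass{F}$ one notes that $D^{n+1}(F)$ is exact with cycles either $0$ or $F$, both in $\class{F}$, and that the cycles of $\Sigma^{-1}X$ are a reindexing of the cycles of $X$; for pair~(3) one also uses that the entries of $X \in \tilclass{F}$ are automatically in $\class{F}$ since each $X_n$ is an extension of $Z_{n-1}X$ by $Z_nX$).
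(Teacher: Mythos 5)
Your handling of pairs (4)--(6) is essentially the paper's own argument: the paper proves the projective dual (Proposition~\ref{prop-induced projective cot pairs on complexes}) and in every case verifies criterion~(3) of Proposition~\ref{prop-characterizing injective cotorsion pairs}, namely that the ``$\class{W}$-side'' class is thick and contains the injectives (dually, projectives), with exactly your degreewise/cycles arguments for thickness. For pairs (1)--(3) you take a genuinely different route. The paper again checks criterion~(3), working directly with the left classes $\leftperp{\dwclass{F}}$, $\leftperp{\exclass{F}}$, $\dgclass{W}$ via their explicit description (complexes with entries in $\class{W}$ such that the relevant $\homcomplex$-complexes are exact) and the long exact homology sequence of $\homcomplex$; you instead verify criterion~(2), getting hereditariness from the coresolving right classes and computing the core $\class{W}'\cap\class{F}'$ by hand through the adjunction $\Ext^1_{\ch}(X,D^{n+1}(F))\cong\Ext^1_R(X_n,F)$ and the contractibility trick with $\Sigma^{-1}X$ and Lemma~\ref{lemma-homcomplex-basic-lemma}. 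That computation is correct (the disks and $\Sigma^{-1}X$ do lie in the respective right classes, and a contractible complex with injective entries is an injective complex), and it buys you independence from the structural description of the perp classes at the cost of an explicit Ext calculation.

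Two loose ends. First, your opening claim that completeness and cogeneration by a set for all six pairs is already in \cite{gillespie} and \cite{gillespie-degreewise-model-strucs} is inaccurate for (4)--(6): those references supply this only for the pairs whose lifted class sits on the right, i.e.\ (1)--(3). For $(\dwclass{W},(\dwclass{W})^\perp)$, $(\exclass{W},(\exclass{W})^\perp)$ and $(\tilclass{W},\dgclass{F})$ the paper must invoke the deconstructibility machinery of \cite{Stovicek-Hill}: since $(\class{W},\class{F})$ is cogenerated by a set, $\class{W}$ is deconstructible, hence so are $\dwclass{W}$ and $\tilclass{W}$, and $\exclass{W}=\dwclass{W}\cap\class{E}$ is deconstructible as an intersection of deconstructible classes; this is what yields cogeneration by a set, and hence completeness, for these three pairs. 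You need to supply that step (or an equivalent argument). Second, criterion~(2) demands the \emph{equality} $\class{W}'\cap\class{F}'=$ injective complexes, and you only prove the inclusion $\subseteq$. The reverse inclusion should be stated: an injective complex is a contractible complex of injectives, hence a direct sum of disks $D^n(Q)$ with $Q$ injective, and since injective modules lie in $\class{W}=\leftperp{\class{F}}$, the adjunction $\Ext^1_{\ch}(D^n(Q),Y)\cong\Ext^1_R(Q,Y_n)$ shows such complexes lie in $\leftperp{\dwclass{F}}$, $\leftperp{\exclass{F}}$ and $\leftperp{\tilclass{F}}$ as well (the entries of any $Y\in\tilclass{F}$ being in $\class{F}$ as extensions of cycles). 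With those two points added, your proof is complete.
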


\begin{proof}
This time we give the proofs for the projective case and this appears in Proposition~\ref{prop-induced projective cot pairs on complexes} below. We note that the only difference between the projective and injective case is that we are always \emph{cogenerating} by a set to get completeness of the cotorsion pair. For (1)--(3) one can cogenerate by a set due to~\cite[Propositions~4.3, 4.4, and~4.6]{gillespie-degreewise-model-strucs} and for (4)--(6) one can use the theory of deconstructible classes in~\cite{Stovicek-Hill}.
\end{proof}

\subsection{Projective cotorsion pairs in $\ch$ from one in $R$-Mod}\label{subsec-Projective cotorsion pairs of complexes coming from one in R-Mod} Let $R$ be any ring and $(\class{C}, \class{W})$ be a projective cotorsion pair in $R$-Mod which is cogenerated by a set. We have the dual statement to Proposition~\ref{prop-induced injective cot pairs on complexes} which we now prove.

\begin{proposition}\label{prop-induced projective cot pairs on complexes}
Let $(\mathcal{C},\mathcal{W})$ be a projective cotorsion pair of $R$-modules cogenerated by some set. Then the following are also each projective cotorsion pairs in $\ch$, and cogenerated by sets.
\begin{enumerate}
\item $(\dwclass{C}, (\dwclass{C})^\perp)$
\item $(\exclass{C}, (\exclass{C})^\perp)$
\item $(\tilclass{C}, \dgclass{W})$

\

\item $({}^\perp\dwclass{W}, \dwclass{W})$
\item $({}^\perp\exclass{W}, \exclass{W})$
\item $(\dgclass{C}, \tilclass{W})$
\end{enumerate}
\end{proposition}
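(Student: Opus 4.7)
The plan is to verify, for each of the six listed pairs, that it is a complete cotorsion pair cogenerated by a set and then apply Proposition~\ref{prop-characterizing projective cotorsion pairs} to upgrade it to a projective cotorsion pair. The argument is essentially the dualization of Proposition~\ref{prop-induced injective cot pairs on complexes}, with the one structural difference that every cotorsion pair we meet must be built up by \emph{cogenerating} with a set to secure completeness.

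For the three left-class-based pairs~(1)--(3), completeness and cogeneration by a set follow from the constructions of \cite[Propositions~4.3,~4.4, and~4.6]{gillespie-degreewise-model-strucs} applied to the pair $(\class{C},\class{W})$. I would then invoke Proposition~\ref{prop-characterizing projective cotorsion pairs}(2): each left class $\dwclass{C}$, $\exclass{C}$, $\tilclass{C}$ is resolving because $\class{C}$ is resolving in $R$-Mod (by Becker's Lemma~\ref{lemma-Beckers lemma} applied to $(\class{C},\class{W})$), and this property passes componentwise to the complex-level classes; Becker's Lemma then upgrades each pair to hereditary. What remains is to identify the intersection of the left and right classes with the projective objects of $\ch$, namely the contractible complexes with projective components, i.e.\ the direct sums of disks $D^{n}(P)$ with $P$ projective. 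For~(3) this is the familiar computation: a complex in $\tilclass{C}\cap\dgclass{W}$ is exact with cycles in $\class{C}\cap\class{W}$, hence with projective cycles, and such an exact complex splits as a sum of disks; the arguments for~(1) and~(2) are parallel and reduce to showing that each component lies in $\class{C}\cap\class{W}$ after the usual cycles/splitting analysis of \cite{gillespie} and \cite{gillespie-degreewise-model-strucs}.

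For the three right-class-based pairs~(4)--(6), I would instead invoke Proposition~\ref{prop-characterizing projective cotorsion pairs}(3). Since $\class{W}$ is thick and deconstructible (being the right half of a cotorsion pair cogenerated by a set), \cite[Proposition~2.9]{Stovicek-Hill} guarantees that $\dwclass{W}$, $\exclass{W}$, and $\tilclass{W}$ are each deconstructible in $\ch$, hence each yields a complete cotorsion pair cogenerated by a set. Thickness in $\ch$ is inherited from thickness of $\class{W}$ in $R$-Mod: two-out-of-three membership in $\class{W}$ is applied degreewise, while membership conditions involving exactness or cycles are preserved under short exact sequences of complexes via the long exact sequence in homology. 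Finally, each right class contains the projective objects of $\ch$, since any disk $D^{n}(P)$ with $P$ projective is an exact complex whose components and cycles all lie in $\class{W}$ (as $\class{W}$ contains the projective $R$-modules).

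The main obstacle is the intersection identification for pairs~(1)--(3): one must pin down that the trivially cofibrant complexes are precisely the projective complexes of $\ch$. The key input making this work is exactly that $\class{C}\cap\class{W}$ is the class of projective $R$-modules, which is the content of $(\class{C},\class{W})$ being a projective cotorsion pair; with this in hand, the remaining analysis is the standard one from \cite{gillespie} and \cite{gillespie-degreewise-model-strucs}.
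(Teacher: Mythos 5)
Your overall architecture (establish each pair as a complete cotorsion pair cogenerated by a set, then upgrade via Proposition~\ref{prop-characterizing projective cotorsion pairs}) is the same as the paper's, and your route for (1)--(3) via characterization~(2) (hereditary plus ``left $\cap$ right $=$ projective complexes'') is a workable alternative to the paper's, which instead verifies characterization~(3) by showing the right classes $(\dwclass{C})^\perp$, $(\exclass{C})^\perp$, $\dgclass{W}$ are thick and contain the projective complexes, using the explicit description of $(\exclass{C})^\perp$ from \cite{gillespie-degreewise-model-strucs} and the long exact homology sequence of $\homcomplex$-complexes. The problem is that your two completeness mechanisms are attached to the wrong halves of the list, and as stated neither one works where you put it.

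For (4)--(6) you assert that $\class{W}$ is deconstructible ``being the right half of a cotorsion pair cogenerated by a set.'' That is backwards: cogeneration by a set makes the \emph{left} class $\class{C}$ deconstructible; nothing of the sort holds for the right class in general. Moreover, even if $\dwclass{W}$, $\exclass{W}$, $\tilclass{W}$ were deconstructible, deconstructibility of a class $\class{D}$ yields completeness of a cotorsion pair having $\class{D}$ on the \emph{left} (i.e.\ of $(\class{D}, \class{D}^\perp)$-type pairs), not of $({}^\perp\dwclass{W}, \dwclass{W})$ and its companions. The paper instead gets cogenerating sets for (4)--(6) from \cite[Propositions~4.3--4.6]{gillespie-degreewise-model-strucs}, which are precisely about lifted pairs with prescribed right-hand classes $\dwclass{W}$, $\exclass{W}$, $\tilclass{W}$. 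Dually, your appeal to those same propositions for (1)--(3) misfires: they do not produce cogenerating sets for pairs whose \emph{left} classes are $\dwclass{C}$, $\exclass{C}$, $\tilclass{C}$; that is exactly where the deconstructibility technology belongs. The paper's argument there is: $\class{C}$ is deconstructible, hence $\dwclass{C}$ and $\tilclass{C}$ are deconstructible by \cite[Theorem~4.2]{Stovicek-Hill}, and $\exclass{C} = \dwclass{C} \cap \class{E}$ is deconstructible by \cite[Proposition~2.9]{Stovicek-Hill}, so each of these pairs is cogenerated by a set and therefore complete. Note also that your use of Becker's Lemma~\ref{lemma-Beckers lemma} and of Proposition~\ref{prop-characterizing projective cotorsion pairs} in (1)--(3) presupposes completeness, so nothing in that part can run until the completeness argument is repaired. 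Once the two mechanisms are swapped back to their proper sides, the rest of your outline (thickness of $\dwclass{W}$, $\exclass{W}$, $\tilclass{W}$, containment of the projective complexes, and the intersection computation for (1)--(3)) does go through.
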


\begin{proof}
The proofs for (1)--(3) are all similar as are the proofs for (4)--(6).

Lets prove (6). First, it follows from~\cite[Section~3]{gillespie} that $({}^\perp\tilclass{W}, \tilclass{W})$ is a cotorsion pair. It is easy to show that $\tilclass{W}$ is thick because $\class{W}$ is thick. (In particular, note that any short exact sequence $0 \to W' \to W \to W'' \to 0$ of complexes in $\tilclass{W}$ gives rise to a short exact sequence $0 \to Z_nW' \to Z_nW \to Z_nW'' \to 0$ on the level of cycles because the complex $W'$ is exact.) Next, recall that a projective chain complex is one which is exact with projective cycles. Since $\class{W}$ contains the projective modules, we get that $\tilclass{W}$ contains the projective complexes. So since $\tilclass{W}$ is thick and contains the projectives it will follow from Proposition~\ref{prop-characterizing projective cotorsion pairs} that $({}^\perp\tilclass{W}, \tilclass{W})$ is a projective cotorsion pair once we know it is complete. Note here that if we had started with $({}^\perp\dwclass{W}, \dwclass{W})$ or $({}^\perp\exclass{W}, \exclass{W})$ then again we get that each of these is a cotorsion pair by~\cite[Propositions~3.2 and~3.3]{gillespie-degreewise-model-strucs} and similarly we argue that both $\dwclass{W}$ and $\exclass{W}$ are thick and contain the projectives. But finally, it was shown in~\cite[Propositions~4.3--4.6]{gillespie-degreewise-model-strucs} that each of the cotorsion pairs $({}^\perp\dwclass{W}, \dwclass{W})$, $({}^\perp\exclass{W}, \exclass{W})$ and $({}^\perp\tilclass{W}, \tilclass{W})$ are cogenerated by a set. This proves (4)--(6).

Now lets prove (1)--(3), It follows again from~\cite{gillespie} and~\cite{gillespie-degreewise-model-strucs} that these are all cotorsion pairs. Lets focus on (2) for example, since (1) and (3) will be similar. First, from~\cite[Proposition~3.3]{gillespie-degreewise-model-strucs} we note that for this cotorsion pair $(\exclass{C}, (\exclass{C})^\perp)$ the right class  $(\exclass{C})^\perp$ equals the class of all complexes $W$ for which $W_n \in \mathcal{W}$ and such that $\homcomplex(C,W)$ is exact whenever $C \in \exclass{C}$. ($\homcomplex$ is defined in Section~\ref{sec-preliminaries}. By Lemma~\ref{lemma-homcomplex-basic-lemma} we get that $\homcomplex(C,W)$ is exact if and only if any chain map $f : \Sigma^n C \to W$ is null homotopic if and only if any chain map $f : C \to \Sigma^n W$ is null homotopic.) In light of Proposition~\ref{prop-characterizing projective cotorsion pairs} we wish to show that this class $(\exclass{C})^\perp$ is thick and contains the projectives and that the cotorsion pair $(\exclass{C}, (\exclass{C})^\perp)$ is complete. The projective complexes are easily seen to be in $(\exclass{C})^\perp$ by its above description (recall $\class{W}$ contains the projectives and any chain map into a projective is null). Next, $(\exclass{C})^\perp$ is clearly closed under retracts since it is the right side of a cotorsion pair. To complete the thickness claim suppose that $0 \to W' \to W \to W'' \to 0$ is a short exact sequence of complexes. If any two of the three $W', W, W''$ are in $(\exclass{C})^\perp$ then note that since $\mathcal{W}$ is thick we get that all of the $W'_n, W_n, W'_n$ are in $\class{W}$. It now follows that for any $C \in \exclass{C}$ we will always get a short exact sequence of $\homcomplex$-complexes $0 \to \homcomplex(C,W') \to \homcomplex(C,W) \to \homcomplex(C,W') \to 0$ (because $\Ext$ vanishes degreewise). So now if any two of the three complexes $\homcomplex(C,W'), \homcomplex(C,W), \homcomplex(C,W')$ are exact then the third is automatically exact due to the long exact sequence in homology. This completes the proof that  $(\exclass{C})^\perp$ is thick. Finally it is left to show that $(\exclass{C}, (\exclass{C})^\perp)$ is complete. We use the results in~\cite{Stovicek-Hill} pointing out that a class which is the left side of a cotorsion pair is \emph{deconstructible} if and only if that cotorsion pair is cogenerated by set. It follows from~\cite[Theorem~4.2]{Stovicek-Hill} that $\dwclass{C}$ and $\tilclass{C}$ are deconstructible since $\class{C}$ is. So the only cotorsion pair left is $(\exclass{C}, (\exclass{C})^\perp)$. But here note that $\exclass{C} = \dwclass{C} \cap \mathcal{E}$ where $\mathcal{E}$ is the class of exact complexes. Since $\mathcal{E}$ is the left side of a cotorsion pair cogenerated by a set it is deconstructible and since $\dwclass{C}$ is also deconstructible, it follows from~\cite[Proposition~2.9]{Stovicek-Hill} that $\exclass{C}$ is also deconstructible.
\end{proof}

\subsection{Examples and homological dimensions}\label{subsec-examples of lattices in ch(R)}

We briefly discussed the semilattices of injective and projective cotorsion pairs on $R$-Mod in Section~\ref{subsec-example of injective cot pairs in R-Mod}. We continue that discussion now by looking in more detail at the basic examples of model structures on $\ch$ induced from the categorical injective (resp. projective) cotorsion pair and the Gorenstein injective (resp. Gorenstein projective) cotorsion pair. The basic theme is that for a Noetherian ring $R$, the semilattice for $\ch$ becomes more complicated as we move from $R$ having finite global dimension, to $R$ being Gorenstein, to a general Noetherian $R$. In the next section we look in more detail at the models induced by the Gorenstein injective and Gorenstein projective pairs.

We will give projective examples here and leave the obvious dual statements concerning injective cotorsion pairs to the reader. Let $\class{P}$ denote the class of projective modules and $(\class{P},\class{A})$ the canonical projective cotorsion pair. Consider the six projective cotorsion pairs on $\ch$ induced by $(\class{P},\class{A})$ using Proposition~\ref{prop-induced projective cot pairs on complexes}. We see that they are in fact only four distinct pairs. They are $(\dwclass{P}, (\dwclass{P})^\perp)$, $(\exclass{P}, (\exclass{P})^\perp)$, $(\tilclass{P}, \ch)$, and $(\dgclass{P}, \mathcal{E})$. We have the following observation.

\begin{proposition}\label{prop-rings have finite global dim iff exclass equals tilclass}
Let $R$ be any ring with $\text{gl.dim}(R) < \infty$. Then we have $\exclass{P} = \tilclass{P}$ and $\dwclass{P} = \dgclass{P}$.
\end{proposition}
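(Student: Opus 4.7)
The plan is to prove each equality separately. The inclusions $\exclass{P} \supseteq \tilclass{P}$ and $\dwclass{P} \supseteq \dgclass{P}$ are immediate from the definitions, so only the reverse inclusion needs work in each case, and both use finite global dimension in a direct way.

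For the first equality, I would take $P \in \exclass{P}$, i.e., an exact complex with every $P_n$ projective, and fix an integer $d$ with $\text{gl.dim}(R) \leq d$. For any fixed $k$, exactness of $P$ makes
$$\cdots \arr P_{k+2} \arr P_{k+1} \arr Z_kP \arr 0$$
a genuine projective resolution of $Z_kP$. Since $Z_kP$ has projective dimension at most $d$, its $d$-th syzygy in this resolution, namely $Z_{k+d}P$, must be projective. As $k$ was arbitrary this shows $Z_kP \in \class{P}$ for every $k$, so $P \in \tilclass{P}$.

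For the second equality, let $P \in \dwclass{P}$. The projective cotorsion pair $(\dgclass{P}, \mathcal{E})$ furnished by Proposition~\ref{prop-induced projective cot pairs on complexes} is complete, so there is a short exact sequence $0 \arr E \arr Q \arr P \arr 0$ with $Q \in \dgclass{P}$ and $E$ exact. Each $P_n$ is projective, so this sequence is degreewise split and each $E_n$ is a summand of the projective $Q_n$, hence projective. Therefore $E \in \exclass{P}$, and by the first part $E \in \tilclass{P}$. In particular $E$ is contractible, with a contracting homotopy $h$ satisfying $d^E h + h d^E = \text{id}_E$. Choosing a degreewise splitting $Q_n = E_n \oplus P_n$, extend $h$ to a degree $+1$ $R$-linear (not necessarily chain) map $\tilde h \colon Q \to E$ by declaring $\tilde h|_{P_n} = 0$. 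A direct computation using $d^2 = 0$ in both $E$ and $Q$ shows that $r := d^E \tilde h + \tilde h d^Q$ is an honest chain map $Q \to E$, and its restriction to $E$ is $d^E h + h d^E = \text{id}_E$. Thus $r$ retracts the inclusion $E \hookrightarrow Q$, the sequence splits, and $P$ is a direct summand of $Q \in \dgclass{P}$. Since $\dgclass{P}$ is closed under retracts (being the left half of a cotorsion pair), $P \in \dgclass{P}$.

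No step presents a serious obstacle: part~(1) is essentially a dimension-shifting observation, and part~(2) reduces cleanly to (1) via the completeness of the $(\dgclass{P},\mathcal{E})$ cotorsion pair, combined with the classical trick of promoting a contracting homotopy on a degreewise summand to a chain retraction. The only bookkeeping point worth checking carefully is that $r$ lands in $E$ and commutes with the differentials, which is automatic from writing out the formulas.
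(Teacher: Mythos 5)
Your proof is correct, but the second half takes a genuinely different route from the paper. For $\exclass{P}=\tilclass{P}$ your syzygy argument is the same dimension-shifting the paper uses (the paper phrases it as $\Ext^1_R(Z_nP,N)\cong\Ext^{d+1}_R(Z_{n-d}P,N)=0$). For $\dwclass{P}=\dgclass{P}$ the paper argues by an Ext computation: it shows every exact complex $E$ has finite projective dimension in $\ch$ (a $d$-th syzygy of $E$ is exact with projective cycles), then dimension-shifts $\Ext^1_{\ch}(P,E)\cong\Ext^{d+1}_{\ch}(P,P^d)=0$, using that $(\dwclass{P},(\dwclass{P})^\perp)$ is a (hereditary) projective cotorsion pair; this directly verifies $P\in\leftperp{\class{E}}=\dgclass{P}$. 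You instead take a special $\dgclass{P}$-precover $0\to E\to Q\to P\to 0$ from completeness of $(\dgclass{P},\class{E})$, note the kernel is an exact complex of projectives, hence by your first part lies in $\tilclass{P}$ and is contractible, and then split the degreewise split sequence by promoting the contracting homotopy to a chain retraction, exhibiting $P$ as a retract of a DG-projective. Both are sound: the paper's argument is shorter given the higher-Ext machinery already in place and in effect records that exact complexes have finite projective dimension in $\ch$; yours avoids higher Ext groups altogether, reuses part one, and needs only completeness of $(\dgclass{P},\class{E})$ plus elementary homotopy algebra (your explicit construction of $r=d^E\tilde h+\tilde h d^Q$ checks out, and you could even shorten it by citing Lemma~\ref{lemma-homcomplex-basic-lemma}, since $\Ext^1_{dw}(P,E)\cong H_{-1}\homcomplex(P,E)=0$ when $E$ is contractible, and the precover sequence is degreewise split because each $P_n$ is projective).
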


\begin{proof}
$\dgclass{P} \subseteq \dwclass{P}$ is automatic. To show $\dwclass{P} \subseteq \dgclass{P}$ let $P \in \dwclass{P}$ and let $E \in \mathcal{E}$. We must show $\Ext^1(P,E) = 0$. Since $\text{gl.dim}(R) = d < \infty$, it is easy to argue that $E$ has finite projective dimension in $\ch$ [because any $d$th syzygy must be exact and inherit projective cycles.] Letting $$0 \to P^d \to P^{d-1} \to \cdots \to P^1 \to P^0 \to E \to 0$$ be a finite projective resolution we conclude by dimension shifting that $\Ext^1(P,E) = \Ext^{d+1}(P,P^d)$. But $\Ext^{d+1}(P,P^d) = 0$ because $(\dwclass{P}, (\dwclass{P})^\perp)$ is a projective cotorsion pair. Thus $\dwclass{P} = \dgclass{P}$. Also, $\exclass{P} = \tilclass{P}$ because given any $P \in \exclass{P}$ and an $R$-module $N$, we can dimension shift $\Ext^1(Z_nP,N) = \Ext^{d+1}(Z_{n-d}P,N) = 0$.
\end{proof}

So the next two examples clarify further the models obtained on $\ch$ from $(\class{P},\class{A})$ using Proposition~\ref{prop-induced projective cot pairs on complexes}.

\begin{example}\label{example-lattice on ch(R) when R has finite global dim}
Say $R$ has finite global dimension. Then we saw in Example~\ref{example-finite global dim for modules} that $(\class{P},\class{A}) = (\class{GP},\class{W})$ is the only projective cotorsion pair on $R$-Mod. Proposition~\ref{prop-rings have finite global dim iff exclass equals tilclass} tells us that we only get two distinct projective cotorsion pairs coming from Proposition~\ref{prop-induced projective cot pairs on complexes} in this case. This first is $(\dwclass{P},\class{E})$ which is the projective model for the derived category of $R$ and the other is the trivial projective model structure $(\tilclass{P},\class{A})$. It follows from Corollary~\ref{cor-Goren proj chain complexes coincide with the degreewise Goren proj complexes} that $(\dwclass{P},\class{E})$ is actually the Gorenstein projective pair in $\ch$, sitting on top of the semilattice while $(\tilclass{P},\class{A})$ is the canonical projective pair sitting at the bottom of the semilattice.  The author doesn't know whether or not there are others on $\ch$ besides these two when $\text{gl.dim}(R) < \infty$.
 \end{example}

\begin{example}\label{example-four possible cot pairs}
Now suppose that $R$ has infinite global dimension. Then we have all four generally distinct pairs $(\dwclass{P}, (\dwclass{P})^\perp)$, $(\exclass{P}, (\exclass{P})^\perp)$, $(\tilclass{P}, \ch)$, and $(\dgclass{P}, \mathcal{E})$. We note $\tilclass{P}$ is the class of categorical projective complexes and so $(\tilclass{P}, \tilclass{P}^\perp)$ is trivial as a model structure, and not of interest. Next, $\dgclass{P}$ is the class of DG-projective complexes and $(\dgclass{P}, \mathcal{E})$ is the usual \emph{projective model structure} on $\ch$ having homotopy category the usual derived category $\class{D}(R)$.  The model structure associated to $(\dwclass{P},(\dwclass{P})^\perp)$ appears in~\cite{bravo-gillespie-hovey} where it is called the \emph{Proj model structure} on $\ch$. This model structure has also appeared in~\cite{positselski} where its homotopy category was called the \emph{contraderived category} of $R$. The model structure  $(\exclass{P},(\exclass{P})^\perp)$ also appears in~\cite{bravo-gillespie-hovey} where it is called the \emph{exact Proj model structure} on $\ch$. Its homotopy category is the (projective) stable derived category $S(R)$ introduced in~\cite{bravo-gillespie-hovey}. For a general ring $R$ we have the portion of the semilattice shown below.
\begin{center}
\begin{displaymath}
\xymatrix{
& & \dwclass{P}  \ar@{-}[dr]  \ar@{-}[dl] & & \\
& \dgclass{P} \ar@{-}[dr] &   & \exclass{P}  \ar@{-}[dl] & \\
& & \tilclass{P}  & &  \\
}
\end{displaymath}
\end{center}
We point out that there are two more model structures on $\ch$ distinct from the above that exist whenever $R$ is a coherent ring in which all flat modules have finite projective dimension. These will appear in~\cite{bravo-gillespie-hovey}.

\end{example}

Having considered projective models on $\ch$ induced from the canonical projective pair $(\class{P},\class{A})$ via Proposition~\ref{prop-induced projective cot pairs on complexes}, we now turn to models induced from the Gorenstein projective cotorsion pair $(\class{GP},\class{W})$. More on this appears in the next Section~\ref{sec-Gorenstein projective and injective models for the derived category}. We now just look at the case when $R$ is a Gorenstein ring.

\begin{example}\label{example-four possible Goren cot pairs}
Let $R$ be a Gorenstein ring and again assume $\text{gl.dim}(R) = \infty$. Recall that $R$ must have finite injective dimension when considered as a module over itself and that these dimensions must coincide. Here assume id($R$) = $d$. Also recall that a module $M$ over $R$ has finite injective dimension if and only if it has finite projective dimension if and only if it has finite flat dimension and that if this is the case then all these dimensions must be $\leq d$. Call these the modules of \emph{finite $R$-dimension} and let $\class{W}$ denote the class of all these modules. Then it was shown in~\cite{hovey} that $(\class{W},\class{GI})$ is a complete cotorsion pair where $\class{GI}$ are the Gorenstein injective complexes and that $(\class{GP},\class{W})$ is a complete cotorsion pair where $\class{GP}$ are the Gorenstein projective complexes. Then we have the following Gorenstein version of Proposition~\ref{prop-rings have finite global dim iff exclass equals tilclass}

\begin{proposition}\label{prop-rings have finite Gorenstein global dim iff exclass equals tilclass}
Let $R$ be a Gorenstein ring with id($R$) = $d$ and let $(\class{GP},\class{W})$ be the Gorenstein projective cotorsion pair. Then $\dwclass{GP} = \dgclass{GP}$ and $\exclass{GP} = \tilclass{GP}$.

\end{proposition}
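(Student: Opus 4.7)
My plan is to adapt the dimension-shifting argument of Proposition~\ref{prop-rings have finite global dim iff exclass equals tilclass}, using two key Gorenstein facts: over a Gorenstein ring of dimension $d$, every module in $\class{W}$ has both projective and injective dimension at most $d$, and the cotorsion pair $(\class{GP}, \class{W})$ is hereditary.

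For $\dwclass{GP} = \dgclass{GP}$, only the containment $\dwclass{GP} \subseteq \dgclass{GP}$ needs proof. Recalling $\dgclass{GP} = \leftperp{\tilclass{W}}$, it suffices to show $\Ext^1_{\ch}(P,W) = 0$ for every $P \in \dwclass{GP}$ and every $W \in \tilclass{W}$. First I would build a resolution of $W$ by categorical projective complexes of length $d$, namely $0 \to P^d \to P^{d-1} \to \cdots \to P^0 \to W \to 0$ with each $P^i \in \tilclass{P}$. This can be done by iteratively forming short exact sequences $0 \to K \to P \to W \to 0$ with $P \in \tilclass{P}$, using that $\ch$ has enough projectives and that its projective objects are exactly the complexes in $\tilclass{P}$. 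At each step $W$ and $P$ are exact, which forces $K$ exact, and the cycle sequences $0 \to Z_nK \to Z_nP \to Z_nW \to 0$ show $\text{pd}(Z_nK) \leq \max(\text{pd}(Z_nW)-1,0) \leq d-1$. After $d$ iterations every cycle of the final syzygy is projective, placing that syzygy in $\tilclass{P}$. Since $\tilclass{P}$ consists of complexes of projective dimension $0$, Proposition~\ref{prop-finite injective and projective dimension objects are trivial} (applied dually) puts $\tilclass{P} \subseteq (\dwclass{GP})^\perp$. Dimension-shifting through the resolution, together with hereditariness of $(\dwclass{GP}, (\dwclass{GP})^\perp)$, then gives $\Ext^1_{\ch}(P,W) \cong \Ext^{d+1}_{\ch}(P,P^d) = 0$.

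For $\exclass{GP} = \tilclass{GP}$, the nontrivial inclusion is $\exclass{GP} \subseteq \tilclass{GP}$. Given $P \in \exclass{GP}$, I would show each cycle $Z_nP$ is Gorenstein projective, which via the cotorsion pair $(\class{GP}, \class{W})$ amounts to $\Ext^1_R(Z_nP, Q) = 0$ for every $Q \in \class{W}$. The short exact sequences $0 \to Z_{k+1}P \to P_{k+1} \to Z_kP \to 0$, combined with hereditariness of $(\class{GP}, \class{W})$, which gives $\Ext^{i}(P_{k+1}, Q) = 0$ for all $i \geq 1$, let me dimension-shift to $\Ext^1_R(Z_nP, Q) \cong \Ext^{d+1}_R(Z_{n-d}P, Q)$. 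Since $R$ is Gorenstein of dimension $d$, every $Q \in \class{W}$ has $\text{id}(Q) \leq d$, and so $\Ext^{d+1}_R(-,Q) = 0$ kills the right-hand side.

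The main obstacle will be the explicit bookkeeping in the first part: verifying at each stage that exactness is preserved when passing to kernels and that the projective dimension of each cycle really drops by one, so that a $d$-th syzygy landing in $\tilclass{P}$ is produced. Once this is in hand, both equalities follow by the same dimension-shifting machinery that drove the proof of Proposition~\ref{prop-rings have finite global dim iff exclass equals tilclass}.
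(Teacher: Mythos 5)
Your proof of $\exclass{GP} = \tilclass{GP}$ is essentially the paper's own argument: dimension-shift $\Ext^1_R(Z_nG,W) \cong \Ext^{d+1}_R(Z_{n-d}G,W)$ using $\Ext^{\geq 1}_R(G_k,W)=0$ for $W \in \class{W}$, and kill the right-hand side because $\mathrm{id}(W) \leq d$ over a Gorenstein ring. Where you genuinely diverge is the first equality: the paper does not prove $\dwclass{GP} = \dgclass{GP}$ directly at all, but obtains it by citing Theorem~3.11 of~\cite{gill-hovey-generalized derived cats} together with Corollary~\ref{cor-Goren inj chain complexes coincide with the degreewise Goren inj complexes}, whereas you give a self-contained argument modeled on the paper's proof of Proposition~\ref{prop-rings have finite global dim iff exclass equals tilclass}: resolve any $W \in \tilclass{W}$ by a length-$d$ resolution with terms in $\tilclass{P}$ (possible because the cycles of $W$ lie in $\class{W}$, hence have projective dimension at most $d$, and pass to syzygies in $\ch$, where exactness and the drop in projective dimension of cycles are preserved), then dimension-shift $\Ext^1_{\ch}(P,W) \cong \Ext^{d+1}_{\ch}(P,P^d) = 0$ using that $(\dwclass{GP},(\dwclass{GP})^\perp)$ is a hereditary projective cotorsion pair whose right class contains $\tilclass{P}$. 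Your route is correct; its ingredients (the cycle sequences are short exact since the syzygy complexes are exact, $\tilclass{P} \subseteq (\dwclass{GP})^\perp$, and higher Ext-orthogonality from hereditariness) all check out, and it has the advantage of being elementary and internal to the paper, at the cost of the bookkeeping you acknowledge; the paper's citation is shorter but imports an external result.
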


\begin{proof}
Let $G \in \exclass{GP}$. We wish to show $Z_nG \in \class{GP}$. So let $W \in \class{W}$ and we must show $\Ext^1_R(Z_nG, W) = 0$. But since $\Ext^i_R(C,W) = 0$ for all $C \in \class{GP}$ we can dimension shift to get $\Ext^1_R(Z_nG,W) \cong \Ext^{d+1}_R(Z_{n-d}G, W)$. But since $W$ must have finite injective dimension less than or equal to $d$ we get that this last group equals 0. Therefore $G \in \tilclass{GP}$. So $\exclass{GP} = \tilclass{GP}$. The fact that $\dwclass{GP} = \dgclass{GP}$ is true by combining~\cite[Theorem~3.11]{gill-hovey-generalized derived cats} with Corollary~\ref{cor-Goren inj chain complexes coincide with the degreewise Goren inj complexes} below.

\end{proof}

We conclude that when $R$ is Gorenstein of infinite global dimension, applying Proposition~\ref{prop-induced projective cot pairs on complexes} to both the categorical projective and the Gorenstein projective pairs generally leads to 8 model structures on $\ch$. This is illustrated concretely in the next example where all 8 model structures are distinct.

\end{example}

\begin{example}\label{example-projective cotorsion pairs of complexes over Z mod 4}
Let $R = \mathbb{Z}_4$, the ring of integers mod 4 and consider $\ch$. Then as described in Example~\ref{example-four possible cot pairs}, the projective cotorsion pair $(\mathcal{P},\mathcal{A})$ on $R$-Mod gives rise to the four projective cotorsion pairs $(\dwclass{P}, (\dwclass{P})^\perp)$ , $(\exclass{P}, (\exclass{P})^\perp)$ , $(\tilclass{P}, \ch)$, and $(\dgclass{P}, \mathcal{E})$ in $\ch$. These classes are indeed distinct because the complex $\cdots \mathbb{Z}/4 \xrightarrow{\times 2} \mathbb{Z}/4 \xrightarrow{\times 2} \mathbb{Z}/4 \cdots$ is in $\exclass{P}$ but not $\tilclass{P}$, and so also $\dwclass{P}$ but not $\dgclass{P}$. Recall that $R$ is quasi-Frobenius meaning that the class of injective modules coincides with the class of projective modules. It follows that the Gorenstein projective cotorsion pair on $R$-Mod is $(\class{A},\class{I})$ where $\class{I}$ is the class of injective/projective $R$-modules. The four associated projective cotorsion pairs from Example~\ref{example-four possible Goren cot pairs} turn out to be $(\ch, \tilclass{I})$ and $(\mathcal{E}, \dgclass{I})$ and $({}^\perp\exclass{I}, \exclass{I})$ and $({}^\perp\dwclass{I}, \dwclass{I})$. These eight classes of cofibrant objects are distinct and are related to each other as shown in the cube shaped lattice below.

\begin{center}
\begin{displaymath}
\xymatrix{
& & \ch  \ar@{-}[dl] \ar@{-}[d] \ar@{-}[dr] & & \\
& {}^\perp\exclass{I}  \ar@{-}[d] \ar@{-}[dr] & \class{E} \ar@{-}[dl] \ar@{-}[dr] & \dwclass{P} \ar@{-}[dl]  \ar@{-}[d] & \\
& {}^\perp\dwclass{I} \ar@{-}[dr] & \dgclass{P}  \ar@{-}[d] & \exclass{P} \ar@{-}[dl] & \\
& & \tilclass{P}  & &  \\
}
\end{displaymath}
\end{center}

\end{example}

\section{Gorenstein models for the derived category and recollements}\label{sec-Gorenstein projective and injective models for the derived category}

Assume $R$ is any Noetherian ring and let $\class{GI}$ denote the class of Gorenstein injective modules. From~\cite{bravo-gillespie-hovey} we know that the Gorenstein injectives are part of an injective cotorsion pair $(\class{W},\class{GI})$ giving rise to a model structure on $R$-Mod. The resulting model structure on $R$-Mod is called the \emph{Gorenstein injective} model structure on $R$-Mod and coincides with the one in~\cite{hovey} when $R$ is a Gorenstein ring. Now applying Proposition~\ref{prop-induced injective cot pairs on complexes} we potentially get 6 injective model structures on $\ch$. As illustrated by the examples in the previous section, the number of cotorsion pairs on $\ch$ induced by $(\class{W},\class{GI})$ and the canonical $(\class{A},\class{I})$ increases as we consider more general rings. In particular, when $R$ is non-Gorenstein there are indeed many injective model structures on $\ch$. Three particular model structures appearing in~\cite{bravo-gillespie-hovey} are the following:
\begin{enumerate}

\item $\class{M}_1 = (\leftperp{\dwclass{I}}, \dwclass{I})$ = The injective model for the homotopy category of all complexes of injectives (or coderived category in the language of~\cite{positselski}).

\item $\class{M}_2 = (\leftperp{\exclass{I}}, \exclass{I})$ = The injective model for the stable derived category.

\item $\class{M}_3 = (\class{E},\dgclass{I})$ = The injective model for the usual derived category.

\end{enumerate}

These are linked through Krause's recollement~\cite{krause-stable derived cat of a Noetherian scheme} indicated below.
\[
\begin{tikzpicture}[node distance=3.5cm]
\node (A) {$K_{ex}(Inj)$};
\node (B) [right of=A] {$K(Inj)$};
\node (C) [right of=B] {$K(\textnormal{DG-}Inj)$};
\draw[<-,bend left=40] (A.20) to node[above]{\small E$(\class{M}_2)$} (B.160);
\draw[->] (A) to node[above]{\small $I$} (B);
\draw[<-,bend right=40] (A.340) to node [below]{\small C$(\class{M}_3)$} (B.200);
\draw[<-,bend left] (B.20) to node[above]{\small $\lambda$=C$(\class{M}_2)$} (C.160);
\draw[->] (B) to node[above]{\tiny E$(\class{M}_3)$} (C);
\draw[<-,bend right] (B.340) to node [below]{\small $I$} (C.200);
\end{tikzpicture}
\]
Recall that $K(\textnormal{DG-}Inj) \cong \class{D}(R)$ and here the notation such as E$(\class{M}_3)$ and C$(\class{M}_3)$ respectively represent using enough injectives or enough projectives with respect to that cotorsion pair. This corresponds to taking special preenvelopes or precovers.

It is not our purpose at this point to make a detailed study of all of the analogous Gorenstein derived categories. We simply wish to illustrate the usefulness of Theorem~\ref{them-finding recollements} by presenting three Gorenstein analogs of Krause's recollement. These appear in Theorem~\ref{them-three Gorenstein version of Krause recollement} and the Gorenstein projective analogs appear in Theorem~\ref{them-three Gorenstein projective versions of Krause recollement}. In the next section we point out two new and interesting recollement situations involving these derived categories and in Section~\ref{sec-Gorenstein AC recollements} we see an extension to arbitrary rings $R$. We now set some language following the language used in~\cite{bravo-gillespie-hovey}.

\begin{itemize}
\item $\dwclass{GI}$ is the class of (categorical) \emph{Gorenstein injective} complexes by Corollary~\ref{cor-Goren inj chain complexes coincide with the degreewise Goren inj complexes} below. We call the model structure $(\leftperp{\dwclass{GI}}, \dwclass{GI})$ the \emph{Gorenstein injective model structure} on $\ch$.

\item $\exclass{GI}$ is the class of \emph{exact Gorenstein injective} complexes. We call the model structure $(\leftperp{\exclass{GI}}, \exclass{GI})$ the \emph{exact Gorenstein injective model structure} on $\ch$.

\item $\dgclass{GI} = \rightperp{\tilclass{W}}$ is the class of \emph{DG-Gorenstein injective} complexes. We call the model structure $(\tilclass{W}, \dgclass{GI})$ the \emph{DG-Gorenstein injective model structure} on $\ch$.

\item $\tilclass{GI}$ is the class of \emph{exact DG-Gorenstein injective} complexes. We call the model structure $(\dgclass{W}, \tilclass{GI})$ the \emph{exact DG-Gorenstein injective model structure} on $\ch$.

\end{itemize}

By Proposition~\ref{prop-classification of homotopic maps in injective models}, two chain maps $f,g : X \xrightarrow{} F$ in $\ch$ (where $F$ is fibrant) are formally homotopic in any of these model structures if and only if their difference factors through an injective complex. But injective complexes are contractible and it follows that the two maps are homotopic if and only if they are chain homotopic in the usual sense.
So, for example, the homotopy category of the Gorenstein injective model structure is equivalent to $K (GInj)$, the
chain homotopy category of the Gorenstein injective complexes. Similarly, the homotopy
category of the exact Gorenstein injective model structure will be denoted $K_{ex}(GInj)$. Then we have the DG-versions which we will denote by $K(\textnormal{DG-}GInj)$ and $K_{ex}(\textnormal{DG-}GInj)$.

\begin{remark}
We resist any urge to give names to the complexes in $\dwclass{W}$ and $\exclass{W}$ and $\dgclass{W}$ at this point. However, note the following. Since $\class{W}$ contains all the projective modules, $\dwclass{W}$ (resp. $\exclass{W}$, resp. $\dgclass{W}$) contains all complexes of projectives (resp. exact complexes of projectives, resp. DG-projective complexes). Moreover, since $\class{W}$ contains the injectives, $\dwclass{W}$ contains all complexes of injectives and $\exclass{W}$ contains the exact complexes of injectives.

\end{remark}

We have the following portion of the semilattice of injective cotorsion pairs in $\ch$. But note that we have not even included the model structures corresponding to $\rightperp{\dwclass{W}}$ and  $\rightperp{\exclass{W}}$, which are still interesting especially in light of Theorem~\ref{them-lovely recollements}.
\begin{center}
\begin{displaymath}
\tag{\text{$\dag$}}
\xymatrix{
& & \dwclass{GI}  \ar@{-}[dl] \ar@{-}[d] \ar@{-}[dr] & & \\
& \dgclass{GI}  \ar@{-}[d] \ar@{-}[dr] & \dwclass{I} \ar@{-}[dl] \ar@{-}[dr] & \exclass{GI} \ar@{-}[dl]  \ar@{-}[d] & \\
& \dgclass{I} \ar@{-}[dr] & \tilclass{GI}  \ar@{-}[d] & \exclass{I} \ar@{-}[dl] & \\
& & \tilclass{I}  & &  \\
}
\end{displaymath}
\end{center}

\subsection{Gorenstein injective complexes}

We now wish to characterize the categorical Gorenstein injective complexes, show that these are the fibrant objects in a model structure on $\ch$ having $\class{D}(R)$ as its homotopy category, and embed these homotopy categories in a Gorenstein version of Krause's recollement.

It has been known for some time that over certain rings, especially Gorenstein rings, that the Gorenstein injective (resp. Gorenstein projective) complexes are precisely those complexes $X$ for which each $X_n$ is Gorenstein injective (resp. Gorenstein projective). For example, see~\cite{garcia-rozas}. Enochs, Estrada, and Iacob have shown this for Gorenstein projective complexes over a commutative Noetherian ring admitting a dualizing complex~\cite{enochs-estrada-iacob}. Moreover, we see from~\cite{Yang-Liu-Gorenstein complexes} that this is true for any ring $R$! But Theorem~\ref{them-Gorenstein injectives are at top of lattice} says that the Gorenstein injective complexes sit on the top of the semilattice of injective cotorsion pairs. So the following is a quick and elegant proof of the above fact which works at this point for any Noetherian ring $R$.

\begin{corollary}\label{cor-Goren inj chain complexes coincide with the degreewise Goren inj complexes}
Let $R$ be any ring for which we know that the Gorenstein injective cotorsion pair $(\mathcal{W}, \mathcal{G}\mathcal{I})$ is cogenerated by a set.  Then $(\leftperp{\dwclass{GI}}, \dwclass{GI})$ is also cogenerated by a set and $\dwclass{GI}$ is exactly the class of Gorenstein injective complexes.
\end{corollary}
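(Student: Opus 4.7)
The plan is to deduce the inclusion $\dwclass{GI} \subseteq \class{GI}_{\ch}$ (where $\class{GI}_{\ch}$ denotes the categorical Gorenstein injective complexes) from the general theory already in place, and then to verify the reverse inclusion by a direct computation with the structure of injective chain complexes.

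First, since $(\class{W},\class{GI})$ is by hypothesis an injective cotorsion pair in $R$-Mod cogenerated by a set, Proposition~\ref{prop-induced injective cot pairs on complexes}(1) immediately yields that $(\leftperp{\dwclass{GI}},\dwclass{GI})$ is an injective cotorsion pair in $\ch$ cogenerated by a set. This already disposes of the completeness and cogeneration-by-a-set claim. Now applying Theorem~\ref{them-Gorenstein injectives are at top of lattice} to the abelian category $\ch$ (which has enough injectives), the right-hand class of any injective cotorsion pair in $\ch$ sits inside $\class{GI}_{\ch}$. In particular $\dwclass{GI} \subseteq \class{GI}_{\ch}$.

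For the reverse inclusion I would argue directly. Suppose $X \in \class{GI}_{\ch}$, so $X = Z_0 \mathbf{J}$ where $\mathbf{J}$ is an exact complex in $\textnormal{Ch}(\ch)$ whose entries $\mathbf{J}_n$ are injective chain complexes, and $\Hom_{\ch}(\mathbf{I},-)$ applied to $\mathbf{J}$ remains exact for every injective chain complex $\mathbf{I}$. The key structural fact is that every injective chain complex splits as a direct sum of discs $D^k(I)$ with $I$ an injective $R$-module. In particular, fixing an internal degree $m$, each $(\mathbf{J}_n)_m$ is an injective $R$-module, and varying $n$ produces an exact complex $\mathbf{J}_{\bullet,m}$ of injective $R$-modules whose $0$-th cycle is exactly $X_m$. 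To promote this to a complete injective resolution, I would use the adjunction-style identity $\Hom_{\ch}(D^m(I),Y) \cong \Hom_R(I,Y_m)$, valid for any $R$-module $I$ and any chain complex $Y$. Plugging $\mathbf{I} = D^m(I)$ (which is an injective chain complex) into the hypothesis then shows $\Hom_R(I,\mathbf{J}_{\bullet,m})$ is exact as a complex of abelian groups for every injective $I$. So $\mathbf{J}_{\bullet,m}$ is a complete injective resolution of $X_m$, giving $X_m \in \class{GI}$ and hence $X \in \dwclass{GI}$.

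The only delicate step is the final paragraph, where one must correctly identify the disc $D^m(I)$ as the right test object to probe $\mathbf{J}$ with, and then translate the external exactness of the resulting $\Hom$-complex into internal-degreewise exactness. Everything else is a direct appeal to Proposition~\ref{prop-induced injective cot pairs on complexes} and Theorem~\ref{them-Gorenstein injectives are at top of lattice}, so once that translation is in hand the corollary follows essentially by unwinding definitions.
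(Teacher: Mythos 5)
Your proposal is correct and follows essentially the same route as the paper: the paper proves the projective dual, with the identical three steps of citing the induced cotorsion pair on $\ch$, invoking the maximality theorem for the inclusion $\dwclass{GI} \subseteq$ (Gorenstein injective complexes), and then testing a complete resolution against disc complexes $D^m(I)$ via the isomorphism $\Hom_{\ch}(D^m(I),Y)\cong\Hom_R(I,Y_m)$ to get the degreewise conclusion. Your direct write-up of the injective case is just the dual of the paper's argument, with the adjunction identity used in the correct direction.
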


\begin{proof}
We choose to prove the projective version this time. See proof of Corollary~\ref{cor-Goren proj chain complexes coincide with the degreewise Goren proj complexes}.

\end{proof}

\begin{theorem}\label{them-three Gorenstein version of Krause recollement}
Let $R$ to be any Noetherian ring. Then for the three choices of $\class{M}_1$ and $\class{M}_2$ as indicated below these are injective cotorsion pairs in $\ch$ with $\class{M}_2 \preceq_r \class{M}_1$ having right localization $\class{M}_1/\class{M}_2$ a model for the derived category $\class{D}(R)$.

\begin{enumerate}
\item $\class{M}_1 = ({}^\perp\dwclass{GI}, \dwclass{GI})$ and $\class{M}_2 = ({}^\perp\exclass{GI}, \exclass{GI})$.

\

\item $\class{M}_1 = (\tilclass{W}, \dgclass{GI})$ and $\class{M}_2 = (\dgclass{W}, \tilclass{GI})$.

\

\item $\class{M}_1 = (\exclass{W}, (\exclass{W})^\perp)$ and $\class{M}_2 = (\dwclass{W}, (\dwclass{W})^\perp)$.
\end{enumerate}

Furthermore, each case gives a recollement with the usual derived category as indicated by Theorem~\ref{them-finding recollements}. For example, the first gives the recollement
\[
\begin{tikzpicture}[node distance=3.5cm]
\node (A) {$K_{ex}(GInj)$};
\node (B) [right of=A] {$K(GInj)$};
\node (C) [right of=B] {$K(\textnormal{DG-}Inj)$};
\draw[<-,bend left=40] (A.20) to node[above]{\small E$(\class{M}_2)$} (B.160);
\draw[->] (A) to node[above]{\small $I$} (B);
\draw[<-,bend right=40] (A.340) to node [below]{\small C$(\class{M}_3)$} (B.200);
\draw[<-,bend left] (B.20) to node[above]{$\lambda$=C$(\class{M}_2)$} (C.160);
\draw[->] (B) to node[above]{\tiny E$(\class{M}_3)$} (C);
\draw[<-,bend right] (B.340) to node [below]{\small $I$} (C.200);
\end{tikzpicture}
\]
Recall that $K(\textnormal{DG-}Inj) \cong \class{D}(R)$ and the notation E$(\class{M}_3)$ and C$(\class{M}_3)$ respectively represent using enough injectives or enough projectives with respect to the cotorsion pair $\class{M}_3 = (\class{E},\dgclass{I})$. (So taking DG-injective preenvelopes or Exact precovers.)

\end{theorem}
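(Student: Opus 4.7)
The plan is to apply Theorem~\ref{them-finding recollements} in each of the three cases, taking the third cotorsion pair to be the standard $\class{M}_3 = (\class{E}, \dgclass{I})$, which is the injective model for $\class{D}(R)$. All of the listed injective cotorsion pairs arise from Proposition~\ref{prop-induced injective cot pairs on complexes} applied either to the canonical $(\class{A}, \class{I})$ or to the Gorenstein injective cotorsion pair $(\class{W}, \class{GI})$; the latter is cogenerated by a set when $R$ is Noetherian by~\cite{bravo-gillespie-hovey}, so each candidate pair is an injective cotorsion pair cogenerated by a set. The inclusions $\class{F}_2, \class{F}_3 \subseteq \class{F}_1$ are routine: case~(1) uses $\dgclass{I} \subseteq \dwclass{I} \subseteq \dwclass{GI}$; case~(2) uses $\tilclass{W} \subseteq \class{E}$ to deduce $\dgclass{I} = \rightperp{\class{E}} \subseteq \rightperp{\tilclass{W}} = \dgclass{GI}$; case~(3) uses $\exclass{W} \subseteq \dwclass{W}$ and $\exclass{W} \subseteq \class{E}$.

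Next I would verify the key hypothesis $\class{W}_3 \cap \class{F}_1 = \class{F}_2$, which in each case reads $\class{E} \cap \class{F}_1 = \class{F}_2$. Case~(1), $\class{E} \cap \dwclass{GI} = \exclass{GI}$, is literally the definition of $\exclass{GI}$. Case~(2), $\class{E} \cap \dgclass{GI} = \tilclass{GI}$, is the standard identity for cotorsion pairs of complexes arising from a hereditary cotorsion pair in $R$-Mod, proved in~\cite{gillespie}. The main obstacle is case~(3), the identity $\class{E} \cap (\exclass{W})^\perp = (\dwclass{W})^\perp$. For the inclusion $\supseteq$, note that $\class{W}$ contains every projective module by Proposition~\ref{prop-finite injective and projective dimension objects are trivial}, so every sphere complex $S^n(R)$ lies in $\dwclass{W}$. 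The short exact sequence $0 \to S^n(R) \to D^{n+1}(R) \to S^{n+1}(R) \to 0$ and projectivity of $D^{n+1}(R)$ then give $\Ext^1_{\ch}(S^{n+1}(R), X) \cong H_n(X)$, forcing any $X \in (\dwclass{W})^\perp$ to be exact; combined with $(\dwclass{W})^\perp \subseteq (\exclass{W})^\perp$ (immediate from $\exclass{W} \subseteq \dwclass{W}$), this establishes $\supseteq$. For the reverse inclusion, I would apply Proposition~\ref{prop-spotting localizations}(2) with $\class{M}_1 = (\exclass{W}, (\exclass{W})^\perp)$, $\class{M}_2 = (\dwclass{W}, (\dwclass{W})^\perp)$, and thick class $\class{E}$: the hypotheses $\dwclass{W} \cap \class{E} = \exclass{W}$ (definitional) and $(\dwclass{W})^\perp \subseteq \class{E}$ (just established) produce a Hovey triple $(\dwclass{W}, \class{E}, (\exclass{W})^\perp)$, and reading off its right cotorsion pair gives $\class{E} \cap (\exclass{W})^\perp = (\dwclass{W})^\perp$.

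With all hypotheses verified, Theorem~\ref{them-finding recollements} delivers the Quillen equivalence $\class{M}_1/\class{M}_2 \simeq \class{M}_3$ and the displayed recollement in each case. The identification of its homotopy categories with the chain-homotopy categories $K_{ex}(GInj)$, $K(GInj)$, and $K(\textnormal{DG-}Inj) \cong \class{D}(R)$ for case~(1) (and the analogous ones in cases~(2) and~(3)) follows from Proposition~\ref{prop-classification of homotopic maps in injective models}, since two maps into a fibrant object are formally homotopic iff their difference factors through an injective complex, and injective complexes are contractible.
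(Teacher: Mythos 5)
Your proposal is correct and follows essentially the paper's own argument: apply Theorem~\ref{them-finding recollements} with $\class{M}_3=(\class{E},\dgclass{I})$, verify $\class{F}_2,\class{F}_3\subseteq\class{F}_1$, check $\class{E}\cap\class{F}_1=\class{F}_2$ directly in cases (1) and (2), and in case (3) reduce to $\dwclass{W}\cap\class{E}=\exclass{W}$ together with $(\dwclass{W})^\perp\subseteq\class{E}$. The only cosmetic differences are that you prove $(\dwclass{W})^\perp\subseteq\class{E}$ by the sphere--disk computation $\Ext^1_{\ch}(S^{n+1}(R),X)\cong H_n(X)$ (the paper instead notes that DG-projective complexes lie in $\dwclass{W}$) and that you obtain the case (3) identity via Proposition~\ref{prop-spotting localizations}(2) rather than the equivalent-condition clause of Theorem~\ref{them-finding recollements}, which amounts to the same verification.
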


\begin{proof}
We set $\class{M}_3 = (\class{E},\dgclass{I})$. According to Theorem~\ref{them-finding recollements}, in each case we just need to show $\class{F}_2, \class{F}_3 \subseteq \class{F}_1$ and one of the other two conditions. For statements~(1) and~(2) we will show $\class{E} \cap \class{F}_1 = \class{F}_2$. Then we use the other condition in Theorem~\ref{them-finding recollements} to prove~(3).

Take the first pair, $\class{M}_1 = (\leftperp{\dwclass{GI}}, \dwclass{GI})$ and $\class{M}_2 = (\leftperp{\exclass{GI}}, \exclass{GI})$. They are injective cotorsion pairs in $\ch$ by Proposition~\ref{prop-induced injective cot pairs on complexes}. Since $\class{F}_1 = \dwclass{GI}$ lies at the top of the semilattice it clearly contains both $\class{F}_2 = \exclass{GI}$ and $\class{F}_3 = \dgclass{I}$. It is also clear that $\class{E} \cap \dwclass{GI} = \exclass{GI}$. So we get the recollement for~(1).

Now take the second pair, $\class{M}_1 = (\tilclass{W}, \dgclass{GI})$ and $\class{M}_2 = (\dgclass{W}, \tilclass{GI})$. They are injective cotorsion pairs in $\ch$ by Proposition~\ref{prop-induced injective cot pairs on complexes}. We have $\class{E} \cap \dgclass{GI} = \tilclass{GI}$ since this in general holds from~\cite{gillespie}. So all that is left is to show $\dgclass{I} \subseteq \dgclass{GI}$. That is, we wish to show that every DG-injective complex is a DG-Gorenstein injective. Indeed if $I$ is DG-injective then it is injective in each degree, so Gorenstein injective in each degree. Furthermore any map $f : E \xrightarrow{} I$ where $E$ is exact must be null homotopic. In particular, any map $f : W \xrightarrow{} I$ where $W \in \tilclass{W}$ must be null homotopic. This completes the proof of statement~(2).

Next take the third pair, $\class{M}_1 = (\exclass{W}, (\exclass{W})^\perp)$ and $\class{M}_2 = (\dwclass{W}, (\dwclass{W})^\perp)$. In this setup we first need to see that the DG-injective complexes are in $(\exclass{W})^\perp$. But $(\exclass{W})^\perp$ is the class of all complexes $Y$ of Gorenstein injective modules with the property that any map $f : W \xrightarrow{} Y$ is null homotopic whenever $W \in \exclass{W}$. So we see that the DG-injectives are in this class. So following the setup to Theorem~\ref{them-finding recollements} we do have $\class{F}_2 , \class{F}_3 \subseteq \class{F}_1$ and we will now use the second condition of Theorem~\ref{them-finding recollements}. That is, we will finish by showing $\class{W}_2 \cap \class{W}_3 = \class{W}_1$ and $\class{F}_2 \subseteq \class{W}_3$. But $\class{W}_2 \cap \class{W}_3 = \class{W}_1$ is clear in this case so lets look at $\class{F}_2 \subseteq \class{W}_3$. It is required to show that every complex in $\class{F}_2 = (\dwclass{W})^\perp$ is exact. But as mentioned in the above Remark, the DG-projective complexes must be in $\dwclass{W}$, and so $(\dwclass{W})^\perp$ must be contained in the class of exact complexes. This completes the proof and we get the three recollements.

\end{proof}

\begin{remark}
Theorem~\ref{them-three Gorenstein version of Krause recollement} holds for any ring $R$ for which we know that the Gorenstein injective cotorsion pair $(\class{W}, \class{GI})$ is cogenerated by a set. In fact, whenever $(\class{W},\class{F})$ is an injective cotorsion pair in $R$-Mod which is cogenerated by a set, then the analogous six lifted cotorsion pairs will give rise to three (not necessarily distinct or nontrivial) recollements. For example, starting with the canonical injective cotorsion pair $(\class{A},\class{I})$, the analog of recollement (1) recovers Krause's recollement. However, the analogs to recollements~(2) and~(3) are trivial!

\end{remark}

\subsection{The Gorenstein projective complexes}\label{subsec-Gorenstein projective complexes}

We now look at the dual situation. Here we assume that $R$ is any coherent ring in which each flat module has finite projective dimension. (See Subsection~\ref{subsec-Gorenstein projective cotorsion pair} for examples of such coherent rings. But as we have already alluded to, one can replace this with the more general hypothesis of any ring $R$ in which the so-called \emph{level} modules have finite projective dimension. This is explained in Section~\ref{sec-Gorenstein AC recollements}.)  With this hypothesis on $R$ we have from~\cite{bravo-gillespie-hovey} that if $\class{GP}$ denotes the class of all Gorenstein projective modules then there is a projective cotorsion pair $(\class{GP},\class{W})$ which is cogenerated by a set. (The class $\class{W}$ is in general different than the class $\class{W} = \leftperp{\class{GI}}$, unless we know $R$ is Gorenstein or some other nice conditions on the ring.) Applying Proposition~\ref{prop-induced projective cot pairs on complexes} to this projective cotorsion pair results in 6 lifted projective model structures on $\ch$, all analogous to the Gorenstein injective versions considered above. We record the duals of the injective results, starting with the promised fact that the Gorenstein projective complexes are precisely the complexes having a Gorenstein projective module in each degree.

\begin{corollary}\label{cor-Goren proj chain complexes coincide with the degreewise Goren proj complexes}
Let $R$ be any ring for which we know that the Gorenstein projective cotorsion pair $(\class{GP}, \class{W})$ is cogenerated by a set.  Then $(\dwclass{GP}, \rightperp{(\dwclass{GP})})$ is also cogenerated by a set and $\dwclass{GP}$ is exactly the class of Gorenstein projective complexes.
\end{corollary}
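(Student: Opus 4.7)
The first assertion is essentially a direct quotation. Applying Proposition~\ref{prop-induced projective cot pairs on complexes}(1) to the hypothesized projective cotorsion pair $(\class{GP},\class{W})$ in $R$-Mod immediately gives that $(\dwclass{GP},\rightperp{(\dwclass{GP})})$ is a projective cotorsion pair in $\ch$ cogenerated by a set. So the real work is identifying $\dwclass{GP}$ with the class of (categorical) Gorenstein projective objects in the abelian category $\ch$, which I will denote $\class{GP}(\ch)$.

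For the inclusion $\dwclass{GP} \subseteq \class{GP}(\ch)$, the plan is to invoke Theorem~\ref{them-Gorenstein projectives are at top of lattice} applied to the abelian category $\cat{A} = \ch$ (which has enough projectives). Since $(\dwclass{GP},\rightperp{(\dwclass{GP})})$ is a projective cotorsion pair in $\ch$ by the first part, that theorem forces its left class $\dwclass{GP}$ to sit inside $\class{GP}(\ch)$. No further computation is needed for this direction.

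The reverse inclusion $\class{GP}(\ch) \subseteq \dwclass{GP}$ is where the work is. Given $X \in \class{GP}(\ch)$, fix a complete projective resolution $Q = (\cdots \to Q^1 \to Q^0 \to Q^{-1} \to \cdots)$ in $\ch$ with $X = Z_0 Q$, each $Q^i$ a projective complex, and $\Hom_{\ch}(Q,P')$ exact for every projective complex $P'$. Each $Q^i$ is a contractible complex of projective modules, so in a fixed degree $n$ the sequence $Q_n \uc \ \cdots \to (Q^1)_n \to (Q^0)_n \to (Q^{-1})_n \to \cdots$ is an exact complex of projective $R$-modules with $Z_0(Q_n) = (Z_0 Q)_n = X_n$. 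The key step is to show that for an arbitrary projective $R$-module $P$ the complex $\Hom_R(Q_n,P)$ remains exact. For this I would use the disk complex $D^{n+1}(P)$, which is a projective object of $\ch$ because $P$ is a projective $R$-module, together with the standard adjunction identity $\Hom_{\ch}(Y,D^{n+1}(P)) \cong \Hom_R(Y_n,P)$. Applying this degreewise in the outer index of $Q$ produces a natural isomorphism of complexes of abelian groups
\[
\Hom_{\ch}(Q,D^{n+1}(P)) \ \cong \ \Hom_R(Q_n,P).
\]
The left-hand side is exact by the defining property of the complete projective resolution $Q$, so the right-hand side is too. This exhibits $Q_n$ as a complete projective resolution of $X_n$, proving $X_n \in \class{GP}$ for every $n$, hence $X \in \dwclass{GP}$.

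The only delicate point I anticipate is bookkeeping around the two gradings (the ``outer'' index $i$ indexing $Q$ as a resolution inside $\ch$, versus the ``inner'' chain complex index $n$), together with verifying the $\Hom$--adjunction for disk complexes in the form stated above; both are routine but must be written out carefully so that exactness in the outer direction transfers correctly. Once this is in place, the two inclusions combine to give $\dwclass{GP} = \class{GP}(\ch)$, completing the proof.
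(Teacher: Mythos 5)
Your proposal is correct and follows essentially the same route as the paper: the first claim and the inclusion $\dwclass{GP}\subseteq\class{GP}(\ch)$ come from Proposition~\ref{prop-induced projective cot pairs on complexes} and Theorem~\ref{them-Gorenstein projectives are at top of lattice}, and the reverse inclusion is proved exactly as in the paper by testing the complete projective resolution against the disk complexes $D^{n+1}(P)$ via the isomorphism $\Hom_{\ch}(Q,D^{n+1}(P))\cong\Hom_R(Q_n,P)$. No gaps; the degreewise exactness and the disk adjunction you flag as the delicate points are indeed the only things to check, and both go through as you describe.
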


\begin{proof}
$(\dwclass{GP}, \rightperp{(\dwclass{GP})})$ is a projective cotorsion pair by Proposition~\ref{prop-induced projective cot pairs on complexes}. So by the ``lattice theorem'' Theorem~\ref{them-Gorenstein projectives are at top of lattice} we have that $\dwclass{GP}$ is contained in the class of all Gorenstein projective complexes. To finish we just need to show that if $G$ is Gorenstein projective then each $G_n$ must be Gorenstein projective. For this suppose $G$ is a Gorenstein projective chain complex so that there is an exact sequence of projective chain complexes $$\class{P} \equiv \cdots \xrightarrow{} P^2 \xrightarrow{} P^{1} \xrightarrow{} P^{0} \xrightarrow{} P^{-1} \xrightarrow{} \cdots $$ for which $G = \ker{(P^0 \xrightarrow{} P^{-1})}$ and which will remain exact after application of $\Hom_{\ch}(-,Q)$ for any projective chain complex $Q$. Then for any projective \emph{module} $M$ taking $Q$ to be the projective complex $D^{n+1}(M)$ we get exactness of the complex $\Hom_{\ch}(\class{P},D^{n+1}(M)) \cong \Hom_{R}(P^{*}_n,M)$, where $P^{*}_n$ denotes the complex
 $$P^{*}_n \equiv \cdots \xrightarrow{} P^2_n \xrightarrow{} P^{1}_n \xrightarrow{} P^{0}_n \xrightarrow{} P^{-1}_n \xrightarrow{} \cdots $$ Of course $P^{*}_n$ is an exact complex of projective modules and since we also have $G_n = \ker{(P^0_n \xrightarrow{} P^{-1}_n)}$ we conclude that $G_n$ is a Gorenstein projective module.

\end{proof}

\begin{remark}
We use the notation $\class{M}_2\backslash\class{M}_1$ from~\cite{becker} to denote the left localization of a projective cotorsion pair $\class{M}_1$ by another $\class{M}_2$ having $\class{C}_2 \subseteq \class{C}_1$. This notation emphasizes a \emph{left} localization.
\end{remark}

\begin{theorem}\label{them-three Gorenstein projective versions of Krause recollement}
Let $R$ be a coherent ring in which all flat modules have finite projective dimension. Then for the three choices of $\class{M}_1$ and $\class{M}_2$ as indicated below these are projective cotorsion pairs in $\ch$ with $\class{M}_2 \preceq_l \class{M}_1$ having left localization $\class{M}_2\backslash\class{M}_1$ a model for the derived category $\class{D}(R)$.

\begin{enumerate}
\item $\class{M}_1 = (\dwclass{GP}, \rightperp{(\dwclass{GP})})$ and $\class{M}_2 = (\exclass{GP}, \rightperp{(\exclass{GP})})$.

\

\item $\class{M}_1 = (\dgclass{GP}, \tilclass{W})$ and $\class{M}_2 = (\tilclass{GP}, \dgclass{W})$.

\

\item $\class{M}_1 = (\leftperp{\exclass{W}}, \exclass{W})$ and $\class{M}_2 = (\leftperp{\dwclass{W}}, \dwclass{W})$.
\end{enumerate}

Furthermore, each case gives a recollement with the usual derived category as indicated by Theorem~\ref{them-finding projective recollements}. For example, the first gives the recollement
\[
\begin{tikzpicture}[node distance=3.5cm]
\node (A) {$K_{ex}(GProj)$};
\node (B) [right of=A] {$K(GProj)$};
\node (C) [right of=B] {$K(\textnormal{DG-}Proj)$};
\draw[<-,bend left=40] (A.20) to node[above]{\small E$(\class{M}_3)$} (B.160);
\draw[->] (A) to node[above]{\small $I$} (B);
\draw[<-,bend right=40] (A.340) to node [below]{\small C$(\class{M}_2)$} (B.200);
\draw[<-,bend left] (B.20) to node[above]{\small $I$} (C.160);
\draw[->] (B) to node[above]{\tiny C$(\class{M}_3)$} (C);
\draw[<-,bend right] (B.340) to node [below]{$\rho$=E$(\class{M}_2)$} (C.200);
\end{tikzpicture}
\]
Recall that $K(\textnormal{DG-}Proj) \cong \class{D}(R)$ and the notation E$(\class{M}_3)$ and C$(\class{M}_3)$ respectively represent using enough injectives or enough projectives with respect to the cotorsion pair $\class{M}_3 = (\dgclass{P},\class{E})$. (So taking Exact preenvelopes or DG-projective precovers.)

\end{theorem}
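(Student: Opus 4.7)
The plan is to apply the projective recollement criterion of Theorem~\ref{them-finding projective recollements} separately to each of the three candidate pairs, always taking the third cotorsion pair to be $\class{M}_3 = (\dgclass{P}, \class{E})$, the usual projective model for $\class{D}(R)$. In every case the three pairs $\class{M}_1, \class{M}_2, \class{M}_3$ are projective cotorsion pairs cogenerated by sets by Proposition~\ref{prop-induced projective cot pairs on complexes}, applied to the Gorenstein projective cotorsion pair $(\class{GP},\class{W})$ (which is cogenerated by a set by~\cite{bravo-gillespie-hovey} under our hypotheses on $R$) and to the categorical projective pair $(\class{P}, \class{A})$. The arguments run parallel to those in Theorem~\ref{them-three Gorenstein version of Krause recollement}, just dualized, so the burden reduces to checking the two set-theoretic hypotheses of Theorem~\ref{them-finding projective recollements}: namely $\class{C}_2,\class{C}_3\subseteq\class{C}_1$ together with $\class{W}_3\cap\class{C}_1=\class{C}_2$ (or the equivalent condition $\class{W}_2\cap\class{W}_3=\class{W}_1$ and $\class{C}_2\subseteq\class{W}_3$).

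For cases~(1) and~(2) I will verify $\class{W}_3\cap\class{C}_1=\class{C}_2$ directly. In~(1) this is the tautology $\class{E}\cap\dwclass{GP}=\exclass{GP}$, and in~(2) it is the general identity $\class{E}\cap\dgclass{GP}=\tilclass{GP}$ from~\cite{gillespie}. The inclusions $\class{C}_2\subseteq\class{C}_1$ are obvious, while $\class{C}_3=\dgclass{P}\subseteq\class{C}_1$ in~(1) holds because a DG-projective is degreewise projective and projectives are Gorenstein projective, and in~(2) it holds because DG-projectives have vanishing $\Ext^1$ against all exact complexes and $\tilclass{W}\subseteq\class{E}$, so $\dgclass{P}\subseteq\leftperp{\tilclass{W}}=\dgclass{GP}$.

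Case~(3) is the main obstacle since the cotorsion pairs $\class{M}_1,\class{M}_2$ now have their Gorenstein content sitting on the right-hand side, so I use the second (equivalent) form of the criterion in Theorem~\ref{them-finding projective recollements}. The inclusions $\class{C}_2=\leftperp{\dwclass{W}}\subseteq\leftperp{\exclass{W}}=\class{C}_1$ and $\dgclass{P}\subseteq\leftperp{\exclass{W}}$ are immediate (the latter since $\exclass{W}\subseteq\class{E}$). Then $\class{W}_2\cap\class{W}_3=\dwclass{W}\cap\class{E}=\exclass{W}=\class{W}_1$ by definition, and $\class{C}_2\subseteq\class{W}_3$ becomes $\leftperp{\dwclass{W}}\subseteq\class{E}$; this holds because every DG-injective complex has injective entries, every injective module belongs to $\class{W}=\rightperp{\class{GP}}$, hence $\dgclass{I}\subseteq\dwclass{W}$, and therefore $\leftperp{\dwclass{W}}\subseteq\leftperp{\dgclass{I}}=\class{E}$. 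With the hypotheses of Theorem~\ref{them-finding projective recollements} in hand, each case yields the Quillen equivalence $\class{M}_2\backslash\class{M}_1\simeq\class{M}_3$ and the claimed recollement, with the displayed adjoints arising from the special preenvelopes and precovers provided by completeness of $\class{M}_2$ and $\class{M}_3$.
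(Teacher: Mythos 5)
Your proposal is correct and follows essentially the same route as the paper: the paper gives no separate argument for this theorem, treating it as the dual of Theorem~\ref{them-three Gorenstein version of Krause recollement}, and your verification of the hypotheses of Theorem~\ref{them-finding projective recollements} with $\class{M}_3=(\dgclass{P},\class{E})$ is exactly that dualized argument (with the pairs supplied by Proposition~\ref{prop-induced projective cot pairs on complexes} and Fact/\cite{bravo-gillespie-hovey}). Your small shortcuts, such as using $\dgclass{P}=\leftperp{\class{E}}$ to get $\dgclass{P}\subseteq\dgclass{GP}$ and $\dgclass{P}\subseteq\leftperp{\exclass{W}}$, are harmless simplifications of the same checks.
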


\begin{remark}
Theorem~\ref{them-three Gorenstein projective versions of Krause recollement} holds for any ring $R$ for which we know that the Gorenstein projective cotorsion pair $(\class{GP}, \class{W})$ is cogenerated by a set. So as described in Section~\ref{sec-Gorenstein AC recollements}, it holds for any ring $R$ for which all level modules have finite projective dimension.

\end{remark}

\section{More recollement situations}\label{sec-more recollements}

In this section we point out two more interesting recollements, both injective and projective versions, arising from the classes of Gorenstein complexes introduced in Section~\ref{sec-Gorenstein projective and injective models for the derived category}. Note that since the class $\dwclass{GI}$ of Gorenstein injective complexes sits on the top of the semilattice of injective cotorsion pairs, Theorem~\ref{them-finding recollements} says we get a recollement situation whenever we notice $\class{W}_3 \cap \dwclass{GI} = \class{F}_2$.

\begin{theorem}\label{them-lovely recollements}
Let $R$ be a Noetherian ring. Then we have the two recollement situations below.
\[
\begin{tikzpicture}[node distance=3.5cm]
\node (A) {$K(Inj)$};
\node (B) [right of=A] {$K(GInj)$};
\node (C) [right of=B] {$K((\dwclass{W})^\perp)$};
\draw[<-,bend left=40] (A.20) to node[above]{\small E$(\class{M}_2)$} (B.160);
\draw[->] (A) to node[above]{\small $I$} (B);
\draw[<-,bend right=40] (A.340) to node [below]{\small C$(\class{M}_3)$} (B.200);
\draw[<-,bend left] (B.20) to node[above]{$\lambda$=C$(\class{M}_2)$} (C.160);
\draw[->] (B) to node[above]{\tiny E$(\class{M}_3)$} (C);
\draw[<-,bend right] (B.340) to node [below]{\small $I$} (C.200);
\end{tikzpicture}
\]

AND

\[
\begin{tikzpicture}[node distance=3.5cm]
\node (A) {$K_{ex}(Inj)$};
\node (B) [right of=A] {$K(GInj)$};
\node (C) [right of=B] {$K((\exclass{W})^\perp))$};
\draw[<-,bend left=40] (A.20) to node[above]{\small E$(\class{M}_2)$} (B.160);
\draw[->] (A) to node[above]{\small $I$} (B);
\draw[<-,bend right=40] (A.340) to node [below]{\small C$(\class{M}_3)$} (B.200);
\draw[<-,bend left] (B.20) to node[above]{$\lambda$=C$(\class{M}_2)$} (C.160);
\draw[->] (B) to node[above]{\tiny E$(\class{M}_3)$} (C);
\draw[<-,bend right] (B.340) to node [below]{\small $I$} (C.200);
\end{tikzpicture}
\]

\end{theorem}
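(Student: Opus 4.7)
The plan is to apply Theorem~\ref{them-finding recollements} to two different triples of injective cotorsion pairs in $\ch$, using in both cases the ambient pair $\class{M}_1 = (\leftperp{\dwclass{GI}}, \dwclass{GI})$. This $\class{M}_1$ is an injective cotorsion pair cogenerated by a set via Proposition~\ref{prop-induced injective cot pairs on complexes}(1) applied to the Gorenstein injective pair $(\class{W},\class{GI})$ in $R$-Mod, which exists and is cogenerated by a set since $R$ is Noetherian~\cite{bravo-gillespie-hovey}. Its fibrant class $\dwclass{GI}$ sits at the top of the semilattice of injective cotorsion pairs on $\ch$ by Theorem~\ref{them-Gorenstein injectives are at top of lattice}, and this maximality is what makes everything run.

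For the first recollement I would take $\class{M}_2 = (\leftperp{\dwclass{I}}, \dwclass{I})$ and $\class{M}_3 = (\dwclass{W}, (\dwclass{W})^\perp)$; both are injective cotorsion pairs by Proposition~\ref{prop-induced injective cot pairs on complexes}, the former applied to $(\class{A},\class{I})$ and the latter to $(\class{W},\class{GI})$. The inclusion $\class{F}_3 = (\dwclass{W})^\perp \subseteq \dwclass{GI} = \class{F}_1$ is furnished by Theorem~\ref{them-Gorenstein injectives are at top of lattice}. The decisive hypothesis $\class{W}_3 \cap \class{F}_1 = \class{F}_2$ reads $\dwclass{W} \cap \dwclass{GI} = \dwclass{I}$, and reduces in each degree to the identity $\class{W} \cap \class{GI} = \class{I}$, which holds because $(\class{W},\class{GI})$ is by definition an injective cotorsion pair in $R$-Mod.

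For the second recollement I would swap in the exact analogues $\class{M}_2 = (\leftperp{\exclass{I}}, \exclass{I})$ and $\class{M}_3 = (\exclass{W}, (\exclass{W})^\perp)$, which again arise from Proposition~\ref{prop-induced injective cot pairs on complexes}. The containment $(\exclass{W})^\perp \subseteq \dwclass{GI}$ is again Theorem~\ref{them-Gorenstein injectives are at top of lattice}, while the key equality $\exclass{W} \cap \dwclass{GI} = \exclass{I}$ combines the same degreewise identity $\class{W} \cap \class{GI} = \class{I}$ with the matching exactness condition present on both sides.

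Theorem~\ref{them-finding recollements} then produces, in each case, a recollement at the level of the stable categories $\class{F}_2/\sim$, $\class{F}_1/\sim$, $\class{F}_3/\sim$. To match the stated form I would identify these with the respective homotopy categories $K(Inj)$, $K_{ex}(Inj)$, $K(GInj)$, $K((\dwclass{W})^\perp)$, $K((\exclass{W})^\perp)$: by Proposition~\ref{prop-classification of homotopic maps in injective models} two maps between fibrant objects agree modulo $\sim$ iff their difference factors through an injective object of $\ch$, and since every injective complex in $\ch$ is contractible this coincides with ordinary chain homotopy. I anticipate no serious obstacle beyond these bookkeeping verifications; the whole theorem is a transparent application of Theorem~\ref{them-finding recollements} once the maximality Theorem~\ref{them-Gorenstein injectives are at top of lattice} is combined with the degreewise identity $\class{W} \cap \class{GI} = \class{I}$.
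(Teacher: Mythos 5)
Your proposal is correct and follows the paper's proof exactly: the paper applies Theorem~\ref{them-finding recollements} with the same choices $\class{M}_1 = (\leftperp{\dwclass{GI}}, \dwclass{GI})$, $\class{M}_2 = (\leftperp{\dwclass{I}}, \dwclass{I})$ or $(\leftperp{\exclass{I}}, \exclass{I})$, $\class{M}_3 = (\dwclass{W}, \rightperp{(\dwclass{W})})$ or $(\exclass{W}, \rightperp{(\exclass{W})})$, and verifies the key equalities $\dwclass{W} \cap \dwclass{GI} = \dwclass{I}$ and $\exclass{W} \cap \dwclass{GI} = \exclass{I}$ from the degreewise identity $\class{W} \cap \class{GI} = \class{I}$, with the containments handled by the maximality of $\dwclass{GI}$.
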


\begin{proof}
Apply Theorem~\ref{them-finding recollements}. For the first, we take $\class{M}_1 = (\leftperp{\dwclass{GI}}, \dwclass{GI})$, $\class{M}_2 = (\leftperp{\dwclass{I}}, \dwclass{I})$ and $\class{M}_3 = (\dwclass{W}, \rightperp{(\dwclass{W})})$. It is clear that $\dwclass{W} \cap \dwclass{GI} = \dwclass{I}$ since $\class{W} \cap \class{GI}$ is the class of injective modules.

For the second, we take $\class{M}_1 = (\leftperp{\dwclass{GI}}, \dwclass{GI})$, $\class{M}_2 = (\leftperp{\exclass{I}}, \exclass{I})$ and $\class{M}_3 = (\exclass{W}, \rightperp{(\exclass{W})})$. Again, it is clear that $\exclass{W} \cap \dwclass{GI} = \exclass{I}$ since $\class{W} \cap \class{GI}$ is the class of injective modules.
\end{proof}

\begin{remark}\label{remark-loevely recollements dual}
The dual to Theorem~\ref{them-lovely recollements} holds whenever $R$ is a ring for which we know that the Gorenstein projective cotorsion pair is cogenerated by a set. In particular, we have the dual whenever $R$ is coherent and all flat modules have finite projective dimension.

\end{remark}

\begin{example}\label{example-recollement of Z mod 4}
Let $R$ be the quasi-Frobenius ring $\Z/4$ as in Example~\ref{example-projective cotorsion pairs of complexes over Z mod 4}. Then the second recollement in Theorem~\ref{them-lovely recollements} above gives a recollement $S(R) \xrightarrow{} K(R) \xrightarrow{} K(\class{W}_{\text{proj}})$ where $S(R) = K_{ex}(Inj)$ is the stable derived category of $R$, $K(R)$ is the usual homotopy category and $\class{W}_{\text{proj}} = \rightperp{\exclass{P}} = \rightperp{\exclass{I}}$ are the trivial objects in the model for the \emph{projective stable derived category}, $S_{\text{proj}}(R)$, of~\cite{bravo-gillespie-hovey}. As shown in~\cite{bravo-gillespie-hovey}, we have $\class{W}_{\text{proj}} \neq \class{W}_{\text{inj}}$ but $S_{\text{proj}}(R) \cong S(R)$ through a natural Quillen equivalence.

\end{example}

\section{Recollements involving complexes of Gorenstein AC-injectives}\label{sec-Gorenstein AC recollements}

In this section we extend the results of Sections~\ref{sec-Gorenstein projective and injective models for the derived category} and~\ref{sec-more recollements} to general rings. This depends on a generalization of Gorenstein homological algebra to arbitrary rings, provided by~\cite{bravo-gillespie-hovey}. We now briefly explain this, but we refer the reader to~\cite{bravo-gillespie-hovey} for the full details.

A main idea of~\cite{bravo-gillespie-hovey} is that Gorenstein homological algebra only seems to work well over Noetherian rings. We see from~\cite{gillespie-Ding-Chen rings} that the theory extends to coherent rings if we alter the definition of Gorenstein injective (resp. Gorenstein projective) to get the so-called \emph{Ding injective} (resp. \emph{Ding projective}) modules. In~\cite{bravo-gillespie-hovey} we see that the generalization goes beyond coherent rings to general rings $R$. Here we get what we call the \emph{Gorenstein AC-injective} and \emph{Gorenstein AC-projective} modules which we define shortly. When $R$ is coherent, they agree with the Ding modules of~\cite{gillespie-Ding-Chen rings}. For Noetherian rings, the Gorenstein AC-injective modules coincide with the usual Gorenstein injectives. The Gorenstein AC-projective modules are a bit more subtle. They do coincide with the usual Gorenstein projectives whenever $R$ is Noetherian and has a dualizing complex. But as we indicate below, it is not because the ring is Noetherian. It is because certain modules we call \emph{level}, which for Noetherian rings are the flat modules, have finite projective dimension. It is the existence of a dualizing complex which forces this~\cite{jorgensen-finite flat dimension}.

In more detail, a module $N$ is said to be of type $FP_{\infty}$ if it has a projective resolution by finitely generated projectives. Over coherent rings these are precisely the finitely presented modules, and over Noetherian rings these are precisely the finitely generated modules. Loosely speaking, finitely generated modules are well-behaved over Noetherian rings, while finitely presented modules are well-behaved over coherent rings. So as we relax the ring from Noetherian to coherent we need to sharpen the notion of a ``finite'' module. So the idea is that for a general ring $R$, it is the type $FP_{\infty}$ modules that are the well-behaved ``finite'' modules.

Now let $I$ be an R-module. Note that $I$ is injective if and only if $\Ext^1_R(N,I) = 0$ for all finitely generated modules $N$. More generally, $I$ is called \emph{absolutely pure} (or \emph{FP-injective}) if $\Ext^1_R(N,I) = 0$ for all finitely presented modules $N$. Still more generally, for reasons described in~\cite{bravo-gillespie-hovey} we call $I$ \emph{absolutely clean} (or \emph{$FP_{\infty}$-injective}) if $\Ext^1_R(N,I) = 0$ for all modules $N$ of type $FP_{\infty}$. For coherent rings, we have absolutely clean = absolutely pure. For Noetherian rings, absolutely clean = absolutely pure = injective. We point out that $FP_{\infty}$-modules have been studied by Livia Hummel in~\cite{miller-livia}.

Recall that for a left $R$-module $N$, its character module is the right $R$-module $N^+ = \Hom_{\Z}(N,\Q)$. For Noetherian rings $R$, it is known that $N$ is flat if and only if $N^+$ is injective and also $N$ is injective if and only if $N^+$ is flat. (See~\cite{enochs-jenda-book}.) For a coherent ring $R$ it is known that $N$ is flat if and only if $N^+$ is absolutely pure and $N$ is absolutely pure if and only if $N^+$ is flat. (See~\cite{fieldhouse}.) In order for this duality to extend to non-coherent rings we need the following definition, which is an extensions of the notion of flatness: A left $R$-module $N$ is called \emph{level} if $\Tor^R_1(M,N) = 0$ for all right $R$-modules $M$ of type $FP_{\infty}$. Now for a general ring $R$, we get $N$ is level if and only if $N^+$ is absolutely clean and $N$ is absolutely clean if and only if $N^+$ is level. This duality captures the above dualities for the special cases of coherent and Noetherian rings. Moreover there are interesting characterizations of coherent rings in terms of level modules and Noetherian rings in terms of absolutely clean modules. For example, a ring $R$ is (right) coherent if and only if all level (left) modules are flat.

Finally, this duality is the key to finding a well-behaved notion of Gorenstein modules for general rings $R$.
We now define a module $M$ to be \emph{Gorenstein AC-injective} if $M = Z_0 X$ for some exact complex of injectives $X$ for which $\Hom_R(I,X)$ remains exact for any absolutely clean module $I$. We see that when $R$ is coherent, the Gorenstein AC-injective modules coincide with the Ding injective modules of~\cite{gillespie-Ding-Chen rings} and when $R$ is Noetherian, the Gorenstein AC-injective modules coincide with the usual Gorenstein injective modules. On the other hand we define a module $M$ to be \emph{Gorenstein AC-projective} if $M = Z_0X$ for some exact complex of projectives $X$ for which $\Hom_R(X,L)$ remains exact for all level modules $L$. Then for coherent rings, these coincide with the Ding projective modules of~\cite{gillespie-Ding-Chen rings}. If every level module has finite projective dimension then they coincide with the usual Gorenstein projectives.  In particular, when $R$ is coherent and satisfies that all flat modules have finite projective dimension, then the Gorenstein AC-projectives are exactly the Gorenstein projectives. A main result of~\cite{bravo-gillespie-hovey} is the following, which appear there as Theorem~5.5/Proposition~5.10 and Theorem~8.5/Proposition~8.10.

\begin{fact}\label{fact-Hovey-Bravo paper}
Let $R$ be any ring. Denote the class of Gorenstein AC-injective modules by $\class{GI}$ and the class of Gorenstein AC-projective modules by $\class{GP}$. Then $(\leftperp{\class{GI}},\class{GI})$ is an injective cotorsion pair and $(\class{GP},\rightperp{\class{GP}})$ is a projective cotorsion pair. Each are cogenerated by a set.

\end{fact}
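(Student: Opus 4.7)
The plan is to treat the injective half in detail and to indicate the projective half by an analogous argument. The proof splits into two steps: exhibit a set that cogenerates the cotorsion pair (establishing completeness), then upgrade completeness to the ``injective'' property using the machinery already built in this paper.

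The upgrade is essentially free: the argument in Theorem~\ref{them-Gorenstein injectives are at top of lattice}(2) transfers verbatim from the classical Gorenstein injective setting to the Gorenstein AC-injective setting. In detail, $\class{GI}$ is cosyzygy closed because for any complete AC-injective resolution $J$ of $M \in \class{GI}$ the shifted cycles $Z_n J$ remain in $\class{GI}$; every injective module lies in $\class{GI}$ via the Dold-style resolution $\cdots \to 0 \to M \xrightarrow{1} M \to 0 \to \cdots$, so $\leftperp{\class{GI}}$ contains all injectives. Assuming completeness, Lemma~\ref{lemma-hereditary test} combined with Becker's Lemma~\ref{lemma-Beckers lemma} yields that the pair is hereditary, Lemma~\ref{lem-Henriks lemma} yields that $\leftperp{\class{GI}}$ is thick, and Proposition~\ref{prop-characterizing injective cotorsion pairs}(3) assembles these into an injective cotorsion pair. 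The projective upgrade is formally dual using Proposition~\ref{prop-characterizing projective cotorsion pairs}, once one observes that all projectives lie in $\class{GP}$ and that $\class{GP}$ is syzygy closed.

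Completeness is the real content. The pivotal input is that the test class of absolutely clean modules is deconstructible, since it equals the right Ext-orthogonal of a set of representatives of modules of type $FP_\infty$. A Kaplansky-style cardinality analysis of complete AC-injective resolutions then produces an infinite cardinal $\kappa$, depending only on $R$ and the $FP_\infty$-class, such that every $M \in \class{GI}$ admits a continuous well-ordered filtration by Gorenstein AC-injective submodules of cardinality at most $\kappa$. Taking $\class{S}$ to be a set of representatives of such $\kappa$-bounded members of $\class{GI}$, Eklof's lemma gives $\leftperp{\class{S}} = \leftperp{\class{GI}}$, and the small object argument produces a complete cotorsion pair cogenerated by $\class{S}$. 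For the projective analogue, one runs the parallel argument on the dual side: the perfect duality between absolutely clean and level modules under character modules, established in the cited paper, renders the class of level modules deconstructible and supports a Kaplansky-style filtration of Gorenstein AC-projective modules by $\kappa$-bounded Gorenstein AC-projective submodules.

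The main obstacle is precisely this cardinality bound. The defining condition of Gorenstein AC-injectivity quantifies over the proper class of absolutely clean modules, and the technical heart of the theorem is replacing this by a set-sized criterion witnessed on $\kappa$-bounded submodules of a complete resolution. For Noetherian rings this collapses to classical results on injective envelopes and the structure of injective modules, but for a general ring it requires a delicate interplay between $FP_\infty$-presentations and the transfinite structure of complete AC-injective resolutions. The projective half has the added subtlety that $\class{GP}$ is generally not closed under filtered colimits, so the filtration structure cannot be inherited from a naive direct-limit argument, and the absolutely clean / level duality must be invoked to transfer the problem into a setting where the deconstructibility machinery applies.
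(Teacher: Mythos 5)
The paper itself offers no proof of this statement: it is recorded as a Fact imported from \cite{bravo-gillespie-hovey} (Theorem~5.5/Proposition~5.10 and Theorem~8.5/Proposition~8.10 there), so the only argument to measure yours against is the one in that paper. Your ``upgrade'' step is fine in outline: granted completeness, the proof of Theorem~\ref{them-Gorenstein injectives are at top of lattice} does transfer to the AC-setting and, combined with Lemma~\ref{lem-Henriks lemma} and Proposition~\ref{prop-characterizing injective cotorsion pairs}, yields that the pair is injective (one small slip: knowing that every injective lies in $\class{GI}$ does not by itself place the injectives in $\leftperp{\class{GI}}$; for that you use $\Ext^1_R(I,M)=0$ for $I$ injective, which holds because injectives are absolutely clean and $M=Z_0J$ with $\Hom_R(I,J)$ exact).

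The genuine gap is in your completeness argument on the injective side, which is the actual content of the Fact. You filter each $M\in\class{GI}$ by $\kappa$-bounded Gorenstein AC-injective submodules, take a set $\class{S}$ of small representatives, and invoke ``Eklof's lemma'' to get $\leftperp{\class{S}}=\leftperp{\class{GI}}$, then the small object argument. This fails twice. First, Eklof's lemma concerns filtrations in the \emph{first} variable of $\Ext$: it says $\leftperp{\class{C}}$ is closed under transfinite extensions. Your filtration sits in the \emph{second} variable, and vanishing of $\Ext^1(X,-)$ is not inherited from the subquotients of a filtration of the target (right orthogonal classes are closed under products and extensions, not under such unions); the dual of Eklof's lemma is false in general. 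Second, even granting $\leftperp{\class{S}}=\leftperp{\class{GI}}$, that would only say the pair is \emph{generated} by a set, whereas the Eklof--Trlifaj/small-object machinery yields completeness only from \emph{cogeneration}: one needs a set $\class{S}$ with $\rightperp{\class{S}}=\class{GI}$, and a set $\class{S}\subseteq\class{GI}$ cannot serve since $\Ext^1$ between Gorenstein AC-injectives need not vanish. The premise is also unsupported: right-hand classes like $\class{GI}$ are not expected to admit such filtrations (already the class of injective modules fails to be closed under transfinite extensions over a non-Noetherian ring). In \cite{bravo-gillespie-hovey} the set-theoretic input is placed where Eklof's lemma actually applies, namely on the test classes: deconstructibility of the absolutely clean modules (and, via the character-module duality, of the classes tied to level modules) is what produces a set whose right $\Ext$-orthogonal is $\class{GI}$, and for the projective half it is the \emph{left} class $\class{GP}$ whose small-filtration property gives cogeneration by a set. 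Your projective sketch has the right logical shape for precisely this reason, but there the hard step --- the filtration of Gorenstein AC-projectives by small ones --- is asserted rather than proved, so neither half of the completeness claim is established as written.
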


This result allows for the following extension of the recollements in Sections~\ref{sec-Gorenstein projective and injective models for the derived category} and~\ref{sec-more recollements} to arbitrary rings.

\begin{theorem}\label{them-Gorenstein AC-injective recollements}
Let $R$ be any ring. Denote the class of all Gorenstein AC-injective modules by $\class{GI}$, and set $\class{W} = \leftperp{\class{GI}}$. Then all of the pairs below are injective cotorsion pairs in $\ch$. For each choice of $\class{M}_1$, $\class{M}_2$, and $\class{M}_3$ shown below, Theorem~\ref{them-finding recollements} yields a recollement.
\begin{enumerate}
\item $\class{M}_1 = ({}^\perp\dwclass{GI}, \dwclass{GI}) , \ \ \ \class{M}_2 = ({}^\perp\exclass{GI}, \exclass{GI}) , \ \ \ \class{M}_3 = (\class{E}, \dgclass{I})$

\

\item $\class{M}_1 = (\tilclass{W}, \dgclass{GI}) , \ \ \ \class{M}_2 = (\dgclass{W}, \tilclass{GI}) , \ \ \ \class{M}_3 = (\class{E}, \dgclass{I})$

\

\item $\class{M}_1 = (\exclass{W}, (\exclass{W})^\perp) , \ \ \ \class{M}_2 = (\dwclass{W}, (\dwclass{W})^\perp) , \ \ \ \class{M}_3 = (\class{E}, \dgclass{I})$

\

\item $\class{M}_1 = ({}^\perp\dwclass{GI}, \dwclass{GI}) , \ \ \ \class{M}_2 = ({}^\perp\dwclass{I}, \dwclass{I}) , \ \ \ \class{M}_3 = (\dwclass{W}, (\dwclass{W})^\perp)$

\

\item $\class{M}_1 = ({}^\perp\dwclass{GI}, \dwclass{GI}) , \ \ \ \class{M}_2 = ({}^\perp\exclass{I}, \exclass{I}) , \ \ \ \class{M}_3 = (\exclass{W}, (\exclass{W})^\perp)$
\end{enumerate}
\end{theorem}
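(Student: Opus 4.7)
The plan is to observe that all the essential arguments from Theorems~\ref{them-three Gorenstein version of Krause recollement} and~\ref{them-lovely recollements} carry over verbatim to this general setting, once we have the right input in $R$-Mod. That input is Fact~\ref{fact-Hovey-Bravo paper}: for any ring $R$, $(\class{W},\class{GI})$ is an injective cotorsion pair in $R$-Mod cogenerated by a set. So I would begin by invoking Fact~\ref{fact-Hovey-Bravo paper}, then feed $(\class{W},\class{GI})$ (and, separately, the canonical $(\class{A},\class{I})$) into Proposition~\ref{prop-induced injective cot pairs on complexes} to produce all the lifted cotorsion pairs in $\ch$ appearing in the statement; this simultaneously verifies that each listed pair is an injective cotorsion pair in $\ch$ and handles the preliminary claim. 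I would also record the key equality that will be used throughout: since $(\class{W},\class{GI})$ is an injective cotorsion pair in $R$-Mod, $\class{W}\cap\class{GI}$ is exactly the class of injective $R$-modules.

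Next I would dispatch cases (1), (2), (3) essentially as in the proof of Theorem~\ref{them-three Gorenstein version of Krause recollement}, checking the hypotheses of Theorem~\ref{them-finding recollements}. For (1), one notes $\exclass{GI},\dgclass{I}\subseteq\dwclass{GI}$ (the latter since every injective is Gorenstein AC-injective) and the defining equality $\class{E}\cap\dwclass{GI}=\exclass{GI}$ is immediate. For (2), one checks $\tilclass{GI},\dgclass{I}\subseteq\dgclass{GI}$ using~\cite{gillespie} and the observation that every DG-injective has injective (hence Gorenstein AC-injective) components and admits only null-homotopic maps from exact complexes (in particular from complexes in $\tilclass{W}$), together with $\class{E}\cap\dgclass{GI}=\tilclass{GI}$ which again holds from~\cite{gillespie}. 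For (3) I would use the alternate criterion of Theorem~\ref{them-finding recollements}: $\class{W}_{2}\cap\class{W}_{3}=\class{W}_{1}$ reduces to $\dwclass{W}\cap\class{E}=\exclass{W}$ (trivial), and $\class{F}_{2}\subseteq\class{W}_{3}$ reduces to $(\dwclass{W})^{\perp}\subseteq\class{E}$, which holds because $\dwclass{W}$ contains all DG-projective complexes (since $\class{W}$ contains all projectives).

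For cases (4) and (5) I would mirror the proof of Theorem~\ref{them-lovely recollements}. In both cases $\class{M}_{1}=({}^{\perp}\dwclass{GI},\dwclass{GI})$ sits at the top of the semilattice of injective cotorsion pairs (by Theorem~\ref{them-Gorenstein injectives are at top of lattice}), so $\class{F}_{2},\class{F}_{3}\subseteq\dwclass{GI}$ is automatic. The crucial identity is $\class{W}_{3}\cap\class{F}_{1}=\class{F}_{2}$, which for (4) reads $\dwclass{W}\cap\dwclass{GI}=\dwclass{I}$ and for (5) reads $\exclass{W}\cap\dwclass{GI}=\exclass{I}$; both follow degreewise from $\class{W}\cap\class{GI}=\class{I}$ (together with the fact that in case (5) each of the three classes in question consists of exact complexes). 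Theorem~\ref{them-finding recollements} then produces the recollements.

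There is no serious obstacle: the whole content of the theorem is the existence of the cotorsion pair $(\class{W},\class{GI})$ in $R$-Mod for an arbitrary ring, and this is exactly what Fact~\ref{fact-Hovey-Bravo paper} provides. The mildest subtlety is simply to notice that every step in the Noetherian arguments of Theorem~\ref{them-three Gorenstein version of Krause recollement} and Theorem~\ref{them-lovely recollements} used only (a) that $(\class{W},\class{GI})$ is an injective cotorsion pair cogenerated by a set and (b) that $\class{W}\cap\class{GI}$ is the class of injective modules; no use was made of Noetherianness beyond ensuring (a) and (b). Thus once Fact~\ref{fact-Hovey-Bravo paper} is in hand, the proofs are essentially a bookkeeping exercise, and I would present them in that condensed form.
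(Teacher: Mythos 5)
Your proposal is correct and follows essentially the same route as the paper: invoke Fact~\ref{fact-Hovey-Bravo paper} for the injective cotorsion pair $(\class{W},\class{GI})$ in $R$-Mod, lift it (and the canonical pair) to $\ch$ via Proposition~\ref{prop-induced injective cot pairs on complexes}, and then verify the hypotheses of Theorem~\ref{them-finding recollements} exactly as in the proofs of Theorems~\ref{them-three Gorenstein version of Krause recollement} and~\ref{them-lovely recollements}, the key point being that $\class{W}\cap\class{GI}$ is the class of injective modules. The paper's own proof is precisely this observation, stated even more briefly, so your write-up (which additionally spells out the individual checks) is fine.
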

\begin{remark}
Note that the first three recollements correspond to those in Theorem~\ref{them-three Gorenstein version of Krause recollement} while the fourth and fifth correspond to Theorem~\ref{them-lovely recollements}. When $R$ is Noetherian they coincide with those recollements. Moreover, we have the dual statements corresponding to the three recollements of Theorem~\ref{them-three Gorenstein projective versions of Krause recollement} and the two in Remark~\ref{remark-loevely recollements dual}. They coincide with those recollements whenever all level modules have finite projective dimension. In particular, whenever $R$ is coherent and all flat modules have finite projective dimension.
\end{remark}

\begin{proof}
Since $(\class{W}, \class{GI})$ is an injective cotorsion pair in $R$-Mod which is cogenerated by a set, it follows then from Proposition~\ref{prop-induced injective cot pairs on complexes} that all the above are injective cotorsion pairs in $R$-Mod. Now the proof of the first three recollements follows exactly as in Theorem~\ref{them-three Gorenstein version of Krause recollement} while the fourth and fifth follows just as in~Theorem~\ref{them-lovely recollements}. It all works the same way since $\class{W} \cap \class{GI}$ equals the class of injective modules, making it easy to verify the hypotheses of Theorem~\ref{them-finding recollements}.

\end{proof}

\end{document}